\NeedsTeXFormat{LaTeX2e}
\documentclass[10pt,reqno]{amsart}
\usepackage{hyperref}
\usepackage{latexsym,amsmath}
\usepackage{enumerate}
\usepackage{amsfonts}
\usepackage{amssymb}
\usepackage{geometry}
\usepackage{latexsym}
\usepackage{fixmath}
\usepackage[prependcaption]{todonotes}

\usepackage[T1]{fontenc}
\usepackage{fourier}
\usepackage{bbm}
\usepackage{color}
\usepackage{fullpage}

\renewcommand\ln{\log}
\renewcommand\Join{*}
\newcommand\msg{\cM}

\newcommand\MC{\mathrm{MC}}

\newcommand\disteq{\,\stacksign{d}=\,}

\newcommand\cutm{\Delta_{\Box}}
\newcommand\Cutm{\cD_{\Box}}
\newcommand\CUTM{D_{\Box}}

\newcommand{\BP}{\mathrm{BP}} 
\newcommand{\OPT}{\mathrm{OPT}} 

\newcommand{\vh}{\vec h}
\newcommand{\vt}{\vec t}
\newcommand{\vw}{\vec w}
\newcommand{\vc}{\vec c}
\newcommand{\vs}{\vec s}
\newcommand{\vz}{\vec z}
\newcommand{\vX}{\vec X}

\renewcommand{\epsilon}{\eps}

\newcommand\vI{\vec I}
\newcommand\vJ{\vec J}

\newcommand\MEAS{\fD^\star}
\newcommand\Meas{\fD}
\newcommand\meas{\fK}
\newcommand\states{\cK}
\newcommand\prior{\fp}

\newcommand{\GG}{\mathbb G}

\newcommand\MU{\vec\mu}
\newcommand\vY{\vec Y}

\newcommand\vm{\vec m}

\newcommand\NU{\vec\nu}

\newcommand\vU{\vec U}

\newcommand\OMEGA{\vec\omega}
\newcommand\PSI{\vec\psi}

\newcommand\VARPHI{\vec\varphi}

\newcommand\nix{\,\cdot\,}

\newcommand\vW{\vec W}
\newcommand\vS{\vec S}
\newcommand\lam{\lambda}

\newcommand\dd{{\mathrm d}}

\newcommand\G{\vec G}

\numberwithin{equation}{section}

\renewcommand{\vec}[1]{\boldsymbol{#1}}

\newcommand\SIGMA{\vec\sigma}
\newcommand\CHI{\vec\chi}
\newcommand\TAU{\vec\tau}

\newtheorem{definition}{Definition}[section]

\newtheorem{example}[definition]{Example}
\newtheorem{remark}[definition]{Remark}
\newtheorem{theorem}[definition]{Theorem}
\newtheorem{lemma}[definition]{Lemma}
\newtheorem{proposition}[definition]{Proposition}
\newtheorem{corollary}[definition]{Corollary}

\newtheorem{fact}[definition]{Fact}

\newcommand\fK{\mathfrak{K}}
\newcommand\fD{\mathfrak{D}}

\newcommand\fA{\mathfrak{A}}

\newcommand\fp{\mathfrak{p}}

\newcommand\cA{\mathcal{A}}
\newcommand\cB{\mathcal{B}}
\newcommand\cC{\mathcal{C}}
\newcommand\cD{\mathcal{D}}
\newcommand\cF{\mathcal{F}}

\newcommand\cE{\mathcal{E}}
\newcommand\cU{\mathcal{U}}
\newcommand\cN{\mathcal{N}}

\newcommand\cH{\mathcal{H}}
\newcommand\cS{\mathcal{S}}

\newcommand\cK{\mathcal{K}}

\newcommand\cM{\mathcal{M}}

\newcommand\cP{\mathcal{P}}
\newcommand\cX{\mathcal{X}}

\newcommand\cV{\mathcal{V}}

\def\cE{{\mathcal E}}

\newcommand\vu{\vec u}
\newcommand\vv{\vec v}
\newcommand\vx{\vec x}
\newcommand\vZ{\vec Z}

\newcommand\THETA{\vec\theta}

\newcommand{\beq}{\begin{equation}} \newcommand{\eeq}{\end{equation}}

\newcommand\thet{\vartheta}

\newcommand\eul{\mathrm{e}}
\newcommand\eps{\varepsilon}

\newcommand\Erw{\mathbb{E}}
\newcommand{\vecone}{\vec{1}}

\newcommand{\Po}{{\rm Po}}

\newcommand{\Be}{{\rm Be}}

\newcommand\TV[1]{\left\|{#1}\right\|_{\mathrm{TV}}}
\newcommand\tv[1]{\|{#1}\|_{\mathrm{TV}}}
\newcommand\dTV{d_{\mathrm{TV}}}

\newcommand{\bink}[2] {{\binom{#1}{#2}}}

\newcommand\bc[1]{\left({#1}\right)}
\newcommand\cbc[1]{\left\{{#1}\right\}}
\newcommand\bcfr[2]{\bc{\frac{#1}{#2}}}
\newcommand{\bck}[1]{\left\langle{#1}\right\rangle}
\newcommand\brk[1]{\left\lbrack{#1}\right\rbrack}
\newcommand\scal[2]{\bck{{#1},{#2}}}

\newcommand\abs[1]{\left|{#1}\right|}

\newcommand\RR{\mathbb{R}}

\newcommand{\Whp}{W.h.p.}
\newcommand{\whp}{w.h.p.}

\newcommand{\stacksign}[2]{{\stackrel{\mbox{\scriptsize #1}}{#2}}}
\newcommand{\tensor}{\otimes}

\newcommand{\Erdos}{Erd\H{o}s}
\newcommand{\Renyi}{R\'enyi}
\newcommand{\Lovasz}{Lov\'asz}

\newcommand{\Mezard}{M\'ezard}

\newcommand{\Szemeredi}{Szemer\'edi}

\newcommand\pr{\mathbb{P}}

\newcommand\Lem{Lemma}
\newcommand\Prop{Proposition}
\newcommand\Thm{Theorem}

\newcommand\Cor{Corollary}
\newcommand\Sec{Section}
\newcommand\Chap{Chapter}

\begin{document}

\title{Spin systems on Bethe lattices}

\author{Amin Coja-Oghlan, Will Perkins}
%\thanks{}

\address{Amin Coja-Oghlan, {\tt acoghlan@math.uni-frankfurt.de}, Goethe University, Mathematics Institute, 10 Robert Mayer St, Frankfurt 60325, Germany.}

\address{Will Perkins, {\tt math@willperkins.org}, Department of Mathematics, Statistics, and Computer Science, University of Illinois at Chicago, Chicago, Illinois, USA.}

\begin{abstract}
In an extremely influential paper \Mezard\ and Parisi put forward an analytic but non-rigorous approach called the cavity method for studying spin systems on the Bethe lattice, i.e., the random $d$-regular graph [Eur.\ Phys.\ J.\ B {\bf20} (2001) 217--233].
Their technique was based on certain hypotheses; most importantly, that the phase space decomposes into a number of Bethe states that are free from long-range correlations and whose marginals are given by a recurrence called Belief Propagation.
In this paper we establish this decomposition rigorously for a very general family of spin systems.
In addition, we show that the free energy can be computed from this decomposition.
We also derive a variational formula for the free energy.
The general results have interesting ramifications on several special cases.
\hfill{\em MSC: 05C80}
\end{abstract}

\maketitle

\section{Introduction}\label{Sec_intro}

\subsection{Disordered systems and the Bethe lattice}

In 2001 in a ground-breaking contribution \Mezard\ and Parisi proposed an analytic but non-rigorous technique that they called the cavity method for the study of spin glasses on the `Bethe lattice'%
		\footnote{Sometimes the $d$-regular infinite tree is referred to as the `Bethe lattice'.
	However, as \Mezard\ and Parisi point out, the $d$-regular infinite tree does not provide a particularly useful framework for the study of spin interactions because almost all sites belong to the boundary of the tree.
	The random $d$-regular graph, which they and hence we call the Bethe lattice, provides a useful way out: while the local geometry around a given vertex is just a $d$-regular tree, at long distances this tree `wraps around'.},
known in combinatorics as the random $d$-regular graph~\cite{MP1}.
\Mezard\ and Parisi argued that the Bethe lattice constitutes an attractive halfway point between classical `mean-field' models such as the Sherrington-Kirkpatrick model with complete interaction between all sites and spatial models such as the Edwards-Anderson model.
Indeed, the Bethe lattice induces a non-trivial metric on the sites, each of which interacts with only a bounded number of others.
But at the same time \Mezard\ and Parisi showed that the model is amenable to analytic methods, even though matters are significantly more complicated than in the fully connected case.
They went on to argue that the spin glass on the Bethe lattice exhibits many of the properties expected of real glassy systems, such as replica symmetry breaking and the proliferation of pure states.

From the original contribution~\cite{MP1} sprang a truly enormous body of work that has had a transformative impact on an astounding variety of subjects, ranging from physics to combinatorics to machine learning.
Many of the applications may appear unexpected, even surprising.
Almost all of them hinge on the cavity method.
Prominent success stories include the development of `low-density parity check codes', a rare example of a statistical physics idea leading directly to an eminently useful, and widely used, algorithm~\cite{RichardsonUrbanke}.
A further example is a new algorithm for the compressed sensing problem, a fundamental signal processing task~\cite{LF}.
Other important cavity method-based contributions pertain to classical problems in mathematics, such as phase transitions in random graphs and other random structures~\cite{pnas,MM,MPZ}.
The cavity method has also been used to put forward predictions in machine learning, including the capacity of the Hopfield model or on restricted Boltzmann machines~\cite{MezardHopfield}.

Due to these numerous ramifications, the task of vindicating the cavity method rigorously has become an important research task at the junction of mathematical physics, combinatorics and computer science.
There has been a lot of progress recently, e.g., \cite{Banks,DSS3,CKPZ,Mossel}; we shall review the literature in greater detail in \Sec~\ref{Sec_related}.
However, much of this work is concerned with special cases, mostly the `replica symmetric' scenario where there is just a single pure state.

The aim of the present paper is to move past such assumptions and special cases.
We directly confirm several of the key hypotheses of \Mezard\ and Parisi, particularly the decomposition into pure states and the validity of the Belief Propagation recurrence, the mainstay of the cavity calculations.
Further, we obtain a general variational formula for the free energy that is perfectly in line with the \Mezard-Parisi ansatz.
Additionally, we show that the free energy can be computed from the Belief Propagation representation of the pure states of the model.
We obtain these results not merely for a specific model, but for a broad family of models on the Bethe lattice.
The prime example is, of course, the diluted spin glass model.
But in addition, since the proof techniques that we develop are generic, the results apply to models that are of eminent interest in other areas, particularly combinatorics, such as the Potts antiferromagnet or the hard-core model.
Crucially, the results apply universally to all parameter values (such as degree, inverse temperature) of the respective models.

Technically the paper builds upon and continues two intertwined threads of prior work.
First, we bring to bear a variant of the `regularity method' from combinatorics that we developed recently~\cite{Victor,Bethe,Limits} in order to establish the pure state decomposition and to vindicate the Belief Propagation equations.
Second, we seize upon Panchenko's work on asymptotic Gibbs measures and the interpolation method, particularly in order to derive the variational formula for the free energy~\cite{Panchenko,Panchenko2}.
Both of these methods were previously applied with great success to random graphs of \Erdos-\Renyi\ type.
This line of work crucially exploited the relative geometric flexibility of the \Erdos-\Renyi\ model, whose Poisson degree distribution facilitates coupling arguments.
By contrast, the geometry of the Bethe lattice is rigid.
While this entails that the specification of the model, the cavity equations and their solution are quite `clean', 
the rigidity poses substantial technical challenges that the present paper resolves.

Before presenting the main results of the paper, which cover a broad family of problems that we call random factor graph models,  in \Sec~\ref{Sec_results}, we illustrate the results and the concepts around which they revolve with the spin glass model from the original contribution of \Mezard\ and Parisi.  We also work out an additional application to the hard-core model and the independence number of the random regular graph.  Several further applications, including the Potts model and the {\sc Max $q$-Cut} problem, are worked out in \Sec~\ref{Sec_app}.

\subsection{The diluted spin glass}\label{Sec_diluted_sg}

For integers $d\geq3$, $n>0$ such that $dn$ is even, let $\GG=\GG(n,d)$ be the uniformly random $d$-regular graph on the vertex set $V_n=\{v_1,\ldots,v_n\}$.
With each edge $e\in E(\GG)$ comes a standard Gaussian $J_e$.
The random variables $(J_e)_{e\in E(\GG)}$ are mutually independent.
For a given inverse temperature $\beta>0$, the diluted spin glass on $\GG$ is the probability distribution on $\{\pm1\}^{V_n}$ defined by
	\begin{align}\label{eqsg}
	\mu_{\GG}(\sigma)&=
	\frac1{Z(\GG)}\prod_{vw\in E(\GG)}\frac{1+\tanh(\beta J_{vw})\sigma_v\sigma_w}2,
	\end{align}
where the partition function $Z(\GG)$ ensures normalization.
	\footnote{The expression (\ref{eqsg}) is equivalent to the possibly more familiar formula
		$\mu_{\GG}(\sigma)\propto\exp\bc{\beta\sum_{vw}\sigma_v\sigma_w }$.}
Without the couplings $J_e$, this would just be the ferromagnetic Ising model on $\GG$.
But since the $J_e$ are independent Gaussians, some will be positive and others negative.
In effect, some edges induce ferromagnetic and others antiferromagnetic interactions, causing frustration.
Thus, $\mu_{\GG}$ is a spin glass model, the well-known diluted spin glass on the Bethe lattice.

There are two fundamental problems associated with this and numerous similar models:
first, to characterize the structure of the Boltzmann distribution $\mu_{\GG}$.
Does it exhibit long-range correlations?
Does it decompose into one or several `pure states', and if so, how can we characterize them?
Second, to calculate the quantity $\lim_{n\to\infty} \frac{1}{n}\Erw[\ln Z(\GG)]$, which we call the {\em free energy density}.  Its fundamental importance is due to the fact that other important observables derive from it.
Moreover, the singularities of the function $\beta\mapsto\lim_{n\to\infty} \frac{1}{n}\Erw[\ln Z(\GG)]$ constitute the phase transitions of the model.

\subsubsection*{Bethe states and the Boltzmann distribution}
With respect to the first problem, \Mezard\ and Parisi hypothesized that the Boltzmann distribution always decomposes into one or a moderate (albeit not necessarily bounded) number of pure states.
Further, they hypothesized that these pure states are characterized by fixed points of a recurrence called Belief Propagation.
Our first theorem confirms this hypothesis.

To be precise, writing $\partial v$ for the set of neighbors of a vertex $v$, let $\cM(\GG)$ be the set of all families $(\nu_{u\to v})_{u\in V_n,u\in\partial v}$ such that $\nu_{u\to v}\in[0,1]$.
We call $\nu_{u\to v}$ the {\em message} from $u$ to $v$.
The messages need not be symmetric, i.e., possibly $\nu_{u\to v}\neq\nu_{v\to u}$.
Furthermore, {\em Belief Propagation} is the operator $\BP:\cM(\GG)\to\cM(\GG)$, $\nu\mapsto\hat\nu$, where
	\begin{align}\label{eqBPsg}
	\hat\nu_{v\to u}&=\frac{\prod_{w\in\partial v\setminus u}1+2\tanh(\beta J_{vw})(\nu_{w\to v}-1/2)}
	{\sum_{\sigma\in\{\pm1\}}\prod_{w\in\partial v\setminus u}1+2\sigma\tanh(\beta J_{vw})(\nu_{w\to v}-1/2)}.
	\end{align}
The motivation behind this operator, and the origin of the name `cavity method', is this.
Suppose we fix a vertex $v$ in a $d$-regular graph along with a neighbor $u$.
Now suppose we remove the vertex $u$, thereby creating a `cavity'.
Then the `ideal' message $\mu_{\GG,u\to v}$ that we would like to compute is just the marginal probability $\mu_{\GG-v,u}(1)$ that $u$ takes spin $1$ in the subgraph obtained by removing $v$.
If the Boltzmann distribution $\mu_{\GG}$ is free from long-range correlations, then 
these ideal messages should plausibly be a fixed point of the BP operator.
Indeed, if we remove $v$, then very likely its former neighbors will be mutually far apart in the resulting graph.
In effect, the joint distribution of their spins should factorize.
If so, then a straightforward calculation verifies that the ideal messages are a fixed point of BP.
In fact this reasoning goes back to Bethe's classical work~\cite{Bethe}.

However, generally spin glass models do exhibit long-range correlations, a phenomenon called {\em replica symmetry breaking} (see, e.g., \cite{SoftCon,COZ} for proofs that replica symmetry breaking occurs in certain models).
Yet the fundamental hypothesis of \Mezard\ and Parisi holds that the phase space $\{\pm1\}^{V_n}$ always decomposes into {\em Bethe states} $S_1,\ldots,S_\ell$ in such a way that the conditional distributions $\mu_{\GG}[\nix|S_h]$ are free from long-range correlations.
Formally, this means that if we pick a pair of vertices $(v_i,v_j)$ uniformly at random, then typically the conditional joint distribution $\mu_{\GG,v_i,v_j}[\nix|S_h]$ of the spins of $v_i$ and $v_j$ is close to the product distribution $\mu_{\GG,v_i}(\nix|S_h)\tensor\mu_{\GG,v_j}(\nix|S_h)$, i.e.,
	\begin{align}\label{eqPureStateSg1}
	\frac1{n^2}\sum_{1\leq i<j\leq n}\TV{\mu_{\GG,v_i,v_j}(\nix|S_h)-\mu_{\GG,v_i}(\nix|S_h)\tensor\mu_{\GG,v_j}(\nix|S_h)}&=o(1).
	\end{align}
In effect, within each Bethe state the `ideal' messages are predicted to be an approximate fixed point of the BP operator.
To be precise, for adjacent vertices $u,v$ we write $\mu_{\GG,v\to u}[S_h]=\mu_{\GG-u,v}(1|S_h)$ for the conditional probability given $S_h$ that $v$ takes spin $1$  in the subgraph of $\GG$ with $u$ removed.
Then we expect that
	\begin{align}\label{eqPureStateSg2}
	\frac1n\sum_{i=1}^n\sum_{u\in\partial v_i}\TV{\mu_{\GG,v_i\to u}[S_h]
		-\hat\mu_{\GG,v_i\to u}[S_h]}&=o(1)&\mbox{where}\quad
		(\hat\mu_{\GG,v\to u}[S_h])_{v\in V_n,u\in\partial v}=\BP(\mu_{\GG,v\to u}[S_h])_{v\in V_n,u\in\partial v}.
	\end{align}
Further, the cavity method predicts that the Boltzmann marginals can be obtained from the messages by a formula quite similar to (\ref{eqBPsg}):
	\begin{align}\label{eqPureStateSg3}
	\frac1n\sum_{i=1}^n\abs{\mu_{\GG,v_i}(1|S_h)-
		\frac{\prod_{w\in\partial v_i}1+2\tanh(\beta J)(\mu_{\GG,w}[S_h]-1/2)}
			{\sum_{\sigma\in\{\pm1\}}\prod_{w\in\partial v_i}1+2\sigma\tanh(\beta J)(\mu_{\GG,w}[S_h]-1/2)}}&=o(1).
	\end{align}
The following theorem establishes these conjectures rigorously.
We say that $\GG$ enjoys a property {\em with high probability} (`\whp') if the probability that the property holds tends to one as $n\to\infty$.

\begin{theorem}\label{Thm_sg}
For any $d\geq3$, $\beta>0$ the following is true.
Let $L=L(n)\to\infty$ be any integer sequence that tends to infinity.
Then there exists a decomposition $S_0=S_0(\GG),S_1=S_1(\GG),\ldots,S_\ell=S_\ell(\GG)$, $\ell=\ell(\GG)\leq L$, of the phase space $\{\pm1\}^n$ into non-empty sets such that $\mu_{\GG}(S_0)=o(1)$ and such that with high probability
 (\ref{eqPureStateSg1})--(\ref{eqPureStateSg3}) are satisfied for $h=1,\ldots,\ell$.
\end{theorem}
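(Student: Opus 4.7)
My plan is to construct the decomposition through a pinning argument and then to verify the Belief Propagation identities via the locally tree-like structure of $\GG(n,d)$. The pinning step builds on the regularity method for Gibbs measures developed in the authors' prior work, while the tree step exploits the Markov property of $\mu_\GG$ on $\GG$.

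\textbf{Decomposition via pinning.} Choose an integer $T = T(n)\to\infty$ with $2^T \leq L$. Draw a uniformly random subset $U \subseteq V_n$ of size at most $T$ and partition $\{\pm 1\}^{V_n}$ into the level sets $S_\tau = \{\sigma : \sigma|_U = \tau\}$. Bundle those $\tau$ carrying negligible $\mu_\GG$-mass into $S_0$; the resulting $\ell \leq 2^T \leq L$ non-trivial states cover all but $o(1)$ of the mass. A classical pinning identity, proved via a telescoping mutual-information argument along a random enumeration of $U$, then implies that for a typical pair $(U,\tau)$ the conditional measure $\mu_\GG[\,\cdot\mid\sigma|_U=\tau]$ is pairwise decorrelated on a uniformly random pair of vertices, delivering (\ref{eqPureStateSg1}).

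\textbf{BP via local trees.} Fix a non-trivial state $S_h$. The random $d$-regular graph is locally tree-like, so \whp\ for all but $o(n)$ vertices $v_i$ the depth-$t$ neighborhood $N_t(v_i)$ is a tree disjoint from $U$, for every constant $t$. Since $\mu_\GG$ is a Markov random field on $\GG$ (the density in (\ref{eqsg}) factors over edges), the marginal at $v_i$ under $\mu_\GG[\,\cdot\mid S_h]$ is obtained by integrating the natural tree recursion on $N_t(v_i)$ against the joint law of the spins on its boundary $\partial N_t(v_i)$. If that boundary law is approximately a product measure, the tree recursion collapses to exactly the BP update (\ref{eqBPsg}), yielding (\ref{eqPureStateSg2}) for the cavity messages and, specialised to the one-step recursion at $v_i$, also (\ref{eqPureStateSg3}) for the marginals.

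\textbf{Main obstacle.} The principal difficulty is bridging pairwise decorrelation, which pinning provides essentially for free, and the approximate joint independence on the (constantly many) vertices of $\partial N_t(v_i)$ that the tree recursion requires. The rigidity of the Bethe lattice rules out the \Erdos--\Renyi-style edge-resampling couplings that make this step routine in the Poisson case, so the analysis has to be carried out intrinsically in the $d$-regular ensemble. I would iterate the pinning sufficiently many times to control the mutual information among any constant number of vertices within a state, and then invoke the cut-metric compactness machinery from the regularity framework to upgrade pairwise decorrelation to the joint near-product structure on boundaries. Letting the depth $t$ grow slowly with $n$, the accumulated $o(1)$ errors along the tree recursion still vanish, and the BP identities (\ref{eqPureStateSg2})--(\ref{eqPureStateSg3}) follow \whp.
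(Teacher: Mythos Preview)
Your first step is right: pinning a random bounded set $U$ and partitioning by $\sigma|_U$ does yield the decomposition with $\ell\le L$, and the pinning lemma does give $\eps$-symmetry of each $\mu_{\GG}[\,\cdot\mid S_h]$, hence (\ref{eqPureStateSg1}). The gap is in your ``BP via local trees'' step, precisely at the place you flag as the main obstacle.

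The issue is this. The $\eps$-symmetry you obtain is a statement about \emph{uniformly random} tuples of vertices: for most $k$-tuples $(w_1,\ldots,w_k)$ drawn uniformly, the joint marginal is close to product. But the boundary $\partial N_t(v_i)$ of a local neighbourhood is not a uniformly random tuple; it is a fixed set determined by the graph $\GG$, and the Gibbs measure $\mu_{\GG}$ depends on the very same graph. There is no a priori reason why $\eps$-symmetry over random tuples transfers to the specific, graph-correlated tuple $\partial N_t(v_i)$. Your proposed fix --- ``iterate the pinning'' and ``invoke cut-metric compactness'' --- does not address this; repeated pinning still only controls random tuples, and the cut-metric upgrade from pairwise to $k$-wise symmetry (which is indeed in the toolkit) is likewise a statement about random $k$-tuples.

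The paper circumvents this by a genuinely different construction. Rather than analysing existing neighbourhoods, it (i) works with an \emph{enhanced} measure $\hat\mu_{\G}$ on second neighbourhoods, so that the pinning directly controls the joint law of $\nabla^2 v$ for typical $v$; (ii) removes a slowly growing number of vertices to create \emph{cavities}, then performs a \emph{second} round of pinning restricted to the cavities to make their joint law $\eps$-symmetric; and (iii) attaches a \emph{fresh} vertex $v_+$ with $d$ new constraints to random cavities. Because the neighbours of $v_+$ are now uniformly random cavities, the $\eps$-symmetry does apply to them, and the BP identities for $v_+$ follow directly. Distributional invariance of $\G$ under vertex permutations then propagates the BP fixed-point property from $v_+$ to almost all vertices, and the enhanced measure is used to undo the cavity surgery without losing the BP structure. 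This fresh-randomness device is exactly what replaces the \Erdos--\Renyi\ edge resampling you correctly note is unavailable here; your outline does not supply a substitute.
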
	

\noindent
Crucially, and in contrast to much prior work in this area, \Thm~\ref{Thm_sg} applies indiscriminately to all $d,\beta$.
While it is expected that in the `high-temperature' regime (small $\beta$) there is just a single pure state, it is widely conjectured that for large $d$ and $\beta$ the number of pure states is unbounded.
Thus, we do not expect that it will be possible to replace the unbounded $L$ in \Thm~\ref{Thm_sg} by a constant.
Yet \Thm~\ref{Thm_sg} shows that the number of states can be upper bounded by an {\em arbitrarily} slowly growing function $L(n)$.

\subsubsection*{The free energy}
The Bethe states and their associated messages contain all the information needed to compute the free energy.
To be precise, once more following the ideas of \Mezard\ and Parisi, we can set up a recurrence for computing the difference
$\Erw[\ln Z(\GG(n+1,d))]-\Erw[\ln Z(\GG(n,d))]$, which in turn enables us to write a formula for $\frac1n\Erw[\ln Z(\GG(n,d))]$ by telescoping.
To set up such a recurrence it is necessary to crack the rigid geometry of the random regular graph open a little bit.
To this end, we resort to the idea of creating a few `cavities'.
Specifically, we delete a few random vertices and edges from $\GG(n,d)$. 
Formally, let $\omega>0$ and let $\vX,\vY$ be two independent Poisson variables with mean $\omega$.
Moreover, let $\vu_1,\ldots,\vu_{\vX}$ and $\vv_1\vw_1,\ldots,\vv_{\vY}\vw_{\vY}$ be sequences of uniformly random vertices and edges of $\GG$, chosen independently.
With $S_1,\ldots,S_\ell$ the decomposition from \Thm~\ref{Thm_sg}, we introduce weights 
	\begin{align*}
	\vz_{\GG,h}=\mu_{\GG}(S_h)&\cdot\prod_{i=1}^{\vX}\bc{
		\sum_{\sigma\in\{\pm1\}}\prod_{v\in\partial\vu_i}1+2\tanh(\beta J_{v\vu_i})
					(\mu_{\GG,v\to\vu_i}[\sigma|S_h]-1/2)}^{-1}\\
		&\cdot\prod_{i=1}^{\vY}\bc{1+4\tanh(\beta J_{\vv_i\vw_i})
			(\mu_{\GG,\vv_i\to\vw_i}[1|S_h]-1/2)(\mu_{\GG,\vw_i\to\vv_i}[1|S_h]-1/2)}^{-1}
	\end{align*}
and $\vz_{\GG}=\sum_{h=1}^\ell\vz_{\GG,h}$.
Further, let $\cC(\GG)$ be the set of all vertices of degree less than $d$ in the graph $\GG_{n,\omega}$ obtained from $\GG$ by removing $\vu_1,\ldots,\vu_{\vX}$ and $\vv_1\vw_1,\ldots,\vv_{\vY}\vw_{\vY}$.
Then with high probability each $c\in\cC(\GG)$ has degree precisely $d-1$, and we  write $c'$ for the erstwhile $d$'th neighbor of $c$.
Further, with $\vc_1,\vc_2,\ldots$ a sequence of uniformly and independently chosen elements of $\cC(\GG)$ and $(\vJ_i)_{i\geq1}$ a sequence of independent standard Gaussians, we let
	\begin{align*}
	\cB(\GG)&=\Erw\brk{
		\ln{\sum_{h=1}^\ell\frac{\vz_{\GG,h}}{\vz_{\GG}}
			\sum_{\sigma\in\{\pm1\}}\prod_{i=1}^d1+2\sigma\tanh(\beta\vJ_{i})(\mu_{\GG,\vc_i\to\vc_i'}[S_h]-1/2)}			\Bigg|\GG}\\
		&\qquad\qquad-\frac d2\Erw\brk{\ln{1+4\tanh(\beta\vJ_1)\sum_{h=1}^\ell\frac{\vz_{\GG,h}}{\vz_{\GG}}
			(\mu_{\GG,\vc_1\to\vc_1'}[1|S_h]-1/2)
					(\mu_{\GG,\vc_2\to\vc_2'}[1|S_h]-1/2)}\Bigg|\GG}-\frac d2\ln 2.
	\end{align*}
The expression $\cB(\GG)$ mirrors our recurrence for the difference  $\Erw[\ln Z(\GG(n+1,d))]-\Erw[\ln Z(\GG(n,d))]$.
Having created a moderate number of cavities, we insert a new $(n+1)$st vertex, connected to $d$ randomly chosen `cavities'.
The first summand above represents the ensuing change in the free energy.
But this operation adds $d$ more edges, whereas a random regular graph with $n+1$ vertices only has $d/2$ more edges than one with $n$ vertices.
Therefore, a correction term is needed. 
Hence the second summand.

Crucially,  the functional $\cB(\GG)$ depends only on the pure state decomposition from \Thm~\ref{Thm_sg} and the associated messages.
The following theorem shows that this information suffices to compute the free energy.

\begin{theorem}\label{Thm_sgZ}
For all $d\geq3,\beta>0$ we have
	$$\lim_{n\to\infty}\frac1n\Erw[\ln Z(\GG)]=\liminf_{\omega\to\infty}\,\liminf_{n\to\infty}\,\Erw[\cB(\GG)].$$
\end{theorem}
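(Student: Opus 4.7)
The plan is to prove \Thm~\ref{Thm_sgZ} via the Aizenman--Sims--Starr (ASS) cavity scheme, using the Bethe-state decomposition of \Thm~\ref{Thm_sg} as the structural input. Setting $F_n=\Erw[\ln Z(\GG(n,d))]$, a Ces\`aro-type telescoping argument reduces the problem to analysing the one-vertex increment $F_{n+1}-F_n$; the goal is to show that its $\liminf$ coincides with $\liminf_{\omega\to\infty}\liminf_{n\to\infty}\Erw[\cB(\GG)]$ and, separately, to upgrade $\liminf$ to $\lim$ on the left via standard concentration together with an interpolation argument in the spirit of Panchenko~\cite{Panchenko,Panchenko2}. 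Existence of the free-energy limit is thereby treated as a side output, not an assumption.

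The key construction is a coupling between $\GG(n,d)$ and $\GG(n+1,d)$ through a common Poissonized reduction. Sample $\vX,\vY\sim\Po(\omega)$ and delete $\vX$ uniformly random vertices and $\vY$ uniformly random edges from $\GG(n+1,d)$; \whp\ each of the resulting $O(\omega)$ cavities sits at a distinct vertex of degree $d-1$. A configuration-model comparison shows that, conditional on the cavity set, the reduced graph is contiguous both with the reduction of $\GG(n,d)$ by $\vX$ vertices and $\vY-d/2$ edges, and with the graph obtained by applying the same reduction to $\GG(n,d)$ and inserting a fresh vertex joined to $d$ uniformly chosen cavities. Taking logarithmic ratios of $Z$ under these two representations expresses $F_{n+1}-F_n$ as an expectation of logarithms of local partition-function contributions around the inserted vertex and the $d/2$ reinstated edges. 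To evaluate these I would expand $Z$ over the Bethe states $S_0,\ldots,S_\ell$ of \Thm~\ref{Thm_sg}: on each $S_h$ the pairwise factorization (\ref{eqPureStateSg1}) and the BP identity (\ref{eqPureStateSg2}) imply that the joint marginal at $d$ uniformly random cavity endpoints factorizes, up to $o(1)$ total variation, into the messages $\mu_{\GG,\vc_i\to\vc_i'}[\cdot|S_h]$. Hence inserting the new vertex multiplies the in-state partition function by $\sum_\sigma\prod_{i=1}^d(1+2\sigma\tanh(\beta\vJ_i)(\mu_{\GG,\vc_i\to\vc_i'}[S_h]-1/2))$, and the weights $\vz_{\GG,h}$ are tailored so that $\vz_{\GG,h}/\vz_{\GG}$ is the correctly re-weighted posterior of $S_h$ on the reduced graph: the $\vX$-product undoes the BP normalization at deleted vertices while the $\vY$-product undoes the per-edge interaction factor from (\ref{eqsg}). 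Combining these pieces recovers the first summand of $\cB(\GG)$; the second summand together with the $-(d/2)\ln 2$ term arises as the edge-count correction for the $d/2$ edges reinstated in the coupling, whose normalization factors $\tfrac12(1+\tanh(\beta J_{vw})\sigma_v\sigma_w)$ produce exactly those two contributions.

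The main technical obstacle will be stability of the pure-state structure under this cavity surgery: one must show that the messages at each cavity endpoint $\vc_i$ are, to $o(1)$ total variation, unaffected by the other $O(\omega)$ deletions. I would reduce this to a high-probability distance bound asserting that any two cavities lie at graph distance much larger than any fixed constant, so the relevant message depends only on a ball disjoint from all other cavities, and then propagate the global factorization (\ref{eqPureStateSg1}) from uniformly random vertex pairs to uniformly random cavity pairs by invoking the local weak limit and regularity machinery of~\cite{Victor,Bethe,Limits}; the BP relation (\ref{eqPureStateSg3}) is what allows us to pass from messages to Boltzmann marginals at the replaced endpoints. A secondary subtlety is that $\ell(\GG)$ may diverge with $n$, so all averages over $h$ must be taken against Gibbs weights $\mu_{\GG}(S_h)$, equivalently against $\vz_{\GG,h}/\vz_{\GG}$, to avoid a spurious multiplicative factor of $\ell$ in the error; this is consistent with \Thm~\ref{Thm_sg}, whose guarantees are already in weighted form. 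Once these ingredients are in place, sending $n\to\infty$ followed by $\omega\to\infty$ absorbs the remaining error terms and yields the identity of \Thm~\ref{Thm_sgZ}.
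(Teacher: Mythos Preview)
Your overall architecture matches the paper's: the result is obtained as the specialisation of the general \Thm~\ref{thmBethePlus}, whose proof combines the interpolation upper bound (\Prop~\ref{Prop_regint}) with the Aizenman--Sims--Starr lower bound (\Prop~\ref{MainAizLemReg}), and then identifies the resulting kernel with the Bethe-state kernel $\check\pi_{n,\omega}$ (\Prop~\ref{Prop_thmBethePlus}). Your reading of the weights $\vz_{\GG,h}$ as the posterior masses of the $S_h$ on the reduced graph is exactly \Lem s~\ref{Lemma_comp}--\ref{Lemma_comp2}, and your identification of the two summands of $\cB(\GG)$ with the vertex-insertion and edge-correction terms is correct.

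There is, however, one genuine gap in your plan. You propose to establish ``stability of the pure-state structure under cavity surgery'' via a distance bound, arguing that messages at a cavity depend only on a ball disjoint from the other cavities. This is false in general: in the replica-symmetry-broken regime the messages $\mu_{\GG,v\to a}[\,\cdot\,|S_h]$ are genuinely global objects with no correlation decay, so graph distance between cavities gives you nothing. The paper's resolution is different and does not use locality at all. It couples $\GG$ with an auxiliary graph $\GG^\#$ obtained by re-attaching the deleted nodes to $\GG_{n,\omega}$ (\Lem~\ref{Lemma_cplXY}), applies the Bethe decomposition to $\GG^\#$, and then invokes condition {\bf BS2} directly: since the cavities form a \emph{random} set of bounded size, {\bf BS2} yields the required factorisation of their joint law into products of messages, with no appeal to distance or decay. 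The computation (\ref{eqcheck}) then shows $\vz_{\GG^\#,h}\sim\check\vz_h^\#$ purely from {\bf BS2}. Your second sentence in that paragraph (``propagate the global factorization \ldots\ to uniformly random cavity pairs'') is the right idea; the distance heuristic preceding it should be dropped.

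A smaller point: the interpolation half of the argument requires condition {\bf POS}, which is not automatic and has to be verified for the spin glass weight functions (this is \Lem~\ref{Lemma_kspin}). You should flag this explicitly rather than folding it into ``in the spirit of Panchenko''.
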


Entirely in line with the ideas developed in~\cite{MP1},  \Thm~\ref{Thm_sgZ} establishes a direct conceptual link between Belief Propagation and the pure state decomposition from \Thm~\ref{Thm_sg} and the free energy for {\em all} $d,\beta$.
Of course, in order to evaluate $\cB(\GG)$ it is necessary to actually determine the pure state decomposition along with the corresponding Belief Propagation messages.
The shape of this decomposition, and the practical difficulty of computing it, will depend significantly on the parameters $d,\beta$.
Alternatively, as we see next, it is possible to derive a variational formula for the free energy.

\subsubsection*{A variational formula}

The variational formula comes in terms of an optimization problem on a space that resembles the graphon space from the theory of graph limits~\cite{Lovasz}.
To be precise, let $\nu:[0,1]^2\to[0,1]$, $(s,x)\mapsto\nu_{s,x}$ and $\nu':[0,1]^2\to[0,1]$, $(s,x)\mapsto\nu'_{s,x}$ be measurable maps.
We define the {\em cut distance} between $\nu,\nu'$  by
	\begin{align*}%\label{eqDefCutMetricSG}
	\Cutm(\nu,\nu')&=\inf_{\varphi,\varphi'}\,\sup_{\substack{S,X\subset[0,1]}}
		\abs{\int_S\int_X\nu_{s,x}(\omega)-\nu'_{\varphi(s),\varphi'(x)}(\omega)\dd x\,\dd s},
	\end{align*}
where $\varphi,\varphi':[0,1]\to[0,1]$ are measurable maps that preserve the Lebesgue measure and $S,X\subset[0,1]$ are measurable.
Obtain the space $\meas$ by identifying any $\nu,\nu'$ with $\Cutm(\nu,\nu')=0$.
Then $\meas$ endowed with the cut distance is a compact metric space.
In addition, write $\Meas$ for the space of probability measures on $\meas$.

The formula for the free energy comes as a variational problem on a subspace $\MEAS$ of $\Meas$, defined as follows.
Let $N,M\geq0$ be integers.
For $\mu\in\meas$ we define a randomly perturbed $\mu^{*(N,M)}\in\meas$ as follows.
Let $(\vx_{i,j})_{i,j\geq1}$ be a family of uniform random variables on $[0,1]$ and let $(\vJ_{i,j})_{i,j\geq1}$ be a family of standard Gaussians, all mutually independent.
Then for $s\in[0,1]$ we define
	\begin{align*}
	\vz_{s}&=\prod_{i=1}^N\bc{
				\sum_{\sigma\in\{\pm1\}}\prod_{j=1}^d1+2\tanh(\beta \vJ_{i,j})(\mu_{s,\vx_{i,j}}-1/2)}
				\prod_{i=1}^M\bc{1+4\tanh(\beta \vJ_{i+N,1})
	(\mu_{s,\vx_{i+N,1}}-1/2)(\mu_{s,\vx_{i+N,2}}-1/2)}.
	\end{align*}
Further, let
	\begin{align*}
\vt&=\vt(s)=\inf\cbc{u\in[0,1]:\int_0^u\vz_{u}\dd u\geq s\int_0^1\vz_{u}\dd u},&
\qquad\mbox{and}\qquad\mu^{*(N,M)}_{s,x}=\mu_{\vt,x}\in\meas.
\end{align*}	
Now, suppose that $\pi\in\Meas$ is  a distribution, and write $\MU^\pi\in\meas$ for a sample from $\pi$.
Then we let $\MEAS$ be the set of all $\pi\in\Meas$ such that the perturbed $\MU^{\pi*(N,M)}$ has distribution $\pi$ again for all $N,M\geq0$.

The definition of $\MEAS$, which is an adaptation of the one stated by Panchenko~\cite{Panchenko} in the case of models of \Erdos-\Renyi\ type,
mirrors a natural combinatorial invariance properties of the graph $\GG_{n,\omega}$ with the random cavities.
Indeed, because the numbers $\vX,\vY$ of deleted edges and vertices are Poisson with a large mean $\omega$, for any fixed $N,M$ 
the random graph $\GG_{n,\omega}$ with $\vX$ deleted vertices and $\vY$ deleted edges is close in total variation to the one with merely $\vX-N$ deleted vertices and $\vY-M$ deleted edges.
Furthermore, because adding or removing a small number of edges only affects the Boltzmann weights by a bounded factor, we should expect that the Bethe states of these two factor graphs remain the same.
But, of course, the relative probability masses of the Bethe states will be different.
Accordingly, the weights $\vz_s$ mirror the changes in the weights of the Bethe states upon re-insertion of $N$ vertices, each with $d$ incident edges, and another $M$ edges into $\GG_{n,\omega}$.
Once we take $\omega$ and $n$ to infinity, the closeness of the two random factor graphs in total variation translates into the statement that the distribution of the messages emitted by the cavities of $\GG_{n,\omega}$ belongs to $\MEAS$.

Finally, define a functional $\cB:\meas\to\RR$ by letting
	\begin{align*}
	\cB(\mu)&=\Erw\Big[
		\ln\bc{\sum_{\sigma\in\{\pm1\}}\int_0^1\prod_{i=1}^d
			1+2\sigma\tanh(\beta\vJ_{i,j})(\mu_{s,\vx_{i,j}}-1/2)\dd s}\\
		&\qquad-\frac d2\ln\bc{1+4\tanh(\beta\vJ_{1,1})\int_0^1(\mu_{s,\vx_{1,1}}-1/2)(\mu_{s,\vx_{1,2}}-1/2)
			\dd s}\Big]-\frac d2\ln 2.
	\end{align*}
We are ready to state the variational formula for the free energy.

\begin{theorem}\label{Thm_sgvariational}
For all $d\geq3$ and $\beta>0$ we have
	$\displaystyle\lim_{n\to\infty}\frac1n\Erw[\ln Z_\beta(\GG)]=\min_{\pi\in\MEAS}\Erw[\cB(\MU^\pi)].$
\end{theorem}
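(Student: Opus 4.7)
The strategy is to sandwich $\lim_{n\to\infty}\frac{1}{n}\Erw[\ln Z(\GG)]$ between $\min_{\pi\in\MEAS}\Erw[\cB(\MU^\pi)]$ from both sides. The ``$\geq$'' direction is obtained essentially for free from \Thm~\ref{Thm_sgZ} by encoding the Bethe-state messages of $\GG_{n,\omega}$ as an element of $\meas$ and extracting a subsequential limit in the cut distance. The ``$\leq$'' direction, valid for every $\pi\in\MEAS$, is established by a Guerra-Panchenko type interpolation that exploits the Poissonian cavity structure of $\GG_{n,\omega}$.

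\smallskip\noindent\emph{From \Thm~\ref{Thm_sgZ} to a limit in $\MEAS$.} Applying \Thm~\ref{Thm_sg} to $\GG_{n,\omega}$ yields a Bethe decomposition $S_1,\ldots,S_\ell$ together with BP messages $\mu_{\GG,c\to c'}[1|S_h]$ at every cavity $c\in\cC(\GG)$. Encode this data as a random $\mu_n\in\meas$ by partitioning $[0,1]$ on the $s$-axis into intervals of length $\vz_{\GG,h}/\vz_{\GG}$ (in a uniformly random order) and on the $x$-axis into intervals of length $1/|\cC(\GG)|$ indexed by cavities (in a uniformly random order), setting $\mu_n(s,x)=\mu_{\GG,c\to c'}[1|S_h]$ on the corresponding cell. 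Let $\pi_n\in\Meas$ denote the law of $\mu_n$; direct unpacking of the definition of $\cB$ shows $\Erw[\cB(\GG)]=\Erw[\cB(\MU^{\pi_n})]+o(1)$ as $n\to\infty$ for each fixed $\omega$. Compactness of $(\meas,\Cutm)$, hence of $(\Meas,\Cutm)$, provides a subsequential limit $\pi$ as $n\to\infty$ and then $\omega\to\infty$. To check $\pi\in\MEAS$, note that for fixed $N,M$ the Poisson cavity counts $\vX,\vY$ of $\GG_{n,\omega}$ are close in total variation to $\vX-N,\vY-M$ (with vanishing distance as $\omega\to\infty$), so $\GG_{n,\omega}$ is distributionally stable under $N$ extra vertex and $M$ extra edge deletions; re-inserting them reweights the Bethe-state masses by precisely the factor $\vz_s$ that appears in the definition of $\mu^{*(N,M)}$. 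Combined with continuity of $\cB$ in the cut distance and \Thm~\ref{Thm_sgZ}, this yields
\[
\lim_{n\to\infty}\frac{1}{n}\Erw[\ln Z(\GG)]\geq\min_{\pi\in\MEAS}\Erw[\cB(\MU^\pi)].
\]

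\smallskip\noindent\emph{Interpolation for the converse.} Fix $\pi\in\MEAS$. Following Panchenko, define a hybrid ensemble $\Phi(t)$, $t\in[0,1]$: start from $\GG_{n,\omega}$, draw one sample of $\MU^\pi$, and reconnect each cavity $c\in\cC(\GG)$ independently with probability $t$ to a second cavity of $\GG$ (recovering an edge of $\GG(n,d)$) and with probability $1-t$ to an external tree leaf carrying the message $\MU^\pi_{s,\vx}$ for an independent uniform $\vx\in[0,1]$ and for $s$ reweighted by the $\vz_s$-mechanism defining $\MU^{\pi*(\cdot,\cdot)}$. At $t=1$ one recovers $\frac{1}{n}\Erw[\ln Z(\GG)]+o(1)$, and at $t=0$ the value equals $\Erw[\cB(\MU^\pi)]$. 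Differentiating and Taylor-expanding $\ln$, the invariance $\MU^{\pi*(N,M)}\deq\MU^\pi$ built into $\MEAS$ forces the joint law of messages at graph endpoints to coincide with that at tree endpoints; the usual Guerra-Panchenko cancellations then render $\Phi'(t)$ sign-definite, whence
\[
\lim_{n\to\infty}\frac{1}{n}\Erw[\ln Z(\GG)]\leq\Erw[\cB(\MU^\pi)].
\]

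\smallskip\noindent\emph{Main obstacle.} The chief difficulty is running the interpolation on the rigid $d$-regular graph, whose fixed degrees preclude the free edge insertion and deletion that Panchenko exploits in the \Erdos-\Renyi\ setting. Our cavity construction supplies the necessary slack: the $\Po(\omega)$-many cavity half-edges of $\GG_{n,\omega}$ form a Poissonian reservoir that can be re-attached either to other cavities or to tree leaves without violating $d$-regularity, while taking $\omega\to\infty$ after $n\to\infty$ washes out the $O(\omega/n)$ discrepancy introduced by the cavity operation. The delicate technical point is to align the graphon encoding on the $s$-axis with the Poisson reshuffling of half-edges, so that the $\MEAS$-invariance translates faithfully into the sign-definiteness of $\Phi'(t)$ uniformly in $\omega$. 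This is the step where the rigidity of $\GG(n,d)$ genuinely bites and where the combination of the regularity method from \Thm~\ref{Thm_sg} with Panchenko's interpolation framework is indispensable.
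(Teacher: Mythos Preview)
Your two-direction strategy is the same high-level plan the paper follows for the general \Thm~\ref{Thm_freeEng}; in the paper, \Thm~\ref{Thm_sgvariational} is then obtained in one line by verifying {\bf POS} for the spin-glass weights (\Lem~\ref{Lemma_kspin}) and invoking \Thm~\ref{Thm_freeEng}.

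However, your upper-bound argument has a genuine gap. You assert that the $\MEAS$-invariance ``forces the joint law of messages at graph endpoints to coincide with that at tree endpoints'' and thereby renders $\Phi'(t)$ sign-definite. This conflates two separate ingredients. Sign-definiteness of the interpolation derivative is not a consequence of invariance at all; it follows from the {\bf POS} inequality, which for the $k=2$ spin glass reduces to the elementary bound $X^2+Y^2\geq 2XY$ after expanding the $\tanh$ weights (\Lem~\ref{Lemma_kspin} and \Lem~\ref{Lemma_intderiv}). The $\MEAS$-invariance enters only \emph{after} the interpolation, to collapse the endpoint functional $\cB'(\mu)-\cB''(\mu)$---which involves products over all $n$ vertices and all $dn/2$ edges---down to the single-vertex/single-edge expression $\cB(\pi)$ (\Prop~\ref{Prop_regint2}). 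Without {\bf POS} your interpolation need not be monotone; without invariance you cannot identify its endpoint with $\cB(\pi)$. Note also that the paper's interpolation (\Sec~\ref{Sec_regint1}) operates directly on the full graph by trading $k$-ary constraints for unary ones along a Poisson clock; no cavities are used for the upper bound, so your cavity-based hybrid $\Phi(t)$ is a different construction whose monotonicity you would have to establish from scratch.

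A smaller issue: deriving the lower bound from \Thm~\ref{Thm_sgZ} is circular in the paper's logic, since \Thm~\ref{Thm_sgZ} (via \Thm~\ref{thmBethePlus}, see \Sec~\ref{Sec_thmBethePlus}) already rests on both directions of \Thm~\ref{Thm_freeEng}. The lower bound has to be obtained directly from the Aizenman--Sims--Starr coupling (\Prop~\ref{Prop_Aizenman}) together with the compactness/invariance argument you correctly sketch (\Prop~\ref{Prop_apxInv}); the Bethe-state formula is then a consequence, not an input.
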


\Thm~\ref{Thm_sgZ} provides the combinatorial interpretation of the optimal $\pi$ for \Thm~\ref{Thm_sgvariational}:
it is the kernel representing the messages $(\mu_{\GG,\vc\to\vc'}[\nix|S_h])_{c\in\cC(\GG),h=1,\ldots,\ell}$ sent out by the cavities on the individual Bethe states.

\subsection{The hard-core model}\label{Sec_introhc}
As a second application we discuss the hard-core model on the random regular graph $\GG=\GG(n,d)$.
This is a probability distribution on the collection of independents sets of $\GG$ parametrized by $\lambda>0$, the {\em fugacity}.
Formally, encoding subsets of the vertex set by their indicator vectors, we define
	\begin{align*}
	\mu_{\GG}(\sigma)&=\frac{\lambda^{\sum_{i=1}^n\sigma_i}}{Z(\GG)}
		\prod_{1\leq i<j\leq n}1-\vecone\{v_i\in\partial v_j\}\sigma_i\sigma_j
		&(\sigma\in\{0,1\}^n),
	\end{align*}
with $Z(\GG)$ the partition function that turns $\mu_{\GG}$ into a probability measure.
Thus, $\mu_{\GG}(\sigma)=0$ unless the $1$-entries of $\sigma$ form an independent set in $\GG$, in which case
the weight of $\sigma$ is proportional to $\lambda$ taken to the power of the size of the independent set.

The hard-core model, of great prominence in statistical physics, is of eminent importance in combinatorics as well because it is closely related to the problem of finding the size of the largest independent set of the random regular graph.  For $d$ large, this problem was solved by Ding, Sly, and Sun~\cite{DSSind} using an intricate version of the second-moment method guided by insights from the 1-step replica symmetry breaking (1RSB) version of the cavity method.
But according to the physics predictions~\cite{BarbierHC}, the 1RSB method runs into an inherent obstacle for small $d$ as the model exhibits a continuous phase transition to a more complicated `full replica symmetry breaking' (full RSB) phase.
In \Cor~\ref{Cor_hc} below we will derive a formula for the largest independent set size that holds for all $d$ and that accommodates the full RSB scenario.

But let us first deal with the free energy of the hard-core model, in and of itself a well-known problem.
To derive a variational formula for the free energy, 
obtain $\meas_\lambda$ from the space of all measurable functions $[0,1]^2\to[0,\lambda/(1+\lambda)]$ by identifying any $\nu,\nu'$ with $\Cutm(\nu,\nu')=0$.
Then $\meas_\lambda$ is a compact.
In addition, we let $\Meas_\lambda$ be the space of probability measures on $\meas_\lambda$.
Similarly to the spin glass problem, the formula for the free energy comes as a variational problem on a subspace $\MEAS_\lambda$ of $\Meas_\lambda$.
This subspace is defined as follows.
Let $(\vx_{i,j})_{i,j\geq1}$ be a family of independent random variables, uniformly distributed on $[0,1]$, and let $N,M\geq0$ be integers.
Then for $\mu\in\meas_\lambda$ we define a random $\mu^{*(N,M)}\in\meas_\lambda$ as follows.
For $s\in[0,1]$ let
	\begin{align*}
	\vz_s&=\prod_{i=1}^N\bc{1+\lambda\prod_{j=1}^d1-\mu_{s,\vx_{i,j}}}
		\prod_{i=1}^M\bc{1-\mu_{s,\vx_{i+N,1}}\mu_{s,\vx_{i+N,2}}}\quad\mbox{and}\quad
	\vt=\vt(s)=
	\inf\cbc{u\in[0,1]:\int_0^u\vz_{\mu,u}^{N,M}\dd s\geq s\int_0^1\vz_{\mu,u}^{N,M}\dd u}
	\end{align*}
and set
	$$\mu^{*(N,M)}_{s,x}=\mu_{%\gamma_\mu^{N,M}(s)
		\vt,x}\in\meas.$$
Further, suppose that $\pi\in\Meas_\lambda$ is  a distribution, and write $\MU^\pi\in\meas_\lambda$ for an element chosen from $\pi$.
Then we let $\MEAS_\lambda$ be the set of all $\pi\in\Meas_\lambda$ such that $\MU^\pi$ and $\MU^{\pi*(N,M)}$ are identically distributed for all $N,M\geq0$.
Finally, let $\cB:\meas_\lambda\to\RR$ be the function defined by
	\begin{align*}
	\cB(\mu)&=\Erw\brk{\ln\bc{1+\lambda\int_0^1
		\prod_{j=1}^d1-\mu_{s,\vx_{1,j}}
		\dd s}
		+\frac d2\log\bc{1-\int_0^1\mu_{s,\vx_{1,1}}\mu_{s,\vx_{1,2}}\dd s}}.
	\end{align*}
The variational formula for the free energy reads as follows.

\begin{theorem}\label{Thm_hc}
For all $d\geq3$ and $\lambda>0$ we have
		\begin{align*}
\lim_{n\to\infty}\frac1n\Erw[\ln Z(\GG)]=\Phi_{d,\lambda},\qquad\mbox{with}\qquad
	\Phi_{d,\lambda}=\min_{\pi\in\MEAS_\lambda}\Erw[\cB(\MU^\pi)].
	\end{align*}
\end{theorem}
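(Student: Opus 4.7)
The plan is to mirror the proof strategy for the spin glass (Theorems~\ref{Thm_sg}--\ref{Thm_sgvariational}) and to show that the hard-core model fits the general random factor graph framework of this paper. The state space $\{0,1\}$ replaces $\{\pm1\}$, and the edge weights $1-\sigma_u\sigma_v$ together with the vertex weights $\lambda^{\sigma_v}$ are non-negative and bounded, which are the essential properties the general theorems require. Note that every single-vertex marginal takes values in $[0,\lambda/(1+\lambda)]$, which justifies the codomain chosen in the definition of $\meas_\lambda$, and all messages $\mu_{\GG,u\to v}(1)$ also lie in this interval.

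For the upper bound $\lim_n\frac{1}{n}\Erw[\ln Z(\GG)]\le\Erw[\cB(\MU^\pi)]$ for every $\pi\in\MEAS_\lambda$, I would use a Panchenko-style interpolation adapted to the rigid geometry of $\GG(n,d)$. Construct a family of random factor graphs $\GG_t$, $t\in[0,1]$, so that $\GG_1=\GG$ and $\GG_0$ is a decoupled system whose $\frac{1}{n}\Erw[\ln Z]$ equals $\Erw[\cB(\MU^\pi)]+o(1)$ when its external fields are sampled from $\pi$. Differentiating $t\mapsto\frac{1}{n}\Erw[\ln Z(\GG_t)]$ yields a difference of two terms that match the two summands in $\cB$, namely a vertex contribution $\ln(1+\lambda\prod_j(1-\mu_{s,\vx_{1,j}}))$ and an edge-correction contribution $\frac d2\ln(1-\mu_{s,\vx_{1,1}}\mu_{s,\vx_{1,2}})$. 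The invariance $\MU^\pi\disteq\MU^{\pi\ast(N,M)}$ ensures that this derivative has the correct sign (monotonicity), which yields the upper bound.

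The lower bound is extracted from the hard-core analog of Theorem~\ref{Thm_sgZ}, which represents $\lim_n\frac{1}{n}\Erw[\ln Z(\GG)]$ as $\liminf_{\omega\to\infty}\,\liminf_n\,\Erw[\cB(\GG_{n,\omega})]$, built from the pure-state decomposition $S_1,\ldots,S_\ell$ and the cavity messages $(\mu_{\GG,\vc\to\vc'}[\nix|S_h])_{c\in\cC(\GG),h=1,\ldots,\ell}$ delivered by Theorem~\ref{Thm_sg}. Encoding the joint law of states and cavity messages as a random kernel $\MU^{(n,\omega)}\in\meas_\lambda$, compactness of $(\meas_\lambda,\Cutm)$ yields, along a subsequence $(n,\omega)\to\infty$, a limit law $\pi\in\Meas_\lambda$. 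The functional $\cB$ is continuous on $(\meas_\lambda,\Cutm)$, because the two integrals appearing in its definition depend on $\mu$ only through averages of products of Lipschitz functions symmetric in $(s,x)$ and can therefore be controlled by the cut norm; hence $\Erw[\cB(\MU^{(n,\omega)})]\to\Erw[\cB(\MU^\pi)]$.

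The main obstacle is to show that the limit $\pi$ actually lies in $\MEAS_\lambda$, i.e.\ satisfies the invariance $\MU^\pi\disteq\MU^{\pi\ast(N,M)}$ for all $N,M\geq0$. The crucial input is that since $\vX,\vY\sim\Po(\omega)$ with $\omega\to\infty$, conditioning on $N$ additional deleted vertices and $M$ additional deleted edges leaves the distribution of $\GG_{n,\omega}$ unchanged in total variation up to $O(1/\omega)$; meanwhile, re-inserting those $N$ vertices (each with $d$ incident edges) and $M$ edges reweights the Bethe states of Theorem~\ref{Thm_sg} by precisely the factor $\vz_s$ in the definition of $\mu^{\ast(N,M)}$, because the messages $\mu_{\GG,\vc\to\vc'}[\nix|S_h]$ satisfy the Belief Propagation fixed-point equations and therefore the reweighting is the exact change in the Gibbs mass attributable to the new factors. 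Passing to the subsequential limit converts this finite-$n$ identity into the desired distributional invariance, placing $\pi$ in $\MEAS_\lambda$. Matching upper and lower bounds then gives the variational formula, and the infimum is attained by compactness of $\MEAS_\lambda$ together with the continuity of $\pi\mapsto\Erw[\cB(\MU^\pi)]$ in the weak topology on $\Meas_\lambda$.
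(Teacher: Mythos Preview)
Your proposal has a genuine gap: you assert that the hard-core model ``fits the general random factor graph framework of this paper'' because its weights are ``non-negative and bounded,'' but the general theorems (\Thm~\ref{Thm_BP}, \Thm~\ref{thmBethePlus}, \Thm~\ref{Thm_freeEng}) are stated for weight functions $\psi:\Omega^k\to(0,1)$ that are \emph{strictly} positive and satisfy the integrability condition~\eqref{eqP}. The hard-core weight $\psi_\infty(\sigma_1,\sigma_2)=1-\sigma_1\sigma_2$ vanishes at $(1,1)$, so it falls outside this framework. Strict positivity is used in several places: in the Bethe-state construction (e.g.\ the contiguity arguments in \Sec~\ref{Sec_Bethe}), in expanding $\ln\scal{\psi_a}{\mu_{\G_t}}$ as a power series in the interpolation (equations \eqref{eqLemma_intderiv7}--\eqref{eqLemma_intderiv9}), and in the continuity of the $\Join$-operation (\Lem~\ref{Lemma_JoinCont}). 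None of these go through automatically when a weight can be zero.

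The paper's actual argument works around this by introducing a soft-core model with weight $\psi_\beta(\sigma_1,\sigma_2)=1-(1-\eul^{-\beta})\sigma_1\sigma_2>0$, applying the general machinery at finite $\beta$, and then taking $\beta\to\infty$. The upper bound requires checking separately that the interpolation inequality survives at $\beta=\infty$ when restricted to kernels in $\meas_\lambda$ (where the marginal bound $\mu_{s,x}(1)\le\lambda/(1+\lambda)$ keeps the logarithms finite). The lower bound is more delicate: one first shows that the optimizing $\pi_{\lambda,\beta}$ produced by the Aizenman--Sims--Starr scheme for the soft model already lies in $\Meas_\lambda$, then passes to a subsequential limit $\pi_\lambda$ as $\beta\to\infty$, and finally verifies via separate continuity lemmas that $\pi_\lambda\in\MEAS_\lambda$ with respect to the $\Join_\infty$ operation. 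Your sketch of the invariance argument is morally right, but it presupposes that the Bethe-state decomposition and the associated kernel construction are available directly for the hard model, which the paper does not establish.
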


In the limit $\lambda\to\infty$ the distribution $\mu_{\GG,\lambda}$ concentrates on the maximum independent sets of the random graph.
As an application of \Thm~\ref{Thm_hc} we therefore obtain the following result on the size of the largest independent set, i.e., the independence number $\alpha(\GG)$ of the random graph.

\begin{corollary}\label{Cor_hc}
For all $d\geq3$ we have
	$\displaystyle\lim_{n\to\infty}\frac1n\Erw[\alpha(\GG)]=\lim_{\lambda\to\infty}\lambda\cdot(\Phi_{d,\lambda+1}-\Phi_{d,\lambda}).$
\end{corollary}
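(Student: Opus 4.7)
The plan is to derive \Cor~\ref{Cor_hc} as a soft consequence of \Thm~\ref{Thm_hc} via a crude partition-function sandwich followed by a convexity argument; we never need to crack open the variational formula itself. Throughout write $\alpha_n=\Erw[\alpha(\GG(n,d))]/n$ and $\Phi_n(\lambda)=\Erw[\ln Z(\GG,\lambda)]/n$, so that $\Phi_n(\lambda)\to\Phi_{d,\lambda}$ pointwise by \Thm~\ref{Thm_hc}.

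First I would establish the uniform-in-$\lambda$ two-sided bound
\[
\alpha(G)\ln\lambda \;\le\; \ln Z(G,\lambda) \;\le\; \alpha(G)\ln\lambda+n\ln 2 \qquad (\lambda\ge 1),
\]
valid for every graph $G$ on $n$ vertices: the lower bound by retaining only the maximum-independent-set term in $Z$, the upper by $\sum_k N_k\lambda^k\le\lambda^{\alpha(G)}\sum_k N_k\le 2^n\lambda^{\alpha(G)}$, where $N_k$ counts independent sets of size $k$. Dividing by $n\ln\lambda$ and taking expectations yields $\alpha_n\le\Phi_n(\lambda)/\ln\lambda\le\alpha_n+\ln 2/\ln\lambda$. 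Sending $n\to\infty$ using \Thm~\ref{Thm_hc} gives $\limsup_n\alpha_n\le\Phi_{d,\lambda}/\ln\lambda\le\liminf_n\alpha_n+\ln 2/\ln\lambda$ for every $\lambda>1$; letting $\lambda\to\infty$ then forces $\limsup_n\alpha_n\le\liminf_n\alpha_n$, so that $L:=\lim_n\alpha_n$ exists and $\lim_{\lambda\to\infty}\Phi_{d,\lambda}/\ln\lambda=L$.

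Next I would upgrade this asymptotic-slope statement to the incremental-slope form $\lambda(\Phi_{d,\lambda+1}-\Phi_{d,\lambda})$. For each graph $G$, the map $u\mapsto\ln Z(G,\mathrm{e}^u)=\ln\sum_{\sigma}\mathrm{e}^{u|\sigma|}$ is a cumulant generating function and therefore convex; thus $\tilde\Phi(u):=\Phi_{d,\mathrm{e}^u}$ is convex and non-decreasing on $\RR$, and its right-derivative $\tilde\Phi'_+$ is non-decreasing and bounded above by $1$. In particular $\tilde\Phi'_+(u)\uparrow L'\in[0,1]$ as $u\to\infty$, and writing $\tilde\Phi(u)/u=\tilde\Phi(0)/u+u^{-1}\int_0^u\tilde\Phi'_+(v)\,\dd v$ and applying Cesaro averaging shows $L'=\lim_{u\to\infty}\tilde\Phi(u)/u=L$. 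The standard convexity sandwich
\[
\lambda\ln(1+1/\lambda)\cdot\tilde\Phi'_+(\ln\lambda)\;\le\;\lambda(\Phi_{d,\lambda+1}-\Phi_{d,\lambda})\;\le\;\lambda\ln(1+1/\lambda)\cdot\tilde\Phi'_-(\ln(\lambda+1))
\]
combined with $\lambda\ln(1+1/\lambda)\to 1$ and $\tilde\Phi'_{\pm}(u)\to L$ then delivers the corollary.

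The only substantive obstacle is the interchange of the $n\to\infty$ and $\lambda\to\infty$ limits. For a fixed graph $G$, the convergence $\Erw_{\mu_\lambda}[|\sigma|]\to\alpha(G)$ is governed by ratios such as $N_{\alpha-1}/N_\alpha$, which can be exponential in $n$, so one cannot simply differentiate the free energy pointwise in $\lambda$ and then let $n\to\infty$. The crude sandwich in Step 1, uniform in $\lambda\ge 1$ and involving only the partition function itself, is precisely what makes the interchange possible; after that, the convexity of $\tilde\Phi$ is a routine device for passing from an asymptotic slope to an incremental slope.
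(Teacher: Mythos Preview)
Your proof is correct and complete; the convexity step in particular is clean. But it follows a genuinely different route from the paper's.

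The paper works at the level of individual graphs throughout. It uses the derivative identity $\alpha_\lambda(G)=\lambda\,\partial_\lambda\ln Z_\lambda(G)$ and the uniform convergence $\alpha_\lambda(G)/|V(G)|\to\alpha(G)/|V(G)|$ over all graphs (proved by the same $2^n$ versus $\lambda^{\eps n}$ trade-off that underlies your Step~1 sandwich). Integrating the derivative over $[\lambda,\lambda+1]$ then gives, \emph{for every $d$-regular $G$}, the direct two-sided bound $(1-\eps)\alpha(G)\le\lambda(\ln Z_{\lambda+1}(G)-\ln Z_\lambda(G))\le\alpha(G)$, after which one simply takes expectations and invokes \Thm~\ref{Thm_hc}. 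You instead establish the trivial sandwich $\alpha(G)\ln\lambda\le\ln Z_\lambda(G)\le\alpha(G)\ln\lambda+n\ln 2$, pass to the limit to get the \emph{asymptotic} slope $\Phi_{d,\lambda}/\ln\lambda\to L$, and then use convexity of $u\mapsto\Phi_{d,\eul^u}$ (inherited from the finite-$n$ cumulant generating functions) to upgrade to the \emph{incremental} slope $\lambda(\Phi_{d,\lambda+1}-\Phi_{d,\lambda})$.

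What each buys: your Step~1 is more elementary than the paper's uniform-convergence claim, at the cost of an extra (but routine) convexity argument; the paper's approach is more direct, never needing properties of the limiting function $\Phi_{d,\lambda}$ beyond its existence, and it yields the pre-limit bound $(1-\eps)\Erw[\alpha(\GG)]/n\le\lambda(\Phi_{d,\lambda+1}-\Phi_{d,\lambda})\le\limsup_n\Erw[\alpha(\GG)]/n$ for all large $\lambda$, which is slightly stronger information along the way.
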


\noindent
The formula in \Cor~\ref{Cor_hc} may not be easy to evaluate; in particular, it may be difficult to obtain a numerical estimate for a given value of $d$.
Nonetheless, since the proofs show that the optimal $\pi$ in \Thm~\ref{Thm_hc} is closely related to the Belief Propagation fixed points on $\GG$, it should be possible to extract combinatorial information about the independent set problem on random graphs.
In any case, \Thm~\ref{Thm_hc} and \Cor~\ref{Cor_hc} put a lid on the complexity of the problem.

\subsection{Organization}
In \Sec~\ref{Sec_results} we present the main results of the paper, which cover a broad family of random factor graph models.
At the end of \Sec~\ref{Sec_results} we are in a position to discuss related work in detail.
\Sec s~\ref{Sec_prelim}--\ref{secAz} deliver the proofs of these general results.
Finally, in \Sec~\ref{Sec_app} we show how \Thm s~\ref{Thm_sg}--\ref{Thm_hc} and \Cor~\ref{Cor_hc} follow from the general results in \Sec~\ref{Sec_results}. 
In addition, we work through several more applications that have each received considerable attention in their own right, such as the Potts antiferromagnet.

\section{Random factor graphs}\label{Sec_results}
\noindent
In this section we present the main results of the paper, which cover a broad class of models called {\em random factor graphs}.
The class encompasses many well-studied examples of problems on random regular graphs or hypergraphs, including the spin glass model from the previous section.
Some other cases, such as the hard-core model or extremal cuts, can be dealt with by taking limits; we will come to that in \Sec~\ref{Sec_app}.

\subsection{Definitions}

To define random factor graph models, we consider a finite set $\Omega\neq\emptyset$ whose elements we call {\em spins}.
Moreover, for an integer $k\geq2$ we let $(\Psi,P)$ be a  probability space of  {\em weight functions} $\psi:\Omega^k\to(0,1)$.
We always denote by $\PSI$ an element of $\Psi$ chosen from the distribution $P$.
The space $\Psi$ may be finite or infinite.
In the latter case we assume that
	\begin{align}\label{eqP}
	\Erw[\exp(1/
			\min_{\sigma\in\Omega^k}\PSI(\sigma))]<\infty.
	\end{align}
Furthermore, we always assume that the distribution $P$ is invariant under permutations of the coordinates.
That is, for any $\psi\in\Psi$ and for any permutation $\kappa$ of $\{1,\ldots,k\}$ the function
$\psi^\kappa:\sigma\mapsto\psi(\sigma_{\kappa_1},\ldots,\sigma_{\kappa_k})$ belongs to $\Psi$ as well
and $\PSI^\kappa$ has the same distribution as $\PSI$.
Additionally, let $p$ be a probability distribution on $\Omega$ with $p(\omega)>0$ for all $\omega\in\Omega$.
Further, let $d\geq3 ,n>0$ be integers and set $m=\lfloor dn/k\rfloor $.
Let $V_n=\{v_1,\ldots,v_n\}$ be a set of {\em variable nodes} and let $F_m=\{a_1,\ldots,a_m\}$ be a set of {\em constraint nodes}.

\begin{definition}
Suppose that $k$ divides $dn$. 
The random factor graph $\G=\G(n,d,p,P)$ consists of
	\begin{itemize}
	\item a weight function $\psi_{a_i}\in\Psi$ drawn from the distribution $P$ independently for each $i=1,\ldots,m$ and
	\item an independent uniformly random bijection $\partial_{\G}:F_m\times\{1,\ldots,k\}\to V_n\times\{1,\ldots,d\}$.
	\end{itemize}
\end{definition}

The definition resembles the pairing model of random regular graphs~\cite{JLR}.
Accordingly, we use standard graph-theoretic terminology.
For instance, we call $x_i\in V_n$ and $a_j\in F_m$ adjacent if there exist $s\in[d]$ and $k\in[k]$ such that $\partial_{\G}(a_j,t)=(x_i,s)$.
We also use the symbol $\partial_{\G}(a_j,t)$ for the variable node $x_i$ such that 
$\partial_{\G}(a_j,t)=(x_i,s)$.
Further, we write $\partial_{\G}x_i$ for the set of all $a_j\in F_m$ that $x_i$ is adjacent to, and similarly for $a_j$.
We omit the index and just write $\partial x_i,\partial a_j$ etc.\ where the reference to the random graph is apparent.
In particular, $\G$ induces a bipartite graph on the variable and constraint nodes, and thereby the shortest path metric on $V_n\cup F_m$.
Hence, by extension of the above notation, we write $\partial^\ell_{\G}u$ for the set of all nodes at distance precisely $\ell$ from $u$
and $\nabla^\ell_{\G}u$ for set of all variable nodes at distance at most $\ell$ from $u$.

We let $\cS$ be the event that $\G$ is simple, i.e., that there do not occur multiple edges between any variable and constraint nodes.
Moreover, we denote by $\GG$ the conditional distribution of $\G$ given $\cS$.
Let us make a note of the following well known fact.

\begin{fact}[\cite{JLR}]\label{Fact_simple}
We have $\pr\brk{\G\in\cS}\sim\exp\brk{-(d-1)(k-1)/2-\vecone\{k=2\}(d-1)^2/4}$.
\end{fact}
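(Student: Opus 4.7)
\medskip

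\noindent\textbf{Proof proposal.} The plan is to apply the classical Poisson paradigm for the pairing model, since the weight functions $\psi_{a_i}$ play no role here and only the uniform random bijection $\partial_{\G}$ matters. First I would pinpoint the obstructions to simplicity. A multi-edge between a variable and a constraint occurs exactly when some $a\in F_m$ has two distinct ports $t_1<t_2\in[k]$ with $\partial_{\G}(a,t_1)$ and $\partial_{\G}(a,t_2)$ incident to the same variable; call such a pair a \emph{double port} and let $X$ count them. For $k\geq3$ the event $\cS$ is simply $\{X=0\}$. For $k=2$ the factor graph is naturally identified with a $d$-regular multigraph on $V_n$, and the usual convention additionally forbids parallel edges; I would write $Y$ for the number of pairs $a\neq a'$ of constraints whose two endpoints in $V_n$ coincide as multisets.

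Next I would compute first moments by exchangeability. Conditional on $\partial_{\G}(a,t_1)=(v,s)$, the image $\partial_{\G}(a,t_2)$ is uniform over the remaining $dn-1$ half-edges, of which exactly $d-1$ sit at $v$. Summing over $m\sim dn/k$ constraints and $\binom{k}{2}$ port pairs gives
\begin{align*}
\Erw[X]\;=\;m\binom{k}{2}\frac{d-1}{dn-1}\;\longrightarrow\;\frac{(k-1)(d-1)}{2},
\end{align*}
and an analogous calculation yields $\Erw[Y]\to(d-1)^2/4$ for $k=2$. The key step is then to lift these first-moment estimates to a joint Poisson limit $(X,Y)\Rightarrow(\Po(\mu),\Po(\nu))$ with independent marginals, where $\mu=(k-1)(d-1)/2$ and $\nu=\vecone\{k=2\}(d-1)^2/4$. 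My preferred route is the method of factorial moments: for fixed $r,s\geq 0$ enumerate ordered tuples of $r$ disjoint double ports and $s$ disjoint parallel-edge pairs, and observe that the conditional distribution of $\partial_{\G}$ given these identifications is uniform on the remaining half-edges, so the probability of each tuple is a product of $(d-1)/(dn-O(1))$ factors. The combinatorial count of tuples combined with this product structure yields $\Erw[(X)_r(Y)_s]\to\mu^r\nu^s$; configurations in which two chosen defects share a constraint or a variable contribute $O(1/n)$ per overlap and are therefore negligible.

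Evaluating inclusion–exclusion at $r=s=0$ gives $\pr[\cS]=\pr[X=Y=0]\to\exp(-\mu-\nu)$, which is the claim. The main delicate point is the joint control of $X$ and $Y$ in the $k=2$ regime, where both defect types live on the same collection of constraints so that a careful overlap-counting argument is needed to check that the factorial moments factorize in the limit. This is precisely the standard configuration-model simplicity calculation, worked out in detail in \cite{JLR}, to which the paper defers.
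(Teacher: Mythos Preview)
Your proposal is correct and is precisely the standard configuration-model argument. The paper itself provides no proof of this fact; it is stated with a citation to \cite{JLR} and nothing more, so there is no in-paper argument to compare against beyond the reference you already identified.
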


The random factor graph induces a probability distribution on $\Omega^{V_n}$.
To define it, we introduce the shorthand $\psi_{a_i}(\sigma)=\psi_{a_i}(\sigma(\partial(a_i,1)),\ldots,\sigma(\partial(a_i,k)))$ for $i\in[m]$ and $\sigma\in\Omega^{V_n}$.
Thus, $\psi_{a_i}(\sigma)$ is the weight that constraint node $a_i$ gives to$\sigma$.
Further, we introduce the {\em total weight}
	\begin{align*}
	\psi_{\G}(\sigma)&=\prod_{i=1}^m\psi_{a_i}(\sigma)
		&(\sigma\in\Omega^{V_n}).
	\end{align*}
by multiplying up all the weight functions of the constraint nodes.
The total weights $\psi_{\G}(\sigma)$  give rise to the {\em partition function} and the {\em Boltzmann distribution}:
	\begin{align}
	\label{eqPartitionFunction}
	Z(\G)&=\sum_{\tau\in\Omega^{V_n}}\psi_{\G}(\tau)\prod_{i=1}^np(\sigma(x_i)),&
	\mu_{\G}(\sigma)&=\frac{\psi_{\G}(\sigma)}{Z(\G)}\prod_{i=1}^np(\sigma(x_i))&(\sigma\in\Omega^{V_n}).
	\end{align}
Since all the weight functions $\psi\in\Psi$ are strictly positive, the Boltzmann distribution is a well-defined probability measure on the phase space $\Omega^{V_n}$.

We set out to investigate the structure of the Boltzmann distribution $\mu_{\G}(\nix)$ and to compute the partition function $Z(\G)$ or, more specifically, its logarithm, which we call the {\em free energy}.
In \Sec~\ref{Sec_BetheStates} we will prove the main result of the paper, which provides that the Boltzmann distribution decomposes into a convex combination of relatively simple distributions called Bethe states.
But before we come to that, let us look at an example.

\begin{example}[the $k$-spin model]\label{Ex_sg}
Let $\Omega=\{\pm1\}$, let $k\geq2$ be an integer and let $\beta>0$ be a real parameter.
The $k$-spin model is a generalization of the spin glass model from the previous section, which corresponds to the special case $k=2$.
The weight functions of the $k$-spin model read
	\begin{align*}
	\psi_{\beta,J}(\sigma_1,\ldots,\sigma_k)&=\frac12\bc{1+\tanh(\beta J)\prod_{i=1}^k\sigma_i}\qquad\qquad\qquad(J\in\RR).
	\end{align*}
Thus, $\Psi=\{\psi_{\beta,J}:J\in\RR\}$, and the distribution $P$ on $\Psi$ is defined by choosing $J$ from the standard Gaussian distribution.
This distribution clearly satisfies~\eqref{eqP}.
Geometrically, this model lives on a generalized Bethe lattice where all variable nodes, representing the sites, have degree $d$, while all constraint nodes, representing the interactions, have degree $k$.
\end{example}

The hard-core model from \Sec~\ref{Sec_introhc} cannot be expressed as a factor graph model directly because of the requirement that all weight functions be strictly positive.
But it is possible to arrive at the hard-core model by taking suitable limits; see \Sec~\ref{Sec_app} for details.

\subsection{Bethe states}\label{Sec_BetheStates}
The Belief Propagation message-passing scheme provides the mainstay of the physicists' non-rigorous cavity method.
Our first main result vindicates its use by showing that the Boltzmann distribution of any random factor graph model can be described in terms of Belief Propagation fixed points.

To introduce Belief Propagation let $\msg(\G)$ be the {\em message space}, consisting of all families
	$$\nu=(\nu_{v\to a},\nu_{a\to v})_{v\in V_n, a\in F_n:v\in\partial_{\G}a}$$
of probability measures $\nu_{v\to a},\nu_{a\to v}$ on $\Omega$.
For adjacent $a,v$ we interpret $\nu_{v\to a}$ as a `message' from $v$ to $a$, and $\nu_{a\to v}$ as a message in the reverse direction.
We equip $\msg(\G)$ with the metric
	\begin{align*}
	\cD_1(\nu,\nu')&=\frac1n\sum_{v,a:v\in\partial_{\G}a}\TV{\nu_{v\to a}-\nu'_{v\to a}}+\TV{\nu_{a\to v}-\nu'_{a\to v}}.
	\end{align*}
Belief Propagation is the operator $\BP:\msg(\G)\to\msg(\G)$ that maps $\nu$ to $\hat\nu$ defined by
	\begin{align*}
	\hat\nu_{v\to a}(\sigma)&=\frac{p(\sigma)\prod_{b\in\partial v\setminus a}\nu_{b\to v}(\sigma)}
		{\sum_{\tau\in\Omega}p(\tau)\prod_{b\in\partial v\setminus a}\nu_{b\to v}(\tau)},&
	\hat\nu_{a\to v}(\sigma)&=\frac{\sum_{\tau\in\Omega^{\partial a}}\vecone\{\tau_v=\sigma\}\psi_a(\tau)\prod_{w\in\partial a\setminus v}\nu_{w\to a}(\tau_w)}
	{\sum_{\tau\in\Omega^{\partial a}}\psi_a(\tau)\prod_{w\in\partial a\setminus v}\nu_{w\to a}(\tau_w)}.
	\end{align*}
Further, a point $\nu\in\msg(\G)$ is an {\em $\eps$-Belief Propagation fixed point} if $\cD_1(\nu,\BP(\nu))<\eps.$

For a thorough discussion and motivation of Belief Propagation we refer to~\cite{MM}.
The punch line is that on acyclic factor graphs a Belief Propagation fixed point computation provably yields the marginals of the Boltzmann distribution as well as the free energy.
Since the random graph $\G$ contains only very few short cycles, one may therefore expect that Belief Propagation renders meaningful information on random factor graphs as well, provided that the Boltzmann distribution is free of long-range correlations.

Alas, in general long-range correlations do occur.
Nevertheless, we will prove that the Boltzmann distribution still decomposes into a convex combination of relatively few `Bethe states', characterized by Belief Propagation fixed points.
To be precise, suppose that $\emptyset \neq S\subset\Omega^{V_n}$ is an event.
Let $v$ be a variable node and let $a\in\partial_{\G} v$.
Then we define $\mu_{\G,v\to a}(\nix|S)$ as the conditional marginal of $v$ given $S$ under the Boltzmann distribution of the factor graph $\G-a$ obtained from $\G$ by removing the constraint node $a$.
In formulas, with $\scal\nix{\mu_{\G}(\nix|S)}$ denoting the expectation with respect to $\SIGMA$ drawn from $\mu_{\G}(\nix|S)$,
we have
	\begin{align*}
	\mu_{\G,v\to a}(\sigma\mid S)&=\frac{\scal{\vecone\{\SIGMA_v=\sigma\}/\psi_a(\SIGMA)}{\mu_{\G}(\nix|S)}}
		{\scal{1/\psi_a(\SIGMA)}{\mu_{\G}(\nix|S)}}&(\sigma\in\Omega).
	\end{align*}
Similarly, we let $\mu_{\G,a\to v}(\nix|S)$ be the conditional marginal of $v$ under the Boltzmann distribution of the factor graph obtained from $\G$ by removing all constraint nodes $b\in\partial_{\G} v\setminus a$ and disregarding the prior of $v$:
	\begin{align*}
	\mu_{\G,a\to v}(\sigma\mid S)&=\frac{\scal{\vecone\{\SIGMA_v=\sigma\}/(p(\sigma)\prod_{b\in\partial v\setminus a}\psi_b(\SIGMA))}{\mu_{\G}(\nix|S)}}
		{\scal{1/(p(\SIGMA_v)\prod_{b\in\partial v\setminus a}\psi_b(\SIGMA))}{\mu_{\G}(\nix|S)}}&(\sigma\in\Omega).
	\end{align*}
We refer to $\mu_{\G,v\to a}(\nix|S),\mu_{\G,a\to v}(\nix|S)$ as the {\em standard messages given $S$}.

\begin{definition}
Let $\eps>0$.
An event $S\subset\Omega^n$ is an {\em $\eps$-Bethe state} of $\G$ if the following two conditions hold.
\begin{description}
\item[BS1] the standard messages given $S$ are an $\eps$-Belief Propagation fixed point.
\item[BS2] if $\ell,\ell'\leq 1/\eps$ and if $\vI\subset V_n$, $\vJ\subset F_m$ are independent uniformly random sets of sizes $|\vI|=\ell$, $|\vJ|=\ell'$, then
	for every $\sigma\in\Omega^{V_n}$ we have
	\begin{align}\nonumber
	\Erw\bigg|&\scal{\vecone\{\forall v\in\vI\cup\partial\vJ\cup\partial^2\vI:
		\SIGMA_v=\sigma_v\}}{\mu_{\G}(\nix|S)}\\
		&-\prod_{v\in\vI}\frac{p(\sigma_v)\prod_{a\in\partial v}\psi_a(\sigma)\prod_{w\in\partial a\setminus v}\mu_{w\to a}(\sigma_w|S)}
			{\sum_{\chi\in\Omega}p(\chi)\prod_{a\in\partial v}\sum_{\tau\in\Omega^{\partial a}}\psi_a(\tau)
				\prod_{w\in\partial a\setminus v}\mu_{w\to a}(\tau_w|S)}
		\cdot\prod_{a\in\partial \vJ}\frac{\psi_a(\sigma)\prod_{w\in\partial a}\mu_{w\to a}(\sigma_w|S)}
			{\sum_{\tau\in\Omega^{\partial a}}\psi_a(\tau)\prod_{w\in\partial a}\mu_{w\to a}(\tau_w|S)}
		\bigg|<\eps.
							\label{eqBetheState}
	\end{align}
\end{description}
\end{definition}

Thus, on a Bethe state the standard messages form an approximate Belief Propagation fixed point.
Furthermore, locally around a bunch of randomly chosen variable and constraint nodes the Boltzmann distribution is characterized by the standard messages.
In particular, setting $\ell=0$ and $\ell'=1$ in {\bf BS2}, we see that the conditional joint distribution $\mu_{\G,\partial a}(\nix|S)$ of the variables around a typical random constraint node $a$ reads 
	\begin{align}\label{eqBP_1}
	\mu_{\G,\partial a}(\sigma\mid S)&=\frac{\psi_a(\sigma)\prod_{w\in\partial a}\mu_{w\to a}(\sigma_w|S)}
			{\sum_{\tau\in\Omega^{\partial a}}\psi_a(\tau)\prod_{w\in\partial a}\mu_{w\to a}(\tau_w|S)}+O(\eps)
			&(\sigma\in\Omega^{\partial a}).
	\end{align}
Additionally, setting $\ell=1$ and $\ell'=0$, we find that the local distribution around a typical variable node $v$, i.e., the distribution $\mu_{\G,v\cup\partial^2v}(\nix|S)$ induced  on the second neighborhood of $v$, reads
	\begin{align}\label{eqBP_2}
	\mu_{\G,v\cup\partial^2v}(\sigma\mid S)&=\frac{p(\sigma_v)\prod_{a\in\partial v}\psi_a(\sigma)\prod_{w\in\partial a}\mu_{w\to a}(\sigma_w|S)}
			{\sum_{\chi\in\Omega}p(\chi)\prod_{a\in\partial v}\sum_{\tau\in\Omega^{\partial a}}\psi_a(\tau)
				\prod_{w\in\partial a}\mu_{w\to a}(\tau_w|S)}+O(\eps)
			&(\sigma\in\Omega^{v\cup\partial^2v}).
	\end{align}
Thus, for most variable nodes $v$ the conditional Boltzmann marginal $\mu_{\G,v}(\nix|S)$ satisfies
	\begin{align}\label{eqBP_3}
	\mu_{\G,v}(\sigma\mid S)&=\frac{p(\sigma)\prod_{a\in\partial v}\mu_{a\to v}(\sigma|S)}
			{\sum_{\chi\in\Omega}p(\chi)\prod_{a\in\partial v}\mu_{a\to v}(\chi|S)}+O(\eps)
			&(\sigma\in\Omega).
	\end{align}	
Apart from the conditioning on $S$, the  formulas (\ref{eqBP_1})--(\ref{eqBP_3}) coincide with the ones known in the acyclic case~\cite{MM}.

In addition,  (\ref{eqBetheState}) implies that if we pick a few variable and/or constraint nodes randomly, then the joint distribution of their neighborhoods approximately factorizes.
Applied to $\ell=2$, $\ell'=0$, this means that once we condition on $S$, the joint distribution of two randomly chosen variable nodes is close to a product distribution:
	\begin{align}
	\label{eqRepSym}
	\frac1{n^2}\sum_{1\leq i<j\leq n}\Erw\TV{\mu_{\G,v_i,x_j}(\nix|S)-\mu_{\G,v_i}(\nix|S)\tensor\mu_{\G,v_j}(\nix|S)}&=O(\eps);
	\end{align}
in statistical physics jargon, the conditional distribution $\mu_{\G}(\nix|S)$ is \textit{replica symmetric}.

Confirming the picture sketched by the cavity method and vindicating the use of Belief Propagation for the study of the Boltzmann distribution,
the following theorem shows that \whp\ the Boltzmann distribution of a random factor graph decomposes into a relatively small number of Bethe states.

\begin{theorem}\label{Thm_BP}
For any function $L=L(n)\to\infty$ there exists $\eps=\eps(n)\to0$ such that the following is true.
There exists a decomposition $S_0=S_0(\G),S_1=S_1(\G),\ldots,S_\ell=S_\ell(\G)$, $\ell=\ell(\G)\leq L$, of  $\Omega^n$ into non-empty sets such that $\mu_{\GG}(S_0)\leq\eps$ such that with high probability
$S_1,\ldots,S_\ell\subset\Omega^n$ are $\eps$-Bethe states. 
The same statement holds with $\G$ replaced by $\GG$.
\end{theorem}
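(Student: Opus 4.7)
The plan is to construct the decomposition by a \emph{pinning} argument: pick a small random subset $\vU\subset V_n$ of variable nodes (whose size $|\vU|=o(n)$ is tuned to the parameters $L$ and $\eps$), and for each assignment $\tau\in\Omega^{\vU}$ set $S_\tau=\{\sigma\in\Omega^{V_n}:\sigma|_{\vU}=\tau\}$. After discarding pinnings of negligible Boltzmann mass (these are lumped into $S_0$) and grouping equivalent pinnings to cap the count at $L(n)$, the surviving sets $S_\tau$ will play the role of the Bethe states $S_1,\dots,S_\ell$. Since the total number of pinnings is $|\Omega|^{|\vU|}$, growing $|\vU|$ slowly enough guarantees $\ell\le L(n)$.

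The heart of the proof is to verify that $S_\tau$ satisfies BS1 and BS2 with high probability over $(\G,\vU,\TAU)$, where $\TAU$ is drawn from $\mu_{\G,\vU}$. First, I would invoke a pinning lemma in the spirit of the information-theoretic arguments of Montanari and Raghavendra--Tan: the total mutual information between any single spin and the remaining configuration is $O(1)$, so an incremental information telescoping shows that for typical $(\vU,\TAU)$ the conditional measure $\mu_{\G}(\cdot|S_\tau)$ satisfies the pairwise replica-symmetry bound (\ref{eqRepSym}). Second, I would upgrade pairwise factorization to the full BS2 statement on depth-two neighborhoods by exploiting the locally tree-like geometry of $\G$: for random $\vI,\vJ$ of bounded size, the sets $\nabla^2\vI$ and $\partial\vJ$ lie in disjoint tree-components after a small local surgery, so the pairwise bound together with tree-propagation of marginals produces the product formula appearing on the right-hand side of (\ref{eqBetheState}). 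Third, BS1 (the approximate Belief Propagation fixed-point property of the standard messages) falls out as an algebraic corollary of BS2 applied at $\ell'=1$, since the $\BP$ operator is exactly the map that extracts a message from a tree-factored local marginal.

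The principal obstacle is the rigidity of the graph model. Unlike the Erdős--Rényi setting exploited by Panchenko, one cannot freely add or remove edges, and conditioning on the spins of $\vU$ entangles with the random bijection $\partial_{\G}$. To overcome this I would adapt the regularity-method framework the authors have developed previously for constant-degree random factor graphs, which supplies a pinning lemma compatible with local weak convergence to the $(d,k)$-Galton--Watson tree and keeps the incremental information loss under control even under the rigid bijection model. Finally, transfer from $\G$ to $\GG$ is routine: by Fact~\ref{Fact_simple}, conditioning on simplicity incurs only a bounded multiplicative cost and therefore preserves the with-high-probability properties established for $\G$.
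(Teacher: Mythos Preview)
Your proposal identifies the correct starting ingredient (a pinning decomposition) and the correct final transfer to $\GG$, but the middle of the argument has a genuine gap, and the logical flow you sketch runs backwards relative to what the proof actually requires.

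First, the direction ``BS2 $\Rightarrow$ BS1 as an algebraic corollary'' is not sound. The formula in \eqref{eqBetheState} already \emph{presupposes} the standard messages $\mu_{\G,w\to a}(\cdot|S)$; it does not define them. Establishing that these messages form an approximate BP fixed point is the substantive step, and in the paper BS1 is proved first (Proposition~\ref{Lemma_fixp}), with BS2 then derived from BS1 together with extremality of the enhanced measure. Second, your step ``upgrade pairwise replica symmetry to BS2 via tree-propagation'' is where the real obstruction lives. Pairwise factorization of $\mu_{\G}(\cdot|S_\tau)$ at \emph{random} vertices says nothing about the joint law of the $k$ variables in $\partial a$ for a constraint $a$: those variables are at distance two and are correlated precisely through $\psi_a$. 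Verifying that, after removing $\psi_a$, the joint law factorizes is exactly the BP fixed-point statement, and no amount of global correlation decay yields it without a local surgery argument.

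The paper's proof supplies the missing mechanism, and it is not a light adaptation of the \Erdos--\Renyi\ regularity framework. Two ideas are essential. (i) The pinning is applied not to $\mu_{\G}$ but to the \emph{enhanced} measure $\hat\mu_{\G}$ on $\prod_v\Omega^{\nabla^2 v}$, i.e.\ one pins entire second neighborhoods $\vU_*\cup\partial^2\vU_*$; this is what makes $\hat\mu$ (not merely $\mu$) extremal, and extremality of $\hat\mu$ is later indispensable for undoing auxiliary conditioning. (ii) To verify BP one creates cavities by deleting a Poisson number of variable nodes (together with their adjacent constraints), then re-attaches a single fresh variable $v_+$ with $d$ new random constraints anchored at random cavities. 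The fresh randomness in this attachment, combined with a \emph{second} round of pinning restricted to the cavity set, forces the $(\beta,d(k-1))$-symmetry needed to make the incoming messages at $v_+$ factorize, yielding the BP identities at $v_+$. Contiguity (Lemma~\ref{Claim_fixp_contig1}) plus permutation invariance then transfers these identities from $v_+$ to almost all vertices of $\G_*'$, and extremality of $\hat\mu$ lets one remove the second pinning to return to $\G_*'$ itself. Your proposal contains neither the enhanced-measure pinning nor the cavity/re-attachment step, and without them the passage from replica symmetry to the BP equations does not go through on a regular graph.
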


An important feature of \Thm~\ref{Thm_BP} is that the upper bound $L$ on the size of the Bethe state decomposition can be an arbitrarily slowly growing function of $n$.
Thus, the Gibbs measure can generally be decomposed into relatively few Bethe states, within which long-range correlations are negligible and where short-range correlations are characterized by Belief Propagation.

\subsection{The free energy}
\label{secFreeEnergy}

Apart from the structure of the Boltzmann distribution, a second key challenge is the computation of the free energy.
More specifically, arguably the single most important quantity associated with a random factor graph model is the {\em free energy density}
	\begin{align}\label{eqlimElog}
	\lim_{n\to\infty}\frac1n\Erw\brk{\log Z(\G)}.
	\end{align}
Of course, it comes as no surprise that computing (\ref{eqlimElog}) generally poses a formidable challenge.
In fact, even the existence of the limit remains an unresolved problem in several interesting cases.

The next theorem provides a formula for (and {\em en passant} establishes the existence of) the limit~\eqref{eqlimElog} 
in terms of the Bethe state decomposition from \Thm~\ref{Thm_BP} for a broad class of models.
We merely require a certain `convexity condition'.
This condition can be stated neatly in terms of a space that resembles the graphon space from combinatorics~\cite{Lovasz}.
Specifically, let $\states$ be the space of all measurable maps $[0,1]^2\to\cP(\Omega)$ modulo equality (Lebesgue-)almost everywhere.
We call these maps {\em strong kernels}.
For $(s,x)\in[0,1]$ and $\mu\in\states$ we let $\mu_{s,x}\in\cP(\Omega)$ denote the function value of $\mu$ at $(s,x)$.
Further, for $\mu,\mu'\in\states$ we define the {\em cut distance}
	\begin{align}\label{eqDefCutMetric}
	\Cutm(\mu,\mu')&=\inf_{\varphi,\varphi'}\,\sup_{\substack{S,X\subset[0,1]\\\omega\in\Omega}}
		\abs{\int_S\int_X\mu_{s,x}(\omega)-\mu'_{\varphi(s),\varphi'(x)}(\omega)\dd x\,\dd s},
	\end{align}
where the infimum is over all  measurable $\varphi,\varphi':[0,1]\to[0,1]$ that preserve the Lebesgue measure
and where the supremum runs over all measurable $S,X\subset[0,1]$.
Strictly speaking, $\Cutm(\nix,\nix)$ is a pre-metric (as possibly $\Cutm(\mu,\nu)=0$ even though $\mu\neq\nu$). 
We therefore let $\meas$ be the metric space where any two $\mu,\nu$ with $\Cutm(\mu,\nu)=0$ are identified.
Then $\meas$ is a compact Polish space \cite{Janson}.
Additionally, we write $\Meas$ for the space of all probability distributions on $\meas$.

Crucially, the convexity assumption that we require comes solely in terms of the distribution $P$ on the set $\Psi$ of weight functions.
Namely, let $\vx=(\vx_i)_{i\geq1}$ be a sequence of independent uniformly random points in $[0,1]$, chosen independently of $\PSI\in\Psi$.
Writing $\Erw\brk\nix$ for the expectation on $\vx,\PSI$, we make the following assumption.

\medskip
\begin{tabular}{|c|}\hline
\parbox[c]{14cm}{
For all $\mu,\mu'\in\states$ and for every integer $\ell\ge1$,
		\begin{align*}
		\Erw&\brk{\bc{1-\sum_{\sigma\in\Omega^k}\PSI(\sigma)\int_0^1\prod_{i=1}^k \mu_{s,\vx_i}(\sigma_i)\dd s}^\ell}
			+(k-1)\Erw\brk{\bc{1-\sum_{\sigma\in\Omega^k}\PSI(\sigma)\int_0^1
				\prod_{i=1}^k\mu'_{s,\vx_i}(\sigma_i)\dd s}^\ell}\\
	&\qquad\geq
			\sum_{h=1}^k\Erw\brk{\bc{1-\sum_{\sigma\in\Omega^k}\PSI(\sigma)
				\int_0^1\mu_{s,\vx_h}(\sigma_h)%\dd s\int_0^1
					\prod_{i\in[k]\setminus\cbc h}\mu'_{s,\vx_i}(\sigma_i)\dd s}^\ell}.
		\end{align*}
}\\\hline
\end{tabular}
\hfill{\bf (POS)}

\medskip\noindent
We will see in \Sec~\ref{Sec_app} that {\bf POS} is easily verified for several interesting models, including the spin glass model from \Sec~\ref{Sec_intro}.

To obtain the formula for the free energy, we will represent the Bethe state decomposition of the random factor graph by a point in $\meas$.
Specifically, let $\vX,\vY$ be random variables with distribution $\Po(\omega)$ for an integer $\omega>0$, mutually independent and independent of $\G$.
Then with $S_1,\ldots,S_\ell$ the decomposition promised by \Thm~\ref{Thm_BP} we introduce for $i=1,\ldots,\ell$,
	\begin{align}\label{eqzGi}
	\check\vz_{\G,i}=\mu_{\G}(S_i)&\cdot
			\prod_{i=1}^{\vX}\bc{\sum_{\chi\in\Omega}p(\chi)\prod_{a\in\partial v_i}\sum_{\tau\in\Omega^{\partial a}}\vecone\{\tau_{v_i}=\chi\}\psi_a(\tau)\prod_{w\in\partial a\setminus v_i}\mu_{w\to a}(\tau_w|S_i)}^{-1}
			\\
&\cdot\prod_{i=1}^{\vY}\bc{\sum_{\tau\in\Omega^{\partial a_i}}\psi_{a_i}(\tau)\prod_{w\in\partial a_i}\mu_{w\to a_i}(\tau_w|S_i)}^{-1},\nonumber
	\end{align}
and we let $\check\vz_{\G}=\sum_{i=1}^\ell\check\vz_{\G,i}$.
It will emerge that combinatorially $\check\vz_{\G,i}/\check\vz_{\G}$ represents the probability mass of the Bethe state $S_i$ in the factor graph $\G'$ where we remove the first $\vY$ constraint nodes $a_1,\ldots,a_{\vY}$ as well as the first $\vX$ variable nodes $v_1,\ldots,v_{\vX}$ along with their adjacent constraint nodes.
While this removal operation has no discernible impact on the free energy (so long as $\omega=o(n)$), it enables us to set up a recurrence for computing this quantity.

The recurrence comes in terms of the messages sent out by those variable nodes that are left with degree $d-1$ after the removal operation.
We thus set up a kernel that captures these messages.
Specifically, let $v_{h_1},\ldots,v_{h_t}$ be the variable nodes of degree $d-1$ in the factor graph $\G'$ and let
$b_{1},\ldots,b_{t}$ be their $\G$-neighbors that got deleted.
Then we define the kernel $\check\mu_{\G,X,Y}:[0,1]^2\to\cP(\Omega)$ by letting
	\begin{align}\label{eqmuGXY}
	\check\mu_{\G,X,Y}:&(s,x)\mapsto\sum_{i=1}^t\sum_{j=1}^\ell
			\vecone\cbc{t-1\leq x<t,\,\sum_{h<j}\check\vz_{\G,h}<s \vz_{\G}\leq \sum_{h<j}\check\vz_{\G,h}}
			\mu_{\G,v_{h_i}\to b_i}(\nix|S_j).
	\end{align}
Recalling that $\G,\vX,\vY$ are random, we write $\check\pi_{n,\omega}\in\Meas$ for the distribution of $\check\mu_{\G,\vX,\vY}$.
Analogously, we write $\check\pi_{n,\omega,\cS}$ for the distribution of $\check\mu_{\GG,\vX,\vY}$ defined for the simple random factor graph.

Finally, we introduce a functional on the space $\Meas$ that 
encodes the recurrence for computing the free energy from the Bethe state decomposition.
Namely, let $(\vx_{i,j})_{i,j\geq1}$ be a family of random variables that are uniform on $[0,1]$,
let $(\vh_i)_{i\geq1}$ be a family of random variables that are uniform on $\{1,\ldots,k\}$,
let $(\PSI_i)_{i\geq1}$ be a sequence of samples from $P$, and let $\MU^{\pi}\in\meas$ be a sample from $\pi\in\Meas$, all mutually independent; then
	\begin{align}
	\label{eqBetheFunctional}
	\cB(\pi)&=\Erw\Big[
		\ln\int_0^1\sum_{\sigma\in\Omega}p(\sigma)\prod_{i=1}^{d}
		\sum_{\substack{\tau\in\Omega^k:\\\tau_{\vh_i}=\sigma}}\PSI_{i}(\TAU)\prod_{j\neq\vh_i}\MU^\pi_{s,\vx_{i,j}}(\tau_j)\dd s
	-d(1-k^{-1})\ln\int_0^1\sum_{\tau\in\Omega^k}\PSI_1(\tau)\prod_{j=1}^k\MU^\pi_{s,\vx_{1,j}}(\tau_j)\dd s
	\Big].
	\end{align}
We obtain the following expression for the free energy.

\begin{theorem}\label{thmBethePlus}
Assume that condition {\bf POS} is satisfied. Then
\begin{align*}
\lim_{n\to\infty}\frac1n\Erw\brk{\ln Z(\G)} &= \liminf_{\omega\to\infty}\,\liminf_{n \to \infty} \cB(\check \pi_{n,\omega}),
	&
\lim_{n\to\infty}\frac1n\Erw\brk{\ln Z(\GG)} &= \liminf_{\omega\to\infty}\,\liminf_{n \to \infty} \cB(\check \pi_{n,\omega,\cS}).
\end{align*}
\end{theorem}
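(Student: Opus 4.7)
The plan is to combine the Aizenman-Sims-Starr incremental scheme with the Bethe state decomposition of Theorem~\ref{Thm_BP}. Concretely, I would first reduce the claim to proving
$$\lim_{n\to\infty}\frac1n\Erw[\ln Z(\G)]=\liminf_{n\to\infty}\bc{\Erw[\ln Z(\G(n+1,d,p,P))]-\Erw[\ln Z(\G(n,d,p,P))]},$$
and analogously for $\GG$, so that only the one-step increment has to be computed. Here condition \textbf{POS} plays a dual role. First, it is the convexity-type identity underlying an interpolation between $\G(n,d,p,P)$ and smaller independent factor graphs with matching expected constraint profiles; sign-controlling the interpolating derivative gives approximate super-additivity of $\Erw[\ln Z(\G)]$ up to $o(n)$, which, combined with an $O(\sqrt n)$ concentration bound for $\ln Z$, ensures existence of the free energy density. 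This is the regularity-method analogue of the interpolation setup that Panchenko~\cite{Panchenko,Panchenko2} employs for Erd\H{o}s-R\'enyi factor graphs.

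To identify the one-step increment with $\cB(\check\pi_{n,\omega})$, I would couple $\G(n+1,d,p,P)$ and $\G(n,d,p,P)$ through a common intermediate cavity factor graph $\G_{n,\omega}$, obtained by removing $\vX\sim\Po(\omega)$ variable nodes with their incident constraints together with $\vY\sim\Po(\omega)$ standalone constraint nodes. Then $\G(n+1,d,p,P)$ is recovered by inserting one new variable with $d$ edges to uniformly random cavities and then filling the remaining cavities with a matching number of constraint nodes, whereas $\G(n,d,p,P)$ is recovered by filling with constraint nodes only. Applying Theorem~\ref{Thm_BP} to $\G_{n,\omega}$ yields a Bethe state decomposition $S_1,\ldots,S_\ell$, and the weights $\check\vz_{\G,i}/\check\vz_{\G}$ from~\eqref{eqzGi} correspond up to $o(1)$ to the conditional probability masses of $S_i$ in $\G_{n,\omega}$ relative to $\G$: removing a constraint $a$ divides the unnormalized weight of $\sigma\in S_i$ by $\psi_a(\sigma)$, which after summing via property \textbf{BS2} contributes the factor $\sum_\tau\psi_a(\tau)\prod_{w\in\partial a}\mu_{w\to a}(\tau_w|S_i)$, and similarly for removed variables. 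Inserting a new variable with $d$ random cavity neighbours then contributes the first term of~\eqref{eqBetheFunctional}, while the half-edge accounting on the constraint side (absorbing $d$ cavities by the variable but only $d/k$ cavities worth of new constraints on average) gives exactly the correction term with coefficient $d(1-1/k)$.

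Having matched the incremental free energy with $\cB(\check\pi_{n,\omega})$ up to an error that is $o(1)$ in $n$ for each fixed $\omega$, I would take $\liminf$ first in $n$ then in $\omega$. The inner $\liminf$ absorbs the $\eps(n)$ approximation errors from Theorem~\ref{Thm_BP} once a slowly growing $L(n)$ is fixed. The outer $\liminf$ in $\omega$ is natural because $\check\pi_{n,\omega}$ becomes approximately invariant under cavity reinsertion as $\omega$ grows: reinserting an $O(1)$ number of objects into a graph that already has $\Po(\omega)$ many cavities changes the message distribution only negligibly in total variation, so the functional identity for $\cB$ becomes sharp in the iterated limit. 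The assertion for the simple graph $\GG$ follows from the same computation combined with Fact~\ref{Fact_simple}, which gives $\Pr[\G\in\cS]=\Theta(1)$ and therefore preserves the $o(n)$ errors under the conditioning.

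The main obstacle lies in controlling the pure state decomposition under cavity operations. Unlike for Erd\H{o}s-R\'enyi factor graphs, where Poisson degree fluctuations provide built-in geometric flexibility and local modifications decouple cleanly from the global geometry, the $d$-regular constraint is rigid and the cavity removals are structurally conspicuous. One must therefore show that the Bethe states of $\G$ descend coherently to Bethe states of $\G_{n,\omega}$, that the messages $\mu_{\G,v\to a}(\nix|S_i)$ on each Bethe state remain close in cut distance to their counterparts in $\G_{n,\omega}$, and that the relative weights $\check\vz_{\G,i}/\check\vz_{\G}$ are stable under the $\Po(\omega)$ fluctuations in the number of cavities. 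This is precisely where the variant of the regularity method from~\cite{Victor,Bethe,Limits} must be deployed, in tandem with the interpolation estimates enabled by \textbf{POS} to give matching upper and lower bounds in the limit.
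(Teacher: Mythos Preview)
Your overall strategy---Aizenman--Sims--Starr coupling through a cavity graph, the Bethe decomposition to represent the cavity law, approximate invariance, and \textbf{POS} via interpolation---matches the paper's. But two points need correction.

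First, the displayed reduction
\[
\lim_{n\to\infty}\frac1n\Erw[\ln Z(\G)]=\liminf_{n\to\infty}\bc{\Erw[\ln Z(\G(n+1))]-\Erw[\ln Z(\G(n))]}
\]
is not valid in general, even after you know the left-hand limit exists from super-additivity: Stolz--Ces\`aro only gives $\liminf(\text{increment})\le\lim a_n/n\le\limsup(\text{increment})$. So even if you identify the increment with $\cB(\check\pi_{n,\omega})+o_\omega(1)$ as an \emph{equality}, you only obtain
\[
\liminf_{\omega}\liminf_{n}\cB(\check\pi_{n,\omega})\ \le\ \lim_n\frac1n\Erw\ln Z(\G)\ \le\ \liminf_{\omega}\limsup_{n}\cB(\check\pi_{n,\omega}),
\]
and the upper bound with a \emph{liminf} on the right does not follow. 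The paper closes this gap differently: it extracts a subsequential limit $\pi^*$ of $\hat\pi_{n,\omega}$ realising the double $\liminf$, uses the approximate-invariance statement (\Prop~\ref{Prop_apxInv}) to place $\pi^*$ in $\MEAS$, and then invokes the interpolation upper bound \Prop~\ref{Prop_regint} (this is the second, and in fact the essential, role of \textbf{POS}) to conclude $\lim_n\frac1n\Erw\ln Z\le\cB(\pi^*)=\liminf_\omega\liminf_n\cB$. Your proposal announces a ``dual role'' for \textbf{POS} but never states the second one; without it the argument does not close.

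Second, in the definition of $\check\pi_{n,\omega}$ (\S\ref{secFreeEnergy}) the Bethe decomposition $S_1,\ldots,S_\ell$ is that of the \emph{full} factor graph $\G$, not of $\G_{n,\omega}$; \Thm~\ref{Thm_BP} is stated for $\G$. The substantive step (\Prop~\ref{Prop_thmBethePlus}) is therefore not to decompose the cavity graph, but to re-complete $\GG_{n,\omega}$ to $\GG^{\#}$, couple $\GG^{\#}$ with $\GG$ (\Lem~\ref{Lemma_cplXY}), apply \Thm~\ref{Thm_BP} to $\GG^{\#}$, and then verify via \Lem s~\ref{Lemma_comp}--\ref{Lemma_comp2} that the resulting weights $\check\vz^{\#}_i$ and cavity messages reproduce the actual cavity Boltzmann law $\mu_{\GG_{n,\omega},\cC}$ up to $o(1)$ in the cut metric. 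This is how $\check\pi_{n,\omega}$ gets identified with $\pi_{n,\omega}$ and hence, through \Cor~\ref{Cor_thmBethePlus1}, with $\hat\pi_{n,\omega}$.
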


\noindent
In particular, the limit on the left hand side exists, and it can be computed from the Bethe state decomposition.

\subsection{A variational formula}
\label{secSelfContain}
We proceed to state a variational formula for the free energy of the random factor graph models akin to the one from \Thm~\ref{Thm_sgvariational} for the spin glass model.
Namely, we express the limit (\ref{eqlimElog}) variationally as the infimum of $\cB(\pi)$ over $\pi$ chosen from a certain subspace $\MEAS\subset\Meas$.
The definition of $\MEAS$ is an adaptation to the Bethe lattice of the invariance property that Panchenko~\cite{Panchenko} put forward in the case of the \Erdos-\Renyi\ model.

To define the subspace $\MEAS$ let $\mu\in\states$, let $s\in[0,1]$ and let $N,M\geq0$ be integers.
We introduce the random variable
	\begin{align}\label{eqZNMmus}
	\vz(s)=&\prod_{i=1}^N\brk{\sum_{\sigma\in\Omega}p(\sigma)\prod_{j=1}^d\sum_{\tau\in\Omega^k}\vecone\{\tau_{\vh_i}=\sigma\}
		\PSI_{di+j}(\tau)\prod_{h\neq\vh_i}\mu_{s,\vx_{k(di+j)+h}}(\tau_h)}\\
		&\cdot\prod_{i=1}^M\brk{\sum_{\tau\in\Omega^k}\PSI_{dN+i}(\tau)\prod_{j=1}^k\mu_{s,\vx_{dk(N+1)+j}}(\tau_j)}.\nonumber
	\end{align}
Further, let
	\begin{align}\label{eqSelfCont}
	\vt&=\vt(s)=\inf\cbc{\theta\in[0,1]:\int_0^\theta \vz(u)\dd u\geq s\int_0^1 \vz(u)\dd u}
&\mbox{and}\qquad
		\mu^{\Join(N,M)}_{s,x}=\mu_{\vt,x}.
	\end{align}
Thus, for each $\mu\in\meas$ we obtain a random $\mu^{\Join(N,M)}\in\meas$.
Further, given $\pi\in\Meas$ we can apply this operation to a randomly chosen kernel $\MU^\pi\in\meas$,
thus obtaining a random kernel $\MU^{\pi\Join(N,M)}$.
We denote the distribution of $\MU^{\pi\Join(N,M)}$ by $\pi^{\Join(N,M)}$.
Now, let $\MEAS$ be the set of all densities $\pi\in\Meas$ such that $\pi^{\Join(N,M)}=\pi$ for all $N,M\geq0$.
Then we obtain the following self-contained formula for the free energy.

\begin{theorem}\label{Thm_freeEng}
Assume that {\bf POS} holds.
Then
	$$\lim_{n\to\infty}\frac1n\Erw\brk{\ln Z(\G)}=
	\lim_{n\to\infty}\frac1n\Erw\brk{\ln Z(\GG)}=\min_{\pi\in\MEAS}\cB(\pi).$$
\end{theorem}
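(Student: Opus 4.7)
The plan is to combine \Thm~\ref{thmBethePlus} with a two-sided argument: on one side identify a limit point of the sequence $(\check\pi_{n,\omega})_{n,\omega}$ of cavity-message distributions and show that it actually lies in $\MEAS$, which upper bounds $\min_{\pi\in\MEAS}\cB(\pi)$ by the free-energy density; on the other side establish $\cB(\pi)\geq\lim_{n\to\infty}\frac1n\Erw[\ln Z(\G)]$ for every $\pi\in\MEAS$ via a Guerra--Panchenko interpolation powered by {\bf POS}. The same route will handle $\GG$ via Fact~\ref{Fact_simple} and a contiguity argument.

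For the first direction, \Thm~\ref{thmBethePlus} already supplies $\lim_n \frac{1}{n}\Erw[\ln Z(\G)] = \liminf_{\omega\to\infty} \liminf_{n\to\infty} \cB(\check\pi_{n,\omega})$. Because $\meas$ is a compact Polish space, $\Meas$ is compact in the topology of weak convergence, so I can extract a double subsequential limit $\pi^\star\in\Meas$ realising this $\liminf$. Continuity of $\cB$ under weak convergence follows from the bounded-kernel form of~\eqref{eqBetheFunctional} together with the integrability bound~\eqref{eqP}, and yields $\cB(\pi^\star)\leq\lim_n\frac1n\Erw[\ln Z(\G)]$. The crucial additional step is to check $\pi^\star\in\MEAS$, i.e.\ $(\pi^\star)^{\Join(N,M)}=\pi^\star$ for every $N,M\geq0$. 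This is where the Poissonisation of the cavities pays off: since $\vX,\vY\sim\Po(\omega)$ with $\omega\to\infty$, the laws of $(\vX,\vY)$ and $(\vX+N,\vY+M)$ are $O(\sqrt{(N+M)/\omega})$-close in total variation, so deleting $N$ further variable nodes and $M$ further constraints from $\G_{n,\omega}$ changes the cavity-message kernel negligibly. Re-inserting these nodes with random messages is exactly the Bayesian reweighting by $\vz(s)$ in~\eqref{eqZNMmus}--\eqref{eqSelfCont}, so the invariance $(\pi^\star)^{\Join(N,M)}=\pi^\star$ is inherited in the double limit.

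For the matching lower bound I interpolate in the spirit of~\cite{Panchenko,Panchenko2}, but on the cavity graph $\G_{n,\omega}$ rather than on $\G$ itself. Fix $\pi\in\MEAS$ and a sample $\MU^\pi\in\meas$, and introduce a family $\G(t)$, $t\in[0,1]$, of perturbed factor graphs which at $t=1$ agrees with $\G_{n,\omega}$ and at $t=0$ reduces to a product-form reference whose logarithmic partition function is $n\cB(\pi)+o(n)$; intermediate $t$ are obtained by replacing a random fraction $1-t$ of the genuine constraints with `cavity' factors whose incident messages are drawn independently from $\MU^\pi$. Differentiating the interpolating free energy in $t$ and Taylor-expanding the logarithm produces a signed sum of moments of the form $\Erw\bigl[\bigl(1-\sum_{\sigma\in\Omega^k}\PSI(\sigma)\int_0^1\prod_{i=1}^k\MU_{s,\vx_i}(\sigma_i)\,\dd s\bigr)^\ell\bigr]$ together with their mixed variants involving two independent copies of $\MU^\pi$. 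The invariance $\pi=\pi^{\Join(N,M)}$ allows one of these copies to be replaced by a fresh tilted sample, at which point {\bf POS} is exactly the inequality required to sign each derivative term. Telescoping $t:0\to1$, then letting $\omega\to\infty$ and invoking the first direction, delivers $\frac1n\Erw[\ln Z(\G)]\leq\cB(\pi)+o(1)$.

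The main obstacle is the rigidity of the $d$-regular Bethe lattice in the interpolation step: in the \Erdos--\Renyi\ treatment of~\cite{Panchenko} one may freely add or delete individual constraints while preserving the Poisson degree distribution, but in the regular model every excised constraint leaves $k$ variable half-edges whose degrees drop below $d$, and a naive re-pairing risks creating short cycles that would corrupt the local tree structure linking $\cB$ to the kernel messages. Interpolating on $\G_{n,\omega}$ circumvents this because the Poisson cavities $\vX,\vY$ already provide the required half-edges, and the tilt operation $\Join(N,M)$ on $\MEAS$ mirrors exactly the combinatorial reinsertion of randomly chosen cavity half-edges. Transferring the conclusion from $\G$ to $\GG$ is then a routine contiguity calculation based on Fact~\ref{Fact_simple} and continuity of $\cB$ in $\pi$.
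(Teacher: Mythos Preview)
Your two-step plan---interpolation for the upper bound and a cavity/compactness argument for the lower bound---matches the paper's architecture (\Thm~\ref{Thm_freeEng} is stated to follow immediately from \Prop s~\ref{Prop_regint} and~\ref{MainAizLemReg}), but there is a logical circularity in your first direction and a mischaracterisation of where the $\MEAS$-invariance enters in the second.

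For the lower bound you invoke \Thm~\ref{thmBethePlus} as a black box. In the paper's logical order that theorem is established \emph{after} \Thm~\ref{Thm_freeEng}: its proof (via \Lem~\ref{Lemma_thmBethePlus1}) explicitly uses \Prop s~\ref{Prop_regint} and~\ref{MainAizLemReg}, which together already give \Thm~\ref{Thm_freeEng}. So your first direction is circular as written. What the paper does instead is derive a one-sided Aizenman--Sims--Starr inequality directly on the auxiliary cavity graph $\hat\G_{n,\omega}$ (\Prop~\ref{Prop_Aizenman}), obtaining $\liminf_n\frac1n\Erw[\ln Z(\G)]\geq\cB(\hat\pi_{n,\omega})-o_\omega(1)$, and separately proves approximate $\Join(N,M)$-invariance of $\hat\pi_{n,\omega}$ (\Prop~\ref{Prop_apxInv}); only then is compactness invoked to extract $\pi^\star\in\MEAS$. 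Your Poisson total-variation heuristic for invariance is exactly the right idea (it is \Lem~\ref{Lemma_perturb1}), but it has to be paired with the explicit reinsertion computation (\Lem~\ref{Lemma_perturb2}) and applied to $\hat\pi_{n,\omega}$ directly, not routed through \Thm~\ref{thmBethePlus}.

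For the upper bound the paper separates the two ingredients more cleanly than you do. The interpolation (\Prop~\ref{Prop_regint1}) yields $\Erw[\ln Z(\G)]\leq\cB'(\mu)-\cB''(\mu)+o(n)$ for an \emph{arbitrary} strong kernel $\mu$, and {\bf POS} is the sole input to signing the derivative (\Lem~\ref{Lemma_intderiv}); no invariance is used there, and one of the two kernels in the {\bf POS} inequality is simply $\dot\mu_{\G_t}$, not a tilted resample. The $\MEAS$-invariance enters only in a subsequent, purely algebraic step (\Prop~\ref{Prop_regint2}) that collapses the telescoping sums $\cB'(\MU^\pi)$ and $\cB''(\MU^\pi)$, each involving order-$n$ many factors, down to $n\cB(\pi)$. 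In particular the interpolation endpoint is not $n\cB(\pi)$ as you write; the reduction to $\cB(\pi)$ is a genuine second step requiring the $\Join(N,M)$-fixed-point property. The paper also interpolates on a Poissonised version of $\G$ itself (with an auxiliary continuous coordinate $s\in[0,1]$ carried by a new variable node), not on the cavity graph $\G_{n,\omega}$.
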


Admittedly, the variational formula may not be easy to evaluate.
But \Thm~\ref{Thm_freeEng} places a lid on the complexity of the problem, and \Thm~\ref{thmBethePlus} provides an explicit combinatorial interpretation of the minimizer in terms of Belief Propagation fixed points and Bethe states.

\subsection{Discussion and related work}\label{Sec_related}
Over the past two decades an enormous amount of research, based on both rigorous and non-rigorous techniques, has been devoted to random factor graph models.
Much of this work has been sparked by the cavity method advanced in the original contribution of \Mezard\ and Parisi~\cite{MP1}.
A survey of this literature up until about 2008 can be found in~\cite{MM}.
More recently models of Bayesian inference problems such as the stochastic block model have received a great deal of attention as well; this literature is surveyed in~\cite{AbbeSurvey,Cris,LF}.

Rigorous work on random factor graphs and the cavity method can broadly be split into two categories.
First, contributions that investigate physics predictions on specific models.
Many of these contributions,  particularly the earlier ones, rely on `classical' techniques such as the second moment method, albeit frequently with physics-inspired twists.
Examples include work on the $k$-SAT threshold~\cite{nae,yuval,Catching,KostaSAT,DSS1}, which culminated in the proof of the $k$-SAT threshold conjecture for large $k$~\cite{DSS3}, the Potts model and the random graph coloring problem~\cite{AchNaor,Cond,ColReg,CDGS} or the hard-core model~\cite{DaniMoore,DSSind}. 
Some recent work is based on the powerful but technically demanding idea of `spatial coupling', which has led to important results in, e.g., coding theory~\cite{GMU} and random constraint satisfaction problems~\cite{DimitrisSpatial}.
A second line of work focused on the mathematical vindication of the cavity method in general, with applications to specific models of interest.
Examples include work on the role of spatial mixing~\cite{DemboBrazil,dembo}, the use of the interpolation method~\cite{bayati,PanchenkoTalagrand}, phase transitions in inference problems~\cite{Jean,CKPZ},
and contributions based on the asymptotic analysis of the Boltzmann distribution such as the influential work of Panchenko~\cite{Panchenko} as well as~\cite{Victor,Bethe}.
The present paper belongs to this second category.

In the following we discuss the main results and methods of the paper and how they compare to prior mathematical research.
Subsequently we compare the present work with the physics intuition and discuss directions for future research.

\subsubsection*{Mathematical work}
We regard \Thm~\ref{Thm_BP} as the main result of the paper.
The theorem confirms in great generality one of the key assumptions behind the cavity method and explains the success of Belief Propagation as a device for analyzing random regular factor graph models.
Indeed, the existence of a Bethe state decomposition has been conjectured explicitly, e.g., by \Mezard\ and Montanari~\cite[\Chap~19]{MM}; see also Dembo and Montanari~\cite{DemboBrazil}.

In a prior paper~\cite{Bethe} we constructed a Bethe state decomposition for random factor graph models of \Erdos-\Renyi\ type,
where the constraint nodes independently choose $k$-tuples of adjacent variable nodes.
While we will be able to use some of the general tools developed in that work, the main argument breaks in the case of the Bethe lattice due to its rigid geometry.
Indeed, the construction of the Bethe state decomposition hinges on coupling arguments involving, e.g., a coupling of a factor graph with $n$ variable and $m$ constraint nodes and another one with parameters $n'$ and $m'$ such that $n=n'+O(1)$, $m=m'+O(1)$.
Due to the Poisson degree distribution and the Stein-Chen property, such arguments are pretty straightforward in the \Erdos-\Renyi\ case.
One might say that the \Erdos-\Renyi\ graph resembles a gentle climbing wall  with footholds supplied by the irregularity of the Poisson degree distribution.
By contrast, the Bethe lattice with its regular degree makes for a smooth cliff.
As a consequence, the Bethe lattice requires new ideas, leading to a rather subtle but ultimately elegant argument.
The upshot is that this proof, which we present in \Sec~\ref{Sec_Bethe}, can be expected to generalize to other random graph models with given degrees.
Apart from the appeal of such lattice-like models from a physics perspective, these models play a vital role, e.g., in coding theory, where a suitably chosen degree sequence is apt to greatly boost performance~\cite{RichardsonUrbanke}.

Similarly, the variational formula for the free energy provided by \Thm~\ref{Thm_freeEng} is a generalization and adaptation of the formula established by Panchenko~\cite{Panchenko} for models of \Erdos-\Renyi\ type with spins $\Omega=\{\pm1\}$.
Panchenko's proof relies on two ingredients: an interpolation argument and a coupling argument.
So does ours.
But while the interpolation argument, an adaptation of the technique of Franz and Leone~\cite{FranzLeone}, goes through without too much trouble, the coupling argument does not.
Once more the rigidity of the Bethe lattice poses substantial challenges that require subtle new arguments.
A further, albeit relatively minor extension is that the present work applies to relatively general models with two or more spins, subject only to the condition {\bf POS}.
A further similarity between Panchenko's work and ours is the embedding of discrete Boltzmann distributions into a compact metric space, which enables us to pick convergent subsequences.
While Panchenko resorts to the Aldous-Hoover representation, here we use the cut metric and the associated kernel space, which is convenient to link the combinatorial representation of the measures in terms of messages directly with the free energy formula.
That the Aldous-Hoover representation is closely related to graph limits is, of course, a well known fact~\cite{DJ}.

Furthermore, Bayati, Gamarnik and Tetali~\cite{bayati} applied the interpolation method to factor graph models, including ones with regular degrees, to establish the {\em existence} of the limit $\lim_{n\to\infty}\frac1n\Erw[\ln Z(\G)]$ in certain cases via a super-additivity argument.
In the process they also used arguments based on `cavities', i.e., the removal of a small but linear number of vertices from the graph; a similar trick was used in~\cite{BP} as well.
But here, particularly in the construction of the Bethe state decomposition, we need to tread much more carefully.
In particular, while removal of a small linear number of vertices does not shift the free energy too much, here we can only afford the creation of a very small number of cavities in order to avoid a distortion of the Boltzmann distribution, an extremely volatile object.

\Thm~\ref{thmBethePlus}, which expresses the free energy density in terms of the Bethe state decomposition, is a synthesis of \Thm s~\ref{Thm_BP} and~\ref{Thm_freeEng}.
The proof shows that the free energy can be expressed in terms of a particular distribution on kernels $[0,1]^2\to\cP(\Omega)$, namely the one that encodes the Bethe state decomposition of the random factor graph or, more specifically, the associated Belief Propagation messages.
No corresponding result was previously known even in the conceptually simpler \Erdos-\Renyi\ case.

Apart from the interpolation method and coupling arguments, the proofs of \Thm s~\ref{Thm_BP}--\ref{Thm_freeEng} rely on some of the techniques that we developed in~\cite{Victor,CKPZ,Limits,Bethe}, particularly the cut metric and its ramifications.
The cut metric, which we apply to kernel representations of probability distributions, was originally developed in the context of the regularity method~\cite{FriezeKannan} and the theory of graph limits in combinatorics~\cite{Lovasz}.
Here we use the cut metric and certain assorted results, such as the `pinning lemma' from~\cite{CKPZ} (\Lem~\ref{Lemma_pinning} below) from~\cite{CKPZ} as tools, e.g., in the construction of the Bethe state decomposition.

While the present paper is concerned with diluted models where each node has a (fixed) bounded number of neighbors, there is also a substantial literature on fully connected models.
The prime example, of course, is the Sherrington-Kirkpatrick model.
The monographs of Panchenko~\cite{PanchenkoBook} and Talagrand~\cite{Tal1} provide an overview of this literature.
 In particular, the TAP equations, the (simplified) fixed point equations that correspond to the Belief Propagation equations in the fully connected case, have been established in several cases~\cite{Auffinger}.

In \Sec~\ref{Sec_app} we work out several application of the general results to specific models, such as the spin glass model from \Sec~\ref{Sec_intro}.
Pointers to related work on the specific problems can be found there.

\subsubsection*{The physics perspective}
The seminal work of \Mezard\ and Parisi~\cite{MP1} marks the starting point of a substantial body of physics work.
Highlights include the Survey Propagation algorithm and precise predictions on phase transitions, including satisfiability thresholds in combinatorial problems~\cite{pnas,MP2,MPZ}.

The results provided by \Thm~\ref{Thm_BP}--\ref{Thm_freeEng} are perfectly in line with the physics predictions.
But we should comment on a subtle point that is apt to cause confusion.
Namely, it has been pointed out that within the replica symmetric phase of certain models the support of the Boltzmann distribution may decompose into an exponentially large number of tiny `clusters'~\cite{MPZ,pnas}, a phenomenon called `dynamic replica symmetry breaking'.
Indeed, it has been conjectured that each of these tiny clusters induces a Bethe state~\cite{MM}; for the special case of the random graph coloring problem, this can be verified rigorously~\cite{Cond}.
At first glance this proliferation of Bethe states may appear to contradict \Thm~\ref{Thm_BP}, where the number of Bethe states is upper-bounded by an arbitrarily slowly growing function $L(n)$.
Yet the Bethe state decomposition is not unique, and despite the abundance of tiny clusters, $\mu_{\G}$ itself is replica symmetric (i.e., condition~\eqref{eqRepSym} holds for $S= \Omega^n$) throughout the dynamic RSB phase.
In effect, \Thm~\ref{Thm_BP} would render just a single Bethe state that comprises all of the tiny clusters.
By contrast, beyond the dynamic RSB phase, within the so-called condensed phase, \Thm~\ref{Thm_BP} would yield a non-trivial decomposition.
The existence of a condensed phase has been established rigorously in several examples~\cite{SoftCon,CKPZ}.

The variational formula for the free energy furnished by \Thm~\ref{Thm_freeEng} is in line with the physics work, which does, however, provide additional clues as to the structure of the minimizer of the functional $\cB(\nix)$.
Specifically, three different scenarios are expected to occur, depending on the model and the choice of its parameters.
First, the replica symmetric scenario with a single (or a  bounded number of) Bethe states. 
Second, the so-called `one-step replica symmetry breaking' scenario, where there are an unbounded number of `independent' Bethe states.
Third, the `full replica symmetry breaking'  scenario, where the Bethe states form a hierarchical structure; see~\cite{MM} for a detailed discussion.
Clearly, in order to better evaluate the variational formula it would be very valuable to establish this additional structural information  rigorously;  in the \Erdos-\Renyi\ case first attempts have been undertaken in~\cite{Panchenko2}.

\subsection{Organization}
In \Sec~\ref{Sec_prelim} we introduce the necessary pieces of notation and state some basic results that we will need.
Then in \Sec~\ref{Sec_cutm} we revisit the cut metric.
While much of what we need on this subject already appears in earlier papers, there are a few general preparations that we need to make and that we carry out in that section.
Subsequently \Sec~\ref{Sec_Bethe} deals with the proof of \Thm~\ref{Thm_BP}.
In \Sec s~\ref{secInterpolate} and~\ref{secAz} we then prove \Thm~\ref{Thm_freeEng} about the variational formula for the free energy.
\Sec~\ref{secAz} also contains the proof of \Thm~\ref{thmBethePlus}.
Finally, in \Sec~\ref{Sec_app} we work through a few applications, including the spin glass and hard-core models from \Sec~\ref{Sec_intro}.

\section{Preliminaries}\label{Sec_prelim}

\subsection{Basics}\label{Sec_basics}
For an integer $\ell\geq1$ we use the shorthand $[\ell]=\{1,\ldots,\ell\}$.
Furthermore, the symbols $O(\nix),\Omega(\nix),\ldots$ refer to the limit $n\to\infty$ by default.
To indicate asymptotics with respect to another variable $K$ tending to infinity, we write  $O_K(\nix),\Omega_K(\nix)$, etc.
Further, where set operations involve singletons, we usually omit braces.
For instance, if $x\in X$, then we just write $X\setminus x$ rather than $X\setminus\cbc x$.

For a finite set $\cX$ we let $\cP(\cX)$ be the set of all probability distributions on $\cX$, endowed with the total variation distance.
More generally, if $(\cX,\fA)$ is a measurable space, then $\cP(\cX)=\cP(\cX,\fA)$ denotes the set of all probability measures on this space.
Further, for probability measures $\pi,\pi'\in\cP(\cX)$ we let $\Gamma(\pi,\pi')$ be the set of all couplings of $\pi,\pi'$.
Thus, $\gamma\in\Gamma(\pi,\pi')$ is a probability distribution on $\cX\times\cX$ with marginals $\pi,\pi'$.

Suppose that $\cX$ is a finite set, that $n\geq1$ is an integer and that $\mu\in\cP(\cX^n)$.
Then we denote by $\SIGMA^\mu,\SIGMA^{1,\mu},\SIGMA^{2,\mu},\ldots$ a sequence of independent samples from $\mu$.
We omit the superscript $\mu$ where it is evident from the context.
Further, if $f:(\cX^n)^\ell\to\RR$ is a function, then we write
$\scal{f(\SIGMA^1,\ldots,\SIGMA^\ell)}\mu$ for the expectation of $f$ with respect to independent samples from $\mu$; thus,
	$$\scal{f(\SIGMA^1,\ldots,\SIGMA^\ell)}\mu=\sum_{\sigma^1,\ldots,\sigma^\ell\in\cX^n}
			f(\sigma^1,\ldots,\sigma^\ell)\prod_{i=1}^\ell\mu(\sigma^i).$$

Suppose that $\Omega,V\neq\emptyset$ are finite sets.
For a distribution $\mu\in\cP(\Omega^V)$ and an set $I\subset V$ we denote by $\mu_I$ the joint distribution of the coordinates $I$.
That is,
	\begin{align*}
	\mu_I(\sigma)&=\sum_{\tau\in\Omega^n}\vecone\{\forall i\in I:\tau_i=\sigma_i\}\mu(\tau)&(I\subset V,\sigma\in\Omega^I).
	\end{align*}
For $\sigma\in\Omega^I$ we use the shorthand $\mu(\sigma)=\mu_I(\sigma)$.
Moreover, if $I=\{i_1,\ldots,i_l\}$ we usually write $\mu_{i_1,\ldots,i_l}$ instead of $\mu_{\{i_1,\ldots,i_l\}}$.
Additionally, if $I\subset V$ and $\tau\in\Omega^V$, then we let $\tau_I=(\tau_i)_{i\in I}$ be the restriction of $\tau$ to $I$.

We keep the notation from \Sec~\ref{Sec_results};
in particular, $\Omega$ continues to denote a finite set of spins, $p$ is a probability distribution on $\Omega$, $\Psi$ is a measurable space of functions $\Omega^k\to(0,1)$, and $P$ is a probability distribution on $\Psi$.
In addition throughout  the paper we denote by
	$$\vx_i,\hat\vx_i,\vs_i,\vx_{i,j},\vx_{i,j}',\vx_{i,j}'',\hat\vx_{i,j}\qquad(i,j\geq1)$$
uniformly distributed random variables with values in $[0,1]$.
Additionally,
	$$\PSI,\PSI_i,\PSI_{i,j},\PSI_{i,j}',\PSI_{i,j}'',\hat\PSI_{i,j}\qquad(i,j\geq1)$$
denote elements of $\Psi$ drawn from the distribution $P$.
Further, 
	$$\vh_i,\vh_{i,j},\vh_i',\hat\vh_{i,j}\qquad(i,j\geq1)$$
are uniformly distributed random variables with values in $[k]$.
All of the above random variables are mutually independent as well as independently of any other sources of randomness.
These random variables yield random functions that will play an important role: for $i\geq1$ we let
	\begin{align*}
	\VARPHI_i&:\Omega^{dk}\to\RR,&\sigma\mapsto\sum_{\chi\in\Omega}p(\chi)\prod_{j=1}\vecone\{\sigma_{k(j-1)+\vh_{i,j}}=\chi\}\PSI_{i,j}(\sigma),\\
	\hat\VARPHI_i&:\Omega^{dk}\to\RR,&\sigma\mapsto\sum_{\chi\in\Omega}p(\chi)\prod_{j=1}\vecone\{\sigma_{k(j-1)+\hat\vh_{i,j}}=\chi\}\hat\PSI_{i,j}(\sigma).
	\end{align*}

\subsection{Factor graphs}

In \Sec~\ref{Sec_results} we already introduced the random factor graph model $\G(n,d,p,P)$.
To facilitate the proofs we need the following abstract definition.

\begin{definition}
Suppose that $(\cX,\fA)$ is a measurable space.
An {\em $\cX$-factor graph} $G=(V,F,(\partial a)_{a\in F},(\psi_a)_{a\in F},\pi)$ consists of
\begin{itemize}
\item a finite set $V$ of {\em variable nodes},
\item a finite set $F$ of {\em constraint nodes},
\item a set $\partial a\subset V$ for each $a\in F$,
\item a function $\psi_a:\cX^{\partial a}\to[0,\infty)$ for each $a\in F$ and
\item a probability measure $\prior$ on $\cX^V$, called the {\em prior}.
\end{itemize}
\end{definition}

A factor graph induces a bipartite graph on $V\cup F$, where $x\in V$ is adjacent to $a\in F$ iff $v\in\partial a$.
Accordingly, for a variable node $v$ we let $\partial v\subset F$ be the set of adjacent constraint nodes (i.e., $a\in\partial v$ iff $v\in\partial a$).
The bipartite graph defines a metric on $V\cup F$, the shortest path metric.
For a variable or constraint node $u$ we let $\partial^\ell u=\partial^\ell_Gu$ be the set of all nodes at distance precisely $\ell$ from $u$.
Moreover, $\nabla^\ell u=V\cap(u\cup\bigcup_{i\leq\ell}\partial^iu)$ denotes the set of all variable nodes at distance no more than $\ell$ from $u$.

Further, for an assignment $\sigma\in\Omega^V$ and $a\in F$ we use the notation $\psi_a(\sigma)=\psi_a(\sigma_{\partial a})$ and we define
	$$\psi_G(\sigma)=\prod_{a\in F}\psi_a(\sigma_{\partial a})\qquad\mbox{and}\qquad Z(G)=\int_{\cX^V}\psi_G(\sigma)\dd \prior(\sigma).$$
Providing that $Z(G)>0$, we introduce a probability measure $\mu_G$ on $\cX^V$, the {\em Boltzmann distribution}, by letting
	$$\dd\mu_G(\sigma)=\frac{\psi_G(\sigma)}{Z(G)}\dd\prior(\sigma).$$

Mostly the factor graphs that we deal with will have a finite space $\Omega$ and the prior $\prior$ will be the product measure $\prior=\bigotimes_{v\in V}p_v$.
In this case we introduce the standard messages given an event $S\subset\Omega^V$  as in \Sec~\ref{Sec_results}:
for a constraint node $a$ and $v\in\partial a$ we let
	\begin{align}\label{eqstdva}
	\mu_{G,v\to a}(\sigma\mid S)&=\frac{\scal{\vecone\{\SIGMA_v=\sigma\}/\psi_a(\SIGMA)}{\mu_{G}(\nix|S)}}
		{\scal{1/\psi_a(\SIGMA)}{\mu_{G}(\nix|S)}}&(\sigma\in\Omega),\\
	\mu_{G,a\to v}(\sigma\mid S)&=\frac{\scal{\vecone\{\SIGMA_v=\sigma\}/(p_v(\sigma)\prod_{b\in\partial v\setminus a}\psi_b(\SIGMA))}{\mu_{G}(\nix|S)}}
		{\scal{1/(p_v(\SIGMA_v)\prod_{b\in\partial v\setminus a}\psi_b(\SIGMA))}{\mu_{G}(\nix|S)}}&(\sigma\in\Omega).\label{eqstdav}
	\end{align}
In the case that $S=\Omega^V$ is the entire phase space, we omit the conditioning from the notation and just write
$\mu_{G,v\to a}$ and $\mu_{G,a\to v}$, respectively.

\subsection{The cut metric revisited}\label{Sec_cutm}

The cut metric, defined in (\ref{eqDefCutMetric}), plays a key role in the proofs of the main results.
In this section we summarize a few basic facts about the cut metric.
Although some have been proved in prior work, we will need to provide a few extensions and adaptations for our purposes.
In addition to the continuous version from (\ref{eqDefCutMetric}), we also need a discrete version of the cut metric, which we present in \Sec~\ref{Sec_cut_disc}.

\subsubsection{The continuous cut metric}
We remember that $\states$ denotes the space of all measurable maps $[0,1]^2\to\cP(\Omega)$, up to equality almost everywhere; we call such maps {\em strong kernels}.
The cut distance (\ref{eqDefCutMetric}) induces a pre-metric on this space~\cite{Janson}.
Moreover, on the space $\meas$ obtained by identifying points at cut distance $0$ the cut distance yields a metric.
The elements of $\meas$ are called {\em weak kernels}.
We drop the attribute and just speak of kernels where there is no danger of confusion.

\begin{proposition}[{\cite{Limits}}]\label{Prop_compact}
Endowed with the cut distance $\meas$ is a compact Polish space.
\end{proposition}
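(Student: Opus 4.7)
The plan is to establish three things in turn: first that $\Cutm$ induces a genuine metric on the quotient space $\meas$, then that $(\meas,\Cutm)$ is sequentially compact, and finally to invoke the fact that any compact metric space is automatically Polish.

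First I would verify the metric axioms on $\meas$. Symmetry follows by interchanging the roles of $\varphi,\varphi'$ together with the roles of $\mu,\mu'$. Non-negativity is built into the definition, and non-degeneracy is forced by the quotient construction. The triangle inequality is the one slightly delicate point: given almost-optimal measure-preserving maps witnessing $\Cutm(\mu,\nu)$ and $\Cutm(\nu,\rho)$, one composes them (using that compositions of measure-preserving maps of $[0,1]$ are measure-preserving) and then applies the triangle inequality for each of the integrals appearing under the supremum.

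The bulk of the work is compactness, for which I would mimic the Lov\'asz--Szegedy compactness theorem for graphons. Since $\Omega$ is finite, a strong kernel $\mu\in\states$ can be encoded as a tuple $(\mu^\omega)_{\omega\in\Omega}$ of scalar kernels $[0,1]^2\to[0,1]$ summing pointwise to $1$. The first sub-step is a Frieze--Kannan-style weak regularity lemma for $\states$: for every $\eps>0$ there exists $N=N(\eps,|\Omega|)$ such that every $\mu\in\states$ admits a step-kernel approximation $\tilde\mu$, constant on a common partition of $[0,1]$ into $N$ parts in each coordinate, with $\Cutm(\mu,\tilde\mu)<\eps$. Because block averages of probability vectors are again probability vectors, $\tilde\mu$ remains in $\states$. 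Given a sequence $(\mu_n)$, set $\eps_k=1/k$ and approximate each $\mu_n$ by a step kernel $\tilde\mu_n^{(k)}$. After permuting coordinates by measure-preserving bijections so as to arrange the partition parts in a canonical order, the data of $\tilde\mu_n^{(k)}$ lives in a compact subset of $[0,1]^{|\Omega|N^2}$. A Bolzano--Weierstrass plus standard diagonal extraction then yields a subsequence along which $\tilde\mu_n^{(k)}$ converges in cut distance for every fixed $k$; the resulting limits are Cauchy in $k$ and converge to some $\nu\in\meas$, which is the cut-distance limit of the extracted subsequence of $(\mu_n)$.

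The main obstacle is that the cut distance takes the supremum over $\omega\in\Omega$ \emph{outside} the infimum over $(\varphi,\varphi')$, so one cannot simply regularize each component $\mu^\omega$ independently. The remedy, exploiting finiteness of $\Omega$, is to regularize iteratively: regularize $\mu^{\omega_1}$, then refine the partition so as to also regularize $\mu^{\omega_2}$, and so on; after $|\Omega|$ rounds the partition is common to all coordinates and its size is still bounded in terms of $\eps$ and $|\Omega|$. Once compactness is secured, separability and completeness come for free in any compact metric space, so $\meas$ is Polish and the proposition follows.
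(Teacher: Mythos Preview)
The paper does not actually prove this proposition: it is quoted verbatim from \cite{Limits} and no proof is given in the present paper. So there is no ``paper's own proof'' to compare against; your sketch is being measured against the argument in the cited reference.

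Your overall strategy---a vector-valued weak regularity lemma followed by a Bolzano--Weierstrass/diagonal extraction, in the spirit of the Lov\'asz--Szegedy compactness theorem---is exactly the route taken in \cite{Limits}, and your observation that one needs a \emph{common} regular partition for all coordinates $\omega\in\Omega$ (obtained by iterated refinement, with size bounded in terms of $\eps$ and $|\Omega|$) is the key technical point there as well. One small misstatement: in the definition~\eqref{eqDefCutMetric} the supremum over $\omega$ sits \emph{inside} the infimum over $(\varphi,\varphi')$, not outside; the genuine issue (which you correctly identify in substance) is that a single pair $(\varphi,\varphi')$ must work simultaneously for all $\omega$. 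Also, your triangle-inequality sketch via composition of measure-preserving maps is a bit too quick: after composing, the indicator $\vecone_S(s)$ is not a function of $\varphi_1(s)$, so a naive change of variables does not go through; the clean way is to pass through the equivalent coupling formulation (as in Janson's monograph~\cite{Janson}) or to use that measure-preserving bijections are dense among measure-preserving maps in the relevant sense.
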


We continue to write $\Meas$ for the space of all probability measures on $\meas$.
This space is endowed with the weak topology.
Since $\meas$ is a compact Polish space, so is $\Meas$.
Hence, there is a natural metric on $\Meas$ that induces the weak topology, the $L_1$-Wasserstein metric.
We take license to denote this metric by $\Cutm(\nix,\nix)$ as well.
Thus, recalling that $\Gamma(\pi,\pi')$ is the set of all couplings of $\pi,\pi'\in\Meas$, we have
	\begin{align*}
\Cutm(\pi,\pi')&=\inf\cbc{\int_{\meas\times\meas}\Cutm(\mu,\mu')\,\dd\gamma(\mu,\mu'):
		\gamma\in\Gamma(\pi,\pi')}.
	\end{align*}
For $\pi\in\Meas$ we let $\MU^\pi\in\meas$ denote a sample.
We just write $\MU$ where $\pi$ is apparent.

By comparison to other metrics on the space of measurable functions $[0,1]^2\to\cP(\Omega)$ the cut metric is extremely weak; this is highlighted by the compactness of the space $\meas$ provided by \Prop~\ref{Prop_compact}.
Yet the cut metric is sufficiently strong to ensure that certain functions that will be of vital interest to us are continuous.
Indeed, suppose that $m,n>0$ are integers and that $f:\Omega^{m\times n}\to\RR$ is a function.
Then for $\mu\in\states$ we define the random variable
	\begin{align*}
	\scal f\mu&=\sum_{\sigma\in\Omega^{m\times n}}f(\sigma)
		\int_0^1\cdots\int_0^1\prod_{i=1}^m\prod_{j=1}^n\mu_{s_i,\vx_{j}}(\sigma_{i,j})\dd s_1\cdots\dd s_m.
	\end{align*}
Because we average out the $s_i$ and the $\vx_i$ are uniform, the random variables $\scal f\mu$ and $\scal f\nu$ are identically distributed if $\Cutm(\mu,\nu)=0$.
Thus, we may safely write $\scal f\mu$ for $\mu\in\meas$.

\begin{lemma}\label{Lemma_cntFct}
For any $f:\Omega^{m\times n}\to\RR$, $\ell\geq1$
 the map $\mu\in\meas\mapsto\Erw\brk{\scal f\mu^\ell}$ is continuous with respect to the cut metric.
\end{lemma}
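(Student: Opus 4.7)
The plan is to expand $\Erw[\scal{f}{\mu}^\ell]$ as a finite $\RR$-linear combination of moment functionals
\[
T_{\boldsymbol\sigma}(\mu) := \int_{[0,1]^{m\ell+n}} \prod_{r=1}^\ell\prod_{i=1}^m\prod_{j=1}^n \mu_{s^r_i,x_j}(\sigma^r_{i,j}) \prod_{r=1}^\ell\prod_{i=1}^m\dd s^r_i\prod_{j=1}^n\dd x_j,
\]
one for each spin array $\boldsymbol\sigma=(\sigma^1,\ldots,\sigma^\ell)\in(\Omega^{m\times n})^\ell$, with bounded coefficients $\prod_{r=1}^\ell f(\sigma^r)$. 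Indeed, expanding the $\ell$-th power of the sum defining $\scal{f}{\mu}$ and taking expectation over $\vx_1,\ldots,\vx_n$ (which are shared across all $\ell$ copies, whereas the dummy $s$-variables are separate in each copy) gives exactly this representation. Since the sum is finite and the coefficients are bounded, it suffices to prove continuity of each $T_{\boldsymbol\sigma}$ in the cut metric on $\meas$.

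Fix $\boldsymbol\sigma$ and take $\mu,\mu'\in\meas$. The argument is a telescoping swap in the spirit of the graphon counting lemma. Choose measure-preserving maps $\varphi,\varphi':[0,1]\to[0,1]$ witnessing the infimum defining $\Cutm(\mu,\mu')$ up to an additive $\delta$, and replace $\mu'$ by its pullback under $(\varphi,\varphi')$; this represents the same element of $\meas$, so one may assume $\sup_{S,X\subset[0,1],\,\omega\in\Omega}|\int_S\!\int_X(\mu_{s,x}(\omega)-\mu'_{s,x}(\omega))\dd s\dd x|<2\delta$. Now swap the $m\ell n$ factors in the integrand of $T_{\boldsymbol\sigma}(\mu)$ one at a time, replacing $\mu$ by $\mu'$. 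For each swap, Fubini permits integrating out the other $m\ell+n-2$ coordinates first, producing a residual factor $\bar h(s,x)\in[0,1]$ paired with the pair $(s,x)$ attached to the swapped edge; the swap error takes the form $\int_0^1\!\int_0^1 \bar h(s,x)(\mu_{s,x}(\omega)-\mu'_{s,x}(\omega))\dd s\dd x$ for $\omega=\sigma^r_{i,j}$. Writing $\bar h(s,x)=\int_0^1\vecone\{\bar h(s,x)\geq t\}\dd t$ and exchanging integrals, this swap error is dominated by $\sup_{U\subseteq[0,1]^2}|\int_U(\mu_{\cdot,\cdot}(\omega)-\mu'_{\cdot,\cdot}(\omega))\dd s\dd x|$, which by the standard equivalence (up to a universal factor of $4$) with the rectangle-cut-norm is $O(\delta)$. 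Summing the $m\ell n$ swap errors yields $|T_{\boldsymbol\sigma}(\mu)-T_{\boldsymbol\sigma}(\mu')|=O_{m,n,\ell}(\delta)\to 0$ as $\delta\to 0$, establishing continuity of $T_{\boldsymbol\sigma}$ and hence of $\mu\mapsto\Erw[\scal{f}{\mu}^\ell]$.

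The main obstacle is the passage from the rectangle-cut-norm $\sup_{S,X}|\int_S\!\int_X\cdot|$ that the cut metric controls directly to the general-measurable-set version $\sup_U|\int_U\cdot|$ required to handle the non-product residual $\bar h(s,x)$ arising in the swap step: one cannot in general factor $\bar h$ as $a(s)b(x)$. The layer-cake decomposition combined with the universal equivalence between these two suprema (standard in the graph-limits literature) resolves this; and since the resulting bound is universal, it applies uniformly in the spin $\omega\in\Omega$, the position of the swap, and the configuration $\boldsymbol\sigma$, which is what is needed to close the finite summation.
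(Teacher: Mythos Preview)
Your strategy—expanding into moment functionals $T_{\boldsymbol\sigma}$ and proving continuity of each by a telescoping swap—is a valid route and is indeed how the graphon counting lemma is proved. But your handling of the swap step contains a genuine error. You claim that $\sup_{U\subseteq[0,1]^2}\big|\int_U W\big|$ is equivalent, up to a universal factor of $4$, to the rectangle cut norm $\sup_{S,X}\big|\int_{S\times X}W\big|$. This is false: the factor-of-$4$ equivalence from the graph-limits literature is between the rectangle cut norm and the \emph{product} functional version $\sup_{a,b:[0,1]\to[0,1]}\big|\int a(s)b(x)W(s,x)\,\dd s\,\dd x\big|$. The supremum over arbitrary measurable $U\subseteq[0,1]^2$ is essentially the $L^1$ norm of $W$ (take $U=\{W>0\}$), which the cut norm does not control—for instance, random $\pm1$ step-graphons on an $n\times n$ grid have rectangle cut norm $O(n^{-1/2})$ but $\sup_U\big|\int_U W\big|=1/2$. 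So your layer-cake reduction does not close the argument, and the ``main obstacle'' you name in the last paragraph is not resolved by the device you invoke.

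The fix is standard: do not integrate out the other coordinates before bounding. The integrand of $T_{\boldsymbol\sigma}$ has a \emph{bipartite} structure in that each factor $\mu_{s^r_i,x_j}$ depends on exactly one $s$-variable and one $x$-variable. Hence, when swapping the edge $(r_0,i_0,j_0)$, for each \emph{fixed} value of the remaining $m\ell+n-2$ coordinates the residual factor splits exactly as $a(s^{r_0}_{i_0})\cdot b(x_{j_0})\cdot c$ with $a,b,c\in[0,1]$, where $a$ collects the factors sharing the coordinate $s^{r_0}_{i_0}$, $b$ those sharing $x_{j_0}$, and $c$ the rest. The inner $(s,x)$-integral is therefore bounded by the functional cut norm, hence by $4\delta$, and integrating over the remaining (probability) coordinates preserves this bound. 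Your worry that ``one cannot in general factor $\bar h$ as $a(s)b(x)$'' is correct \emph{after} integrating out the other coordinates but is irrelevant, because the product structure is available \emph{before} integration. For the record, the paper takes an entirely different route: it shows that the tensor operations $(\mu,\nu)\mapsto\mu\oplus\nu$ (sharing the $s$-coordinate) and $(\mu,\nu)\mapsto\mu\otimes\nu$ (sharing the $x$-coordinate) are continuous in the cut metric, and then writes $\Erw[\scal{f}{\mu}^\ell]$ as $\Erw\scal{f}{\nu}$ for a $\cP(\Omega^{mn\ell})$-valued kernel $\nu$ built from $\mu$ by iterating these operations, reducing to the trivial case $m=n=\ell=1$.
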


\noindent
The proof of \Lem~\ref{Lemma_cntFct} can be found in the appendix.
For a probability distribution $\pi\in\Meas$ we let $\scal f\pi$ be the random variable $\scal f{\MU^{\pi}}$, with $\MU^{\pi}$ chosen independently of the $\vx_i$.
Since $\Meas$ carries the weak topology, \Lem~\ref{Lemma_cntFct} implies

\begin{corollary}\label{Cor_cntFct2}
For any $f:\Omega^{m\times n}\to\RR$, $\ell\geq1$ the map 
	$\pi\in\Meas\mapsto\Erw\brk{\scal f\pi^\ell}$ is continuous.
\end{corollary}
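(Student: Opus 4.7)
The plan is to deduce the corollary from Lemma~\ref{Lemma_cntFct} essentially by unwinding what $\Erw[\scal{f}{\pi}^\ell]$ means as an integral against $\pi$ and then invoking the definition of the weak topology on $\Meas$. First I would define the function $F:\meas\to\RR$ by $F(\mu)=\Erw[\scal{f}{\mu}^\ell]$, where the expectation is over the random variables $(\vx_{i,j})$ defining the bracket. By Lemma~\ref{Lemma_cntFct}, $F$ is continuous on $\meas$ with respect to the cut metric.

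Next I would observe that, because the sample $\MU^\pi$ is taken independently of the $\vx_{i,j}$, conditioning on $\MU^\pi$ and applying Fubini yields
\begin{align*}
\Erw\brk{\scal{f}{\pi}^\ell}=\Erw\brk{\Erw\brk{\scal{f}{\MU^\pi}^\ell\mid\MU^\pi}}=\int_{\meas}F(\mu)\,\dd\pi(\mu).
\end{align*}
Thus the map $\pi\mapsto\Erw[\scal{f}{\pi}^\ell]$ is the integration-against-$\pi$ functional associated with the single function $F$.

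Then I would use Proposition~\ref{Prop_compact}: since $\meas$ is compact and $F$ is continuous, $F$ is bounded. The space $\Meas$ is equipped with the weak topology (metrised by the $L_1$-Wasserstein distance built from $\Cutm$), so by the very definition of weak convergence of probability measures, for every bounded continuous $F:\meas\to\RR$ the map $\pi\mapsto\int F\,\dd\pi$ is continuous on $\Meas$. Applying this to our $F$ yields the desired continuity.

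There is no real obstacle here; the only point worth checking carefully is that one may pull the conditional expectation inside, i.e.\ that the $\vx_{i,j}$ are genuinely independent of $\MU^\pi$ so that $\Erw[\scal{f}{\MU^\pi}^\ell\mid\MU^\pi]=F(\MU^\pi)$ almost surely. This is guaranteed by the convention established in Section~\ref{Sec_basics} that the $\vx_{i,j}$ are independent of all other sources of randomness. Once that is noted, the argument reduces to the standard fact that weak convergence of probability measures is characterised by convergence of integrals of bounded continuous test functions, which is why the statement is phrased as a corollary rather than a separate lemma.
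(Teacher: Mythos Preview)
Your proposal is correct and matches the paper's own argument exactly: the paper simply remarks that since $\Meas$ carries the weak topology, \Lem~\ref{Lemma_cntFct} implies the corollary. Your write-up just makes explicit the two steps that the paper leaves implicit, namely that $\Erw[\scal{f}{\pi}^\ell]=\int F\,\dd\pi$ for $F(\mu)=\Erw[\scal{f}{\mu}^\ell]$ and that integration against a bounded continuous $F$ is continuous in the weak topology.
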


\noindent
We recall the functional $\cB(\nix)$ from \eqref{eqBetheFunctional}.

\begin{corollary}\label{Cor_Bcont}
The map	$\pi\in\Meas\mapsto\cB(\pi)$ is continuous.
\end{corollary}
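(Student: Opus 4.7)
The strategy is to show that $\cB(\pi)=\int_{\meas}F(\mu)\,\dd\pi(\mu)$ for a bounded continuous function $F:\meas\to\RR$; since the weak topology on $\Meas$ is metrized by $\Cutm(\nix,\nix)$, continuity of $\pi\mapsto\cB(\pi)$ then follows. Abbreviate the two inner expressions appearing in \eqref{eqBetheFunctional} by
\begin{align*}
A_{\PSI,\vh,\vx}(\mu)&:=\int_0^1\sum_{\sigma\in\Omega}p(\sigma)\prod_{i=1}^{d}\sum_{\tau\in\Omega^k:\tau_{\vh_i}=\sigma}\PSI_i(\tau)\prod_{j\neq\vh_i}\mu_{s,\vx_{i,j}}(\tau_j)\,\dd s,\\
B_{\PSI,\vx}(\mu)&:=\int_0^1\sum_{\tau\in\Omega^k}\PSI_1(\tau)\prod_{j=1}^{k}\mu_{s,\vx_{1,j}}(\tau_j)\,\dd s,
\end{align*}
and set $F(\mu):=\Erw\bigl[\ln A_{\PSI,\vh,\vx}(\mu)-d(1-k^{-1})\ln B_{\PSI,\vx}(\mu)\bigr]$, the expectation taken over $\PSI,\vh,\vx$ only. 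By Fubini, $\cB(\pi)=\int F\,\dd\pi$.

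For the boundedness of $F$, $\PSI_i\in(0,1)$ gives $A,B\leq|\Omega|^{dk}$ pointwise, while bounding each $\PSI_i(\tau)$ below by $\min_\tau\PSI_i(\tau)$ and using that $\sum_{\tau_j}\mu_{s,\vx_{i,j}}(\tau_j)=1$ yields $A_{\PSI,\vh,\vx}(\mu)\geq\prod_{i=1}^{d}\min_\tau\PSI_i(\tau)$ and $B_{\PSI,\vx}(\mu)\geq\min_\tau\PSI_1(\tau)$. Hence $|\ln A|+|\ln B|$ is bounded uniformly in $\mu,\vh,\vx$ by $\sum_{i=1}^d\ln(1/\min_\tau\PSI_i(\tau))+O(1)$, and condition~\eqref{eqP} combined with the elementary inequality $\ln y\leq y\leq e^y$ for $y\geq 1$ shows this majorant to be integrable in $\PSI$. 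In particular, $F$ is bounded.

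Continuity of $F$ in the cut metric is the crux. Fix $\PSI,\vh$ momentarily: expanding $A_{\PSI,\vh,\vx}(\mu)^\ell$ as an iterated integral over $\ell$ independent copies $s_1,\ldots,s_\ell\in[0,1]$ of the $s$-variable and fully expanding the products over $i,j$ reveals it as $\scal{f_\ell}\mu$ for a suitable $f_\ell:\Omega^{\ell\times d(k-1)}\to\RR$ depending only on $\PSI,\vh$. \Lem~\ref{Lemma_cntFct} therefore gives cut-continuity of $\mu\mapsto\Erw_{\vx}[A_{\PSI,\vh,\vx}(\mu)^\ell]$ for every $\ell\geq 1$. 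Since $A_{\PSI,\vh,\vx}(\mu)$ takes values in the compact interval $[\prod_i\min_\tau\PSI_i(\tau),\,|\Omega|^{dk}]$, Weierstrass approximation of $\ln$ by polynomials on that interval upgrades this to cut-continuity of $\mu\mapsto\Erw_{\vx}[\ln A_{\PSI,\vh,\vx}(\mu)]$; the argument for $B$ is identical. Dominated convergence against the integrable majorant of the previous paragraph then transfers this continuity to the outer expectation over $\PSI,\vh$, proving that $F:\meas\to\RR$ is continuous and completing the argument. The main technical obstacle is the unbounded logarithm: pointwise evaluation of $\mu_{s,x}$ is not cut-continuous, but averaging over $\vx$ recovers continuity via \Lem~\ref{Lemma_cntFct}, and the logarithm is then dispatched by uniform polynomial approximation on a $\PSI$-dependent bounded interval, with the necessary $\PSI$-integrability supplied precisely by condition~\eqref{eqP}.
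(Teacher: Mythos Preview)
Your proof is correct and follows essentially the same approach as the paper: use the tail bound~\eqref{eqP} to control the logarithms, approximate $\ln$ by polynomials via Weierstrass, and invoke the continuity of moment functionals from \Lem~\ref{Lemma_cntFct}. The only organizational difference is that you write $\cB(\pi)=\int F\,\dd\pi$ for a bounded cut-continuous $F$ on $\meas$ and then appeal to weak convergence, whereas the paper works directly at the level of $\pi$ via \Cor~\ref{Cor_cntFct2}; the underlying argument is the same.
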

\begin{proof}
Thanks to the tail bound (\ref{eqP}), we can approximate the logarithms in (\ref{eqBetheFunctional}) by polynomials.
Therefore, the assertion follows from \Cor~\ref{Cor_cntFct2}.
\end{proof}

The set $\MEAS$ of $\pi\in\Meas$ that are invariant under the $\Join(N,M)$-operation is a closed subset of $\Meas$.
To see this, and to interpret the $\Join(N,M)$-operation nicely in terms of operations that are continuous under the cut metric, we introduce the following general transformation.
Suppose that $f:\Omega^N\to(0,\infty)$ is a function and that $\mu\in\states$.
Then we define a random $f\Join\mu\in\states$ as follows.
Letting
	\begin{align*}
	\vz_s&=\vz_s^{f,\mu}=\sum_{\sigma\in\Omega^N}f(\sigma)\prod_{i=1}^N\mu_{s,\hat\vx_i}(\sigma_i),&
	\vz&=\vz^{f,\mu}=\int_0^1\vz_s^{f,\mu}\dd s,
	\end{align*}
we introduce
	\begin{align*}
	\vt&=\vt_s^{f,\mu}=\inf\cbc{u\in[0,1]:\int_0^u \vz_u\dd u\geq s\vz}.
	\end{align*}
Now, $f\Join\mu_{s,x}=\mu_{\vt,x}$.
We emphasize that $f\Join\mu\in\states$ is random, dependent on $\hat\vx_1,\ldots,\hat\vx_N$.
The kernel is characterized by the identity
	\begin{align*}
	\int_0^1\int_0^1 g_{s,x}\cdot f*\mu_{s,x}(\omega)\dd s\dd x&=
		\frac1{\vz}\sum_{\sigma\in\Omega^N}f(\sigma)\int_0^1\int_0^1g_{s,x}\cdot \mu_{s,x}(\omega)\vz_s\dd s\dd x
		\qquad\mbox{for all } g:[0,1]^2\to[0,1],\,\omega\in\Omega.
	\end{align*}
Further, since the $\hat\vx_i$ are uniform, we have $\Cutm(f\Join\mu,f\Join\nu)=0$ if $\Cutm(\mu,\nu)=0$.
Hence, the $\Join$-operation extends to weak kernels.
Furthermore, for a distribution $\pi$ we let $f\Join\pi$ be the distribution of $f\Join\MU^\pi$.

\begin{lemma}\label{Lemma_JoinCont}
For any function $f:\Omega^k\to(0,\infty)$ the map $\meas\to\Meas$, $\mu\mapsto f\Join\mu$ is continuous.
\end{lemma}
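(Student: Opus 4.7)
The plan is to show that if $\mu_n\to\mu$ in the cut metric on $\meas$, then the laws $\pi_n\in\Meas$ of the random kernels $f\Join\mu_n$ converge weakly in $\Meas$ to the law $\pi$ of $f\Join\mu$. Since $\meas$ is a compact Polish space by \Prop~\ref{Prop_compact}, Stone--Weierstrass reduces this to showing $\Erw[F(f\Join\mu_n)]\to\Erw[F(f\Join\mu)]$ for every $F$ in the algebra $\mathcal{A}=\{\mu\mapsto\Erw[\scal{g}{\mu}^\ell]:g,\ell\geq1\}$. This family contains the constants, separates points in $\meas$ (the analogue of the fact that motif densities separate graphons), and is closed under products: $\scal{g_1}{\mu}^{\ell_1}\scal{g_2}{\mu}^{\ell_2}$ can be rewritten as a single $\scal{\tilde g}{\mu}^{\ell_1+\ell_2}$ on a wider column alphabet by absorbing the shared or disjoint column variables into a bigger test function.

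The main step is therefore to show that $\mu\mapsto\Erw_{\hat\vx,\vx}[\scal{g}{f\Join\mu}^\ell]$ is continuous for each test function $g$ and integer $\ell\ge1$. For fixed $\hat\vx_1,\ldots,\hat\vx_k$ the map $\vt_\cdot$ is the quantile function of the probability measure on $[0,1]$ with density $\vz_u^{f,\mu}/\vz^{f,\mu}$, so the change of variables $u_i=\vt_{s_i}$ in the integral defining $\scal{g}{f\Join\mu}$ gives $\dd s_i=(\vz_{u_i}/\vz)\,\dd u_i$, and expanding $\vz_{u_i}=\sum_{\tau^{(i)}}f(\tau^{(i)})\prod_l\mu_{u_i,\hat\vx_l}(\tau^{(i)}_l)$ yields
\[
\scal{g}{f\Join\mu}=\frac1{(\vz^{f,\mu})^{m}}\sum_{\sigma,\tau^{(1)},\ldots,\tau^{(m)}}g(\sigma)\prod_{i=1}^{m}f(\tau^{(i)})\int_{[0,1]^{m}}\prod_{i,j}\mu_{u_i,\vx_j}(\sigma_{i,j})\prod_{i,l}\mu_{u_i,\hat\vx_l}(\tau^{(i)}_l)\,\dd u_1\cdots\dd u_m.
\]
The integral on the right is of the form $\scal{g'}{\mu}$ for an appropriate $g':\Omega^{m\times(n+k)}\to\RR$ with the $\hat\vx_l$ playing the role of $k$ additional column variables alongside the $\vx_j$, and $\vz^{f,\mu}=\scal{f}{\mu}$ on the $k$ columns $\hat\vx_l$. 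So $\scal{g}{f\Join\mu}$ is a rational expression whose numerator is a bounded-degree polynomial in $\scal{\cdot}{\mu}$-quantities and whose denominator is $(\scal{f}{\mu})^m$.

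To handle the denominator, since $f:\Omega^k\to(0,\infty)$ takes only finitely many positive values and $\sum_\tau\prod_l\mu_{u,\hat\vx_l}(\tau_l)=1$, one has $\min_\tau f(\tau)\le\vz^{f,\mu}\le\max_\tau f(\tau)$ uniformly in $\mu$ and in $\hat\vx$. Hence $(\vz^{f,\mu})^{-m}$ ranges in a fixed compact subinterval of $(0,\infty)$ on which it is a uniform limit of polynomials in $\vz^{f,\mu}$. It follows that $\scal{g}{f\Join\mu}^\ell$ is a uniform limit of polynomials in $\scal{\cdot}{\mu}$-quantities, and by taking $\Erw_{\hat\vx,\vx}$ and merging columns into a single moment as in the first paragraph, $\mu\mapsto\Erw[\scal{g}{f\Join\mu}^\ell]$ becomes a uniform limit of the continuous maps $\mu\mapsto\Erw[\scal{\tilde g}{\mu}^{r}]$ supplied by \Lem~\ref{Lemma_cntFct}. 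Since uniform limits of continuous functions are continuous, the required continuity follows and the proof is complete.

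The main obstacle is the bookkeeping in the change-of-variables step and in the column-merging manipulations: carefully tracking which families of i.i.d.\ variables ($\vx_j$ versus $\hat\vx_l$) appear in which factors, whether they are shared or independent across the factors being multiplied, and verifying that each combined expression still falls within the scope of \Lem~\ref{Lemma_cntFct}. Strict positivity of $f$ is essential throughout: it guarantees that $\vt_\cdot$ is a well-defined strictly increasing absolutely continuous reparametrization and it permits the uniform polynomial approximation of $1/\vz^m$ on a fixed compact subinterval of $(0,\infty)$.
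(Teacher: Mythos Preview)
Your approach is correct and genuinely different from the paper's. The paper proves \Lem~\ref{Lemma_JoinCont} via a direct coupling argument based on the regularity lemma (\Thm~\ref{Thm_reg}): given $\mu,\nu$ close in cut distance, it finds a common regular partition, shows that the block averages $\mu_{S_i,X_j},\nu_{S_i,X_j}$ agree up to small error, and then exhibits explicit strong kernels $\mu',\nu'$ representing $f\Join\mu,f\Join\nu$ whose cut distance is small with high probability over $\hat\vx$; this gives the required coupling witnessing $\Cutm(f\Join\mu,f\Join\nu)$ small in $\Meas$. Your route instead tests weak convergence against the algebra generated by the moment functionals $\nu\mapsto\Erw[\scal{g}{\nu}^\ell]$, rewrites $\scal{g}{f\Join\mu}$ by the quantile change of variables as a ratio of $\scal{\cdot}{\mu}$-type quantities with denominator $\scal{f}{\mu}^m$, and then uses strict positivity of $f$ to approximate the reciprocal by polynomials uniformly, reducing everything to \Lem~\ref{Lemma_cntFct}. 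The paper's argument is more geometric and yields an explicit coupling, at the price of invoking the regularity machinery; your argument is more algebraic and lighter on machinery, but leans on the fact that the moment functionals separate points of $\meas$ (the kernel analogue of the inverse counting lemma), which you assert rather than prove. That separation statement is indeed standard in this setting and is what underlies the compactness theory of \cite{Limits}, so this is acceptable, but you should cite it explicitly rather than leave it parenthetical. Your acknowledged bookkeeping in merging the $\vx_j$ and $\hat\vx_l$ columns does work out: the key observation is that whenever the test function ignores a cell $(i,j)$, the corresponding factor $\mu_{s_i,x_j}(\sigma_{i,j})$ marginalizes to $1$, so products of rectangular densities with disjoint or overlapping column sets can always be recast as a single $\scal{h}{\mu}$ on a larger grid.
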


\noindent
The proof of \Lem~\ref{Lemma_JoinCont} can be found in the appendix.

The $\Join(N,M)$-operation is an application of the above $\Join$-operation to a particular random function $f$.
To define this random function, we need one more piece of notation.
Namely, for functions $f:\Omega^{M\times N}\to\RR$, $g:\Omega^{M\times L}\to\RR$ we define
	\begin{align*}
	f\oplus g&:\Omega^{M\times(N+L)}\to\RR,&
	\sigma&\mapsto f\bc{(\sigma_{i,j})_{i\in[M],j\in[N]}}\cdot g\bc{(\sigma_{i,j+N})_{i\in[M],j\in[L]}}.
	\end{align*}	
In words, we stick the first $N$ `columns' of $\sigma$ into $f$ and the last $M$ columns into $g$ and multiply the results.  Recalling the random functions $\hat\PSI_i$, $\hat\VARPHI_i$ from \Sec~\ref{Sec_basics}, we obtain the following.

\begin{lemma}\label{Lemma_JoinNM}
For any $\mu\in\meas$ the random $\mu^{\Join(N,M)}\in\meas$ is distributed as
	$\bc{\bigoplus_{i=1}^N\hat\VARPHI_i\oplus\bigoplus_{i=1}^M\hat\PSI_i}\Join\mu.$
\end{lemma}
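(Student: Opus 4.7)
The plan is to unfold both sides in parallel and verify that the weight processes driving them coincide in distribution, so that the CDF-inversion step $s\mapsto \vt(s)$ produces the same random kernel. Concretely, writing $F:=\bigoplus_{i=1}^N\hat\VARPHI_i\oplus\bigoplus_{i=1}^M\hat\PSI_i$, I would show that the process $(\vz(s))_{s\in[0,1]}$ defined in (\ref{eqZNMmus}) has the same law as the process $(\vz_s^{F,\mu})_{s\in[0,1]}$ that arises from the generic $\Join$-operation applied to $F$. Once this is in hand, both $\mu^{\Join(N,M)}_{s,x}=\mu_{\vt(s),x}$ and $(F\Join\mu)_{s,x}=\mu_{\vt_s^{F,\mu},x}$ are obtained by plugging the inverse CDF of (in-distribution) equal weight processes into the $s$-coordinate of the same kernel $\mu$, so the resulting random elements of $\meas$ agree in law.

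The first key step is to exploit the product structure of $\oplus$. By definition $F$ acts on $\sigma\in\Omega^{dkN+kM}$ as
\begin{equation*}
F(\sigma)\;=\;\prod_{i=1}^N\hat\VARPHI_i\bigl(\sigma^{(i)}\bigr)\cdot\prod_{i=1}^M\hat\PSI_i\bigl(\tau^{(i)}\bigr),
\end{equation*}
where $\sigma^{(i)}\in\Omega^{dk}$ is the $i$-th block of $dk$ coordinates and $\tau^{(i)}\in\Omega^k$ is the $i$-th subsequent block of $k$. Because the uniform samples $\hat\vx_\ell$ introduced by the generic $\Join$-operation are i.i.d., the weight factorises block-by-block:
\begin{equation*}
\vz_s^{F,\mu}\;=\;\prod_{i=1}^N \vz_s^{\hat\VARPHI_i,\mu}\cdot\prod_{i=1}^M \vz_s^{\hat\PSI_i,\mu},
\end{equation*}
each factor drawing on its own independent batch of fresh uniforms.

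Next I would match the individual factors against (\ref{eqZNMmus}). Substituting the definition of $\hat\PSI_i$ gives $\vz_s^{\hat\PSI_i,\mu}=\sum_{\tau\in\Omega^k}\hat\PSI_i(\tau)\prod_{j=1}^k\mu_{s,\hat\vx_\cdot}(\tau_j)$, which is precisely one of the constraint-node factors in the second product of (\ref{eqZNMmus}). For the variable-node factor, I would insert the definition of $\hat\VARPHI_i$, swap the sums over $\chi\in\Omega$ and $\sigma\in\Omega^{dk}$ with the product over $j\in[d]$, and factorise the sum over $\sigma$ into $d$ independent sums over $\Omega^k$, producing exactly the $i$-th factor in the first product of (\ref{eqZNMmus}). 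Because all random ingredients on the two sides — the uniforms $\hat\vx_\ell$ versus $\vx_{i,j}$, the weight functions $\hat\PSI_{i,j}$ versus $\PSI_{di+j}$, and the hole positions $\hat\vh_{i,j}$ versus $\vh_{i,j}$ — are i.i.d.\ from identical laws across the two descriptions, the joint law of $(\vz_s^{F,\mu})_s$ coincides with that of $(\vz(s))_s$.

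The argument is essentially a bookkeeping exercise with no hard estimates, so the only real obstacle is notational: both sides rely on a triple-indexed scheme (replica $i$, neighbour $j$, coordinate $h$) interleaved with several independent families of uniforms, hole positions and weight functions. I would need to verify carefully that the blockwise factorisation coming out of $\oplus$ indeed aligns the $j$-th constraint-node block of $\hat\VARPHI_i$ with the factor indexed by $(i,j)$ in (\ref{eqZNMmus}), with the hole landing on the correct coordinate $k(j-1)+\hat\vh_{i,j}$. Once the indexing is laid out cleanly, the equality follows by direct comparison term-by-term.
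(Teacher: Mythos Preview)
Your proposal is correct and follows exactly the approach the paper intends: the paper's own proof consists of the single sentence ``This is immediate from the construction of $\mu^{\Join(N,M)}$,'' and what you have written is precisely the bookkeeping that makes this immediacy explicit. There is nothing to add.
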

\begin{proof}
This is immediate from the construction of $\mu^{\Join(N,M)}$.
\end{proof}

\begin{corollary}\label{Cor_JoinCont}
For any $N,M$ the map $\mu\in\meas\mapsto\mu^{\Join(N,M)}$ is continuous with respect to the cut metric.
\end{corollary}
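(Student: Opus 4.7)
The plan is to leverage Lemma~\ref{Lemma_JoinNM}, which represents $\mu^{\Join(N,M)}$ as $f\Join\mu$ for the random function
\[
f=\bigoplus_{i=1}^N\hat\VARPHI_i\oplus\bigoplus_{i=1}^M\hat\PSI_i,
\]
together with Lemma~\ref{Lemma_JoinCont}, which gives continuity of $\mu\mapsto f\Join\mu$ for each \emph{fixed} $f$. Viewing $\mu^{\Join(N,M)}$ as a random element of $\meas$, its distribution belongs to $\Meas$; to denote this temporarily, write $\Phi(\mu)\in\Meas$ for the law of $\mu^{\Join(N,M)}$. Writing $\pi_f\in\Meas$ for the law of $f\Join\mu$ conditional on the realization of the randomness in $f$ (i.e., the law over $\hat\vx_1,\ldots,\hat\vx_{dk(N+M)}$ only), Lemma~\ref{Lemma_JoinNM} gives the mixture representation
\[
\Phi(\mu)=\Erw_f\brk{\pi_f^{\mu}},
\]
where the expectation is over the independent copies $\hat\vh_{i,j},\hat\PSI_{i,j},\PSI_i$ defining $f$.

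First, I would fix a sequence $\mu_n\to\mu$ in $\meas$ and aim to show that $\Phi(\mu_n)\to\Phi(\mu)$ in $\Meas$, i.e., with respect to the weak topology (equivalently, the cut-metric Wasserstein distance on $\Meas$). It suffices to verify that $\int g\,\dd\Phi(\mu_n)\to\int g\,\dd\Phi(\mu)$ for every bounded continuous $g:\meas\to\RR$. By the mixture representation,
\[
\int g\,\dd\Phi(\mu_n)=\Erw_f\brk{\int g\,\dd\pi_f^{\mu_n}},\qquad \int g\,\dd\Phi(\mu)=\Erw_f\brk{\int g\,\dd\pi_f^{\mu}}.
\]
For each fixed realization of $f$, Lemma~\ref{Lemma_JoinCont} says $\mu\mapsto f\Join\mu\in\Meas$ is continuous, which means $\pi_f^{\mu_n}\to\pi_f^{\mu}$ weakly in $\Meas$, and hence $\int g\,\dd\pi_f^{\mu_n}\to\int g\,\dd\pi_f^{\mu}$.

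Second, to pass the limit through $\Erw_f$, I would invoke dominated convergence: $|\int g\,\dd\pi_f^{\mu_n}|\leq\|g\|_\infty$ uniformly in $f$ and in $n$, and $\|g\|_\infty$ is integrable against the law of $f$. This yields
\[
\int g\,\dd\Phi(\mu_n)\longrightarrow\int g\,\dd\Phi(\mu),
\]
which establishes the desired continuity of $\mu\mapsto\mu^{\Join(N,M)}$.

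The only step that is not entirely formal is the measurability needed to interchange expectation and the cut-metric/Wasserstein functional, and to justify that $f\mapsto\pi_f^\mu$ and $f\mapsto\int g\,\dd\pi_f^\mu$ are measurable so that $\Erw_f$ makes sense; this is routine since $f$ takes values in a countable product of Polish spaces and Lemma~\ref{Lemma_JoinCont} gives continuity, hence measurability, of the relevant maps. There is no integrability hurdle from the unboundedness of $\PSI_i$ entering through~\eqref{eqP}, because the conclusion of Lemma~\ref{Lemma_JoinCont} only requires pointwise (in $f$) continuity of $\mu\mapsto f\Join\mu$ and the dominated convergence step uses only the bounded test function $g$. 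Thus the proof reduces to a clean application of Lemmas~\ref{Lemma_JoinNM} and~\ref{Lemma_JoinCont} together with dominated convergence.
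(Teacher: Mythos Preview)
Your proof is correct and follows essentially the same route as the paper: combine Lemma~\ref{Lemma_JoinNM} (the random-$f$ representation) with Lemma~\ref{Lemma_JoinCont} (continuity for each fixed $f$) and pass to the mixture. The paper's one-line proof additionally cites~\eqref{eqP}; this is not needed in your dominated-convergence argument since you only use boundedness of the test function $g$, so you can regard that citation as either a harmless redundancy or as implicitly ensuring that the random $f$ is almost surely strictly positive so that Lemma~\ref{Lemma_JoinCont} applies---a point you have covered anyway.
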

\begin{proof}
Since $\Meas$ carries the weak topology, which is induced by the Wasserstein metric, 
the assertion follows from \Lem s~\ref{Lemma_JoinCont}--\ref{Lemma_JoinNM} and (\ref{eqP}).
\end{proof}

\noindent
As a further immediate consequence of \Lem~\ref{Lemma_JoinNM} we obtain

\begin{corollary}\label{Cor_JoinNM}
A distribution $\pi\in\Meas$ belongs to $\MEAS$ if and only if $\pi=\bc{\bigoplus_{i=1}^N\hat\VARPHI_i\oplus\bigoplus_{i=1}^M\hat\PSI_i}\Join\pi$ for all $N,M$.
\end{corollary}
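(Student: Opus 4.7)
The statement is essentially a tautology once one carefully unpacks the definitions, so the plan is to reduce it to Lemma~\ref{Lemma_JoinNM} by tracking which sources of randomness are being averaged out on each side.

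The first step is to recall the definition: $\pi \in \MEAS$ means $\pi^{\Join(N,M)} = \pi$ for every pair $N, M \geq 0$, where $\pi^{\Join(N,M)}$ is the distribution of $\MU^{\pi \Join(N,M)}$, i.e.\ the result of first drawing $\MU^\pi$ from $\pi$ and then applying the $\Join(N,M)$-operation (which itself uses the auxiliary uniforms $\hat\vx_{i,j}$, uniforms $\hat\vh_{i,j}$, and samples $\hat\PSI_{i,j}$ from $P$, all independent of $\MU^\pi$).

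Next, I would apply Lemma~\ref{Lemma_JoinNM} pointwise: for any fixed $\mu \in \meas$, the random kernel $\mu^{\Join(N,M)}$ has the same distribution (over the auxiliary randomness) as $\bc{\bigoplus_{i=1}^N \hat\VARPHI_i \oplus \bigoplus_{i=1}^M \hat\PSI_i} \Join \mu$. Specializing this identity to $\mu = \MU^\pi$ (drawn from $\pi$ independently of the auxiliary variables) and then integrating over the randomness of $\MU^\pi$ yields that $\pi^{\Join(N,M)}$ is exactly the distribution of $\bc{\bigoplus_{i=1}^N \hat\VARPHI_i \oplus \bigoplus_{i=1}^M \hat\PSI_i} \Join \MU^\pi$. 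By the convention extending the $\Join$-operation from kernels to measures (applied to the random function $f = \bigoplus_i \hat\VARPHI_i \oplus \bigoplus_i \hat\PSI_i$, with $f$ and $\MU^\pi$ independent), this distribution is precisely $\bc{\bigoplus_{i=1}^N \hat\VARPHI_i \oplus \bigoplus_{i=1}^M \hat\PSI_i} \Join \pi$.

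Having matched $\pi^{\Join(N,M)}$ with $\bc{\bigoplus_{i=1}^N \hat\VARPHI_i \oplus \bigoplus_{i=1}^M \hat\PSI_i} \Join \pi$ as elements of $\Meas$, the equivalence is immediate: the invariance condition defining $\MEAS$ translates verbatim into the identity in the corollary for every $N, M \geq 0$. The main (very mild) pitfall to watch for is the order of sampling — one has to make sure that $\MU^\pi$, the auxiliary uniforms $\hat\vx$, the indices $\hat\vh$, and the weight functions $\hat\PSI$ are jointly independent, so that ``distribution of $f \Join \MU^\pi$'' truly coincides with $f \Join \pi$ under the established convention — but this is exactly the setup inherited from Lemma~\ref{Lemma_JoinNM}. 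No further argument is needed.
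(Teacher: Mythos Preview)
Your proposal is correct and follows the same approach as the paper, which simply records the corollary as an immediate consequence of \Lem~\ref{Lemma_JoinNM} without further proof. Your careful tracking of the independence between $\MU^\pi$ and the auxiliary randomness is exactly the content that makes the ``immediate'' claim valid.
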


\noindent
In particular, Corollaries~\ref{Cor_JoinCont} and~\ref{Cor_JoinNM} imply that the 
map $\pi\mapsto\pi^{*(N,M)}$ is continuous for all $N,M\geq0$.
Consequently,  $\MEAS$ is a closed subset of $\Meas$.

\subsubsection{The discrete version}\label{Sec_cut_disc}

Apart from the `continuous' installment of the cut metric, defined on kernels, we also need a discrete variant, defined on probability measures on discrete sets.
To be precise, with $\Omega\neq\emptyset$ our finite set of spins and $V$ another finite set of size $n\geq1$, 
we define a metric $\cutm(\nix,\nix)$ on $\cP(\Omega^V)$ as follows.
Recalling that $\Gamma(\mu,\nu)$ is the set of all couplings of probability measures $\mu,\nu$ on $\Omega^V$,
we let
	\begin{align}\label{eqCutm}
	\cutm(\mu,\nu)&=\frac1{n}\min_{\gamma\in\Gamma(\mu,\nu)}\max_{\substack{I\subset V\\
		B\subset\Omega^V\times\Omega^V\\\omega\in\Omega}}\abs{\sum_{i\in I}\sum_{(\sigma,\tau)\in B}\gamma(\sigma,\tau)
			(\vecone\{\sigma_i=\omega\}-\vecone\{\tau_i=\omega\})}\qquad\mbox{for }\mu,\nu\in\cP(\Omega^V).
	\end{align}

\begin{fact}[{\cite{Bethe}}]\label{Fact_metric}
$\cutm(\nix,\nix)$ is a metric on $\cP(\Omega^V)$.
\end{fact}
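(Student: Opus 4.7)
The plan is to verify non-negativity, symmetry, definiteness, and triangle inequality in turn; only the last is substantial. Non-negativity and symmetry are immediate. For symmetry, note that swapping coordinates $(\sigma,\tau)\mapsto(\tau,\sigma)$ gives a bijection $\Gamma(\mu,\nu)\to\Gamma(\nu,\mu)$, and replacing $B$ by its transpose $B^T=\{(\tau,\sigma):(\sigma,\tau)\in B\}$ merely flips the sign of the integrand inside the absolute value, so the min-max in \eqref{eqCutm} is preserved.

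For definiteness, suppose $\cutm(\mu,\nu)=0$ and let $\gamma\in\Gamma(\mu,\nu)$ attain the minimum. Specializing to a singleton $B=\{(\sigma^*,\tau^*)\}$, $I=\{i\}$, and arbitrary $\omega\in\Omega$, the inner sum equals $\gamma(\sigma^*,\tau^*)(\vecone\{\sigma^*_i=\omega\}-\vecone\{\tau^*_i=\omega\})$, and the hypothesis forces this to vanish for every $i\in V,\omega\in\Omega$. Hence whenever $\gamma(\sigma^*,\tau^*)>0$ we must have $\sigma^*_i=\tau^*_i$ for all $i$, i.e.\ $\sigma^*=\tau^*$. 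Thus $\gamma$ is supported on the diagonal and $\mu=\nu$.

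For the triangle inequality, fix $\mu,\rho,\nu\in\cP(\Omega^V)$, and let $\gamma_{12}\in\Gamma(\mu,\rho)$ and $\gamma_{23}\in\Gamma(\rho,\nu)$ be optimal couplings for $\cutm(\mu,\rho)$ and $\cutm(\rho,\nu)$. Glue them into a probability measure $\gamma_{123}$ on $(\Omega^V)^3$ via the standard construction $\gamma_{123}(\sigma_1,\sigma_2,\sigma_3)=\gamma_{12}(\sigma_1,\sigma_2)\gamma_{23}(\sigma_2,\sigma_3)/\rho(\sigma_2)$ (with $0/0=0$), so that its $(1,3)$-marginal $\gamma_{13}\in\Gamma(\mu,\nu)$. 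Given $I,B,\omega$, set $g_i(\alpha,\beta)=\vecone\{\alpha_i=\omega\}-\vecone\{\beta_i=\omega\}$ and use the telescope $g_i(\sigma_1,\sigma_3)=g_i(\sigma_1,\sigma_2)+g_i(\sigma_2,\sigma_3)$ to decompose the discrepancy $A_{13}(I,B,\omega;\gamma_{13})$ as $A_{12}+A_{23}$.

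The hard part, and the only step requiring care, is that $\vecone_B(\sigma_1,\sigma_3)$ in $A_{12}$ depends on the coordinate $\sigma_3$ that has been absorbed into the function; and symmetrically for $A_{23}$. To handle $A_{12}$, sum out $\sigma_3$ and write $A_{12}=\sum_{i\in I}\sum_{\sigma_1,\sigma_2} g_i(\sigma_1,\sigma_2)h(\sigma_1,\sigma_2)$ with $h(\sigma_1,\sigma_2)=\sum_{\sigma_3:(\sigma_1,\sigma_3)\in B}\gamma_{123}(\sigma_1,\sigma_2,\sigma_3)$, which satisfies $0\le h\le\gamma_{12}$ pointwise. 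Define $\alpha=h/\gamma_{12}\in[0,1]$ (and $\alpha=0$ where $\gamma_{12}=0$), and apply the layer-cake identity $\alpha=\int_0^1\vecone\{\alpha\ge t\}\dd t$ to obtain
\begin{align*}
A_{12}=\int_0^1\sum_{i\in I}\sum_{(\sigma_1,\sigma_2)\in B_t}g_i(\sigma_1,\sigma_2)\gamma_{12}(\sigma_1,\sigma_2)\dd t,\qquad B_t=\{\alpha\ge t\}\subset\Omega^V\times\Omega^V.
\end{align*}
Pulling the absolute value inside the integral and invoking the definition of $\cutm(\mu,\rho)$ with the optimal coupling $\gamma_{12}$ bounds $|A_{12}|\le n\cdot\cutm(\mu,\rho)$, and the mirror-image argument on $A_{23}$ yields $|A_{23}|\le n\cdot\cutm(\rho,\nu)$. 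Since $I,B,\omega$ were arbitrary, taking the maximum and dividing by $n$ delivers $\cutm(\mu,\nu)\le\cutm(\mu,\rho)+\cutm(\rho,\nu)$.
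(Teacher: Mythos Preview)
Your proof is correct. The paper does not actually prove this statement; it merely cites it as a known fact from~\cite{Bethe}, so there is no ``paper's own proof'' to compare against. Your layer-cake argument to pass from the sub-coupling weight $h\le\gamma_{12}$ back to an indicator set $B_t$ is the standard and clean way to handle the triangle inequality for cut-type metrics, and all steps (existence of optimal couplings by compactness of $\Gamma(\mu,\nu)$, the gluing construction, the telescope, and the bound for each $B_t$ via optimality of $\gamma_{12}$) are valid as written.
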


\noindent
We refer to $\cutm(\nix,\nix)$ as the {\em discrete cut metric}.

Suppose that $V$ is a finite set.
A measure $\mu\in\cP(\Omega^V)$ can be represented by a point $\dot\mu\in\states$.
Indeed, assume without loss that $V=[n]$ and that $\Omega=[q]$.
Then the set $\Omega^V$ can be ordered lexicographically as $\sigma^{(1)},\ldots,\sigma^{(q^n)}$.
We define $\dot\mu\in\states$ by letting
	\begin{align*}
	\dot\mu_{s,x}&=\sum_{i=1}^n\sum_{j=1}^{q^n}\vecone\{(i-1)/n\leq x<i/n\}
		\vecone\cbc{\sum_{h<j}\mu(\sigma^{(h)})\leq s<\sum_{h\leq j}\mu(\sigma^{(h)})}\delta_{\sigma^{(j)}_i}.
	\end{align*}
Comparing \eqref{eqCutm} with the definition~(\ref{eqDefCutMetric}) of the continuous cut metric, we see that
	\begin{align}\label{eqCutmcutm}
\Cutm(\dot\mu,\dot\nu)&\leq\cutm(\mu,\nu)&(\mu,\nu\in\Omega^V).
\end{align}

The discrete cut metric encodes a great deal of information about the discrete measures.
A particularly important case occurs when a measure $\mu\in\cP(\Omega^V)$ is close to a product measure.
To be precise, we say that $\mu$ is {\em $\eps$-extremal} if
	$\cutm(\mu,\bigotimes_{v\in V}\mu_v)<\eps$.
In words, $\mu$ is close to the product measure with the same marginals.
In addition, $\mu\in\cP(\Omega^V)$ is {\em $(\eps,\ell)$-symmetric} if 
	\begin{align}\label{eqepssym}
	\frac1{|V|^\ell}\sum_{v_1,\ldots,v_\ell\in V}\TV{\mu_{v_1,\ldots,v_\ell}-
		\mu_{v_1}\tensor\cdots\tensor\mu_{v_\ell}}&<\eps.
	\end{align}
Informally, if we choose $\ell$ coordinates randomly, then their joint distribution typically `nearly' factorizes.
The following statement shows that these concepts are essentially equivalent, up to a moderate loss in the parameters.

\begin{proposition}[\cite{Bethe}]\label{Cor_symmetry}
For any $\Omega$ of size $1<\abs\Omega<\infty$, any $0<\eps<1/2$ and  any $\ell\geq2$ there exists $n_0>0$ such that for all $n>n_0$ and all $\mu\in\cP(\Omega^V)$ the following two statements hold. 
\begin{enumerate}[(i)]
\item If $\mu$ is $(\eps/6)^3$-symmetric, then $\mu$ is $\eps$-extremal.
\item If  $\mu$ is $\eps^3/(128|\Omega|)^{4\ell}$-extremal, then $\mu$ is $(\eps,\ell)$-symmetric.
\end{enumerate}
\end{proposition}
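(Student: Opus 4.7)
This is a regularity-type equivalence: the cut distance between $\mu$ and its product measure encodes the same averaged $\ell$-point marginal information, up to polynomial losses in the parameter. My strategy is to handle the two directions separately. Part~(ii) is a counting lemma: extremality transfers to approximate factorization of most small-dimensional marginals via a hybrid/telescoping test. Part~(i) is a Frieze-Kannan-type argument: $2$-point factorization on average upgrades to uniform closeness in the cut norm. Without loss of generality, I read the hypothesis in~(i) as $((\eps/6)^3,2)$-symmetry, since pairwise factorization is the decisive obstruction, and I would then bootstrap to general $\ell$ in~(ii).

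\textbf{Part (ii).} Fix a tuple $v_1,\ldots,v_\ell$ and a spin pattern $\omega\in\Omega^\ell$. The plan is to telescope
\[
\mu_{v_1,\ldots,v_\ell}(\omega)-\prod_{i=1}^\ell\mu_{v_i}(\omega_i)=\sum_{j=1}^\ell\bigl(A_j-A_{j-1}\bigr),
\]
where $A_j$ is the hybrid probability enforcing $\omega_1,\ldots,\omega_j$ under $\mu$ and drawing the remaining coordinates independently from the product. Each hybrid difference is a single-coordinate probability comparison which, after a decomposition of its conditioning event, can be realized as a cut witness with $I=\{v_j\}$ and a suitable $B$. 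Hence the coupling $\gamma$ realizing $\delta$-extremality with $\delta=\eps^3/(128|\Omega|)^{4\ell}$ bounds each term by roughly $\delta n$. Summing over the $\ell$ hybrids, unioning over the $|\Omega|^\ell$ spin patterns, and averaging over tuples yields $(\eps,\ell)$-symmetry, with the cube and the $(128|\Omega|)^{4\ell}$ factor absorbing the loss.

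\textbf{Part (i).} Here I would take the product coupling $\gamma(\sigma,\tau)=\mu(\sigma)\prod_v\mu_v(\tau_v)$. For any test triple $(I,B,\omega)$, set $f_I^\omega(\sigma)=\sum_{i\in I}\vecone\{\sigma_i=\omega\}$; the cut expression in~\eqref{eqCutm} equals $\Erw_\gamma[\vecone_B(\SIGMA,\TAU)(f_I^\omega(\SIGMA)-f_I^\omega(\TAU))]$. Since $\TAU$ is a product sample, Chernoff plus a union bound over $I$ and $\omega$ gives $|f_I^\omega(\TAU)-\Erw f_I^\omega|=O(\sqrt{n}\log n)$ uniformly, a contribution of $o(1)$ to $\cutm$. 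It remains to bound $\max_{I,B,\omega}|\Erw_\mu[\vecone_B(\SIGMA)(f_I^\omega(\SIGMA)-\Erw f_I^\omega)]|$, a cut-norm quantity on the $V\times V$ matrix of centred pairwise correlations $\mu_{i,j}(\omega,\cdot)-\mu_i(\omega)\mu_j(\cdot)$. A Frieze-Kannan / Grothendieck-style bound then converts average $L^1$ smallness of this matrix, which is exactly what $((\eps/6)^3,2)$-symmetry yields through Cauchy-Schwarz, into uniform cut-norm smallness at the cost of a cube-root.

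\textbf{Main obstacle.} The true difficulty is (i): cut norm is worst-case over $(I,B,\omega)$, whereas symmetry is an average, so the Frieze-Kannan step trading $L^1$ for cut norm with cubic loss is unavoidable. This is what forces the $(\eps/6)^3$ hypothesis; the polynomial exponents and the constant $6$ arise as the sharp parameters in the spin-system version of weak regularity, and closing the argument requires verifying that the concentration of the product side and the Frieze-Kannan passage on the $\mu$-side are compatible uniformly in $|\Omega|$ and $n\geq n_0(\eps,\Omega,\ell)$.
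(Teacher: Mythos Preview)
The paper does not prove this proposition; it is quoted from \cite{Bethe}. So there is no in-paper proof to compare against, and I evaluate your plan on its own terms.

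Your plan for (ii) is sound. The telescoping over hybrids works once you handle the absolute value correctly: for fixed $j$ and fixed $v_1,\ldots,v_{j-1},v_{j+1},\ldots,v_\ell,\omega$, write the sign of the $j$th hybrid difference as $s_{v_j}\in\{\pm1\}$, split $V$ into $I^+=\{v_j:s_{v_j}=+1\}$ and $I^-$, and apply the cut bound once with $I=I^+$ and once with $I=I^-$, $B$ being the event $\{\sigma_{v_i}=\omega_i\ (i<j),\ \tau_{v_i}=\omega_i\ (i>j)\}$. This yields an average TV bound of order $\ell|\Omega|^\ell\delta$, well inside the stated parameters.

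Your plan for (i) contains a genuine error. The claim ``Chernoff plus a union bound over $I$ and $\omega$ gives $|f_I^\omega(\TAU)-\Erw f_I^\omega|=O(\sqrt{n}\log n)$ uniformly'' is false: there are $2^n$ sets $I$, and for any fixed product sample $\TAU$ the worst $I$ is $\{i:\TAU_i=\omega\}$, for which the deviation is $\sum_{i:\TAU_i=\omega}(1-\mu_i(\omega))=\Theta(n)$, not $o(n)$. So the $\TAU$-side does not concentrate uniformly in $I$ in the pointwise sense you use.

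The fix is simpler than Frieze--Kannan and removes the need for any cube-root loss. With the product coupling $\gamma$, set $X=f_I^\omega(\SIGMA)-f_I^\omega(\TAU)$; since $\Erw_\gamma X=0$, one has
\[
\max_B\bigl|\Erw_\gamma[\vecone_B\,X]\bigr|=\tfrac12\,\Erw_\gamma|X|\le\tfrac12\sqrt{\mathrm{Var}_\gamma(X)}
=\tfrac12\sqrt{\mathrm{Var}_\mu(f_I^\omega(\SIGMA))+\mathrm{Var}_{\bar\mu}(f_I^\omega(\TAU))}.
\]
The product variance is at most $n/4$. The $\mu$-variance equals $\sum_{i,j\in I}\bigl(\mu_{i,j}(\omega,\omega)-\mu_i(\omega)\mu_j(\omega)\bigr)$, which for \emph{every} $I$ is bounded by $\sum_{i,j\in V}\bigl|\mu_{i,j}(\omega,\omega)-\mu_i(\omega)\mu_j(\omega)\bigr|\le 2\delta n^2$ by $(\delta,2)$-symmetry. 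Dividing by $n$ gives $\cutm(\mu,\bar\mu)\le\tfrac12\sqrt{2\delta}+O(n^{-1/2})$, so already $\delta$ of order $\eps^2$ suffices; the cube $(\eps/6)^3$ is comfortable slack, not a Frieze--Kannan artefact. In short: replace the failed union bound by a second-moment estimate that is uniform over $I$ because symmetry controls the \emph{total} absolute covariance, and drop the Grothendieck step entirely.
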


It is an elementary observation that probability measures that are close in the discrete cut metric cannot have very different marginals.
Formally, we have the following.

\begin{lemma}\label{Lemma_tv1}
For any two probability measures $\mu,\nu$ on $\Omega^V$ we have
	$\sum_{v\in V}\tv{\mu_v-\nu_v}\leq2|\Omega|\cutm(\mu,\nu).$
\end{lemma}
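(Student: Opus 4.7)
The plan is to combine two features of the definition \eqref{eqCutm}: the freedom to choose $B\subset\Omega^V\times\Omega^V$ and the freedom to choose $I\subset V$. Let $\gamma\in\Gamma(\mu,\nu)$ be a coupling that attains the minimum in \eqref{eqCutm}. The first key observation is that taking $B=\Omega^V\times\Omega^V$ (the full product) collapses the inner sum in \eqref{eqCutm} to the marginal difference,
\begin{align*}
\sum_{(\sigma,\tau)\in\Omega^V\times\Omega^V}\gamma(\sigma,\tau)\bigl(\vecone\{\sigma_i=\omega\}-\vecone\{\tau_i=\omega\}\bigr)=\mu_i(\omega)-\nu_i(\omega),
\end{align*}
so that for every $I\subset V$ and every $\omega\in\Omega$ the bound $\bigl|\sum_{i\in I}(\mu_i(\omega)-\nu_i(\omega))\bigr|\leq n\cutm(\mu,\nu)$ is an immediate consequence of the defining max and min.

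To connect this signed quantity to $\sum_v|\mu_v(\omega)-\nu_v(\omega)|$, I would, for each fixed $\omega\in\Omega$, partition $V=I_\omega^+\cup I_\omega^-$ according to the sign of the marginal difference, with $I_\omega^+=\{v\in V:\mu_v(\omega)\geq\nu_v(\omega)\}$. Then
\begin{align*}
\sum_{v\in V}|\mu_v(\omega)-\nu_v(\omega)|=\sum_{v\in I_\omega^+}(\mu_v(\omega)-\nu_v(\omega))+\sum_{v\in I_\omega^-}(\nu_v(\omega)-\mu_v(\omega)),
\end{align*}
and each of the two signed sums on the right is of exactly the form controlled in the previous step (with $I=I_\omega^\pm$, the second one after swapping the roles of $\sigma$ and $\tau$). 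Hence each is at most $n\cutm(\mu,\nu)$, which yields $\sum_{v\in V}|\mu_v(\omega)-\nu_v(\omega)|\leq 2n\cutm(\mu,\nu)$ for every $\omega\in\Omega$.

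Summing the previous display over $\omega\in\Omega$ and using $\tv{\mu_v-\nu_v}=\tfrac12\sum_{\omega\in\Omega}|\mu_v(\omega)-\nu_v(\omega)|$ then delivers the stated bound. The argument is essentially bookkeeping; there is no substantive obstacle, as the two non-trivial moves---using $B=\Omega^V\times\Omega^V$ to recover marginals and the sign-based partition $V=I_\omega^+\cup I_\omega^-$ to convert signed cut-metric bounds into bounds on absolute marginal differences---are both immediate consequences of the shape of \eqref{eqCutm}.
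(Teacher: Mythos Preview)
Your proof is correct and follows essentially the same approach as the paper: both take $B=\Omega^V\times\Omega^V$ to reduce the inner sum to the marginal difference $\mu_i(\omega)-\nu_i(\omega)$, and both choose $I$ according to the sign of this difference. The only cosmetic distinction is that the paper applies pigeonhole to single out one $\omega\in\Omega$ achieving at least a $\tfrac{1}{2|\Omega|}$-fraction of $\sum_v\tv{\mu_v-\nu_v}$ and then invokes the cut-metric bound once, whereas you invoke the bound twice for each $\omega$ (once for $I_\omega^+$ and once for $I_\omega^-$) and then sum over $\omega$; your version in fact yields the slightly sharper constant $|\Omega|$ in place of $2|\Omega|$.
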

\begin{proof}
There exists $\omega\in\Omega$ such that
	\begin{equation}\label{eqLemma_L1_1}
	\sum_{v\in V}(\mu_v(\omega)-\nu_v(\omega))\vee0\geq\frac1{2|\Omega|}\sum_{v\in V}\tv{\mu_v-\nu_v}.
	\end{equation}
Let $I=\cbc{i\in V:\mu_i(\omega)\geq\nu_i(\omega)}$ and $B=\Omega^V\times\Omega^V$.
Then for any coupling $\gamma\in\Gamma(\mu,\nu)$,
	\begin{align}\label{eqLemma_L1_2}
	\sum_{i\in I}\sum_{(\sigma,\tau)\in B}\gamma(\sigma,\tau)\bc{\vecone\{\sigma_i=\omega\}-\vecone\{\tau_i=\omega\}}
		&=\sum_{i\in V}(\mu_i(\omega)-\nu_i(\omega))\vee0.
	\end{align}
Combining (\ref{eqLemma_L1_1}) and (\ref{eqLemma_L1_2}) completes the proof.
\end{proof}

\noindent
The converse bound, that close marginals imply closeness in the cut metric, holds for extremal measures.

\begin{lemma}\label{Lemma_tv2}
For any two $\eps$-extremal $\mu,\nu\in\cP(\Omega^V)$ we have
	$\Cutm(\mu,\nu)\leq2\eps+\sum_{v\in V}\tv{\mu_v-\nu_v}.$
\end{lemma}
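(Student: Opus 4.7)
The plan is to apply the triangle inequality, using the product marginal measures $\bar\mu=\bigotimes_{v\in V}\mu_v$ and $\bar\nu=\bigotimes_{v\in V}\nu_v$ as intermediate points. The $\eps$-extremality hypothesis directly gives $\cutm(\mu,\bar\mu)<\eps$ and $\cutm(\nu,\bar\nu)<\eps$, and \Fact~\ref{Fact_metric} guarantees that $\cutm$ is a genuine metric, so
\begin{align*}
\cutm(\mu,\nu)\le\cutm(\mu,\bar\mu)+\cutm(\bar\mu,\bar\nu)+\cutm(\bar\nu,\nu)\le 2\eps+\cutm(\bar\mu,\bar\nu).
\end{align*}
Thus the task reduces to bounding the cut distance between the two product measures in terms of $\sum_{v\in V}\tv{\mu_v-\nu_v}$.

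For the second step the plan is to exhibit an explicit coupling of $\bar\mu$ and $\bar\nu$. For each $v\in V$ choose an optimal total variation coupling $\gamma_v\in\Gamma(\mu_v,\nu_v)$, so that $\gamma_v\{(\sigma_v,\tau_v):\sigma_v\neq\tau_v\}=\tv{\mu_v-\nu_v}$, and take the product coupling $\gamma=\bigotimes_{v\in V}\gamma_v\in\Gamma(\bar\mu,\bar\nu)$. Under $\gamma$ the coordinates are coupled independently, and the key observation is that the integrand $\vecone\{\sigma_i=\omega\}-\vecone\{\tau_i=\omega\}$ vanishes on $\{\sigma_i=\tau_i\}$. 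Hence, for any $I\subset V$, $B\subset\Omega^V\times\Omega^V$ and $\omega\in\Omega$,
\begin{align*}
\bigg|\sum_{i\in I}\sum_{(\sigma,\tau)\in B}\gamma(\sigma,\tau)\bc{\vecone\{\sigma_i=\omega\}-\vecone\{\tau_i=\omega\}}\bigg|
\le\sum_{i\in I}\gamma\cbc{(\sigma,\tau):\sigma_i\neq\tau_i}
=\sum_{i\in I}\tv{\mu_i-\nu_i}\le\sum_{v\in V}\tv{\mu_v-\nu_v}.
\end{align*}
Taking the maximum over $I,B,\omega$ and dividing by $n=|V|$ yields $\cutm(\bar\mu,\bar\nu)\le n^{-1}\sum_{v\in V}\tv{\mu_v-\nu_v}$, which combined with the triangle inequality above already gives a bound at least as strong as the one asserted.

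There is no real obstacle in this proof: it is essentially a triangle inequality followed by an independent coupling on the coordinates. The only minor bookkeeping concerns the $1/n$ normalization in the definition of $\cutm$, which makes the inequality rather comfortable; more subtle would be the case where one wanted a matching lower bound (that is the content of \Lem~\ref{Lemma_tv1}), but for the present direction the product coupling is precisely the canonical choice.
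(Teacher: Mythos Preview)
Your proof is correct and follows essentially the same approach as the paper: triangle inequality through the product measures $\bar\mu,\bar\nu$, extremality for the two outer terms, and the product of optimal total-variation couplings to bound $\cutm(\bar\mu,\bar\nu)$. You are in fact slightly more careful with the $1/n$ normalization than the paper's write-up, yielding the stronger bound $\cutm(\bar\mu,\bar\nu)\le n^{-1}\sum_{v}\tv{\mu_v-\nu_v}$, which of course still implies the stated inequality.
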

\begin{proof}
Assume without loss that $V=[n]$ and let $\bar\mu=\bigotimes_{i=1}^n\mu_i$ and $\bar\nu=\bigotimes_{i=1}^n\nu_i$.
Since $\mu,\nu$ are $\eps$-extremal, we have
	\begin{align}\label{eqLemma_tv2_1}
	\cutm(\mu,\bar\mu)&<\eps,&\cutm(\nu,\bar\nu)&<\eps.
	\end{align}
Let $\gamma_i\in\cP(\Omega\times\Omega)$ be an optimal coupling of $\mu_i,\nu_i$, i.e., $\tv{\mu_i-\nu_i}=\sum_{\sigma\neq\tau}\gamma_i(\sigma,\tau)$.
Then $\gamma=\bigotimes_{i=1}^n\gamma_i$ is a coupling of $\bar\mu,\bar\nu$.
Further, for any $I\subset[n],B\subset\Omega^n\times\Omega^n,\omega\in\Omega$ we have
	\begin{align*}
	\abs{\sum_{i\in I}\sum_{(\sigma,\tau)\in B}\gamma(\sigma,\tau)\bc{\vecone\{\sigma_i=\omega\}-\vecone\{\tau_i=\omega\}}}
		&\leq\sum_{i\in I}\sum_{(\sigma,\tau)\in B}\gamma(\sigma,\tau)\vecone\{\sigma_i\neq\tau_i\}\leq
		%&\leq\sum_{i=1}^n\sum_{\sigma,\tau\in \Omega^n}\gamma(\sigma,\tau)\vecone\{\sigma_i\neq\tau_i\}
			=\sum_{i=1}^n\tv{\mu_i-\nu_i}.
	\end{align*}
Hence, $\cutm(\bar\mu,\bar\nu)\leq\sum_{i=1}^n\tv{\mu_i-\nu_i}$, and thus   the assertion follows from (\ref{eqLemma_tv2_1}) and the triangle inequality.
\end{proof}

\noindent
We also make a note of the following enhanced triangle inequality.

\begin{lemma}[{\cite{Bethe}}]\label{Lemma_triangle}
Suppose that $\mu^{(1)},\nu^{(1)},\ldots,\mu^{(\ell)},\nu^{(\ell)}$ are probability measures on $\Omega^V$ and that $u_1,\ldots,u_\ell\geq0$ are numbers such that $\sum_{i=1}^\ell u_i=1$.
Then
	\begin{align*}
	\cutm\bc{\sum_{i=1}^\ell u_i\mu^{(i)},\sum_{i=1}^\ell u_i\nu^{(i)}}&\leq\sum_{i=1}^\ell u_i\cutm(\mu^{(i)},\nu^{(i)}).
	\end{align*}
\end{lemma}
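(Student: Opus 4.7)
The plan is to construct an explicit coupling for the mixtures by mixing optimal couplings for each pair and then push the linearity through the sup defining the cut distance.

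First I would fix, for each $i \in [\ell]$, an optimal coupling $\gamma^{(i)} \in \Gamma(\mu^{(i)},\nu^{(i)})$ achieving the minimum in the definition~\eqref{eqCutm} of $\cutm(\mu^{(i)},\nu^{(i)})$. Then I would form the convex combination
$$\gamma = \sum_{i=1}^\ell u_i \gamma^{(i)} \in \cP(\Omega^V \times \Omega^V).$$
A direct check of the marginals shows that $\gamma$ is a coupling of $\sum_i u_i \mu^{(i)}$ and $\sum_i u_i \nu^{(i)}$; this is the only coupling we will use, so the ``$\min$'' over couplings on the left-hand side is already handled by passing to this particular $\gamma$.

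Next, for any $I \subset V$, $B \subset \Omega^V \times \Omega^V$, $\omega \in \Omega$, let
$$F(\delta;I,B,\omega) = \sum_{i \in I}\sum_{(\sigma,\tau) \in B} \delta(\sigma,\tau)\bigl(\vecone\{\sigma_i=\omega\}-\vecone\{\tau_i=\omega\}\bigr).$$
Because $\delta \mapsto F(\delta;I,B,\omega)$ is linear, $F(\gamma;I,B,\omega) = \sum_h u_h F(\gamma^{(h)};I,B,\omega)$. Now I would pick $(I^*,B^*,\omega^*)$ attaining the $\max$ for $\gamma$ and apply the triangle inequality:
$$\abs{F(\gamma;I^*,B^*,\omega^*)} \leq \sum_{h=1}^\ell u_h \abs{F(\gamma^{(h)};I^*,B^*,\omega^*)} \leq \sum_{h=1}^\ell u_h \max_{I,B,\omega}\abs{F(\gamma^{(h)};I,B,\omega)}.$$
Dividing by $n = |V|$ and invoking optimality of each $\gamma^{(h)}$ yields the desired inequality.

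There is no real obstacle here beyond bookkeeping; the key point is that the definition of $\cutm$ uses $\min$ over couplings \emph{then} $\max$ over the combinatorial data, so by producing any coupling for the mixture we only need to worry about the $\max$, and the $\max$ obeys the triangle inequality against a convex combination of absolute values. The one subtlety worth emphasizing is that $\gamma$ need not be optimal for $\bigl(\sum u_i\mu^{(i)},\sum u_i\nu^{(i)}\bigr)$, so the inequality is one-sided; but since we only need an upper bound on $\cutm$ of the mixture, this is exactly what we want.
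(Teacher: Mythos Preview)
Your argument is correct and is exactly the standard proof: mix optimal couplings to get a coupling of the mixtures, then use linearity plus the triangle inequality on the inner $\max$. The paper does not actually prove this lemma here (it is imported from~\cite{Bethe}), so there is nothing to compare; your write-up is precisely what one would expect the cited proof to contain.
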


Finally, we come to an important fact, intimately related to the \Szemeredi\ regularity lemma from combinatorics. Namely, any probability distribution $\mu\in\cP(\Omega^V)$ is close in the cut metric to a mixture of a `small' number of product measures.
To state this results precisely, suppose that $I\subset V$ and that $\sigma\in\Omega^I$.
Let
	$$S^{I,\sigma}=\cbc{\tau\in\Omega^V:\tau_I=\sigma}$$
be the  sub-cube of $\Omega^V$ where the entries of the coordinates in $I$ coincide with $\sigma$.
Further, assuming that $\mu\in\cP(\Omega^V)$ and $\mu(S^{I,\sigma})>0$, we let
	\begin{align}
	\label{eqCondMu}
\mu^{I,\sigma}=\mu[\nix|S^{I,\sigma}]
\end{align}
be the corresponding conditional distribution of $\mu$.
(If $\mu(S^{I,\sigma})=0$, then we agree that $\mu^{I,\sigma}$ is the uniform distribution on $S^{I,\sigma}$.)
The following key lemma shows that $\mu^{I,\sigma}$ is likely $\eps$-symmetric for suitably random $I,\sigma$.

\begin{lemma}[{\cite[\Lem~3.5]{CKPZ}}]\label{Lemma_pinning}
For any set $\Omega$ of size $1<|\Omega|<\infty$ and any $\eps>0$ there exist $n_0>0$ and a random variable $0<\THETA\leq2\eps^{-3}\ln|\Omega|$ such that for all $n>n_0$ and all $\mu\in\cP(\Omega^V)$ the following holds.
Let $\vI\subset V$ be a uniformly random subset of size $\THETA$ and choose $\SIGMA\in\Omega^{\vI}$ from $\mu_{\vI}$.
	Then  $\pr\brk{\mu^{\vI,\SIGMA}\mbox{ is $\eps$-symmetric}}>1-\eps.$
\end{lemma}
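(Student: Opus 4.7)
The plan is to execute the entropy-compression (pinning) argument. Each additional pinned coordinate reduces the average conditional single-site entropy by exactly the pairwise mutual information that it destroys, so a random pinning depth of order $\eps^{-3}\ln|\Omega|$ typically leaves only tiny average pairwise MI, which Pinsker's inequality then converts into pairwise total-variation closeness to a product measure, the defining property of $(\eps,2)$-symmetry. Concretely, I set up the potential
\[
\Phi(t) = \Erw\brk{H(\SIGMA_\vu \mid \SIGMA_{\vI_t})}, \qquad 0 \leq \Phi(t) \leq \ln|\Omega|,
\]
with $\SIGMA \sim \mu$, $\vu$ uniform in $V$ and $\vI_t \in \binom{V}{t}$ uniform and independent. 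Coupling $\vI_{t+1} = \vI_t \cup \cbc{\vv}$ with $\vv$ uniform in $V \setminus \vI_t$, the chain rule for entropy yields
\[
\Phi(t) - \Phi(t+1) = \frac{n - t}{n}\, \Erw\brk{I(\SIGMA_\vu; \SIGMA_\vv \mid \SIGMA_{\vI_t})},
\]
where on the right $\vu, \vv$ are independent uniform in $V \setminus \vI_t$ (the $\vu \in \vI_t$ terms drop because both entropies vanish there). Setting $T = \lceil 2\eps^{-3}\ln|\Omega|\rceil$ and letting $\THETA$ be uniform on $\cbc{1, \dots, T}$, monotonicity of $\Phi$ and telescoping give $\Erw_\THETA\brk{\Phi(\THETA) - \Phi(\THETA+1)} \leq \ln|\Omega|/T \leq \eps^3/2$.

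\textbf{From mutual information to symmetry.} Write
\[
\eta(\THETA, \vI, \sigma) = \frac{1}{(n-\THETA)^2}\sum_{u, v \in V \setminus \vI} I\bc{\SIGMA_u; \SIGMA_v \mid \SIGMA_\vI = \sigma}
\]
for the average pairwise mutual information under the conditional law $\mu^{\vI, \sigma}$. The previous step, together with $\THETA \leq T = o(n)$, yields $\Erw\brk{\eta(\THETA, \vI_\THETA, \SIGMA_{\vI_\THETA})} \leq (1 + o(1))\eps^3/2 \leq \eps^3$ whenever $n > n_0$ is large enough. Markov's inequality gives $\pr\brk{\eta > \eps^2} \leq \eps$, and on the complementary event combining Pinsker's inequality (applied to each pair $(u,v)$ under $\mu^{\vI_\THETA, \SIGMA_{\vI_\THETA}}$) with Jensen's inequality delivers
\[
\frac{1}{(n-\THETA)^2} \sum_{u, v \in V \setminus \vI_\THETA}\TV{\mu^{\vI_\THETA, \SIGMA_{\vI_\THETA}}_{u,v} - \mu^{\vI_\THETA, \SIGMA_{\vI_\THETA}}_u \tensor \mu^{\vI_\THETA, \SIGMA_{\vI_\THETA}}_v} \leq \sqrt{\eta/2} < \eps,
\]
which is precisely $(\eps,2)$-symmetry of $\mu^{\vI_\THETA, \SIGMA_{\vI_\THETA}}$ (contributions from $u$ or $v \in \vI_\THETA$ vanish since those marginals are point masses, and extending the sum from $V\setminus\vI_\THETA$ to $V$ only shrinks the average). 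If the statement requires $(\eps, \ell)$-symmetry with larger $\ell$, I rerun the argument with the smaller parameter $\eps^3/(128|\Omega|)^{4\ell}$ in place of $\eps$; the resulting pairwise symmetry then upgrades via \Prop~\ref{Cor_symmetry}(i) to extremality and via \Prop~\ref{Cor_symmetry}(ii) to $(\eps, \ell)$-symmetry.

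\textbf{Main obstacle.} Conceptually the proof is short; the only technical friction is the bookkeeping of the $O(\ln|\Omega|/n)$ boundary contributions (the events $\vu \in \vI_t$ or $\vu = \vv$, and the prefactor $(n-\THETA)/n$) and ensuring these are absorbed into the slack of the Markov step. This forces the threshold $n_0$ to be taken sufficiently large, e.g.\ $n_0 \gg \eps^{-5}\ln|\Omega|$, so that the conclusion holds uniformly over all $\mu \in \cP(\Omega^V)$.
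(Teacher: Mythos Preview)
The paper does not prove this lemma; it is quoted verbatim from \cite[\Lem~3.5]{CKPZ} and used as a black box. Your sketch is essentially the standard entropy-telescoping argument that appears in that reference, and it is correct as written: the potential $\Phi(t)$ drops by the average conditional pairwise mutual information, telescoping over $T=\lceil 2\eps^{-3}\ln|\Omega|\rceil$ steps forces the average drop below $\eps^3/2$, Markov then Pinsker finish. Your handling of the boundary terms ($\vu\in\vI_t$, $\vu=\vv$, the $(n-t)/n$ prefactor) via the $n_0$ threshold is exactly the right bookkeeping. Since the paper offers no proof to compare against, there is nothing further to contrast.
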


We can apply \Lem~\ref{Lemma_pinning} multiple times to obtain a decomposition of the set $\Omega^V$ into sub-cubes $S_1,\ldots,S_\ell$ such that $\mu[\nix|S_i]$ is $\eps$-symmetric.
To obtain these sub-cubes we just choose the set $\vI$ randomly as in \Lem~\ref{Lemma_pinning} and let $\sigma$ range over all $|\Omega|^{\THETA}$ possible assignments of $\vI$.
We then obtain the following version of the regularity lemma.

\begin{corollary}[\cite{Victor}]\label{Lemma_rl}
For any finite set $\Omega\neq\emptyset$ and any $\eps>0$ there exist $L,n_0$ such that for all $n>n_0$ the following is true.
For any $\mu\in\cP(\Omega^V)$ there exists a partition of $\Omega^V$ into pairwise disjoint sets $S_0,\ldots,S_\ell$, $\ell\leq L$, such that
 $\mu(S_0)<\eps$ and such that $\mu(\nix|S_i)$ is $\eps$-symmetric for each $1\leq i\leq\ell$.
\end{corollary}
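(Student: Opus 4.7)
The plan is to derive the corollary via a single application of the pinning lemma (\Lem~\ref{Lemma_pinning}) combined with an averaging argument to derandomize the choice of pinned coordinates. The key observation is that once we fix a set $I \subset V$, the cylinders $\{S^{I,\sigma} : \sigma \in \Omega^I\}$ partition $\Omega^V$ into at most $|\Omega|^{|I|}$ pieces, and the conditional measures $\mu^{I,\sigma}$ of (\ref{eqCondMu}) are precisely the objects whose $\eps$-symmetry is controlled by \Lem~\ref{Lemma_pinning}. Since $|\Omega|^{|I|}$ is bounded by a constant independent of $n$ whenever $|I|$ is, one application will produce a partition whose non-symmetric part has small total mass.

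First I would invoke \Lem~\ref{Lemma_pinning} with the given $\eps$ to obtain $n_0$ and a random $\THETA \leq T := 2\eps^{-3}\ln|\Omega|$ with the pinning property, and set $L := |\Omega|^T$. Averaging the conclusion of the pinning lemma over $\THETA$ and the random $\vI$ yields
\begin{align*}
\Erw\brk{\pr_{\SIGMA \sim \mu_{\vI}}\brk{\mu^{\vI,\SIGMA}\text{ is not $\eps$-symmetric}\mid\vI}} < \eps,
\end{align*}
so there must exist a deterministic pair $(t,I)$ with $|I| = t \leq T$ such that $\pr_{\SIGMA \sim \mu_I}[\mu^{I,\SIGMA}\text{ is not $\eps$-symmetric}] < \eps$. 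Fixing such an $I$, partition $\Omega^V$ by the restriction to $I$: let $\cB \subset \Omega^I$ be the set of $\sigma$ for which $\mu^{I,\sigma}$ fails to be $\eps$-symmetric, set $S_0 := \bigcup_{\sigma \in \cB} S^{I,\sigma}$, and let $S_1,\ldots,S_\ell$ be the remaining cylinders $S^{I,\sigma}$, $\sigma \in \Omega^I \setminus \cB$, enumerated arbitrarily.

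For verification: each good cylinder satisfies $\mu(\nix \mid S_i) = \mu^{I,\sigma}$ with $\sigma \notin \cB$, hence is $\eps$-symmetric by construction. Moreover $\mu(S_0) = \sum_{\sigma \in \cB} \mu(S^{I,\sigma}) = \pr_{\SIGMA \sim \mu_I}[\mu^{I,\SIGMA}\text{ not $\eps$-symmetric}] < \eps$ by the choice of $I$, and $\ell \leq |\Omega^I| \leq |\Omega|^T = L$, independent of $n$. I do not anticipate a serious obstacle: the pinning lemma does all the heavy lifting, and the remainder is a standard derandomization-plus-partition step. The only minor subtlety is that \Lem~\ref{Lemma_pinning} produces a \textbf{random} pin size $\THETA$, so one must observe that some deterministic realization $t \leq T$ is good — an immediate averaging — in order to keep the cardinality bound $|\Omega|^t \leq L$ purely a function of $\Omega$ and $\eps$.
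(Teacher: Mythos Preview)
Your proposal is correct and matches the paper's own sketch, which likewise obtains the sub-cubes by choosing a random pinning set $\vI$ via \Lem~\ref{Lemma_pinning} and then letting $\sigma$ range over all $|\Omega|^{\THETA}$ possible assignments of $\vI$. Your explicit averaging step to derandomize the choice of $I$ is exactly the routine step needed to turn that sketch into a proof.
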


\subsubsection{Contiguity}\label{Sec_contig}

Suppose that $\Omega\neq\emptyset$ is a finite set and let $c\geq1$.
A probability distribution $\nu$ on $\Omega^n$ is {\em $c$-contiguous} with respect to another probability distribution $\mu$ if 
	\begin{align*}
	\nu(\sigma)&\leq c\mu(\sigma)&\mbox{for all }\sigma\in\Omega^n.
	\end{align*}
Moreover, $\mu,\nu$ are {\em mutually $c$-contiguous} if each is $c$-contiguous with respect to the other.

\begin{lemma}\label{Lemma_extremal_contig}
For any $c\geq1,\delta>0$ there exists $\eps>0$ such that for all large enough $n$ the following is true.
Assume that $\mu\in\cP(\Omega^n)$ is $\eps$-extremal and that $\nu$ is $c$-contiguous with respect to $\mu$.
Then $\nu$ is $\delta$-extremal and $\cutm(\mu,\nu)<\delta$.
\end{lemma}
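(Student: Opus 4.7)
The two conclusions will be derived together from a single calculation that transfers the approximate product structure of $\mu$ to $\nu$ via the bounded Radon--Nikodym derivative $g=d\nu/d\mu\leq c$.

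The plan is to first convert both hypothesis and target from $\eps$-extremality into $(\eps',\ell)$-symmetry via Proposition~\ref{Cor_symmetry}. Setting $\eps_1=(\delta/6)^3$, Proposition~\ref{Cor_symmetry}(i) implies that it suffices to show $\nu$ is $(\eps_1,2)$-symmetric in order to conclude that $\nu$ is $\delta$-extremal. I choose $\ell$ large and $\eps$ small enough that Proposition~\ref{Cor_symmetry}(ii) yields $(\eps_1/2,\ell)$-symmetry of $\mu$ from its $\eps$-extremality.

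The core step is a Hoeffding/Efron--Stein decomposition $g=\sum_{S\subseteq V}g_S$ with respect to the product measure $\bar\mu=\bigotimes_v\mu_v$, in which each $g_S$ depends only on the coordinates in $S$ and has vanishing conditional mean on proper subsets. The bound $g\leq c$ yields the Parseval-type estimate $\sum_S\|g_S\|_2^2\leq c^2$, and a direct expansion of conditional expectations gives
\[
\nu_{v_1,v_2}(\sigma_1,\sigma_2)-\nu_{v_1}(\sigma_1)\nu_{v_2}(\sigma_2)=\mu_{v_1}(\sigma_1)\mu_{v_2}(\sigma_2)\bigl[g_{\{v_1,v_2\}}(\sigma_1,\sigma_2)-g_{\{v_1\}}(\sigma_1)g_{\{v_2\}}(\sigma_2)\bigr].
\]
Cauchy--Schwarz followed by averaging over a uniformly random pair $(\vv_1,\vv_2)$, together with $\sum_{|S|=2}\|g_S\|_2^2\leq c^2$ and $\sum_v\|g_{\{v\}}\|_2^2\leq c^2$, will give $\Erw_{\vv_1,\vv_2}\TV{\nu_{\vv_1,\vv_2}-\nu_{\vv_1}\otimes\nu_{\vv_2}}=O(c^2/n)<\eps_1$ for $n$ large. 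Hence $\nu$ is $(\eps_1,2)$-symmetric, and thus $\delta$-extremal. For the cut-distance claim, Lemma~\ref{Lemma_tv2} then gives $\cutm(\mu,\nu)\leq\delta+\sum_v\TV{\mu_v-\nu_v}$, while the same decomposition yields $\TV{\mu_v-\nu_v}\leq\tfrac12\|g_{\{v\}}\|_2$. Cauchy--Schwarz delivers $\sum_v\TV{\mu_v-\nu_v}\leq(c/2)\sqrt n$, and the $1/n$-normalisation built into $\cutm$ renders this term $O(c/\sqrt n)<\delta$ for $n$ sufficiently large.

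The main obstacle lies in the fact that the orthogonal Efron--Stein decomposition of $g$ is clean only under the product measure $\bar\mu$, whereas we only have $\mu$ close to $\bar\mu$ in cut distance. The fix is to carry out the entire computation under $\bar\mu$ and to absorb the discrepancy as an error: every quantity entering the bound is an integral of a bounded function, so by Lemma~\ref{Lemma_cntFct} (continuity of integrals under the cut metric), swapping the reference measure from $\mu$ to $\bar\mu$ changes each such integral by at most $O(\eps)$. Since $\eps$ is the last parameter to be chosen, this perturbation can be absorbed into the slack between the two levels $\eps_1/2$ and $\eps_1$.
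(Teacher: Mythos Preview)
Your Efron--Stein argument is clean and correct \emph{when $\mu$ is exactly the product measure $\bar\mu$}, but the transfer to the general case has a genuine gap that the proposed fix does not close.

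The issue is twofold. First, the displayed identity for $\nu_{v_1,v_2}-\nu_{v_1}\nu_{v_2}$, as well as the Parseval bound $\sum_S\|g_S\|_2^2\le c^2$ and the formula $\nu_v-\mu_v=\mu_v\,g_{\{v\}}$, all rely on the decomposition being orthogonal in $L^2(\bar\mu)$ \emph{and} on the marginal computations being taken under $\bar\mu$. If you decompose $g=d\nu/d\mu$ in $L^2(\bar\mu)$ but then integrate against $\mu$, none of these identities survive; if instead you try to work with $h=d\nu/d\bar\mu=g\cdot d\mu/d\bar\mu$, you lose the uniform bound $h\le c$, because cut-closeness of $\mu$ and $\bar\mu$ gives no pointwise control on $d\mu/d\bar\mu$. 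Second, the appeal to \Lem~\ref{Lemma_cntFct} does not help: that lemma gives continuity of $\mu\mapsto\Erw\brk{\scal f\mu^\ell}$ for a \emph{fixed} function $f$ of a \emph{bounded} number of coordinates, whereas every integral in your argument involves $g$, which depends on all $n$ coordinates. Closeness in the cut metric says nothing about $\int F\,d\mu$ versus $\int F\,d\bar\mu$ for such $F$.

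The paper's proof proceeds along an entirely different line. It applies the regularity decomposition (\Cor~\ref{Lemma_rl}) to $\nu$, obtaining classes $S_1,\dots,S_\ell$ on which $\nu(\nix|S_i)$ is $\eta$-symmetric. The key step is then to show that for each non-negligible $S_i$ the marginals $\nu_j(\nix|S_i)$ are close to $\mu_j$; this is done by contradiction: if not, one builds a linear statistic $X(\sigma)=\sum_{j\in J}\vecone\{\sigma_j=\omega\}$ whose mean under $\nu(\nix|S_i)$ and under $\mu$ differ by $\Omega(n)$, uses the symmetry of both measures to control the variance (Chebyshev), and obtains an event $B$ with $\nu(B|S_i)\ge 3/4$ but $\mu(B)\le\eps^{1/4}$, contradicting $c$-contiguity. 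Once the marginals match, \Lem~\ref{Lemma_tv2} and \Lem~\ref{Lemma_triangle} finish both conclusions. This argument uses the $\eps$-extremality of $\mu$ only through its $(\eps,2)$-symmetry, and never needs any pointwise comparison of $\mu$ with $\bar\mu$.
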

\begin{proof}
Choose $0<\eps\ll\eta\ll\zeta\ll\xi\ll\delta$ and assume that $n>n_0(\eps)$ is sufficiently large and that $\mu$ is $\eps$-extremal.  Applying \Cor~\ref{Lemma_rl} to the measure $\nu$, we obtain a partition $S_0,S_1,\ldots,S_\ell$ of the
cube $\Omega^n$ into pairwise disjoint sets such that $\nu(S_0)<\eta$ and such that $\nu(\nix|S_i)$ is $\eta$-symmetric for every $i=1,\ldots,\ell$.
Moreover, $\ell$ is bounded by a number $L(\eta,\Omega)>0$ that depends on $\eta$ and $|\Omega|$ only.

Suppose that for every $1\leq i\leq\ell$ with $\nu(S_i)\geq\eta/\ell$ we have
	\begin{equation}\label{eqLemma_extremal_contig1}
	\sum_{j=1}^n\tv{\nu_j(\nix|S_i)-\mu_j}\leq\zeta n.
	\end{equation}
Then \Lem~\ref{Lemma_tv2} yields $\cutm(\nu(\nix|S_i),\mu)\leq2\eta+\zeta$.
Hence, \Lem~\ref{Lemma_triangle} shows that
	\begin{equation}\label{eqLemma_extremal_contig2}
	\cutm(\nu,\mu)\leq4\eta+\zeta<\delta.
	\end{equation}
Further, (\ref{eqLemma_extremal_contig1}) implies that  $\sum_{j=1}^n\tv{\nu_j-\mu_j}\leq\zeta+2\eta$.
Hence, letting $\bar\mu=\bigotimes_{i=1}^n\mu_i$, $\bar\nu=\bigotimes_{i=1}^n\nu_i$ and applying \Lem~\ref{Lemma_tv2} a second time, we obtain $\cutm(\bar\mu,\bar\nu)\leq\zeta+2\eta$.
Thus, invoking the $\eps$-extremality of $\mu$ and (\ref{eqLemma_extremal_contig2}), we conclude that
	\begin{equation}\label{eqLemma_extremal_contig3}
	\cutm(\nu,\bar\nu)\leq\cutm(\nu,\mu)+\cutm(\mu,\bar\mu)+\cutm(\bar\mu,\bar\nu)\leq(4\eta+\zeta)+\eps+(\zeta+2\eta)<\delta.
	\end{equation}
In summary, if (\ref{eqLemma_extremal_contig1}) is satisfied, then (\ref{eqLemma_extremal_contig2}) and (\ref{eqLemma_extremal_contig3}) yield $\cutm(\nu,\mu)<\delta$ and $\cutm(\nu,\bar\nu)<\delta$, as claimed.

Thus, we are left to establish (\ref{eqLemma_extremal_contig1}).
Assume for contradiction that there is $1\leq i\leq\ell$ with $\nu(S_i)\geq\eta/\ell$ and $\sum_{j=1}^n\tv{\nu_j(\nix|S_i)-\mu_j}>\zeta n$.
Then there exist $J\subset[n]$ and $\omega\in\Omega$ such that $\sum_{j\in J}{\nu_j(\omega|S_i)-\mu_j(\omega)} >\zeta n/\bc{2|\Omega|}$.  In other words, the random variable $X(\sigma)=\sum_{j\in J}\vecone\{\sigma_j=\omega\}$ satisfies
	\begin{align}\label{eqLemma_extremal_contig1a}
	\scal{X}{\nu(\nix|S_i)}-\scal{X}{\mu}>\zeta n/\bc{2|\Omega|}.
	\end{align}
Due to the $\eta$-symmetry of $\nu(\nix|S_i)$ and the $\eps$-symmetry of $\mu$, the second moments work out as
	\begin{align}\label{eqLemma_extremal_contig1b}
	\scal{X(X-1)}{\nu(\nix|S_i)}&=\sum_{j,j'\in J:j\neq j'}\scal{\vecone\{\SIGMA_j=\SIGMA_{j'}=\omega\}}{\nu(\nix|S_i)}
		\leq\eta n+\scal{X}{\nu(\nix|S_i)}^2,&
	\scal{X(X-1)}{\mu}&\leq\eps n+\scal{X}{\mu}^2.
	\end{align}
Combining (\ref{eqLemma_extremal_contig1a}) and (\ref{eqLemma_extremal_contig1b}) with Chebyshev's inequality
and keeping in mind that $\eps\ll\eta\ll\zeta$, we conclude that the event
	$B=\cbc{X(\SIGMA)\geq\scal{X}\mu+\zeta/(4|\Omega|)}$
satisfies 
	\begin{align}\label{eqLemma_extremal_contig1c}
	\nu(B|S_i)&\geq3/4,&\mu(B)&\leq\eps^{1/4}.
	\end{align}
However, if $\nu$ is $c$-contiguous with respect to $\mu$, then \eqref{eqLemma_extremal_contig1c} yields
	\begin{align*}
	c\eps^{1/4}\geq c\mu(B)&\geq \nu(B)=\nu(B|S_i)\nu(S_i)\geq3\eta/(4\ell)\geq 3\eta/(4L(\eta,\Omega)),
	\end{align*}
which contradicts the choice of the parameters $\eps,\eta$.
\end{proof}

\begin{corollary}\label{Lemma_extremal_cond}
For any $\delta>0$ there exists $\eps>0$ such that the following is true.
Suppose that $\mu$ is $\eps$-extremal and that $S\subset\Omega^n$ is an event such that $\mu(S)\geq\delta$.
Then $\mu(\nix|S)$ is $\delta$-extremal and
	$\cutm(\mu(\nix|S),\mu)<\delta$.
\end{corollary}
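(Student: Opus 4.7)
The plan is to reduce the corollary directly to Lemma \ref{Lemma_extremal_contig} by observing that conditioning on a set of mass at least $\delta$ produces a measure that is $(1/\delta)$-contiguous with respect to the original.

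More concretely, I would first note the elementary pointwise inequality
\begin{align*}
\mu(\sigma\mid S) \;=\; \frac{\mathbf{1}\{\sigma\in S\}\,\mu(\sigma)}{\mu(S)} \;\leq\; \frac{\mu(\sigma)}{\mu(S)} \;\leq\; \frac{1}{\delta}\,\mu(\sigma)
\end{align*}
for every $\sigma\in\Omega^n$, using the hypothesis $\mu(S)\geq\delta$. This says precisely that $\nu:=\mu(\nix\mid S)$ is $c$-contiguous with respect to $\mu$ in the sense of \Sec~\ref{Sec_contig}, with $c=1/\delta$.

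Given the target extremality/cut-distance parameter $\delta$, I would then invoke Lemma~\ref{Lemma_extremal_contig} with that value of $c$ and with the same $\delta$ on the conclusion side. The lemma supplies an $\eps=\eps(c,\delta)=\eps(\delta)>0$ such that, for all sufficiently large $n$, whenever $\mu$ is $\eps$-extremal and $\nu$ is $(1/\delta)$-contiguous with respect to $\mu$, one has that $\nu$ is $\delta$-extremal and $\cutm(\mu,\nu)<\delta$. Substituting $\nu=\mu(\nix\mid S)$ gives exactly the two conclusions of the corollary.

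There is essentially no obstacle: the only content is the pointwise contiguity bound, and all of the real work (passing from contiguity plus $\eps$-extremality of $\mu$ to $\delta$-extremality of $\nu$ and closeness in the cut metric) is already encapsulated in Lemma~\ref{Lemma_extremal_contig}. The one thing to be slightly careful about is absorbing the statement ``for all sufficiently large $n$'' into the quantifier structure of the corollary; if a statement uniform in $n$ is required, one can observe that small $n$ contribute only finitely many cases which can be handled by shrinking $\eps$ further (or, equivalently, the conclusion is implicitly asymptotic, matching the usage throughout the paper).
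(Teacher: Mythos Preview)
Your proposal is correct and matches the paper's own proof essentially verbatim: the paper also observes that $\mu(\sigma\mid S)\leq\mu(\sigma)/\mu(S)$ gives contiguity with constant $1/\delta$ and then invokes Lemma~\ref{Lemma_extremal_contig} directly. (In fact the paper's write-up contains what appears to be a typo, writing ``$1/\eps$-contiguous'' where your ``$1/\delta$-contiguous'' is the intended bound.)
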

\begin{proof}
Since $\mu(\sigma|S)\leq\mu(\sigma)/\mu(S)$ for every $\sigma$, the conditional distribution
$\mu(\nix|S)$ is $1/\eps$-contiguous with respect to $\mu$.
Thus, the assertion follows from \Lem~\ref{Lemma_extremal_contig} immediately.
\end{proof}

\section{Bethe state decompositions}\label{Sec_Bethe}

\subsection{The construction}\label{Sec_Proof_BP}
In this section we prove \Thm~\ref{Thm_BP}.
Specifically, we aim to show that the Boltzmann distribution $\mu_{\G}$ is well approximated by a collection of no more than $L$ Belief Propagation fixed points.
For a given variable node $v$ the corresponding fixed point equations involve the messages sent by the constraint $a\in\partial v$, which in turn are determined by the messages sent out by the variables $w$ at distance precisely two from $v$.
Thus, to express a single application of the Belief Propagation operator we require information about the variable nodes at distance two from $v$.
Therefore, in addition to the Boltzmann distribution $\mu_{\G}$ we will consider an enhanced measure $\hat\mu_{\G}$ that captures the joint distribution of the second neighborhoods

To be precise, let $G=(V,F,(\partial a)_{a\in F},(\psi_a)_{a\in F},p^{\tensor n})$ be a factor graph.
Then its Boltzmann distribution $\mu_G$ `lives' on the space $\Omega_G=\Omega^V$.
In addition, recalling that $\nabla^2_Gv\cap V$ consists of all variable nodes at distance at most two from $v$, consider the space
	$$\hat\Omega_G=\prod_{v\in V}\Omega^{\nabla^2_Gv}$$
of second neighborhood assignments, whose
 elements we denote as $\tau=(\tau(v,w))_{v\in V,w\in\nabla^2_Gv}$.
The factor graph $G$ induces an embedding
	\begin{align*}
	\Omega_G&\to\hat\Omega_G,&\sigma&\mapsto \hat\sigma=(\hat\sigma(v,w))_{v\in V,w\in\nabla^2_Gv},
		\qquad\mbox{where }\hat\sigma(x,y)=\sigma(y).
	\end{align*}
Thus,  $\mu_G$ induces a probability distribution $\hat\mu_G$ on $\hat\Omega_G$.
For a variable $v$ we denote by $\hat\mu_{G,v}\in\cP(\Omega^{\nabla^2_Gv})$ the marginal distribution of $\hat\mu_G$ on the $v$-factor of $\hat\Omega_G$.

The enhanced measure $\hat\mu_{\G}$ will play a vital role in the construction of the Bethe state decomposition.
Indeed, by comparison to the \Erdos-\Renyi\ case, the rigid geometry of the random regular graph causes significant difficulties.
More precisely, while the Belief Propagation messages are defined in terms of removing one or a few constraints, such operations clearly destroy regularity.
Hence, we need to create a bit of wiggling room.
To this end, we  remove some variable nodes along with their adjacent constraint nodes, thereby leaving a few variable nodes with degree $d-1$ rather than $d$.
We refer to these variables as `cavities'.
Clearly, this operation loses some information and would therefore by itself not suffice to prove \Thm~\ref{Thm_BP}.
However, what saves the day is that the enhanced measure $\hat\mu_{\G}$ contains the extra information needed to stitch the graph back up without losing track of the Bethe decomposition.

Unsurprisingly,	the construction is subtle and involves several steps.
It requires a number of carefully chosen parameters.
Specifically,  given a slowly diverging monotonically increasing positive integer sequence $L=L(n)\to\infty$ as in \Thm~\ref{Thm_BP}, we choose  a sequence $0<\xi=\xi(L)=o(1)$ that tends to zero monotonically sufficiently slowly, 
a further sequence $\omega=\omega(\xi)\to\infty$ that tends to infinity monotonically sufficiently slowly, as well as sequences
$0<\thet=\thet(\omega)=o(1)$, $0<\zeta=\zeta(\thet)=o(1)$,  $0<\beta=\beta(\thet)=o(1)$, 
$0<\alpha=\alpha(\beta)=o(1)$, $0<\eta=\eta(\alpha)=o(1)$ and
 $0<\eps=\eps(\eta)=o(1)$ that tend monotonically to zero slowly enough.
In summary, the pecking order reads
	\begin{align}\label{eqfunctions}
	1\ll1/\eps&\ll1/\eta\ll1/\alpha\ll1/\beta\ll1/\zeta\ll 1/\thet\ll \omega\ll 1/\xi\ll L\ll\ln\ln n,
	\end{align}
and we always assume tacitly that $n>n_0$ is sufficiently large.

We are ready to begin the construction.
Let $\G_*$ be the random factor graph obtained from $\G$ as follows.
Let $\THETA_*$ be a copy of the random variable $\THETA_\xi$ promised by \Lem~\ref{Lemma_pinning}; $\THETA_*$ is independent of $\G_*$.
Further, let $\vU_*$ be a random set of $\THETA_*$ variable nodes of $\G$
and draw $\SIGMA_*$ from $\mu_{\G}$ independently of $\THETA_*$ and $\vU_*$.
Now, obtain $\G_*$ from $\G$ by  changing the prior  distribution to
	\begin{align}\label{eqNewPrior}
	p_{\G_*}(\sigma)=\prod_{u\in\vU_*\cup\partial^2\vU_*}\vecone\{\sigma_u=\SIGMA_{*\,u}\}\prod_{v\not\in\vU_*\cup\partial^2\vU_*}p(\sigma_v).
	\end{align}
Additionally, let $\OMEGA$ be a random variable with distribution $\Po(\omega)\wedge2\omega$, independent of everything else, and let $\vW=\{v_{n-\OMEGA+1},\ldots,v_n\}$.
Finally, obtain $\G_*'$ from $\G_*$ by removing the variable nodes in $\vW$ along with their adjacent constraint nodes.

Thus, in $\G_*'$ we pin the spins of the variable nodes in $\vU_*$ and their neighbors to the values observed under $\SIGMA_*$, which is drawn from $\mu_{\G}$.  Additionally, we create cavities by removing the last $\OMEGA$ variable nodes along with their adjacent constraints.
The following lemma shows that the removal of the variable nodes in $\vW$ does not shift the marginals of the enhanced Boltzmann distribution much.

\begin{lemma}\label{Lemma_shift}
With probability at least $1-\omega^{-10}$ over the choice of $\THETA_*$, $\SIGMA_*$ and $\G$ the following statements are true.
\begin{enumerate}[(i)]
\item both $\mu_{\G_*}$ and $\hat\mu_{\G_*}$ are $\xi^{1/4}$-extremal.
\item we have
	$\sum_{v\in V_n\setminus(\vW\cup\partial^2\vW)}\tv{\hat\mu_{\G_*,v}-\hat\mu_{\G_*',v}}<\thet n.$
\end{enumerate}
\end{lemma}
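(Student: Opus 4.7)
The plan is to combine the pinning lemma (\Lem~\ref{Lemma_pinning}) with the symmetry-to-extremality transfer of \Prop~\ref{Cor_symmetry} to obtain (i), and then to leverage the resulting extremality to obtain (ii) via a Radon-Nikodym computation that expresses $\mu_{\G_*'}$ as a reweighting of the marginal of $\mu_{\G_*}$, with Radon-Nikodym factor supported on a bounded-size neighborhood of $\vW$.

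For (i), I regard $\mu_{\G_*}$ as $\mu_{\G}$ conditioned on $\SIGMA_*|_{\vU_* \cup \partial^2 \vU_*}$, which is meaningful because $\SIGMA_*$ is drawn from $\mu_\G$. Applied to $\mu_{\G}$ at a parameter $\eps'$ sitting far below $\xi$ in the hierarchy~\eqref{eqfunctions}, \Lem~\ref{Lemma_pinning} yields a uniform random set of size $O(\log|\Omega|/\eps'^3)$ (which we identify with $\vU_*$) such that $\mu_\G$ conditioned on its pinned spins is $\eps'$-symmetric with probability $1-\eps'$. The additional deterministic pinning on $\partial^2 \vU_*$ enlarges the pin set by only a factor of $O(d^2)$ and uses $\mu_\G$-consistent spins, so a two-stage application of \Lem~\ref{Lemma_pinning} at a finer parameter absorbs it without loss. \Prop~\ref{Cor_symmetry}(i) promotes the resulting symmetry to $\xi^{1/4}$-extremality of $\mu_{\G_*}$; the enhanced measure $\hat\mu_{\G_*}$ inherits extremality because pairwise factorization of its $\Omega^{\nabla^2 v}$-valued coordinates amounts to $(\eps', O(d^2))$-symmetry of $\mu_{\G_*}$, available from the same invocation of the pinning lemma.

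For (ii), write $\tilde\mu_{\G_*}$ for the marginal of $\mu_{\G_*}$ on $V_n \setminus \vW$. Unpacking definitions,
\begin{align*}
\mu_{\G_*'}(\sigma_{V_n \setminus \vW}) \,=\, \frac{1}{Z_0}\sum_{\tau \in \Omega^\vW} \mu_{\G_*}(\sigma_{V_n\setminus\vW},\tau) \prod_{a \in \partial \vW}\psi_a(\sigma,\tau)^{-1}
\end{align*}
for a normalizing constant $Z_0$, so the Radon-Nikodym density $F$ of $\mu_{\G_*'}$ with respect to $\tilde\mu_{\G_*}$ depends on $\sigma$ only through its restriction to the outer boundary $U$ of $\vW$ (variable nodes adjacent via $\partial \vW$ to $\vW$, but not in $\vW$), of cardinality $|U| \leq dk|\vW| = O(\omega)$ w.h.p. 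For a vertex $v \notin \vW \cup \partial^2 \vW$ with $\nabla^2 v \cap U = \emptyset$, the discrepancy $\tv{\hat\mu_{\G_*, v} - \hat\mu_{\G_*', v}}$ is controlled by the covariance under $\mu_{\G_*}$ between $F(\sigma_U)$ and $\vecone\{\sigma_{\nabla^2 v} = \tau\}$. Since (i) provides $\xi^{1/4}$-extremality of $\mu_{\G_*}$, \Prop~\ref{Cor_symmetry}(ii) applied with $\ell = |\nabla^2 v|+|U| = O(\omega)$ yields near-independence of $\sigma_{\nabla^2 v}$ and $\sigma_U$ under $\mu_{\G_*}$, provided $\xi$ lies low enough in~\eqref{eqfunctions} that $\xi^{3/4}/(128|\Omega|)^{4\ell}$ dominates $\thet$. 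Combined with the exponential tail bound~\eqref{eqP} on $1/\min_\sigma \PSI(\sigma)$, which controls the tail of $F$ via truncation, this gives $\tv{\hat\mu_{\G_*, v} - \hat\mu_{\G_*', v}} = o(1)$ for all but an $o(1)$-fraction of admissible $v$; summing delivers the required $\sum_v \tv{\cdot} < \thet n$.

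The main obstacle is the unboundedness of the Radon-Nikodym factor $F$: since $\Psi$ carries only the exponential tail bound~\eqref{eqP}, there is no uniform bound on $\prod_a \psi_a^{-1}$. The covariance estimate therefore requires truncating $F$ at a level $T = T(\thet)$ growing like $\log(1/\thet)$ and controlling the tail via Markov applied to the product form of~\eqref{eqP} over $|\partial\vW| = O(\omega)$ independent constraint weights. A secondary subtlety is that \Lem~\ref{Lemma_pinning} produces a uniformly random pin set, whereas our $\vU_* \cup \partial^2 \vU_*$ is a random set deterministically augmented by the second neighborhood, addressed by the aforementioned two-stage invocation at refined parameters.
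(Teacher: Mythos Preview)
Your argument for (i) has a real gap. \Lem~\ref{Lemma_pinning} requires a \emph{uniformly random} pin set, but $\vU_*\cup\partial^2\vU_*$ is not: once $\vU_*$ is chosen, $\partial^2\vU_*$ is determined by $\G$. Your two-stage patch fails because the construction fixes the pin-set size at $\THETA_\xi$, so you cannot run the first stage at a finer parameter; and absorbing the extra conditioning via \Cor~\ref{Lemma_extremal_cond} would need first-stage extremality at scale $|\Omega|^{-|\partial^2\vU_*|}=|\Omega|^{-O(\xi^{-3})}$, which $\xi$-pinning cannot give. Your subsequent passage from $\mu_{\G_*}$ to $\hat\mu_{\G_*}$ has the same defect: $(\eps',\ell)$-symmetry of $\mu_{\G_*}$ controls only \emph{random} $\ell$-tuples and says nothing about the structured sets $\nabla^2 v\cup\nabla^2 w$ that $(\eps,2)$-symmetry of $\hat\mu_{\G_*}$ requires. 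The paper's fix is a single observation you missed: pinning the $\vU_*$-coordinates of the \emph{enhanced} measure $\hat\mu_{\G}$ on $\hat\Omega_{\G}=\prod_v\Omega^{\nabla^2 v}$ is exactly the same as pinning $\vU_*\cup\partial^2\vU_*$ in $\mu_{\G}$, because the $v$-component of $\hat\Omega_{\G}$ already records all of $\nabla^2 v$. Since $\vU_*$ itself \emph{is} uniform of size $\THETA_\xi$, \Lem~\ref{Lemma_pinning} applies to $\hat\mu_{\G}$ directly and yields $\xi^{1/4}$-extremality of $\hat\mu_{\G_*}$; extremality of $\mu_{\G_*}$ then follows as a coordinate-wise projection.

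For (ii) your covariance route inherits the same structured-versus-random issue (the sets $U\cup\nabla^2 v$ are not random tuples, so $(\eps,|U|{+}|\nabla^2 v|)$-symmetry is silent about them), and the truncation of $F$ is unnecessary machinery. The paper instead observes that removing $O(\omega)$ constraint nodes makes the marginals of $\hat\mu_{\G_*}$ and $\hat\mu_{\G_*'}$ on $V_n\setminus(\vW\cup\partial^2\vW)$ mutually $C(\omega)$-contiguous with probability $1-\omega^{-11}$ (via \eqref{eqP}); since the former is $2\xi^{1/4}$-extremal by (i), \Lem~\ref{Lemma_extremal_contig} gives closeness in the cut metric directly, and \Lem~\ref{Lemma_tv1} converts this to the required total-variation bound on marginals.
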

\begin{proof}
By construction, $\hat\mu_{\G_*}$ is identical to the measure obtained through the pinning procedure of \Lem~\ref{Lemma_pinning} applied to the $\vU_*$-components of the space $\hat\Omega_{\G}$.
Hence, \Lem~\ref{Lemma_pinning} and \Prop~\ref{Cor_symmetry} imply that $\hat\mu_{\G_*}$ is $\xi^{1/4}$-extremal with probability at least $1-\xi^{1/4}$.
Since $\mu_{\G_*}$ is a projection of $\hat\mu_{\G_*}$, we obtain (i).

Further, let $\cV=V_n\setminus(\vW\cup\partial^2\vW)$.
If $\hat\mu_{\G_*}$ is $\xi^{1/4}$-extremal, then by the definition of the cut metric the distribution  $\hat\mu_{\G_*,\cV}$ induced on the neighborhoods of $\cV$ is $2\xi^{1/4}$-extremal, because $|\cV|\geq n/2$.
Additionally, there is $C=C(\omega)$ such that $\hat\mu_{\G_*,\cV}$ is $C$-contiguous with respect to $\hat\mu_{\G_*',\cV}$ with probability at least $1-\omega^{-11}$.
This follows from (\ref{eqP}), because $\G_*'$ is obtained from $\G_*$ by removing no more than $d\omega$ constraint nodes.
Therefore, (ii) follows from (i) and \Lem~\ref{Lemma_extremal_contig}, provided that $\xi,\omega,\thet$ are chosen appropriately in accordance with~\eqref{eqfunctions}.
\end{proof}

The following proposition, which establishes the Belief Propagation equations on $\G_*'$, constitutes the main technical step of the proof.

\begin{proposition}\label{Lemma_fixp}
With probability at least $1-\alpha^{9}$, $\G'_*$ enjoys the following properties.
\begin{enumerate}[(i)]
\item the standard messages $(\mu_{\G'_*,v\to a},\mu_{\G'_*,a\to v})_{v\in V(\G_*'),a\in\partial v}$ form an $\alpha^{9d}$-Belief Propagation fixed point.
\item we have 
	\begin{align*}
	\sum_{v\in V(\G'_*)}
		\sum_{\sigma\in\Omega^{\nabla^2v}}\abs{\mu_{\G_*'}(\sigma)
			-\frac{p(\sigma_v)\prod_{a\in\partial v}\psi_a(\sigma)\prod_{w\in\partial a}\mu_{\G_*',w\to a}(\sigma_w)}
			{\sum_{\chi\in\Omega}p(\chi)\prod_{a\in\partial v}\sum_{\tau\in\Omega^{\partial a}}\psi_a(\tau)
				\prod_{w\in\partial a}\mu_{\G_*',w\to a}(\tau_w)}
			}&<\alpha^{9d}n.
	\end{align*}
\end{enumerate}
\end{proposition}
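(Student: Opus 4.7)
The plan is to leverage the extremality of $\hat\mu_{\G_*}$ from \Lem~\ref{Lemma_shift}, carry it over to $\hat\mu_{\G_*'}$, translate extremality into $(\alpha^{20d},\ell)$-symmetry via \Prop~\ref{Cor_symmetry}, and then read off the Belief Propagation equations from the resulting factorization of joint distributions on second neighborhoods.

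Step 1 (transfer extremality): Since $\G_*'$ is obtained from $\G_*$ by deleting the $\OMEGA$ variable nodes in $\vW$ and their at most $d\OMEGA\leq 2d\omega$ incident constraints, and since the assumption (\ref{eqP}) bounds the weight functions exponentially, the Boltzmann distributions $\mu_{\G_*}$ and $\mu_{\G_*'}$ (restricted to $V(\G_*')$) are with probability $1-o(1)$ mutually $C(\omega)$-contiguous for some $C(\omega)$. Combined with the $\xi^{1/4}$-extremality of $\hat\mu_{\G_*}$ supplied by \Lem~\ref{Lemma_shift}(i) and the fact that projection to the relevant second-neighborhood coordinates preserves extremality up to a factor depending on $\omega$, \Lem~\ref{Lemma_extremal_contig} shows that $\hat\mu_{\G_*'}$ is $\eta$-extremal. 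Applying part (ii) of \Prop~\ref{Cor_symmetry} with $\ell=4$ then turns this into $(\alpha^{20d},4)$-symmetry, provided the pecking order (\ref{eqfunctions}) of constants is respected.

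Step 2 (factorization across branches): Fix a variable node $v\in V(\G_*')$ outside the vanishing exceptional set $\vU_*\cup\partial^2\vU_*\cup\vW\cup\partial^2\vW$, with neighbors $\partial v=\{a_1,\dots,a_d\}$. The $(\alpha^{20d},4)$-symmetry of $\hat\mu_{\G_*'}$ implies that for typical such $v$, after conditioning on $\SIGMA_v=\sigma$, the joint distribution of the spins on $\partial a_1\cup\cdots\cup\partial a_d$ approximately factorizes into a product over the $d$ branches. This is the statement that the tree-like structure around $v$ is preserved up to error $\alpha^{20d}$ per variable, because the random $d$-regular graph is whp locally acyclic up to depth $2$.

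Step 3 (read off BP and local formula): Using the elementary identity $\mu_{\G_*',v\to a}(\sigma)=\mu_{\G_*'-a,v}(\sigma)$, together with the analogous identity for $\mu_{\G_*',a\to v}$, the factorization from Step 2 immediately expresses both standard messages in terms of the product over the other incident edges, yielding the BP equation at $v$ and at $a$. Averaging the resulting error over $v$ and $a\in\partial v$ and bounding the contribution from the $o(n)$ exceptional vertices by (\ref{eqP}) gives part (i). For part (ii), Bayes applied to $\hat\mu_{\G_*',v}$ expands the marginal on $\nabla^2 v$ as a product of the prior $p(\sigma_v)$, the local weights $\psi_{a_i}(\sigma)$, and the ingoing messages from $w\in\partial a_i\setminus v$; factorization across branches identifies these ingoing messages with the standard messages $\mu_{\G_*',w\to a_i}$, which is exactly the formula claimed.

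The main obstacle is Step 1: the contiguity constant $C(\omega)$ between $\mu_{\G_*}$ and $\mu_{\G_*'}$ diverges with $\omega$, while the desired error $\alpha^{9d}$ must beat the inverse functions of $\omega$. Resolving this forces us to choose the parameters $\xi,\omega,\thet,\zeta,\beta,\alpha,\eta,\eps$ exactly in the hierarchy (\ref{eqfunctions}) so that each scale is much smaller than the next; in particular, $\omega$ must be taken large enough for the Poisson cavity count to behave nicely yet small enough that removing the cavities only perturbs the joint measure by a factor quantitatively controlled by the exponential-moment bound (\ref{eqP}). A secondary, more routine issue is the careful bookkeeping near the pinning set $\vU_*\cup\partial^2\vU_*$ and the cavity boundary $\vW\cup\partial^2\vW$, both of which have size $o(n)$ and therefore contribute negligibly once errors are summed over $v$.
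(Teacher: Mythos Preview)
Your proposal has a genuine gap in Step~2, and it is the heart of the matter rather than the bookkeeping issue you flag in Step~1.

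The $(\alpha^{20d},\ell)$-symmetry of $\hat\mu_{\G_*'}$ that you obtain from \Prop~\ref{Cor_symmetry} says only that for $\ell$ \emph{randomly chosen} coordinates $v_1,\ldots,v_\ell$, the joint law of the second-neighborhood blocks $(\SIGMA_{\nabla^2 v_1},\ldots,\SIGMA_{\nabla^2 v_\ell})$ approximately factorizes. It says nothing about the internal structure of a single block $\SIGMA_{\nabla^2 v}$. What you need in Step~2 is that, for a typical fixed $v$, the spins on the $d$ branches $\partial a_1,\ldots,\partial a_d$ are conditionally independent given $\SIGMA_v$. Those branches are not random coordinates of $\hat\mu_{\G_*'}$; they are geometrically prescribed by the graph and are connected through $v$, so symmetry of $\hat\mu_{\G_*'}$ gives no leverage on them. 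The local tree-likeness of the random regular graph is a purely geometric fact and does not by itself imply this conditional independence under the Boltzmann measure, which depends on the global structure.

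The paper's proof is built precisely around this obstruction. It injects \emph{fresh} randomness by attaching a new variable $v_+$ and constraints $b_1,\ldots,b_d$ to uniformly random cavities of $\G_*'$; now the branches of $v_+$ really are random subsets of cavities, so a $(\beta,d(k-1))$-symmetry statement \emph{for the cavity distribution} does yield the desired branch-factorization (\Lem~\ref{Claim_fixp1}). But the cavity distribution need not be symmetric a priori (the cavities are far too few for the extremality of $\hat\mu_{\G_*'}$ to help), which is why a second round of pinning on the cavities is required (\Lem~\ref{Claim_fixp_contig}). One then argues that $v_+$ is distributionally indistinguishable from a random existing variable node (\Lem~\ref{Claim_fixp_contig1} and \Cor~\ref{Claim_fixp2}), and finally undoes the second pinning using the extremality of $\hat\mu_{\G_*'}$---this last step is the only place where the extremality you established in Step~1 is actually used.
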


\noindent
Before we prove \Prop~\ref{Lemma_fixp} in \Sec~\ref{Sec_Lemma_fixp}, let us indicate how the theorem follows.
As a final preparation we need the following basic fact.

\begin{lemma}\label{Lemma_localRemoval}
For any factor graph $G$, for any variable node $v$, any $S\subset\partial v$ and any $\sigma\in\Omega$ we have
	$$\mu_{G-S,v}(\sigma)=
		\frac{\scal{\vecone\{\SIGMA_v=\sigma\}/\prod_{a\in S}\psi_a(\SIGMA)}{\mu_{G,v\cup\partial^2v}}}{\scal{1/\prod_{a\in S}\psi_a(\SIGMA)}{\mu_{G,v\cup\partial^2v}}}.$$
\end{lemma}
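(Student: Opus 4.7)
\medskip

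\noindent\textbf{Proof plan.}  The identity is essentially a bookkeeping calculation that reflects two facts: (a) deleting a set $S$ of constraints changes the unnormalized Boltzmann weight by a factor $1/\prod_{a\in S}\psi_a(\sigma)$, and (b) when $S\subset\partial v$, that factor, as well as the indicator $\vecone\{\sigma_v=\sigma\}$, depends only on the spins at $v$ and its second neighborhood. So my plan would be to carry out the reweighting and then observe the locality.

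First I would unpack the definition of the Boltzmann distribution on $G-S$. Since $\psi_{G-S}(\tau)=\psi_G(\tau)/\prod_{a\in S}\psi_a(\tau)$ and the prior $p^{\tensor n}$ is unchanged (the variable nodes in $\partial a$ for $a\in S$ remain in the factor graph, they merely lose some incident constraints), we have, for every $\tau\in\Omega^{V}$,
\[
\mu_{G-S}(\tau)\;=\;\frac{Z(G)}{Z(G-S)}\cdot\frac{\mu_G(\tau)}{\prod_{a\in S}\psi_a(\tau)}.
\]
Marginalising on $v$ then gives $\mu_{G-S,v}(\sigma)=\tfrac{Z(G)}{Z(G-S)}\scal{\vecone\{\SIGMA_v=\sigma\}/\prod_{a\in S}\psi_a(\SIGMA)}{\mu_G}$. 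Summing this identity over $\sigma\in\Omega$ uses only that the left hand side sums to one and yields $Z(G)/Z(G-S)=1/\scal{1/\prod_{a\in S}\psi_a(\SIGMA)}{\mu_G}$, so substituting back I obtain
\[
\mu_{G-S,v}(\sigma)\;=\;\frac{\scal{\vecone\{\SIGMA_v=\sigma\}/\prod_{a\in S}\psi_a(\SIGMA)}{\mu_G}}{\scal{1/\prod_{a\in S}\psi_a(\SIGMA)}{\mu_G}}.
\]

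The only remaining point is the replacement of $\mu_G$ by its marginal $\mu_{G,v\cup\partial^2 v}$ on the second neighbourhood of $v$. This works because $S\subset\partial v$, so for every $a\in S$ the support $\partial a$ of $\psi_a$ is contained in $\{v\}\cup\partial^2 v$. Therefore both the numerator integrand $\vecone\{\SIGMA_v=\sigma\}/\prod_{a\in S}\psi_a(\SIGMA)$ and the denominator integrand $1/\prod_{a\in S}\psi_a(\SIGMA)$ are functions of $\SIGMA_{v\cup\partial^2 v}$ only, and their expectations under $\mu_G$ equal their expectations under the marginal $\mu_{G,v\cup\partial^2v}$. This finishes the argument.

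There is no real obstacle here; the statement is a restatement of the elementary fact that removing a factor reweights the Gibbs measure by its reciprocal, packaged in a way that isolates the local dependence on $v\cup\partial^2v$. The only thing to watch is the bookkeeping of the normalising constant $Z(G)/Z(G-S)$, which cancels out automatically once one uses the summation identity $\sum_{\sigma}\mu_{G-S,v}(\sigma)=1$.
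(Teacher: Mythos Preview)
Your proof is correct and follows essentially the same route as the paper: express $\mu_{G-S}(\tau)$ as $\mu_G(\tau)$ reweighted by $1/\prod_{a\in S}\psi_a(\tau)$, identify the normalizing constant as $\scal{1/\prod_{a\in S}\psi_a(\SIGMA)}{\mu_G}$, and then observe that since $S\subset\partial v$ the integrands depend only on $\SIGMA_{v\cup\partial^2 v}$. The only cosmetic difference is that the paper computes $Z(G-S)=Z(G)\scal{1/\prod_{a\in S}\psi_a}{\mu_G}$ directly from the partition function definition, whereas you recover the same constant by summing your marginal identity over $\sigma$; the two are equivalent.
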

\begin{proof}
The partition function works out to be
	\begin{align*}
	Z(G-S)&=\sum_{\sigma\in\Omega^{V(G)}}\prod_{a\in F(G)\setminus S}\psi_a(\sigma_{\partial a})
		=\sum_{\sigma\in\Omega^{V(G)}}\frac{\prod_{a\in F(G)}\psi_a(\sigma_{\partial a})}{\prod_{a\in S}\psi_a(\sigma_{\partial a})}=Z(G)\scal{1/\prod_{a\in S}\psi_a}{\mu_G}.
	\end{align*}
Hence, for any $\tau\in\Omega^{V(G)}$,
	\begin{align*}
	\mu_{G-S}(\tau)&=\frac1{Z(G-S)}\prod_{a\in F(G)\setminus S}\psi_a(\tau_{\partial a})
		=\frac{Z(G)}{Z(G-S)}\frac1{Z(G)}\frac{\prod_{a\in F(G)}\psi_a(\tau_{\partial a})}{\prod_{a\in S}\psi_a(\tau_{\partial a})}
		=\frac{\mu_G(\tau)}{\scal{1/\prod_{a\in S}\psi_a}{\mu_G}\prod_{a\in S}\psi_a(\tau_{\partial a})},
	\end{align*}
and the average in the denominator involves variables in $v\cup\partial^2v$ only.
\end{proof}

\begin{proof}[Proof of \Thm~\ref{Thm_BP}]
For any assignment $\chi\in\cX=\prod_{v\in\vU_*}\Omega^{\nabla^2v}$ of the variables in $\vU_*$ and their neighborhoods let
	$$S(\chi)=\{\sigma\in\hat\Omega_{\G}:\forall v\in\vU_*,\,w\in\nabla^2v:\sigma(v,w)=\chi(v,w)\}.$$
Then $(S(\chi))_{\chi\in\cX}$ is a decomposition of $\hat\Omega_{\G}$ into no more than $q^{(1+d(k-1))\THETA_*}$ sub-cubes, corresponding to the neighborhood assignments of the first $\THETA_*$ variable nodes.
As \Lem~\ref{Lemma_pinning} shows, 
by choosing the functions from (\ref{eqfunctions}) appropriately we can guarantee that $q^{(1+d(k-1))\THETA_*}\leq L$.
We are going to show that the decomposition $(S(\chi))_{\chi}$ meets the requirements of the theorem \whp\

For $\chi\in\cX$ let $\G_*[\chi]$ 
be the random factor graph $\G_*$ given that $\SIGMA_{*}(w)=\chi(v,w)$ for all $v\in\vU_*$ and all $w\in\nabla^2v$.
Also let $\G_*'[\chi]$ be the factor graph obtained from $\G_*[\chi]$ by removing the variables in $\vW$ along with their adjacent constraint nodes.
Further, let $\cE_\chi$ be the event that the following four conditions are satisfied.
\begin{description}
\item[E1] Both $\mu_{\G_*[\chi]}$ and $\hat\mu_{\G_*[\chi]}$ are $\xi^{1/8}$-extremal.
\item[E2] We have
	\begin{align}\label{eqiiib}
	\sum_{v\in V_n\setminus(\vW\cup\partial_2\vW)}\TV{\hat\mu_{\G_*[\chi],v}-\hat\mu_{\G_*'[\chi],v}}<\thet n.
	\end{align}
\item[E3] On $\G_*'[\chi]$ the standard messages form an $\alpha^{9d}$-BP fixed point and
	\begin{align}
	\sum_{v\not\in \vW\cup\partial^2\vW}
		\sum_{\sigma\in\Omega^{\nabla^2v\cap V_n}}\abs{\mu_{\G_*'[\chi]}(\sigma)
			-\frac{p(\sigma_v)\prod_{a\in\partial v}\psi_a(\sigma)\prod_{w\in\partial a}\mu_{\G_*'[\chi],w\to a}(\sigma_w)}
			{\sum_{\chi\in\Omega}p(\chi)\prod_{a\in\partial v}\sum_{\tau\in\Omega^{\partial a}}\psi_a(\tau)
				\prod_{w\in\partial a}\mu_{\G_*'[\chi],w\to a}(\tau_w)}
			}&<\alpha^{9d}n.\label{eqiiia}
	\end{align}	
\item[E4] There are no more than $\alpha^{10d}n$ constraint nodes $a$ in $\G$ such that $\min_{\sigma\in\Omega^k}\psi_a(\sigma)\leq\alpha^{1/4}$, nor are  there more than $\eps^{20}n$ constraint nodes $a$ such that $\min_{\sigma\in\Omega^k}\psi_a(\sigma)\leq\eps$.
\end{description}
Then (\ref{eqP}), \Lem~\ref{Lemma_shift} and \Prop~\ref{Lemma_fixp} yield $\Erw\brk{\sum_{\chi}\mu_{\G}(\chi)(1-\vecone\cE_\chi)}\leq\alpha^8$.
Thus, Markov's inequality shows
	\begin{align}\label{eqiiic}
	\pr\brk{\sum_{\chi}\mu_{\G}(\chi)(1-\vecone\cE_\chi)\geq\alpha^4}\leq\alpha^4.
	\end{align}

Thus, we are left to argue that $S(\chi)$ is an $\eps$-Bethe state of $\G$ if the event $\cE_\chi$ occurs.
As a first step, we are going to show that the standard messages of $\G_*'[\chi]$, $\G_*[\chi]$ are close:
given $\cE_\chi$, we claim
	\begin{align}\label{eqiiid}
	\sum_{\sigma\in\Omega}\sum_{v\not\in \vW\cup\partial_2\vW}\sum_{a\in\partial v}
		\abs{\mu_{\G_*[\chi],v\to a}(\sigma)-\mu_{\G_*'[\chi],v\to a}(\sigma)}+
		\abs{\mu_{\G_*[\chi],a\to v}(\sigma)-\mu_{\G_*'[\chi],a\to v}(\sigma)}&<\alpha^{8d}n.
	\end{align}
To see this, recall that $\mu_{\G_*[\chi],v\to a}$ is the marginal of $v$ in the factor graph $\G_*[\chi]-a$.
Hence, \Lem~\ref{Lemma_localRemoval} shows that
	\begin{align}\label{eqiiie}
	\mu_{\G_*[\chi],v\to a}(\sigma)&=
		\frac{\scal{\vecone\{\SIGMA_v=\sigma\}/\psi_a(\SIGMA)}{\mu_{\G_*[\chi],\nabla^2 v}}}{\scal{1/\psi_a(\SIGMA)}{\mu_{\G_*[\chi],\nabla^2 v}}},\\
	\label{eqiiif}
	\mu_{\G_*[\chi],a\to v}(\sigma)&=
		\frac{\scal{\vecone\{\SIGMA_v=\sigma\}/\prod_{b\in\partial v\setminus a}\psi_b(\SIGMA)}{\mu_{\G_*[\chi],\nabla^2 v}}}{\scal{1/\prod_{b\in\partial v\setminus a}\psi_b(\SIGMA)}{\mu_{\G_*[\chi],\nabla^2 v}}}.
		\end{align}
Providing  $\thet\ll\alpha^d$, we obtain (\ref{eqiiid}) from (\ref{eqiiib}), (\ref{eqiiie}), (\ref{eqiiif}) and {\bf E4}.
Further, 
the estimate (\ref{eqiiid}) and the fact that the standard messages of $\G_*'[\chi]$ are an $\alpha^{9d}$-BP fixed point imply that the standard messages of $\G_*[\chi]$ are an $\eta$-BP fixed point.
Thus, we have established {\bf BS1}.

In order to prove {\bf BS2}, we estimate the derivatives of a term like in \eqref{eqiiia} as follows:
	\begin{align*}
	\abs{\frac{\partial}{\partial \nu_w(\sigma)}
	\frac{p(\sigma_v)\prod_{a\in\partial v}\psi_a(\sigma)\prod_{w\in\partial a}\nu_{w}(\sigma(w))}
			{\sum_{\chi\in\Omega}p(\chi)\prod_{a\in\partial v}\sum_{\tau\in\Omega^{\partial a}}\psi_a(\tau)
				\prod_{w\in\partial a}\nu_{w}(\tau(w))}}
				&\leq\frac{1}{\min_{a\in\partial v,\tau\in\Omega^{\partial a}}\psi_a(\tau)^{2d}}.
	\end{align*}
Hence, \eqref{eqiiib}, (\ref{eqiiia}), \eqref{eqiiid} and {\bf E4} and the bound $|\vW\cup\partial^2\vW|=O(\ln n)$ yield
	\begin{align}\label{eqiiig}
	\sum_{v\in V_n}
		\sum_{\sigma\in\Omega^{\nabla^2v}}\abs{\mu_{\G_*[\chi]}(\sigma)
			-\frac{p(\sigma_v)\prod_{a\in\partial v}\psi_a(\sigma)\prod_{w\in\partial a}\mu_{\G_*[\chi],w\to a}(\sigma(w))}
			{\sum_{\chi\in\Omega}p(\chi)\prod_{a\in\partial v}\sum_{\tau\in\Omega^{\partial a}}\psi_a(\tau)
				\prod_{w\in\partial a}\mu_{\G_*[\chi],w\to a}(\tau(w))}
			}&<\alpha^{2d}n.
	\end{align}	

Additionally, we claim that
	\begin{align}\label{eqiiih}
	\sum_{b\in F_m}
		\sum_{\sigma\in\Omega^{\partial b}}\abs{\mu_{\G_*[\chi]}(\sigma)
			-\frac{\psi_b(\sigma)\prod_{w\in\partial b}\mu_{\G_*[\chi],w\to b}(\sigma(w))}
			{\sum_{\tau\in\Omega^{\partial b}}\psi_b(\tau)\prod_{w\in\partial b}\mu_{\G_*[\chi],w\to b}(\tau(w))}
			}&<\eps^9 n.
	\end{align}	
To see this, suppose that $b$ satisfies $\min_{\sigma\in\Omega^k}\psi_b(\sigma)\geq\eps$, that
$b\in\partial v$ for a variable node $v$ such that
	\begin{align}\label{eqiiii}
		\sum_{\sigma\in\Omega^{\nabla^2v}}\abs{\mu_{\G_*[\chi]}(\sigma)
			-\frac{p(\sigma_v)\prod_{a\in\partial v}\psi_a(\sigma)\prod_{w\in\partial a\setminus v}\mu_{\G_*[\chi],w\to a}(\sigma_w)}
			{\sum_{\kappa\in\Omega}p(\kappa)\prod_{a\in\partial v}\sum_{\tau\in\Omega^{\partial a}}\psi_a(\tau)\vecone\{\tau_v=\kappa\}
				\prod_{w\in\partial a\setminus v}\mu_{\G_*[\chi],w\to a}(\tau_w)}
			}&<\alpha^{d}
	\end{align}	
and that
	\begin{align}\label{eqiiij}
	\sum_{\sigma\in\Omega}\abs{\mu_{\G_*[\chi],v\to b}(\sigma)-\frac{p(\sigma)\prod_{a\in\partial v\setminus b}\mu_{\G_*[\chi],a\to v}(\sigma)}
		{\sum_{\kappa\in\Omega}p(\kappa)\prod_{a\in\partial v\setminus b}\mu_{\G_*[\chi],a\to v}(\kappa)}}&<\eta^{1/4},\\
	\sum_{a\in\partial v}\sum_{\sigma\in\Omega}\abs{\mu_{\G_*[\chi],a\to v}(\sigma)-
			\frac{\sum_{\tau\in\Omega^{\partial a}}\vecone\{\tau_v=\sigma\}\psi_a(\tau)\prod_{w\in\partial a\setminus v}\mu_{\G_*[\chi],w\to a}(\tau_w)}
				{\sum_{\tau\in\Omega^{\partial a}}\psi_a(\tau)\prod_{w\in\partial a\setminus v}\mu_{\G_*[\chi],w\to a}(\tau_w)}}&<\eta^{1/4}.\label{eqiiik}
	\end{align}	
All but $\eps^{10}n$ constraint nodes $b$ enjoy these properties, due to {\bf E4}, \eqref{eqiiig} and because the standard messages of $\G_*[\chi]$ form an $\eta$-BP fixed point.
For any such $b$ and any $\sigma\in\Omega^{\partial b}$ we obtain
	\begin{align*}
		&\mu_{\G_*[\chi],\partial b}(\sigma)=\sum_{\tau\in\Omega^{\nabla^2v}}\vecone\{\tau_{\partial b}=\sigma\}\mu_{\G_*[\chi],\nabla^2v}(\tau)\\
			&\quad\stacksign{\eqref{eqiiii}}=\sum_{\tau\in\Omega^{\nabla^2v}}
		\frac{\vecone\{\tau_{\partial b}=\sigma\}p(\sigma_v)\prod_{a\in\partial v}\psi_a(\tau)\prod_{w\in\partial a\setminus v}\mu_{\G_*[\chi],w\to a}(\tau_w)}
			{\sum_{\kappa\in\Omega}\prod_{a\in\partial v}\sum_{\tau'\in\Omega^{\partial a}}\psi_a(\tau')\vecone\{\tau'_v=\kappa\}
				\prod_{w\in\partial a\setminus v}\mu_{\G_*[\chi],w\to a}(\tau_w')}+O(\alpha^d)\\
	&\quad\stacksign{\eqref{eqiiik}}=\frac{p(\sigma_v)\psi_b(\sigma)\prod_{w\in\partial b\setminus v}\mu_{\G_*[\chi],w\to b}(\sigma_w)
			\prod_{a\in\partial v\setminus b}\mu_{\G_*[\chi],a\to v}(\sigma_v)\sum_{\tau\in\Omega^{\partial a}}\psi_a(\tau)\prod_{w\in\partial a\setminus v}\mu_{\G_*[\chi],w\to a}(\tau_w)}
		{\sum_{\kappa\in\Omega^{\partial b}}p(\kappa_v)\psi_b(\kappa)
				\prod_{w\in\partial b\setminus v}\mu_{\G_*[\chi],w\to b}(\kappa_w)
			\prod_{a\in\partial v\setminus b}\mu_{\G_*[\chi],a\to v}(\kappa)\sum_{\tau\in\Omega^{\partial a}}\psi_a(\tau)\prod_{w\in\partial a\setminus v}\mu_{\G_*[\chi],w\to a}(\tau_w)}\\
	&\qquad\qquad\qquad+O(\alpha^d)\\
	&\quad=\frac{p(\sigma_v)\psi_b(\sigma)\prod_{w\in\partial b\setminus v}\mu_{\G_*[\chi],w\to b}(\sigma_w)
			\prod_{a\in\partial v\setminus b}\mu_{\G_*[\chi],a\to v}(\sigma_v)}
		{\sum_{\kappa\in\Omega^{\partial b}}p(\kappa_v)\psi_b(\kappa)
				\prod_{w\in\partial b\setminus v}\mu_{\G_*[\chi],w\to b}(\kappa_w)
			\prod_{a\in\partial v\setminus b}\mu_{\G_*[\chi],a\to v}(\kappa)}+O(\alpha^d)\\
	&\quad\stacksign{\eqref{eqiiij}}=
		\frac{\psi_b(\sigma)\prod_{w\in\partial b}\mu_{\G_*[\chi],w\to b}(\sigma_w)}
				{\sum_{\kappa\in\Omega^{\partial b}}\psi_b(\kappa)\prod_{w\in\partial b}\mu_{\G_*[\chi],w\to b}(\kappa_w)}+O(\alpha^d),
	\end{align*}	
whence (\ref{eqiiih}) follows by averaging on $b$.

Finally, {\bf BS2} follows from (\ref{eqiiig}), (\ref{eqiiih}), the  $\xi^{1/8}$-extremality of $\hat\mu_{\G_*[\chi]}$.
Indeed, let $\vI,\vJ$ be random sets of at most $1/\eps$ variable/constraint nodes.
For each $b\in\vJ$ pick a variable node $v_b\in\partial b$.
Because $\hat\mu_{\G_*[\chi]}$ is $\xi^{1/8}$-extremal,  \Prop~\ref{Cor_symmetry} yields
	\begin{align}\label{eqBS2_1}
	\Erw\TV{\hat\mu_{\G_*[\chi],\vI\cup\{v_b:b\in\vJ\}}-\bigotimes_{v\in\vI\cup\{v_b:b\in\vJ\}}\hat\mu_{\G_*[\chi],v}}&<\alpha^4.
	\end{align}
Furthermore, (\ref{eqiiig}) and (\ref{eqiiih}) imply that with probability at least $1-\eps^2$ over the choice of $\vI,\vJ$ we have
	\begin{align*}
	\forall b\in\vJ&:
		\sum_{\sigma\in\Omega^{\partial b}}\abs{\mu_{\G_*[\chi]}(\sigma)
			-\frac{\psi_b(\sigma)\prod_{w\in\partial b}\mu_{\G_*[\chi],w\to b}(\sigma(w))}
			{\sum_{\tau\in\Omega^{\partial b}}\psi_b(\tau)\prod_{w\in\partial b}\mu_{\G_*[\chi],w\to b}(\tau(w))}
			}<\eps^3,\\
	\forall v\in\vI&:
		\sum_{\sigma\in\Omega^{\nabla^2v}}\abs{\mu_{\G_*[\chi]}(\sigma)
			-\frac{p(\sigma_v)\prod_{a\in\partial v}\psi_a(\sigma)\prod_{w\in\partial a}\mu_{\G_*[\chi],w\to a}(\sigma(w))}
			{\sum_{\chi\in\Omega}p(\chi)\prod_{a\in\partial v}\sum_{\tau\in\Omega^{\partial a}}\psi_a(\tau)
				\prod_{w\in\partial a}\mu_{\G_*[\chi],w\to a}(\tau(w))}
			}<\eps^3.
	\end{align*}	
If these estimates hold, then  for any configuration $\sigma\in\Omega^{\vI\cup\partial\vJ}$ we obtain
	\begin{align}							\label{eqBS2_2}
	\bigg|&\bigotimes_{v\in\vI\cup\{v_b:b\in\vJ\}}\hat\mu_{\G_*[\chi],v}(\sigma)\\
		&-\prod_{v\in\vI}\frac{p(\sigma_v)\prod_{a\in\partial v}\psi_a(\sigma)\prod_{w\in\partial a\setminus v}\mu_{\G_*[\chi],w\to a}(\sigma_w)}
			{\sum_{\chi\in\Omega}p(\chi)\prod_{a\in\partial v}\sum_{\tau\in\Omega^{\partial a}}\psi_a(\tau)
				\prod_{w\in\partial a\setminus v}\mu_{\G_*[\chi],w\to a}(\tau_w)}
		\cdot\prod_{a\in\vJ}\frac{\psi_a(\sigma)\prod_{w\in\partial a}\mu_{\G_*[\chi],w\to a}(\sigma_w)}
			{\sum_{\tau\in\Omega^{\partial a}}\psi_a(\tau)\prod_{w\in\partial a}\mu_{\G_*[\chi],w\to a}(\tau_w)}
		\bigg|<\frac\eps2.\nonumber
	\end{align}
Thus, {\bf BS2} follows from \eqref{eqBS2_1} and \eqref{eqBS2_2}.

Finally, to obtain the Bethe state decomposition of the simple factor graph $\GG$, we merely recall that $\pr\brk{\G\in\cS}=\Omega(1)$ by Fact~\ref{Fact_simple}.
Hence, claim about $\GG$ follows immediately form the statement for $\G$ and Bayes' rule.
\end{proof}

\subsection{Proof of \Prop~\ref{Lemma_fixp}}\label{Sec_Lemma_fixp}
By construction, the random factor graph $\G_*'$ comprises a pairing of variable clones $(v_i,h)\in V_n\times[d]$ and constraint clones $(a_j,h)\in F_m\times[d]$.
But since we obtained $\G_*'$ from $\G_*$ by removing some variable nodes $\vW$ along with their adjacent constraint nodes, not all of the variable clones $(v_i,h)$ with $i\leq n-\OMEGA$ are paired.
We call variables with at least one unpaired clone {\em cavities}.
Let $\cC$ be the set of all cavities.

The basic idea behind the proof is as follows.
We will add a new variable node $v^+$ along with new adjacent constraint nodes $b_1,\ldots,b_d$ to $\G_*'$.
Apart from $v^+$, these new constraint nodes are adjacent to some of the cavities.
The fresh randomness afforded by this construction will facilitate the study of the standard messages from $v^+$ to the $b_i$ as well as the reverse messages.
Then we will argue that $v^+$ is essentially indistinguishable from a randomly chosen variable node of $\G_*'$,
thereby extending the analysis to almost all the messages of $\G_*'$.

Formally, since $\OMEGA$ is a Poisson variable with mean $\omega$ truncated at $2\omega$, \whp\ we have $\omega/2\leq|\cC|\leq2d(k-1)\omega$.
Given that $|\cC|\geq d(k-1)$, obtain $\G_*^-$ from $\G_*'$ by re-inserting one variable node $v_+=v_{n-\OMEGA+1}$ along with $d$ new constraint nodes $b_1,\ldots,b_d$.
For each of these constraint nodes a random clone $(b_i,\vh_i)$, $\vh_i\in[k]$, is paired with a random clone $v_+$.
In addition, the $b_i$ are paired randomly to $k-1$ cavities.
The weight functions $\psi_{b_i}$ are chosen independently from $P$.
The following lemma shows that the distributions of $\G_*'$ and $\G_*^-$ are reasonably close.

\begin{lemma}\label{Claim_fixp_contig1}
For any event $\cE$ we have $\pr\brk{\G_*'\in\cE}\leq \alpha^{-1}\pr\brk{\G_*^-\in\cE}+O(\alpha^{100}).$
\end{lemma}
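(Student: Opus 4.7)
My proof will be a likelihood-ratio comparison between the laws of $\G_*'$ and $\G_*^-$, carried out on a typicality event of very high probability. Both graphs are built from the same triple $(\G,\THETA_*,\vU_*,\SIGMA_*)$, so I will condition on these throughout. I will then restrict to $\cA=\{\OMEGA\in[\omega/2,2\omega]\}\cap\{|\cC|\geq d(k-1)\}$; Poisson concentration together with $\omega\ll 1/\alpha$ from \eqref{eqfunctions} ensures $\pr[\cA^c]=O(e^{-\omega/10})=O(\alpha^{100})$, safely absorbed into the additive error. All subsequent work is done under this conditioning.

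On $\cA$, I will re-index the sum over $\OMEGA$ so that $\G_*'$ at $\OMEGA=j-1$ is compared to $\G_*^-$ at $\OMEGA=j$: both then have variable set $\{v_1,\ldots,v_{n-j+1}\}$ and share the entire sub-factor-graph induced on $V_n\setminus(\vW\cup\{v_+\})$. The Poisson weight ratio $\pr[\OMEGA=j-1]/\pr[\OMEGA=j]=j/\omega$ lies in $[1/2,2]$ in the typical range. The two conditional distributions diverge only in the $d$ constraints incident to $v_+=v_{n-j+1}$: in $\G_*'[j-1]$ these are $v_+$'s original constraints from $\G_*$, with weight functions i.i.d.\ from $P$ and the $d(k-1)$ non-$v_+$ clones paired uniformly among all clones not in $\vW$; in $\G_*^-[j]$ they are fresh constraints with i.i.d.\ weights from $P$, whose $d(k-1)$ non-$v_+$ clones are paired uniformly among cavity clones of $\G_*'[j]$. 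A direct configuration-model count will give the ratio of the two conditional densities; because the weight-function factors and the $v_+$-clone selection have identical distributions in both models, only the clone-pairing counts contribute. On $\cA$ this ratio is bounded by a polynomial in $n/|\cC|$, which by \eqref{eqfunctions} is dominated by the slack $\alpha^{-1}$.

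The hard part will be the density-ratio step: the non-$v_+$ endpoints are supported on very different clone sets in the two models (all non-$\vW$ clones versus the much smaller cavity set), so the ratio is only bounded by a sizeable polynomial factor, and the generous $\alpha^{-1}$ slack is precisely what is engineered to absorb it. Minor additional complications I will handle include constraints of $\G_*$ incident to pinned variables $\vU_*$ (controlled using \eqref{eqP} via a union bound contributing $O(\alpha^{100})$), weight functions with very small minima (confined to a set of mass $O(\alpha^{100})$ by the tail bound \eqref{eqP}), and the rare event that a clone of $v_+$ was originally paired inside $\vW$ in $\G_*$ (probability $O(\omega^2/n)=O(\alpha^{100})$). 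Summing over $j$ and integrating back out the conditioning then yields the stated inequality.
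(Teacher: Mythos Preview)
The proposal has a genuine gap: it does not correctly handle the pinning step. Recall that $\G_*'$ carries a modified prior \eqref{eqNewPrior} that pins the variables in $\vU_*\cup\partial^2\vU_*$ to the values of $\SIGMA_*$, where $\SIGMA_*$ is drawn from the Boltzmann distribution $\mu_{\G}$ of the \emph{entire} graph $\G$. Hence $\SIGMA_*$---and therefore the pinning data recorded in $\G_*'$---is correlated with the second neighborhood of $v_+$ in $\G$, even when $\vU_*$ is far from $v_+$. Consequently, the conditional law of $v_+$'s original constraints given $\G_*'[j]$ is \emph{not} ``weight functions i.i.d.\ from $P$ and non-$v_+$ clones paired uniformly'': the pinned spin values bias it. Your density-ratio step asserts that ``the weight-function factors \ldots\ have identical distributions in both models, only the clone-pairing counts contribute,'' and this is precisely what fails. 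Your side remark about ``constraints of $\G_*$ incident to pinned variables $\vU_*$'' addresses adjacency, not this Boltzmann-induced bias. Separately, if you literally condition on $(\G,\THETA_*,\vU_*,\SIGMA_*)$ as stated, then $\G_*'[j-1]$ is deterministic while $\G_*^-[j]$ remains random over the fresh insertion, and for continuous $P$ (e.g.\ the diluted spin glass) the pointwise density ratio is not even well defined.

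The paper's proof is designed around exactly this obstacle. It fixes the unpinned graph structure $\G'$ underlying $\G_*'$ together with two candidate depth-two neighborhoods $\cN_1,\cN_2$ of $v_+$, and couples $\G$ given $(\G',\cN_1)$ with $\G$ given $(\G',\cN_2)$ so that the two outcomes differ in at most $2dk$ edges. On the event $\cA'$ where all weight functions adjacent to $v_+$ are bounded below by $\alpha^{1/(2dk)}$, the corresponding Boltzmann distributions $\mu_{\vec\Gamma_1},\mu_{\vec\Gamma_2}$ are mutually $\alpha^{-1}$-contiguous; hence the law of the pinning $\SIGMA_*|_{\vU_*\cup\partial^2\vU_*}$, and therefore of $\G_*'$, changes by at most a factor $\alpha^{-1}$ when $\cN_1$ is swapped for $\cN_2$. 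This contiguity of the pinning is the true source of the $\alpha^{-1}$ in the lemma, not a clone-count ratio as you propose. Your re-indexing $\OMEGA\leftrightarrow\OMEGA+1$ does appear in the paper's final step, but only after the contiguity bound has been established; it is then absorbed via $\dTV(\OMEGA,\OMEGA+1)=O(\omega^{-1/2})=O(\alpha^{200})$.
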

\begin{proof}
We need to get a grip on the conditional distribution of the second neighborhood of $v_+$ in $\G$ given $\G_*'$.
This is non-trivial because of the revised prior of $\G_*$ introduced by the pinning operation~\eqref{eqNewPrior}; for the assignment $\SIGMA_*$ is correlated with the neighborhood of $v_+$ in $\G$.
To begin, let $\cA$ be the event that no constraint node of $\G$ is connected by two edges with the variable nodes $v_{n-\OMEGA+1},\ldots,v_n$ and that all cavities have degree precisely $d-1$.
Then  (\ref{eqfunctions}) guarantees that
	\begin{equation}\label{eqClaim_fixp_contig1_1}
	\pr\brk{\G\in\cA}=1-O(\omega^2/n)=1-O(n^{-1/2}).
	\end{equation}
Further, given $\cA$ the total number of cavities of $\G_*'$ is equal to $d(k-1)\OMEGA$, and thus
	\begin{equation}\label{eqClaim_fixp_contig1_2}
	\pr\brk{\abs\cC\geq\omega/2\mid\cA}=1-O(\omega^{-1}).
	\end{equation}
Let $\cA'$ be the event that $\cA$ occurs, that $\abs\cC\geq\omega/2$ and that the weight functions of all constraints adjacent to $v_+$ take a minimum value of at least $\alpha^{-1/(2dk)}$.

We condition on the event $\cA'$, which occurs with probability $1+O(\alpha^{100})$ due to (\ref{eqP}), \eqref{eqClaim_fixp_contig1_1} and \eqref{eqClaim_fixp_contig1_2} show.
Let $\cN_1,\cN_2$ be two possible outcomes of the depth-two neighborhoods of $v_+$ in $\G$ given $\G_*'$.
Thus, $\cN_1,\cN_2$ specify the weight functions of the $d$ constraints adjacent to $v_+$, the pairing of the clones of $v_+$ to those of these constraint nodes,
and the pairing of these $d$ constraint nodes and the cavities $\cC$.
In addition, let $\G'$ be the random factor graph obtained from $\G_*'$ by restoring the prior to $p^{\tensor n}$.
Then we can set up a coupling $(\vec\Gamma_1,\vec\Gamma_2)$ of $\G$ given $\G',\cN_1,\cA'$ and of $\G$ given $\G',\cN_2,\cA'$ such that under $\Gamma$ the two random factor graphs differ in no more than $2dk$ edges:
the coupling simply switches the pairings occurring in $\cN_1$ but not in $\cN_2$, and vice versa.
In effect, on $\cA'$ the Boltzmann distributions $\mu_{\vec\Gamma_1}, \mu_{\vec\Gamma_2}$ are mutually $\alpha^{-1}$-contiguous.
Consequently, since the priors are amended according to samples from these respective Boltzmann distribution, 
we conclude that for any two outcomes $\cN_1,\cN_2$ of the second neighborhood of $v^+$ and for any possible outcome $g$ of $\G_*'$,
	\begin{align}\label{eqClaim_fixp_contig1_3}
	\pr\brk{\G_*'=g\mid\cN_2,\G',\cA'}\leq \alpha^{-1}\,\pr\brk{\G_*'=g\mid\cN_1,\G',\cA'}.
	\end{align}
Combining (\ref{eqClaim_fixp_contig1_1})--(\ref{eqClaim_fixp_contig1_3}), we conclude that for any possible $g$ and for any  $2\omega/3\leq\omega_0\leq3\omega/2$,
	\begin{align}\label{eqClaim_fixp_contig1_3a}
	\pr\brk{\G_*'=g\mid \OMEGA=\omega_0}\leq \alpha^{-1}\,\pr\brk{\G_*^-=g\mid \OMEGA=\omega_0+1}+O(\alpha^{100}).
	\end{align}
Finally, the assertion follows from (\ref{eqClaim_fixp_contig1_3a}) because $\dTV(\OMEGA,\OMEGA+1)=O(\omega^{-1/2})=O(\alpha^{200})$
and $\pr\brk{2\omega/3\leq\OMEGA\leq3\omega/2}=1-\exp(-\Omega(\omega))=1-O(\alpha^{200})$.
\end{proof}

\Lem~\ref{Claim_fixp_contig1} shows that studying the messages received by and emanating from $v_+$ is about as good as studying the messages of a random variable node of $\G_*'$.
The randomness involved in the attachment process will help, but is not yet quite sufficient to actually verify the Belief Propagation equations.
Namely, we also need to make sure that the Boltzmann distribution of the cavities is extremal in order to argue that typically the joint distribution of the variables where the new constraints $b_1,\ldots,b_d$ are anchored factorizes.
Unfortunately, we do not know a priori that extremality holds.
Indeed, while going from $\G$ to $\G_*'$ renders the Boltzmann distribution $\xi^{1/4}$-extremal (by \Lem~\ref{Lemma_shift}), the cavities are far too few in number to conclude that their joint distribution is extremal.

Hence, we will apply a second round of pinning.
But this time we will pin the cavities directly.
To be precise, recalling the random variable $\THETA_{+}=\THETA_{\zeta}$ from \Lem~\ref{Lemma_pinning}, let  $\cC_+\subset\cC$ be a random subset of size $\THETA_{+}\wedge|\cC|$.
Further, draw a sample $\SIGMA_{+}$ from $\mu_{\G_*'}$.
The choice of $\cC_+,\SIGMA_{+}$ is independent of the choice of the constraints $b_1,\ldots,b_d$, and $\SIGMA_+$ is independent of $\cC_+$.
Now, obtain $\G_*''$ from $\G_*'$ by changing the prior to
	\begin{align}\label{eqChangePriors}
	p_{\G_*''}(\sigma)&\propto p_{\G_*'}(\sigma)\prod_{y\in\cC_+}\vecone\{\sigma=\SIGMA_{+}(y)\}.
	\end{align}
Thus, we pin the cavities $y\in\cC_+$ to the spins observed under $\SIGMA_{+}$, which are independent of $b_1,\ldots,b_d$.

\begin{lemma}\label{Claim_fixp_contig}
The joint distribution $\mu_{\G_*'',\cC}$ of the cavities is $\zeta$-symmetric with probability at least $1-\zeta$.
\end{lemma}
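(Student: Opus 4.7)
The statement follows by a direct application of the pinning lemma (Lem~\ref{Lemma_pinning}) to the marginal measure $\mu_{\G_*',\cC}\in\cP(\Omega^{\cC})$ obtained by projecting the Boltzmann distribution $\mu_{\G_*'}$ onto the cavities. The plan is to observe that the construction~\eqref{eqChangePriors} on the cavities is precisely an instance of the abstract pinning operation from Lem~\ref{Lemma_pinning}, with $V$ played by $\cC$, the pinning size $\THETA$ played by $\THETA_{+}=\THETA_{\zeta}$, the random index set $\vI$ played by $\cC_+$, and the pinned assignment $\SIGMA$ played by the restriction $\SIGMA_+|_{\cC_+}$.

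To make this identification rigorous, I would check the following three points in sequence. First, conditional on the event $\{|\cC|\geq\THETA_+\}$, the set $\cC_+$ is by construction a uniformly random subset of $\cC$ of the prescribed size $\THETA_+$, matching Lem~\ref{Lemma_pinning}. Second, since $\SIGMA_+$ is drawn from $\mu_{\G_*'}$ independently of the choice of $\cC_+$, the restriction $\SIGMA_+|_{\cC_+}$ is distributed according to the marginal $\mu_{\G_*',\cC_+}$, which is exactly the sampling distribution that Lem~\ref{Lemma_pinning} requires. Third, one verifies directly from~\eqref{eqChangePriors} that $\mu_{\G_*''}$ is nothing but $\mu_{\G_*'}$ conditioned on the event $\{\sigma_y=\SIGMA_+(y):y\in\cC_+\}$, so that in the notation of~\eqref{eqCondMu} one has the pointwise identity
\[
\mu_{\G_*'',\cC} \;=\; (\mu_{\G_*',\cC})^{\cC_+,\,\SIGMA_+|_{\cC_+}},
\]
since $\cC_+\subset\cC$ and the conditioning event only involves coordinates in $\cC$.

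The one residual issue is the degenerate event $\{|\cC|<\THETA_+\}$, on which $\cC_+$ is forced to equal all of $\cC$ rather than being a uniform subset of size $\THETA_+$. However, Lem~\ref{Lemma_pinning} guarantees $\THETA_{+}\leq 2\zeta^{-3}\ln|\Omega|$, which is a bounded quantity once $\zeta$ is fixed, whereas $|\cC|\geq\omega/2$ with probability $1-\exp(-\Omega(\omega))$, and the pecking order~\eqref{eqfunctions} enforces $\omega\gg\zeta^{-3}$. Consequently $\pr[|\cC|<\THETA_+]=o(\zeta)$, which is harmless in the desired $1-\zeta$ bound. Combining these observations with Lem~\ref{Lemma_pinning} applied to $\mu_{\G_*',\cC}$ with parameter $\zeta$ yields $\zeta$-symmetry of $\mu_{\G_*'',\cC}$ with probability at least $1-\zeta$. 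The whole argument is conceptual rather than computational, and I do not foresee a genuine obstacle; the main subtlety is simply being careful that the external randomness used to define $\G_*'$ may be frozen first, so that the pinning lemma is then invoked on the (now deterministic) measure $\mu_{\G_*',\cC}$ while averaging over $(\cC_+,\SIGMA_+)$.
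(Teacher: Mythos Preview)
Your proposal is correct and follows exactly the same approach as the paper: the paper's proof is a two-line remark that $|\cC|\geq\omega/2$ with probability $1-\exp(-\Omega(\omega))$ and that the assertion then follows immediately from \Lem~\ref{Lemma_pinning} and the construction of $\G_*''$. You have simply unpacked this remark, verifying explicitly that the construction~\eqref{eqChangePriors} realizes the pinning of \Lem~\ref{Lemma_pinning} on the measure $\mu_{\G_*',\cC}$ and that the pecking order~\eqref{eqfunctions} guarantees $|\cC|$ is large enough.
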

\begin{proof}
Since $|\cC|\geq\omega/2$ with probability $1-\exp(-\Omega(\omega))$, 
the assertion follows immediately from \Lem~\ref{Lemma_pinning} and the construction of $\G_*''$.
\end{proof}

\noindent
Additionally, obtain $\G^+_*$ from $\G^-_*$ by changing the prior as per (\ref{eqChangePriors}) as well, i.e., 
	\begin{align}\label{eqpeg}
	p_{\G^+_*}(\sigma)\propto  p_{\G_*^-}(\sigma)\prod_{y\in\cC_+}\vecone\{\sigma=\SIGMA_{+}(y)\}.
	\end{align}
We are ready to verify the Belief Propagation equations for $v_+$ on $\G_*^+$.

\begin{lemma}\label{Claim_fixp1}
With probability $1-O(\alpha^{90})$ the random factor graph $\G_*^+$ has the following properties:
	\begin{align}
	\abs{\mu_{\G_*^+,b_i\to v_+}(\sigma)-\frac{\sum_{\tau\in\Omega^{\partial b_i}}\vecone\{\tau(v_+)=\sigma\}\psi_{b_i}(\tau)\prod_{w\in\partial b_i\setminus v_+}\mu_{\G_*^+,w\to b_i}(\tau(y))}{\sum_{\tau\in\Omega^{\partial b_i}}\psi_{b_i}(\tau)\prod_{w\in\partial b_i\setminus v_+}\mu_{\G_*^+,w\to b_i}(\tau(w))}}&\leq\alpha^{70d}&\forall i\in[d],\sigma\in\Omega,\label{eqClaim_fixp1_1}\\
	\abs{\mu_{\G_*^+,v_+\to b_i}(\sigma)-
		\frac{p(\sigma)\prod_{j\neq i}\sum_{\tau\in\Omega^{\partial b_j}}\vecone\{\tau_{v_+}=\sigma\}\psi_{b_j}(\tau)\prod_{w\in\partial b_j\setminus v_+}\mu_{\G_*^+,w\to b_j}(\tau_w)}
		{\sum_{\chi\in\Omega}p(\chi)\prod_{j\neq i}\sum_{\tau\in\Omega^{\partial b_j}}\psi_{b_j}(\tau)\vecone\{\tau_{v_+}=\chi\}\prod_{w\in\partial b_j\setminus v_+}\mu_{\G_*^+,w\to b_j}(\tau_w)}}
			&\leq\alpha^{70d}&\forall  i\in[d],\sigma\in\Omega,\label{eqClaim_fixp1_2}\\
	\abs{\mu_{\G_*^+}(\sigma)
			-\frac{\prod_{i=1}^d\psi_{b_i}(\sigma)\prod_{w\in\partial b_i\setminus v_+}\mu_{\G_*^+,w\to b_i}(\sigma(w))}
			{\sum_{\chi\in\Omega}\prod_{i=1}^d\sum_{\tau\in\Omega^{\partial b_i}}\vecone\{\tau_{v_+}=\chi\}\psi_{b_i}(\tau)
				\prod_{w\in\partial b_i\setminus v_+}\mu_{\G_*^+,w\to b_i}(\tau(w))}
			}&<\alpha^{70d}&\forall \sigma\in\Omega^{\nabla^2v_+}.\label{eqClaim_fixp1_3}
	\end{align}
\end{lemma}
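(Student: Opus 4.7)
The plan is to reduce all three estimates to a single Bayesian computation about the joint law of $v_+$ together with the cavities touched by $b_1,\dots,b_d$, powered by the near-product structure of the cavity marginals supplied by \Lem~\ref{Claim_fixp_contig}. First I would enumerate the bad events that may spoil this reduction and show that together they carry mass $O(\alpha^{90})$. Concretely, with that probability we have: (a) the weight functions $\psi_{b_1},\dots,\psi_{b_d}$ all satisfy $\min_{\tau}\psi_{b_i}(\tau)\ge\alpha^{1/(2dk)}$ (by the tail condition \eqref{eqP}); (b) the sets $\partial b_i\setminus v_+$ are pairwise disjoint and disjoint from the pinned cavities $\cC_+$ (this follows from $|\cC|=\Theta(\omega)$ and $|\cC_+|\le 2\zeta^{-3}\log|\Omega|$, both negligible compared to the number of cavity pairings); and (c) the joint distribution $\mu_{\G_*'',\cC}$ is $\zeta$-symmetric, which is the content of \Lem~\ref{Claim_fixp_contig}. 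On this good event the $d(k-1)$ cavities $T=\bigcup_i(\partial b_i\setminus v_+)$ are pairwise distinct, unpinned, and their joint $\G_*''$-marginal is $\alpha^{60d}$-close to the product of the single-site marginals via \Prop~\ref{Cor_symmetry} together with \Lem~\ref{Lemma_tv1}.

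Second, I would derive \eqref{eqClaim_fixp1_3} by straight Bayesian algebra. Because $\G_*^+$ is obtained from $\G_*''$ by adjoining $v_+$ and the $b_i$, and because the $\partial b_i\setminus v_+$ are disjoint and avoid $\cC_+$, the joint weight of a configuration on $\{v_+\}\cup T$ factorises as
\begin{align*}
\mu_{\G_*^+,\{v_+\}\cup T}(\sigma)\;\propto\;p(\sigma_{v_+})\prod_{i=1}^d\psi_{b_i}(\sigma_{\partial b_i})\,\mu_{\G_*'',T}(\sigma_T).
\end{align*}
Inserting the near-factorisation $\mu_{\G_*'',T}\approx\bigotimes_{w\in T}\mu_{\G_*'',w}$ from the previous paragraph, normalising, and collecting terms indexed by $b_i$ yields exactly the right-hand side of \eqref{eqClaim_fixp1_3}, provided we identify $\mu_{\G_*'',w}(\sigma_w)$ with $\mu_{\G_*^+,w\to b_i}(\sigma_w)$ for the unique $i$ with $w\in\partial b_i$. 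This identification is the crux: the definition \eqref{eqstdva} of the standard message gives $\mu_{\G_*^+,w\to b_i}$ as the marginal of $w$ in $\G_*^+-b_i$; since by (b) the cavity $w$ has no other edge to the $v_+$-cluster, removing $b_i$ disconnects $w$ from $\{v_+\}\cup(T\setminus\{w\})$ entirely, so this marginal coincides with $\mu_{\G_*'',w}$ up to an $O(\alpha^{80d})$ perturbation coming from the at most $d-1$ other $b_j$ factors that appear in the partition function but not in the $w$-component.

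Third, the two message equations \eqref{eqClaim_fixp1_1} and \eqref{eqClaim_fixp1_2} follow from \eqref{eqClaim_fixp1_3} by marginalisation. For \eqref{eqClaim_fixp1_1}, $\mu_{\G_*^+,b_i\to v_+}(\sigma)$ is by \eqref{eqstdav} the marginal of $v_+$ in the graph $\G_*^+$ with the constraints in $\partial v_+\setminus b_i$ removed and the $v_+$-prior disregarded; applying \Lem~\ref{Lemma_localRemoval} and then the same near-product structure on the cavities attached to $b_i$ yields the advertised Belief Propagation formula. For \eqref{eqClaim_fixp1_2} the analogous computation deletes only $b_i$ and keeps the $d-1$ remaining factors, which is again immediate from \eqref{eqClaim_fixp1_3} after summing out the cavity spins through the product approximation.

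The principal obstacle is the bookkeeping of compounded error terms: the pinning of $\cC_+$ yields only a $\zeta$-symmetry guarantee for $\mu_{\G_*'',\cC}$, but we must derive $\alpha^{70d}$-precision statements, so the pecking order \eqref{eqfunctions} must be exploited carefully — in particular one needs $\zeta$ chosen small enough that the symmetry error, amplified by the at most $\alpha^{-1/2}$ contiguity cost between $\G_*''$ and $\G_*^+$ induced by the new constraints, still beats $\alpha^{70d}$. Given the generous separation in \eqref{eqfunctions} this is quantitative but routine; the real conceptual content is the disjointness step (b) and the identification of cavity marginals with standard messages in the previous paragraph, both of which are special to the rigid regular geometry that the proposition was designed to handle.
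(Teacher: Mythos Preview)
Your overall strategy matches the paper's: condition on the high-probability events (lower-bounded weight functions from \eqref{eqP}, enough cavities, $\zeta$-symmetry of $\mu_{\G_*'',\cC}$ from \Lem~\ref{Claim_fixp_contig}), write the exact Bayesian formula linking $\mu_{\G_*^+}$ to $\mu_{\G_*''}$, use the near-product structure of the cavity joint (via \Prop~\ref{Cor_symmetry}, which upgrades $\zeta$-symmetric to $(\beta,d(k-1))$-symmetric) to factorize, and then identify the single-cavity marginals $\mu_{\G_*'',w}$ with the messages $\mu_{\G_*^+,w\to b_i}$. The paper proves the three estimates in parallel rather than deducing \eqref{eqClaim_fixp1_1}--\eqref{eqClaim_fixp1_2} from \eqref{eqClaim_fixp1_3} via \Lem~\ref{Lemma_localRemoval}, but your reduction is sound.

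There is, however, a real gap in your identification step. Your claim that ``removing $b_i$ disconnects $w$ from $\{v_+\}\cup(T\setminus\{w\})$ entirely'' is false: the cavity $w$ remains connected to the other cavities---and hence to $v_+$ through the surviving $b_j$---via the bulk graph $\G_*''$. So $\mu_{\G_*^+-b_i,w}$ is \emph{not} a marginal on a component disjoint from the new constraints, and the ``partition function but not in the $w$-component'' phrasing does not describe a valid decomposition. What is actually true is that the reweighting factor $p(\tau_{v_+})\prod_{j\ne i}\psi_{b_j}(\tau)$ depends on $\tau_{T\setminus\{w\}}$ but not on $\sigma_w$, so \emph{after} replacing $\mu_{\G_*'',T}$ by the product $\bigotimes_{w'}\mu_{\G_*'',w'}$ the $w$-marginal factors out of the sum. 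That repairs your argument, but you must be explicit that the ``disconnection'' holds only after the factorization, not in the graph.

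The paper handles this differently and more robustly: it observes that $\mu_{\G_*^+-b_i,\cC}$ is $(2/\alpha)^{dk}$-contiguous with respect to $\mu_{\G_*'',\cC}$; since the latter is extremal (via \Prop~\ref{Cor_symmetry}), \Lem~\ref{Lemma_extremal_contig} gives $\cutm(\mu_{\G_*^+-b_i,\cC},\mu_{\G_*'',\cC})<\beta$, and then \Lem~\ref{Lemma_tv1} turns this into an \emph{average} marginal bound, which suffices because the $\partial b_i\setminus v_+$ are uniformly random cavities. This contiguity-then-cut-metric route does not require the specific set $T$ to already be typical for factorization, and it makes clear that your ``$\alpha^{-1/2}$ contiguity cost amplifying the symmetry error'' remark is not the right mechanism: contiguity is used to transfer extremality (via \Lem~\ref{Lemma_extremal_contig}), not to multiply an error bound.
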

\begin{proof}
\Lem~\ref{Claim_fixp_contig} shows that $\mu_{\G_*'',\cC}$ is $\zeta$-symmetric with probability at least $1-\zeta$.
Suppose it is.
Then \Prop~\ref{Cor_symmetry} shows that $\mu_{\G_*'',\cC}$ is $(\beta,d(k-1))$-symmetric.
We may also assume that $|\cC|\geq\omega/2$, an event that occurs with probability at least $1-\exp(-\Omega(\omega))$ by the construction of $\G_*'$.
Additionally, due to (\ref{eqP}) we may assume that 
	\begin{align}\label{eqP_1}
	\min_{\sigma\in\Omega^k}\psi_{b_i}(\sigma)\geq\alpha\qquad\mbox{ for all $i\in[d]$.}
	\end{align}

According to (\ref{eqstdav}),  the standard message $\mu_{\G_*^+,b_1\to v_+}$ is defined as the marginal of $v_+$ in the factor graph obtained from $G_*^+$ by removing $b_2,\ldots,b_d$ and replacing the prior of $v_+$ by the uniform distribution.
By construction, this factor graph is obtained from $\G_*''$ by adding the variable node $v_+$ and constraint node $b_1$ and replacing the prior of $v_+$ by the uniform distribution.
Therefore, 
	\begin{align}\label{eqClaim_fixp1_4}
	\mu_{\G_*^+,b_1\to v_+}(\sigma)&=\frac{\sum_{\tau\in\Omega^{\partial b_1}}\vecone\{\tau_{v_+}=\sigma\}\psi_{b_1}(\tau)
		\scal{\vecone\{\forall w\in\partial b_1\setminus v_+:\SIGMA_w=\tau_w\}}{\mu_{\G_*'',\cC}}}
		{\sum_{\tau\in\Omega^{\partial b_1}}\psi_{b_1}(\tau)
		\scal{\vecone\{\forall w\in\partial b_1\setminus v_+:\SIGMA_w=\tau_w\}}{\mu_{\G_*'',\cC}}}\qquad(\sigma\in\Omega).
	\end{align}
Further, the neighbors $\partial b_1\setminus v^+$ are chosen uniformly from $\cC$ (without replacement).
Because $|\cC|\geq\omega/2$ and   $\mu_{\G_*'',\cC}$ is $(\beta,d(k-1))$-symmetric, we conclude that
	\begin{align}\label{eqClaim_fixp1_5}
	\pr\brk{\TV{\mu_{\G_*'',\partial b_1\setminus v^+}-\bigotimes_{w\in\partial b_1\setminus v^+}\mu_{\G_*'',w}}\leq\beta^{1/3}}&\geq1-\beta^{1/3}.
	\end{align}
Combining (\ref{eqP_1}), (\ref{eqClaim_fixp1_4}) and (\ref{eqClaim_fixp1_5}), we obtain the estimate
	\begin{align}\label{eqClaim_fixp1_6}
	\Erw\abs{\mu_{\G_*^+,b_1\to v^+}(\sigma)-
	\frac{\sum_{\tau\in\Omega^{\partial b_1}}\vecone\{\tau_{v^+}=\sigma\}\psi_{b_1}(\tau)\prod_{w\in\partial b_1\setminus x^+}\mu_{\G_*'',w}(\tau_w)}
		{\sum_{\tau\in\Omega^{\partial b_1}}\psi_{b_1}(\tau)\prod_{w\in\partial b_1\setminus v^+}\mu_{\G_*'',w}(\tau_w)}
	}&\leq\beta^{1/4}.
	\end{align}	
Moreover, the factor graph $\G^+_*-b_1$ is obtained from $\G_*''$ by adding $v_+$ and $b_2,\ldots,b_d$.
Hence, (\ref{eqP_1}) implies that $\mu_{\G^+_*-b_1,\cC}$ is $(2/\alpha)^{dk}$-contiguous with respect to $\mu_{\G_*'',\cC}$.
Since $\mu_{\G_*'',\cC}$ is $\zeta$-symmetric, \Prop~\ref{Cor_symmetry} and \Lem~\ref{Lemma_extremal_contig} yield
	$\cutm(\mu_{\G_*^+-b_1,\cC},\mu_{\G_*'',\cC})<\beta.$
Because the neighborhood $\partial b_1$ is random, \Lem~\ref{Lemma_tv1} therefore yields
	\begin{align}\label{eqClaim_fixp1_7}
	\Erw\sum_{w\in\partial b_1\setminus v_+}\TV{\mu_{\G_*'',w}-\mu_{\G_*^+,w\to b_1}}&\leq O(\beta).
	\end{align}
Combining \eqref{eqP_1}, (\ref{eqClaim_fixp1_6}) and (\ref{eqClaim_fixp1_7}), we obtain (\ref{eqClaim_fixp1_1}).

The proofs of (\ref{eqClaim_fixp1_2}) and (\ref{eqClaim_fixp1_3}) are similar.
Indeed, $\mu_{\G_*^+,v_+\to b_1}$ is the marginal of $v_+$ in $\G_*^+-b_1$, which is obtained from $\G_*''$ by adding $b_2,\ldots,b_d$.
Hence,
	\begin{align*}
	\mu_{\G_*^+,v_+\to b_1}(\sigma)&=\frac{\sum_{\tau\in\Omega^{\{v_+\}\cup\partial^2v_+}}p(\sigma)\vecone\{\tau_{v_+}=\sigma\}
		\scal{\vecone\{\forall w\in\partial^2v_+:\SIGMA_w=\tau_w\}}{\mu_{\G_*'',\cC}}
		\prod_{i=2}^d\psi_{b_i}(\tau)}
		{\sum_{\tau\in\Omega^{\{v_+\}\cup\partial^2v_+}}p(\tau_{v_+})\scal{\vecone\{\forall w\in\partial^2v_+:\SIGMA_w=\tau_w\}}{\mu_{\G_*'',\cC}}
		\prod_{i=2}^d\psi_{b_i}(\tau)}.
	\end{align*}
Invoking the $(\beta,d(k-1))$-symmetry of $\mu_{\G_*''}$ and (\ref{eqP_1}), we obtain
	\begin{align*}
	\Erw\abs{\mu_{\G_*^+,v_+\to b_1}(\sigma)-\frac{\sum_{\tau\in\Omega^{\{v_+\}\cup\partial^2v_+}}p(\sigma)\vecone\{\tau_{v_+}=\sigma\}
		\prod_{i=2}^d\psi_{b_i}(\tau)\prod_{w\in\partial^2v_+}\mu_{\G_*'',w}(\tau_w)}
		{\sum_{\tau\in\Omega^{\{v_+\}\cup\partial^2v_+}}p(\tau_{v_+})
		\prod_{i=2}^d\psi_{b_i}(\tau)\prod_{w\in\partial^2v_+}\mu_{\G_*'',w}(\tau_w)}}&\leq\beta^{1/4}.
	\end{align*}
Moreover, reordering the sums and products, we simplify the last expression and find
	\begin{align}\label{eqClaim_fixp1_10}
	\Erw\abs{\mu_{\G_*^+,v_+\to b_1}(\sigma)-\frac{p(\sigma)
		\prod_{i=2}^d\sum_{\tau\in\Omega^{\partial b_i}}\vecone\{\tau_{v_+}=\sigma\}\psi_{b_i}(\tau)\prod_{w\in\partial b_i\setminus v_+}\mu_{\G_*'',w}(\tau_w)}
		{\sum_{\chi\in\Omega}p(\chi)\prod_{i=2}^d\sum_{\tau\in\Omega^{\partial b_i}}\vecone\{\tau_{v_+}=\chi\}\psi_{b_i}(\tau)\prod_{w\in\partial b_i\setminus v_+}\mu_{\G_*'',w}(\tau_w)}}&\leq\beta^{1/4}.
	\end{align}
Further, (\ref{eqP_1}) ensures that for each $i\in[d]$ the distribution $\mu_{\G_*^+-b_i,\cC}$ is $(2/\alpha)^{dk}$-contiguous with respect to $\G_*''$.
Consequently, since the neighbors of $b_i$ are chosen randomly from the set $\cC$ of cavities, \Prop~\ref{Cor_symmetry} and \Lem~\ref{Lemma_extremal_contig} yield
	\begin{align}\label{eqClaim_fixp1_11}
	\sum_{i=2}^d\Erw\brk{\sum_{w\in\partial b_i\setminus v_+}\TV{\mu_{\G_*'',w}-\mu_{\G_*^+,w\to b_i}}}&\leq O(\beta).
	\end{align}
Combining (\ref{eqClaim_fixp1_10}) and (\ref{eqClaim_fixp1_11}), we obtain (\ref{eqClaim_fixp1_2}).

Moving on to \eqref{eqClaim_fixp1_3}, we consider $\sigma\in\Omega^{\{v^+\}\cup\partial^2v^+}$.
Since $\G_*^+$ is obtained from $\G_*''$ by adding $v_+$ along with $b_1,\ldots,b_d$, we have the exact formula
	\begin{align*}
	\mu_{\G_*^+}(\sigma)&=
		\frac{p(\sigma)\scal{\vecone\{\forall w\in\partial^2v^+:\SIGMA_w=\sigma_w}{\mu_{\G_*''}}\prod_{i=1}^d\psi_{b_i}(\sigma)}
			{\sum_{\chi\in\Omega}p(\chi)
				\scal{\prod_{i=1}^d\sum_{\tau\in\Omega^{\partial b_i}}\vecone\{\tau_{v^+}=\chi\}\psi_{b_i}(\tau)
				\prod_{w\in\partial b_i\setminus v^+}\vecone\{\SIGMA_w=\tau_w\}}{\mu_{\G_*''}}}
				\qquad(\sigma\in\Omega^{\nabla^2v_+}).
	\end{align*}
Since $\partial^2v^+$ is a random set of cavities, the $(\beta,d(k-1))$-symmetry of $\mu_{\G_*''}$ and (\ref{eqP_1}) ensure that
	\begin{align}\label{eqClaim_fixp1_20}
	\Erw\abs{\mu_{\G_*^+}(\sigma)-
		\frac{p(\sigma)\prod_{w\in\partial^2v^+}\mu_{\G_*'',w}(\sigma_w)\prod_{i=1}^d\psi_{b_i}(\sigma)}
			{\sum_{\chi\in\Omega}p(\chi)\prod_{i=1}^d\sum_{\tau\in\Omega^{\partial b_i}}\vecone\{\tau_{v^+}=\chi\}\psi_{b_i}(\tau)
				\prod_{w\in\partial b_i\setminus v^+}\mu_{\G_*'',w}(\tau_w)}}&\leq\beta^{1/4}.
	\end{align}
Finally, to complete the proof we combine (\ref{eqClaim_fixp1_11}) and (\ref{eqClaim_fixp1_20}).
\end{proof}

We set up the random factor graph $\G_*^+$ so as to facilitate the verification of the BP equations.
But in a sense the model is a bit `out of line' because the prior is pinned according to a configuration $\SIGMA_+$ drawn from $\mu_{\G_*'}$ rather than $\mu_{\G_*^-}$; see~(\ref{eqpeg}).
Thus, with $\THETA_{+}=\THETA_{\zeta}$ and  $\cC_+\subset\cC$ as before, draw  
 $\SIGMA_{++}$ from $\mu_{\G_*^-}$ and let $\G_*^{++}$ be the random factor graph obtained from $\G_*^-$ by changing the prior to
	$$p_{\G_*^{++}}(\sigma)\propto p_{\G_*'}(\sigma)\prod_{w\in\cC_+}\vecone\{\sigma=\SIGMA_{++}(w)\}.$$
Hence, the pinning $\SIGMA_{++}$ takes $v_+,b_1,\ldots,b_d$ into account.

\begin{corollary}\label{Cor_Claim_fixp1}
With probability $1-O(\alpha^{80})$  the bounds (\ref{eqClaim_fixp1_1})--(\ref{eqClaim_fixp1_3}) hold with $\G_*^+$ replaced by $\G_*^{++}$.
\end{corollary}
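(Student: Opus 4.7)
The plan is to mirror the proof of \Lem~\ref{Claim_fixp1} verbatim, substituting $\G_*^{++}$ and $\SIGMA_{++}$ for $\G_*^+$ and $\SIGMA_+$ throughout. The only step that requires fresh justification is the symmetry input: in \Lem~\ref{Claim_fixp1} the $\zeta$-symmetry of $\mu_{\G_*'',\cC}$ comes from \Lem~\ref{Claim_fixp_contig}, which in turn invokes \Lem~\ref{Lemma_pinning} for a pinning sample drawn from $\mu_{\G_*'}$; here the pinning $\SIGMA_{++}$ is drawn instead from $\mu_{\G_*^-}$, so \Lem~\ref{Lemma_pinning} does not apply directly.

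To circumvent this, I would introduce the auxiliary factor graph $\tilde\G_*''$ obtained from $\G_*'$ by pinning the cavities in $\cC_+$ according to $\SIGMA_{++}|_{\cC_+}$; the structural relation $\G_*^{++}=\tilde\G_*''+(v_+,b_1,\ldots,b_d)$ matches exactly the relation $\G_*^+=\G_*''+(v_+,b_1,\ldots,b_d)$ used in \Lem~\ref{Claim_fixp1}. The key step is then to show that $\mu_{\tilde\G_*'',\cC}$ is $\zeta$-symmetric with probability $1-O(\alpha^{100})$. First I would restrict to the event, of probability $1-O(\alpha^{100})$ by \eqref{eqP}, that $\min_{\sigma\in\Omega^k}\psi_{b_i}(\sigma)\geq\alpha$ for every $i\in[d]$. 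On this event, the Radon-Nikodym derivative $d\mu_{\G_*^-}/d\mu_{\G_*'}$ on the shared vertex set $V_n\setminus\vW$ equals, up to normalization, $\sum_{v_+\in\Omega}p(v_+)\prod_{i=1}^d\psi_{b_i}(\sigma,v_+)$, which lies between $\alpha^d$ and $1$. Hence $\mu_{\G_*^-,\cC_+}$ and $\mu_{\G_*',\cC_+}$ are mutually $\alpha^{-d}$-contiguous, and any event of probability $1-\zeta$ under $\SIGMA_+\sim\mu_{\G_*',\cC_+}$ occurs with probability at least $1-\alpha^{-d}\zeta$ under $\SIGMA_{++}\sim\mu_{\G_*^-,\cC_+}$. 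Since the pecking order (\ref{eqfunctions}) permits us to choose $\zeta\leq\alpha^{200}$, the event that $\mu_{\tilde\G_*'',\cC}$ is $\zeta$-symmetric fails with probability at most $O(\alpha^{100})$.

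Once the symmetry of $\mu_{\tilde\G_*'',\cC}$ is in hand, every line of the proof of \Lem~\ref{Claim_fixp1}---the formula \eqref{eqClaim_fixp1_4} for the standard messages, the reduction via \eqref{eqClaim_fixp1_6}--\eqref{eqClaim_fixp1_10}, the contiguity bounds \eqref{eqClaim_fixp1_7} and \eqref{eqClaim_fixp1_11} that follow from \Prop~\ref{Cor_symmetry} and \Lem~\ref{Lemma_extremal_contig}, and the second-neighborhood identity leading to \eqref{eqClaim_fixp1_20}---carries over with $\tilde\G_*''$ and $\G_*^{++}$ in place of $\G_*''$ and $\G_*^+$. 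Adding up the failure probabilities from \Lem~\ref{Claim_fixp1} ($O(\alpha^{90})$) and from the symmetry transfer ($O(\alpha^{100})$) yields the claimed bound $1-O(\alpha^{80})$. The main obstacle is the pinning-source mismatch, and the contiguity computation above is the crucial ingredient that resolves it.
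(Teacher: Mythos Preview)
Your approach is correct and rests on the same key observation as the paper---namely, that the distributions of $\SIGMA_{++}$ and $\SIGMA_+$ on $\cC_+$ are mutually $\alpha^{-O(1)}$-contiguous because $\G_*^-$ differs from $\G_*'$ only by the addition of $v_+$ and $b_1,\ldots,b_d$. The paper, however, applies this contiguity at a different (and more economical) point: rather than transferring the $\zeta$-symmetry of the pinned cavity measure and then re-running the entire proof of \Lem~\ref{Claim_fixp1}, it observes that the event ``bounds (\ref{eqClaim_fixp1_1})--(\ref{eqClaim_fixp1_3}) hold'' is itself a deterministic function of $(\G_*',\cC_+,b_1,\ldots,b_d,\chi)$ evaluated at the pinning $\chi$, so the conclusion of \Lem~\ref{Claim_fixp1} transfers directly from $\chi=\SIGMA_+|_{\cC_+}$ to $\chi=\SIGMA_{++}|_{\cC_+}$ at the cost of one factor $\alpha^{-1}$ in the failure probability. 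Your route works, but the paper's saves the repetition of the proof of \Lem~\ref{Claim_fixp1} by applying contiguity to the output rather than the input.
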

\begin{proof}
The only difference between $\G_*^{++}$ and $\G_*^+$ lies in the choice of the configuration to which the variable nodes in $\cC_+$ get pinned.
But since $\G_*^+$ is obtained from $\G_*''$ by the mere addition of $d$ constraint nodes $b_1,\ldots,b_d$, (\ref{eqP}) shows that
$\SIGMA_{++}$ is $\alpha^{-1}$-contiguous with respect to the distribution of  $\SIGMA_{+}$ with probability $1-O(\alpha^{80}).$
Thus, the assertion follows from \Lem~\ref{Claim_fixp1}.
\end{proof}

We are finally ready to go back to the random factor graph $\G_*''$.
Indeed, basically the only difference between $\G_*^{++}$ and $\G_*''$ is that the former has one more variable node, along with $d$ adjacent constraint nodes.
But since the number of variable nodes of $\G_*''$ is random, this difference should hardly be noticeable.
Also $\G_*''$ is invariant under permutations of its variable nodes.
Thus, whatever we can prove for the last variable node $v_+$ of $\G_*^{++}$  carries over to a random variable node of $\G_*''$.
The following corollary makes this precise.

\begin{corollary}\label{Claim_fixp2}
With probability $1-O(\alpha^{70})$ we have
	\begin{align}
	\sum_{v\in V_n}\sum_{b\in\partial v}\sum_{\sigma\in\Omega}
	\abs{\mu_{\G_*'',b\to v}(\sigma)-\frac{\sum_{\tau\in\Omega^{\partial b}}\vecone\{\tau_v=\sigma\}\psi_{b}(\tau)\prod_{w\in\partial b\setminus v}\mu_{\G_*'',w\to b}(\tau_w)}{\sum_{\tau\in\Omega^{\partial b}}\psi_{b}(\tau)\prod_{w\in\partial b\setminus v}\mu_{\G_*'',w\to b}(\tau_w)}}&\leq n\alpha^{60d},\label{eqClaim_fixp2_1}\\
	\sum_{v\in V_n}\sum_{b\in\partial v}\sum_{\sigma\in\Omega}\abs{\mu_{\G_*'',v\to b}(\sigma)-
		\frac{p(\sigma)\prod_{a\in\partial v\setminus b}\sum_{\tau\in\Omega^{\partial a}}\vecone\{\tau_{v}=\sigma\}\psi_{a}(\tau)\prod_{w\in\partial a\setminus v}\mu_{\G_*'',w\to a}(\tau_w)}
		{\sum_{\chi\in\Omega}p(\chi)\prod_{a\in\partial v\setminus b}\sum_{\tau\in\Omega^{\partial a}}\vecone\{\tau_{v}=\chi\}\psi_{a}(\tau)\prod_{w\in\partial a\setminus v}\mu_{\G_*'',w\to a}(\tau_w)}}
			&\leq n\alpha^{60d}\label{eqClaim_fixp2_2}\\
	\sum_{v\in V_n}\sum_{\sigma\in\Omega^{x\cup\partial^2x}}	\abs{\mu_{\G_*''}(\sigma)
			-\frac{p(\sigma)\prod_{a\in\partial v}\psi_{a}(\sigma)\prod_{w\in\partial a\setminus v}\mu_{\G_*'',w\to a}(\sigma_w)}
			{\sum_{\chi\in\Omega}p(\chi)\prod_{a\in\partial v}\sum_{\tau\in\Omega^{\partial a}}\vecone\{\tau_{v}=\chi\}\psi_{a}(\tau)
				\prod_{w\in\partial a\setminus v}\mu_{\G_*'',w\to a}(\tau_w)}
			}&<n\alpha^{60d}.\label{eqClaim_fixp2_3}
	\end{align}
\end{corollary}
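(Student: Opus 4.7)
The strategy is to combine exchangeability of $\G_*''$ over variable nodes with a contiguity argument in order to transfer the pointwise bounds of \Cor~\ref{Cor_Claim_fixp1} at the distinguished vertex $v_+$ to a sum over all $v\in V_n$, and then invoke Markov's inequality.  Specifically, I would first argue that the joint distribution of $\G_*''$ together with any fixed vertex label is invariant under permutations of $V_n$: the underlying random factor graph $\G$ is exchangeable, the pinning set $\vU_*$ and the cavity set $\cC_+$ are uniformly random, and the removal set $\vW$ is chosen uniformly among the last coordinates (which by relabelling is equivalent to a uniform choice).  Consequently, for any measurable non-negative functional $F(G,v)$ of a factor graph and a distinguished variable node,
\begin{align*}
\Erw\Bigl[\sum_{v\in V_n}F(\G_*'',v)\Bigr]
&=n\,\Erw\bigl[F(\G_*'',\vv)\bigr],
\end{align*}
where $\vv$ is an independent uniformly random element of $V_n$.

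Next I would couple $(\G_*'',\vv)$ with $(\G_*^{++},v_+)$ along the same lines as the proof of \Lem~\ref{Claim_fixp_contig1}.  Deleting $\vv$ together with its $d$ adjacent constraint nodes from $\G_*''$ produces a graph close in distribution to $\G_*'$ with one additional cavity-vertex; on the other hand, $\G_*^{++}$ is obtained from $\G_*'$ by reinserting $v_+$ with $d$ constraints attached to random cavities and adjusting the prior via $\SIGMA_{++}$ drawn from $\mu_{\G_*^-}$.  A Radon--Nikodym comparison between the two constructions, together with the tail bound~\eqref{eqP} on the weight functions and the concentration of $\OMEGA$, shows that for any event $\cE$ in the product space of factor-graph-with-marked-vertex,
\begin{align*}
\pr[(\G_*'',\vv)\in\cE]\le \alpha^{-O(1)}\,\pr[(\G_*^{++},v_+)\in\cE]+O(\alpha^{100}).
\end{align*}
I would take $F$ to be each of the three pointwise BP-violation quantities at the marked vertex appearing inside the absolute values of~(\ref{eqClaim_fixp2_1})--(\ref{eqClaim_fixp2_3}); these are bounded functionals taking values in $[0,O(1)]$.

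Combining the two steps, \Cor~\ref{Cor_Claim_fixp1} yields $F(\G_*^{++},v_+)\le \alpha^{70d}$ on an event of probability $1-O(\alpha^{80})$, and $F$ is trivially bounded on its complement, so $\Erw[F(\G_*^{++},v_+)]=O(\alpha^{70d})+O(\alpha^{80})$.  The contiguity bound then gives $\Erw[F(\G_*'',\vv)]=O(\alpha^{70d-O(1)})$, and exchangeability gives $\Erw[\sum_{v}F(\G_*'',v)]=O(n\alpha^{70d-O(1)})$.  A direct application of Markov's inequality to each of the three violation functionals, followed by a union bound over the three, produces the joint bound~(\ref{eqClaim_fixp2_1})--(\ref{eqClaim_fixp2_3}) with failure probability $O(\alpha^{70})$, provided the exponents in~\eqref{eqfunctions} are chosen with enough slack.

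The main obstacle is the coupling in the second step: the pinning configuration $\SIGMA_{++}$ depends on the full structure of $\G_*^-$, and so conditioning on the neighbourhood of $\vv$ inside $\G_*''$ twists the distribution of $\SIGMA_{++}$ in a subtle way.  As in the proof of \Lem~\ref{Claim_fixp_contig1}, the remedy is to restrict to a high-probability event on which the weight functions adjacent to $\vv$ are bounded below by a polynomial in $\alpha$, so that swapping a bounded number of pairings produces only an $\alpha^{-O(1)}$ multiplicative distortion of the Boltzmann distribution and hence of the pinned prior.  Everything else is bookkeeping: summing over $\sigma\in\Omega^{\partial b}$ or $\sigma\in\Omega^{v\cup\partial^2 v}$ introduces only constant factors (depending on $|\Omega|,d,k$), which are absorbed into the slack between the exponent $70d$ of \Cor~\ref{Cor_Claim_fixp1} and the exponent $60d$ of the present statement.
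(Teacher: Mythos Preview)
Your overall plan---contiguity with $(\G_*^{++},v_+)$, then exchangeability, then Markov---is exactly the paper's route, but you have made the contiguity step harder than it needs to be and have identified as ``the main obstacle'' something the paper bypasses entirely.

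The paper does not couple $(\G_*'',\vv)$ with $(\G_*^{++},v_+)$ directly. Instead it observes that the second pinning is a \emph{randomized function} $T$ of the underlying factor graph: draw $\cC_+$ uniformly from the cavities, draw $\SIGMA$ from the Boltzmann distribution of the input graph, and modify the prior. By construction $\G_*''=T(\G_*')$ and $\G_*^{++}=T(\G_*^-)$ for the \emph{same} $T$. Since \Lem~\ref{Claim_fixp_contig1} already gives $\pr[\G_*'\in\cE]\le\alpha^{-1}\pr[\G_*^-\in\cE]+O(\alpha^{100})$ for every event $\cE$, and pushforward by a randomized map preserves this inequality, one gets $\pr[\G_*''\in\cE]\le\alpha^{-1}\pr[\G_*^{++}\in\cE]+O(\alpha^{100})$ for free. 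The dependence of $\SIGMA_{++}$ on the neighbourhood of the distinguished vertex is absorbed into $T$ and never has to be analysed; your proposed Radon--Nikodym comparison and the restriction to bounded-below weight functions near $\vv$ are unnecessary.

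A second, minor point: the distribution of $\G_*''$ is not invariant under permutations of all of $V_n$. The set $\vW$ is removed, and among the remaining vertices the cavities (degree $d-1$) are not interchangeable with the degree-$d$ vertices. The paper only uses exchangeability over the $n-O(\omega)$ variable nodes of degree $d$; this suffices because the remaining $O(\omega)$ terms contribute $o(n)$ to the sums in (\ref{eqClaim_fixp2_1})--(\ref{eqClaim_fixp2_3}). Your identity $\Erw[\sum_{v\in V_n}F(\G_*'',v)]=n\,\Erw[F(\G_*'',\vv)]$ should be stated for $\vv$ uniform over the degree-$d$ vertices, with the harmless correction absorbed into the slack between $\alpha^{70d}$ and $\alpha^{60d}$.
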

\begin{proof}
Consider the event $\cE$ that in $\G_*''$, for the variable node $v_{n-\OMEGA}$ with the largest index the estimate
	\begin{align}\label{eqClaim_fixp2_10}
	\sum_{b\in\partial v_{n-\OMEGA}}\sum_{\sigma\in\Omega}
	\abs{\mu_{\G_*'',b\to v_{n-\OMEGA}}(\sigma)-\frac{\sum_{\tau\in\Omega^{\partial b}}\vecone\{\tau(v_{n-\OMEGA})=\sigma\}\psi_{b}(\tau)\prod_{w\in\partial b\setminus v_{n-\OMEGA}}\mu_{\G_*'',w\to b}(\tau(w))}{\sum_{\tau\in\Omega^{\partial b}}\psi_{b}(\tau)\prod_{w\in\partial b\setminus v_{n-\OMEGA}}\mu_{\G_*'',w\to b}(\tau_w)}}&\leq\alpha^{69}
	\end{align}
holds.
Since $\G_*^{++}$ is obtained from $\G_*^+$ by the same process that produces $\G_*''$ from $\G_*'$, 
\Lem~\ref{Claim_fixp_contig1} and \Cor~\ref{Cor_Claim_fixp1} show that $\pr\brk{\G_*''\in\cE}=1-O(\alpha^{70})$.
But since the distribution of $\G_*''$ is invariant under permutations of the $n-O(\omega)$ variable nodes of degree $d$, we can replace
$v_{n-\OMEGA}$ in (\ref{eqClaim_fixp2_10}) by a random variable node of degree $d$.
Thus, we obtain (\ref{eqClaim_fixp2_1}).
The two bounds (\ref{eqClaim_fixp2_2}) and (\ref{eqClaim_fixp2_3}) follow analogously.
\end{proof}

To complete the proof of \Prop~\ref{Lemma_fixp}, we finally need to get from $\G_*''$ back to $\G_*'$.
Thus, we need to undo the additional pinning of the cavities that was required to verify the BP equations \eqref{eqClaim_fixp2_1}--\eqref{eqClaim_fixp2_3}.
The elegant insight that makes this possible is that \eqref{eqClaim_fixp2_3}--\eqref{eqClaim_fixp2_3} really just describe a property of the joint distribution of the second neighborhoods of the variable nodes $v_i$, $i\neq n-\OMEGA$.
Indeed, by \Lem~\ref{Lemma_localRemoval} the standard messages, defined via the removal of a few constraints adjacent to a single variable node $v$, can be expressed easily in terms of the joint distribution of the second neighborhood of $v$.
Furthermore, \Lem~\ref{Lemma_shift} implies that the enhanced measure $\hat\mu_{\G_*'}$ describing the second neighborhood distributions is $\xi^{1/4}$-extremal, with $\xi$ is near the top of the pecking order (\ref{eqfunctions}).
In effect, $\hat\mu_{\G_*'}$ is impervious to the additional pinning required to go from  $\G_*'$ to $\G_*''$.
Let us formalize this argument to finish the proof of \Prop~\ref{Lemma_fixp}.

\begin{proof}[Proof of \Prop~\ref{Lemma_fixp}]
By \Lem~\ref{Lemma_shift} the measure $\hat\mu_{\G^*}$ is $\xi^{1/4}$-extremal with probability $1-\omega^{-1}$.
Consequently, since $\G_*'$ is obtained by deleting $O(\omega)$ constraints, 
\Lem~\ref{Lemma_extremal_contig} and (\ref{eqP}) ensure that $\hat\mu_{\G_*'}$ is $\thet$-extremal with probability $1-\alpha^{10}$.
Furthermore, $\hat\mu_{\G_*''}$ is nothing but the conditional distribution $\hat\mu_{\G_*'}$ given the event $S_+$ that the spins of the $\THETA_\zeta$ cavities $\cC_+$ coincide with the ones of the reference configuration $\SIGMA_+$.
Since $\SIGMA_+$ is drawn from $\hat\mu_{\G_*'}$, with probability at least $1-\zeta$ we have 
	$$\hat\mu_{\G_*'}(S_+)\geq \zeta q^{-\THETA_\zeta}.$$
If so, and if $\hat\mu_{\G_*'}$ is $\thet$-extremal, then (\ref{eqfunctions}) and \Cor~\ref{Lemma_extremal_cond} imply that 
$\cutm(\hat\mu_{\G_*''},\hat\mu_{\G_*'})\leq\beta$.
In summary,
	\begin{align*}
	\pr\brk{\cutm(\hat\mu_{\G_*''},\hat\mu_{\G_*'})\leq\beta}&\geq1-2\zeta.
	\end{align*}
In addition (\ref{eqP}) ensures that with probability at least $1-\alpha^{10}$, 
	\begin{align}\label{eqTailBound}
	\abs{\cbc{a\in F(\G_*'):\min_{\sigma\in\Omega^k}\psi_a(\sigma)\leq\alpha}}&\leq n\alpha^{1000d}.
	\end{align}
Further, by \Cor~\ref{Claim_fixp2} the bounds (\ref{eqClaim_fixp2_1})--(\ref{eqClaim_fixp2_3}) hold with probability $1-O(\alpha^{70})$.

Thus, we are left to prove statements (i) and (ii) under the assumption that $\cutm(\hat\mu_{\G_*''},\hat\mu_{\G_*'})\leq\beta$
and that (\ref{eqClaim_fixp2_1})--(\ref{eqClaim_fixp2_3}) and~\eqref{eqTailBound} hold.
Applying \Lem~\ref{Lemma_tv1}, we obtain 
	\begin{align}\label{eqLemma_fixp_1}
	\sum_{v\in V}\TV{\hat\mu_{\G_*'',v}-\hat\mu_{\G_*',v}}&\leq O(\beta).
	\end{align}
Further, \Lem~\ref{Lemma_localRemoval} shows that the messages $\mu_{\G_*'',v\to a}$, $\mu_{\G_*'',a\to v}$ and  $\mu_{\G_*',v\to a}$, $\mu_{\G_*',a\to v}$  can be expressed in terms of the marginal distributions $\hat\mu_{\G_*'',v}$ and $\hat\mu_{\G_*',v}$ of the depth-two neighborhood.
Indeed, according to (\ref{eqstdva})--(\ref{eqstdav}), for any $a\in\partial v$ and $\sigma\in\Omega$,
	\begin{align*}
	\mu_{\G_*'',v\to a}(\sigma)&=\frac{\scal{\vecone\{\SIGMA_v=\sigma\}/\psi_a(\SIGMA)}{\hat\mu_{\G_*'',v}}}{\scal{1/\psi_a(\SIGMA)}{\hat\mu_{\G_*'',v}}},&
	\mu_{\G_*'',a\to v}(\sigma)&=\frac{\scal{\vecone\{\SIGMA_v=\sigma\}/(p(\sigma)\prod_{b\in\partial v\setminus a}\psi_b(\SIGMA))}{\hat\mu_{\G_*'',v}}}{\scal{1/(p(\SIGMA_v)\prod_{b\in\partial v\setminus a}\psi_b(\SIGMA))}{\hat\mu_{\G_*'',v}}},
	\end{align*}
and analogously for $\G_*'$.
Hence, the total variation bound (\ref{eqLemma_fixp_1}) and (\ref{eqTailBound}) imply that
	\begin{align}\label{eqLemma_fixp_2}
	\sum_{v\in V}\sum_{a\in\partial v}\TV{\mu_{\G_*'',v\to a}-\mu_{\G_*',v\to a}}+\TV{\mu_{\G_*'',a\to v}-\mu_{\G_*',a\to v}}
		&=O(n\alpha^{900d}).
	\end{align}
Combining \eqref{eqClaim_fixp2_3} and \eqref{eqTailBound}--(\ref{eqLemma_fixp_2}), we obtain assertion (ii).
Further, (\ref{eqClaim_fixp2_1}), (\ref{eqTailBound}) and (\ref{eqLemma_fixp_2}) readily yield
	\begin{align}
	\sum_{v\in V}\sum_{b\in\partial v}\sum_{\sigma\in\Omega}
	\abs{\mu_{\G_*',b\to v}(\sigma)-\frac{\sum_{\tau\in\Omega^{\partial b}}\vecone\{\tau(v)=\sigma\}\psi_{b}(\tau)\prod_{w\in\partial b\setminus v}\mu_{\G_*',w\to b}(\tau(w))}{\sum_{\tau\in\Omega^{\partial b}}\psi_{b}(\tau)\prod_{w\in\partial b\setminus v}\mu_{\G_*',w\to b}(\tau(w))}}&\leq O(n\alpha^{60d}).\label{eqLemma_fixp_3}
	\end{align}
Moreover, combining (\ref{eqClaim_fixp2_2}), (\ref{eqTailBound}), (\ref{eqLemma_fixp_1}) and (\ref{eqLemma_fixp_3}), we obtain 
	\begin{align}
	\sum_{v\in V}\sum_{b\in\partial v}\sum_{\sigma\in\Omega}
	\abs{\mu_{\G_*',v\to b}(\sigma)-\frac{p(\sigma)\prod_{a\in\partial v\setminus b}\mu_{\G_*',a\to v}(\sigma)}{\sum_{\chi\in\Omega}p(\chi)\prod_{a\in\partial v\setminus b}\mu_{\G_*',a\to v}(\chi)}}
&\leq O(n\alpha^{50d}).\label{eqLemma_fixp_4}
	\end{align}
Finally, (\ref{eqLemma_fixp_3}) and (\ref{eqLemma_fixp_4}) show that the standard messages are a $O(\alpha^{40d})$-BP fixed point.
\end{proof}

\section{The free energy: upper bound}\label{secInterpolate}

\subsection{Outline}
In this section we derive the following upper bound on the free energy.

\begin{proposition}\label{Prop_regint}
Assume that {\bf POS} is satisfied.
Then 
	\begin{align*}
\limsup_{n\to\infty}\frac1n\Erw\brk{\ln Z(\G)}&\leq\inf_{\pi\in\MEAS}\cB(\pi),&
	\limsup_{n\to\infty}\frac1n\Erw\brk{\ln Z(\GG)}&\leq\inf_{\pi\in\MEAS}\cB(\pi).
\end{align*}
\end{proposition}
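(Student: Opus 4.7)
My plan is to establish the upper bound via a Franz--Leone-style interpolation in the spirit of Panchenko~\cite{Panchenko,PanchenkoTalagrand}, adapted to the regular setting. Fix an arbitrary $\pi \in \MEAS$ and a sample $\MU^\pi \sim \pi$. I will construct a one-parameter family $\{\G_t\}_{t \in [0,1]}$ of random factor graphs that interpolates between a reference model at $t = 0$ whose normalized log-partition function tends to $\cB(\pi)$ and (up to lower-order terms) $\G$ at $t = 1$. Concretely $\G_t$ has three independent families of constraint nodes on $V_n$: (i) $\vM(t) \sim \Po(tdn/k)$ full $k$-ary constraints with weight functions drawn from $P$ and $k$ uniformly random neighbors; (ii) at each $v_i$, a batch of $\vec D_i \sim \Po((1-t)d)$ unary boundary constraints whose weight functions take the form
\begin{align*}
\sigma \mapsto \sum_{\tau \in \Omega^k} \vecone\{\tau_{\vh} = \sigma\} \PSI(\tau) \prod_{h \neq \vh} \MU^\pi_{\vs, \vx_h}(\tau_h),
\end{align*}
with fresh independent $\vs, \vh, \vx_h, \PSI$; and (iii) a multiplicative scalar correction $\prod_{\ell \le \vN(t)} W_\ell$ with $\vN(t) \sim \Po(tdn(1-k^{-1}))$ and each $W_\ell$ an independent copy of $\int_0^1 \sum_\tau \PSI'(\tau) \prod_h \MU^\pi_{s, \vx'_h}(\tau_h)\, ds$. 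The rates are calibrated so that $\frac{1}{n}\Erw[\ln Z(\G_0)] \to \cB(\pi)$ by the strong law and Corollary~\ref{Cor_Bcont}, while $\frac{1}{n}\Erw[\ln Z(\G_1)] = \frac{1}{n}\Erw[\ln Z(\G)] + o(1)$ by Poisson concentration and Fact~\ref{Fact_simple}.

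\textbf{The derivative and the role of POS.} The heart of the argument is the monotonicity $\frac{d}{dt} \Erw[\ln Z(\G_t)] \le 0$. Since $\vM, \vec D_i, \vN$ are Poisson, differentiation reduces to computing the expected change in $\ln Z$ from inserting or removing a single constraint of each type. Expanding every logarithm via $\ln x = -\sum_{\ell \ge 1} \ell^{-1}(1-x)^\ell$ — admissible because all weights lie in $(0,1)$ with~(\ref{eqP}) controlling the tail — the $\ell$-th term assumes, after identifying the time-$t$ distribution of messages at a random cavity with a kernel $\mu \in \meas$ and the boundary kernel with $\mu' = \MU^\pi$, precisely the shape of the three expressions in condition POS. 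Summing over $\ell$ and invoking POS term by term yields $\frac{d}{dt} \Erw[\ln Z(\G_t)] \le 0$; integrating from $0$ to $1$ then gives $\frac{1}{n}\Erw[\ln Z(\G)] \le \cB(\pi) + o(1)$, and taking the infimum over $\pi \in \MEAS$ delivers the first bound. Fact~\ref{Fact_simple} together with~(\ref{eqP}) gives $|\Erw[\ln Z(\GG)] - \Erw[\ln Z(\G)]| = O(1)$, so the same bound transfers to $\GG$.

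\textbf{Main obstacle.} The delicate step is the identification of the time-$t$ message distribution with an element of $\MEAS$. In the Erd\H{o}s--R\'enyi case~\cite{Panchenko}, the Poisson local geometry at a random vertex makes the incoming messages automatically i.i.d.\ from a single kernel, so this matching is routine. On the Bethe lattice the $d$ half-edges at a given variable are strongly correlated and a naive kernel substitution fails. This is exactly what motivates the $\Join(N,M)$-invariance in the definition of $\MEAS$: the operation $\pi \mapsto \pi^{\Join(N,M)}$ encodes the combinatorial effect of inserting $N$ extra variable nodes and $M$ extra $k$-ary constraints into the cavities of $\G_t$, which is the operation the derivative calculation requires. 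To carry out the identification I will combine the cavity-creation-plus-pinning apparatus of Section~\ref{Sec_Proof_BP} (specifically Lemma~\ref{Lemma_pinning} to enforce $\eps$-symmetry of conditional Boltzmann neighborhoods, together with the Bethe-state structure of Theorem~\ref{Thm_BP}) with the cut-metric compactness of $\meas$ (Proposition~\ref{Prop_compact}) to extract a limiting $\pi \in \MEAS$ along a subsequence, and invoke continuity of $\cB$ (Corollary~\ref{Cor_Bcont}) to justify passing to the limit inside the POS expansion.
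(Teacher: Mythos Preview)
Your overall strategy (Franz--Leone interpolation) is right, but your ``main obstacle'' is a phantom, and the concrete model you wrote down does not produce $\cB(\pi)$.

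First, reread condition {\bf POS}: it is stated for \emph{arbitrary} $\mu,\mu'\in\states$. In the derivative calculation the role of $\mu$ is played by the kernel representation $\dot\mu_{\G_t}$ of the Boltzmann distribution of the interpolating model itself, and $\mu'$ is the fixed external kernel. Neither of these has to lie in $\MEAS$; {\bf POS} applies to them as they are, and the derivative inequality follows immediately (this is exactly the content of \Lem~\ref{Lemma_intderiv}). There is no ``delicate identification'' to perform, and the pinning/Bethe-state apparatus of \Sec~\ref{Sec_Proof_BP} plays no role whatsoever in the upper bound.

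Second, the place where the $\Join(N,M)$-invariance is actually needed is different, and your model bypasses it incorrectly. The interpolation, carried out for a \emph{single} fixed kernel $\mu\in\states$, yields
\[
\Erw[\ln Z(\G)]\le \cB'(\mu)-\cB''(\mu)+o(n),\qquad
\cB'(\mu)=\Erw\ln\scal{\textstyle\bigoplus_{i=1}^n\VARPHI_i}\mu,\quad
\cB''(\mu)=\Erw\ln\scal{\textstyle\bigoplus_{i\le d(k-1)n/k}\PSI_{1,i}}\mu,
\]
i.e.\ a bound that still depends on $n$ through huge $\oplus$-products. It is only \emph{after} the interpolation that one averages over $\MU^\pi$ with $\pi\in\MEAS$ and uses the invariance $\pi=\pi^{\Join(N,M)}$ to telescope these products down to the one-step quantities in $\cB(\pi)$ (Lemmas~\ref{Lemma_Prop_regint2_B''}--\ref{Lemma_Prop_regint2_B'}). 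Your construction tries to short-circuit this step by giving each unary constraint a fresh independent $\vs$, but then at $t=0$ you get $\Erw\bigl[\ln\sum_\sigma p(\sigma)\prod_j\phi_j(\sigma;\vs_j)\bigr]$ with independent $\vs_j$, which is \emph{not} the first term of $\cB(\pi)$; the latter has $\ln\int_0^1(\cdots)\,ds$ with a \emph{shared} $s$ inside the logarithm, encoding the pure-state index. This is why the paper introduces $s$ as an additional variable node with uniform prior on $[0,1]$, adjacent to \emph{all} the boundary constraints $a_i'$ and correction constraints $a_i''$ (steps {\bf INT1}--{\bf INT6}), rather than sampling $s$ independently per constraint.
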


The proof of \Prop~\ref{Prop_regint} consists of two parts.
First, we will prove that {\em any} $\mu\in\states$ yields an upper bound on $\Erw\brk{\ln Z(\G)}$.
Specifically, recalling the notation from \Sec~\ref{Sec_basics}, let
	\begin{align*}
	\cB'(\mu)&=\Erw\ln\scal{\bigoplus_{i=1}^n\VARPHI_i}\mu,&
	\cB''(\mu)&=\Erw\ln\scal{\bigoplus_{1\leq i\leq (k-1)dn/k}\PSI_{1,i}}\mu.
	\end{align*}
Then we have the following generic upper bound, which may be of interest in its own right.

\begin{proposition}\label{Prop_regint1}
Assume that {\bf POS} is satisfied.
Then  $\Erw\brk{\ln Z(\G)}\leq o(n)+\cB'(\mu)-\cB''(\mu)$ for any $\mu\in\states$.
\end{proposition}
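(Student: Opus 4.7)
The approach is the Franz--Leone interpolation method~\cite{FranzLeone}, adapted to the regular factor-graph setting in the spirit of Panchenko~\cite{Panchenko}. I would introduce a one-parameter family of random factor-graph models indexed by $t\in[0,1]$ whose partition functions $Z_t$ satisfy $\tfrac1n\Erw\ln Z_1 = \tfrac1n\Erw\ln Z(\G) + o(1)$, $\Erw\ln Z_0 = \cB'(\mu)-\cB''(\mu)+o(n)$, and $\tfrac{d}{dt}\Erw\ln Z_t\le 0$ for $t\in(0,1)$. The bound then follows by integration.

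To build $Z_t$ I would combine two independent Bernoulli layers of ``cavity decorations''. For each variable $v_i$ independently, with probability $1-t$ one attaches a full ``star'' factor: a bouquet of $d$ constraints, each with $k-1$ of its inputs drawn from $\mu$ and the remaining input identified with $v_i$; multiplicatively this is precisely the single-variable integrand whose $\Erw\ln$ contributes one summand of $\cB'(\mu)$. Independently, for each of $m(k-1)$ ``compensator slots'', with probability $1-t$ one attaches an averaged single-constraint factor $\sum_\tau\PSI(\tau)\int_0^1\prod_j\mu_{s,\vx_j}(\tau_j)\dd s$ that is \emph{divided} out of $Z_t$. At $t=1$ both layers are inert and $Z_t=Z(\G)$; at $t=0$ the variables are decoupled from the original graph and $\Erw\ln Z_0$ telescopes to exactly $\cB'(\mu)-\cB''(\mu)$ after matching Bernoulli rates against the combinatorial counts $n$ and $m(k-1)$.

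The central step is showing $\tfrac{d}{dt}\Erw\ln Z_t\le 0$. Bernoulli calculus rewrites the derivative as a sum of differences comparing the log-weight of activating a star versus $k$ compensators, taken against the Gibbs measure of $Z_t$. Expanding $\ln(1-y)=-\sum_{\ell\ge1}y^\ell/\ell$---valid by the tail bound~\eqref{eqP}---each $\ell$-th coefficient becomes a moment of the form $\Erw[(1-\sum_\sigma\PSI(\sigma)\int_0^1\prod_i\mu_{s,\vx_i}(\sigma_i)\dd s)^\ell]$ together with its $\mu$-$\mu'$ mixed analogues, where $\mu'\in\meas$ represents the Gibbs marginal distribution on $Z_t$. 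POS applied to $(\mu,\mu')$ delivers the required non-positivity termwise. The main obstacle is the identification of the Gibbs marginals on $Z_t$ with a bona fide weak kernel $\mu'$: unlike the \Erdos-\Renyi\ case of~\cite{Panchenko}, the rigid degree sequence prevents direct coupling constructions, so the pinning-based machinery of \Sec~\ref{Sec_Bethe} must be imported and applied to the interpolating graph itself. Concretely, applying \Lem~\ref{Lemma_pinning} to a vanishing fraction of Gibbs spins and \Prop~\ref{Cor_symmetry} renders the pinned measure $o(1)$-symmetric, hence within $o(1)$ cut-distance of a mixture of product measures coded by a kernel in $\meas$; the cut-metric continuity results (\Lem~\ref{Lemma_cntFct}, \Cor~\ref{Cor_JoinCont}) then propagate POS to this kernel representation.
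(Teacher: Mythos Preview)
Your high-level plan—interpolate, expand the logarithm, apply {\bf POS} termwise—matches the paper's, but your execution has a gap and you misidentify the obstacle. Representing the Gibbs measure of the interpolating model as a kernel $\mu'\in\states$ is \emph{not} a difficulty: any probability measure on $\Omega^V$ has a canonical kernel representation $\dot\mu\in\states$ (\Sec~\ref{Sec_cut_disc}), and {\bf POS} is assumed for \emph{all} pairs in $\states$, with no symmetry or extremality hypothesis whatsoever. The pinning machinery of \Sec~\ref{Sec_Bethe} is nowhere used in the proof of \Prop~\ref{Prop_regint1}.

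What you are actually missing is the coupling device. The paper adjoins an auxiliary \emph{continuous} variable node $s$ with spin in $[0,1]$ to the interpolating graph ({\bf INT1}--{\bf INT6}); the cavity constraints $a_i',a_i''$ carry weights that depend on this $s$ through $\mu_{s,\vx}$, so the first coordinate of the external kernel becomes a genuine spin that the Gibbs measure of $\G_t$ integrates over. This shared $s$ is precisely what makes the mixed term of {\bf POS}—where $\mu$ and $\mu'$ are evaluated at the \emph{same} first argument—appear in the derivative, as in~(\ref{eqLemma_intderiv8}). Your per-variable ``stars'', if each carries its own private $\int_0^1\dd s$, cannot manufacture this cross term, and your endpoint identification then also fails: $\cB'(\mu)=\Erw\ln\scal{\bigoplus_{i=1}^n\VARPHI_i}{\mu}$ is $\Erw\ln$ of a single $s$-integral of a product over~$i$, not a sum of per-variable contributions (the two agree only under the invariance established in \Lem~\ref{Lemma_Prop_regint2_B'}, which requires $\pi\in\MEAS$ and is false for generic $\mu$). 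Separately, merely attaching stars does not decouple the variables from $\G$; the original constraints must be \emph{removed} as stars are added, and this cannot be done variable-by-variable in a $d$-regular model since each constraint is shared by $k$ variables. The paper handles both points at once by Poissonizing all three constraint counts with an $\exp(-\eps)$ slack, so that differentiation in $t$ trades one constraint type for another while always leaving spare clones for the pairing, and then sending $\eps\to0$ at the end.
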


\noindent
The proof of \Prop~\ref{Prop_regint1}, based on the interpolation method, is relatively standard, although the fact that we deal with regular graphs requires a bit of care.
The details are carried out in \Sec~\ref{Sec_regint1}.
This is the only place where condition {\bf POS} is required.

The second step toward the proof of \Prop~\ref{Prop_regint} is to show that for $\mu$ drawn from $\pi\in\MEAS$ the
upper bound from \Prop~\ref{Prop_regint1} boils down to the expression $\cB(\pi)$.

\begin{proposition}\label{Prop_regint2}
For any $\pi\in\MEAS$ we have $\cB(\pi)=\Erw[\cB'(\MU^\pi)-\cB''(\MU^\pi)].$
\end{proposition}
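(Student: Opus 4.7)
The plan is to use the self-consistency $\pi^{\Join(N,M)}=\pi$ to factorize the logarithm-of-an-integral appearing in $\cB'$ and $\cB''$ into a sum of single-node (respectively single-edge) contributions, which then match the two terms defining $\cB(\pi)$ in \eqref{eqBetheFunctional}.

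First I would unpack the definitions: expanding $\oplus$ and $\scal{\cdot}{\mu}$ yields $\scal{\bigoplus_{i=1}^n \VARPHI_i}{\mu}=\int_0^1\prod_{i=1}^n V_i(s,\mu)\,\dd s$ with a common integration variable $s$, where each $V_i(s,\mu)$ depends on $\mu$ only through the slice $\mu_{s,\cdot}$ and on i.i.d. auxiliary randomness $(\vh_{i,\cdot},\vx_{i,\cdot,\cdot},\PSI_{i,\cdot})$; analogously $\scal{\bigoplus_{i=1}^{dn(k-1)/k}\PSI_{1,i}}{\mu}=\int_0^1\prod_{j} E_j(s,\mu)\,\dd s$. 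The proposition then reduces to the factorization
\[
\Erw_\pi\ln\int_0^1\prod_{i=1}^N f_i(s,\mu)\,\dd s \;=\; N\cdot \Erw_\pi\ln\int_0^1 f_1(s,\mu)\,\dd s,
\]
applied separately to $(f_i,N)=(V_i,n)$ and to $(f_j,N)=(E_j,\,dn(k-1)/k)$, after which the two sums match the variable and edge contributions in \eqref{eqBetheFunctional}.

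To prove the factorization I would telescope. With $W_k(s)=\prod_{i\le k}f_i(s,\mu)$ and $\rho_{k-1}(s)=W_{k-1}(s)/\int_0^1 W_{k-1}(u)\,\dd u$, the $k$-th increment equals $\ln\int_0^1 f_k(s,\mu)\rho_{k-1}(s)\,\dd s$. By the definition of $\oplus$ and $\Join$ combined with \Lem~\ref{Lemma_JoinNM}, $W_{k-1}(s)$ coincides, after relabelling the auxiliary randomness, with the density $\vz(s)$ of the $\Join(k-1,0)$-reweighting in \eqref{eqZNMmus}, and hence the reparameterization $\vt(s)$ with $\mu_{s,x}^{\Join(k-1,0)}=\mu_{\vt(s),x}$ satisfies $\vt'(s)=1/\rho_{k-1}(\vt(s))$. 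Since $f_k$ is a function of $\mu$ only through the slice $\mu_{s,\cdot}$, a change-of-variables $u=\vt(s)$ yields
\[
\int_0^1 f_k(s,\mu^{\Join(k-1,0)})\,\dd s \;=\; \int_0^1 f_k(u,\mu)\,\rho_{k-1}(u)\,\dd u.
\]
Taking logarithms, averaging under $\pi$, and invoking the $\MEAS$-invariance $\pi^{\Join(k-1,0)}=\pi$ turns the left-hand expectation into $\Erw_\pi\ln\int_0^1 f_k(s,\mu)\,\dd s$. Summing $k=1,\ldots,N$ gives the factorization.

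Combining the two factorizations and using permutation invariance of $P$ to identify $\Erw_\pi\ln\int_0^1 V_1(s,\mu)\,\dd s$ and $\Erw_\pi\ln\int_0^1 E_1(s,\mu)\,\dd s$ with the two integrals defining $\cB(\pi)$ in \eqref{eqBetheFunctional} yields the claim. The main obstacle is the careful bookkeeping that matches the product $\prod_{i<k} V_i(s,\mu)$ arising from $\bigoplus_{i<k}\VARPHI_i$ with the $\vz(s)$ density of \eqref{eqZNMmus}; this matching is precisely what makes the $\Join(k-1,0)$-invariance directly applicable to the $\rho_{k-1}$ produced by telescoping, and it is where the conventions on the indices $\vh_{i,j}$ versus a single per-variable $\vh_i$ need to be reconciled via the permutation invariance of $\PSI$.
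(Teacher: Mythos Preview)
Your approach is essentially the paper's: both telescope and identify each increment with $\Erw\ln\scal{\VARPHI_1}{\pi}$ (respectively $\Erw\ln\scal{\PSI_1}{\pi}$) via the $\Join$-invariance of $\pi$, which is precisely the content of Lemmas~\ref{Lemma_Prop_regint2_B'} and~\ref{Lemma_Prop_regint2_B''}. The one technical point the paper handles explicitly and you leave implicit is that the invariance $\pi^{\Join(k-1,0)}=\pi$ is an equality of laws on the quotient $\meas$, so to apply it one must know that the functional $\nu\mapsto\Erw[\ln\scal{\VARPHI_k}{\nu}]$ is well-defined there; the paper secures this by matching all moments $\Erw[\scal{\,\cdot\,}{\,\cdot\,}^\ell]$ (continuous on $\meas$ by \Cor~\ref{Cor_cntFct2}) and recovering the logarithm via its power series, with Fubini justified by \eqref{eqP}.
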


\noindent
We prove \Prop~\ref{Prop_regint2} in \Sec~\ref{Sec_regint2}.

\begin{proof}[Proof of \Prop~\ref{Prop_regint}]
The first assertion is immediate from \Prop s~\ref{Prop_regint1} and~\ref{Prop_regint2}.
To obtain the second assertion, we apply Azuma's inequality and~\eqref{eqP} to see that
	$n^{-0.51}\abs{\ln Z(\G)-\Erw\ln Z(\G)}$ converges to zero in probability.
Hence, Fact~\ref{Fact_simple} and Bayes' rule show that $\Erw\ln Z(\GG)=\Erw\ln Z(\G)+o(n)$, and thus the second assertion follows from the first.
\end{proof}

\subsection{Proof of \Prop~\ref{Prop_regint1}}\label{Sec_regint1}
We construct a family of random factor graph models parametrized by $t\in[0,1]$.
The free energy of the model at $t=1$ will be easy to compute, and we will see that it is (nearly) equal to $\cB'(\mu)-\cB''(\mu)$.
The model with $t=0$ essentially coincides with $\G$.
Furthermore, we will show that the derivative of the free energy is non-negative for all $t$, thus
 obtaining the desired upper bound on $\Erw\ln Z(\G)$.

To construct this interpolating family, fix $\mu\in\states$ and a small $\eps>0$.
For $t\in[0,1]$ let
	\begin{align*}
	\vm_t&=\Po((1-t)\exp(-\eps)dn/k),\\
	\vm_t'&=\Po(t\exp(-\eps)dn),\\
	\vm_t''&=\Po((1-t)(k-1)\exp(-\eps)dn/k),
	\end{align*}
all three mutually independent and independent of everything else.
Given $k\vm_t+\vm_t'\leq dn$, we define the random factor graph $\G_t$ as follows.
	\begin{description}
	\item[INT1] the set of variable nodes is $\cV=\cbc s\cup V_n$, and the set of spins is $\cX=\Omega\cup[0,1]$.
	\item[INT2] the set of constraint nodes is 
			$$\cF_{t}=\cbc{a_1,\ldots,a_{\vm_t},a_1',\ldots,a_{\vm_t'}',a_1'',\ldots,a_{\vm_t''}''}.$$
	\item[INT3] each constraint node $a_i$ independently chooses a weight function $\psi_{a_i}$ from $P$, and
		the $a_i$ are joined to the variable nodes $v_1,\ldots,v_n$ by a random pairing of $V_n\times[d]$ and $\{a_1,\ldots,a_{\vm_t}\}\times[k]$.
	\item[INT4] each of the constraint nodes $a_i'$, $i\in[\vm_t']$, is adjacent to the variable node $s$ and one further variable node from $v_1,\ldots,v_n$;
		the links between the $a_i'$ and the $v_j$ are constructed by choosing a random pairing between the $\vh_i'$-clone of each $a_i'$ and the clones in $V_n\times[d]$
		that are not paired to a constraint node $a_h$.
		The weight function associated with $a_i'$ reads
		\begin{align*}
		\psi_{a_i'}(s,\sigma)&=\sum_{\tau\in\Omega^k}\vecone\{\tau_{\vh_i'}=\sigma\}\PSI_i'(\tau)\prod_{h\neq\vh_i'}\mu_{s,\vx_{i,h}'}(\tau_h).
		\end{align*}
	\item[INT5] the constraint nodes $a_i''$, $i\in[\vm_t'']$, are unary, adjacent to $s$ only.
		Their weight functions read
		\begin{align*}
		\psi_{a_i''}(s)&=\sum_{\tau\in\Omega^k}\PSI_i''(\tau)\prod_{h=1}^k\mu_{s,\vx_{i,h}''}(\tau_h).
		\end{align*}
	\item[INT6] the prior $\prior$ is 		a product measure  
		$$\dd\prior(\sigma)=\vecone\{\sigma_s\in[0,1],\,\forall 1\leq i\leq n:\sigma_{v_i}\in\Omega\}
				\prod_{i=1}^np\bc{\sigma_{x_i}} \dd \sigma_s;$$
		thus, for each $v_i\in V_n$ a spin from $\Omega$ is chosen independently from $p$, and $\sigma_s$ is uniform on $[0,1]$.
	\end{description}
Thus, the total weight, partition function and Boltzmann distribution of $\G_t$ read
	\begin{align}\nonumber
	\psi_{\G_t}(\sigma)&=\prod_{i=1}^{\vm_t}\psi_{a_i}(\sigma)\prod_{i=1}^{\vm_t'}\psi_{a_i'}(\sigma)\prod_{i=1}^{\vm_t''}\psi_{a_i''}(\sigma),
		&&(\sigma_s\in[0,1],\ \sigma_{x_i}\in\Omega),\\
	Z(\G_t)&=\sum_{\sigma_{x_1},\ldots,\sigma_{x_n}\in\Omega}\int_0^1\psi_{\G_t}(\sigma)\dd\sigma_s \prod_{i=1}^n p(\sigma_{x_i}),&
	\dd\mu_{\G_t}(\sigma)&=\frac{\psi_{\G_t}(\sigma)}{Z(\G_t)}\dd\prior(\sigma).
		\label{eqBoltzmannInt}
	\end{align}
The following lemma establishes the monotonicity of the free energy in $t$; its proof is the only place where we use condition {\bf POS}.

\begin{lemma}\label{Lemma_intderiv}
Suppose that {\bf POS} is satisfied.
Then uniformly for all $t\in(0,1)$ 
we have
	$$\frac1n\frac{\partial}{\partial t}\Erw\brk{\ln Z(\G_t)}\geq o(1).$$
\end{lemma}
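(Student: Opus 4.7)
The plan is to deploy the standard Franz--Leone/Panchenko interpolation calculus, together with the pinning/symmetry machinery from Section~\ref{Sec_cutm}, and read off {\bf POS} term by term.

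\smallskip\noindent\textbf{Step 1: Poisson derivative.} For a Poisson variable $\vN$ with mean $\lambda(t)$ and an integrable functional $f(\vN)$ one has $\frac{d}{dt}\Erw[f(\vN)] = \lambda'(t)\Erw[f(\vN+1)-f(\vN)]$. Applied separately to $\vm_t, \vm_t', \vm_t''$, and using that each added constraint satisfies $\ln Z(\G_t^{+a})-\ln Z(\G_t) = \ln\scal{\psi_a}{\mu_{\G_t}}$, the derivative of $n^{-1}\Erw[\ln Z(\G_t)]$ equals
\begin{align*}
\frac{de^{-\eps}}{k}\Big(-\Erw\ln\scal{\psi_a}{\mu_{\G_t}} + k\,\Erw\ln\scal{\psi_{a'}}{\mu_{\G_t}} - (k-1)\Erw\ln\scal{\psi_{a''}}{\mu_{\G_t}}\Big),
\end{align*}
where $a,a',a''$ are freshly sampled constraints of the three types and the expectations include the randomness of their neighbors and weight functions.

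\smallskip\noindent\textbf{Step 2: Taylor expansion and per-$\ell$ reduction.} Since every $\psi\in\Psi$ takes values in $(0,1)$, so does each conditional expectation $\scal{\psi}{\mu_{\G_t}}$, whence $\ln\scal\psi{\mu_{\G_t}}=-\sum_{\ell\geq 1}\ell^{-1}(1-\scal\psi{\mu_{\G_t}})^\ell$. Interchanging the sum with $\Erw[\nix]$ (justified by the tail condition \eqref{eqP} applied to the weights), it suffices to show, for every $\ell\geq 1$,
\begin{align*}
\Erw\brk{(1-\scal{\psi_a}{\mu_{\G_t}})^\ell} + (k-1)\Erw\brk{(1-\scal{\psi_{a''}}{\mu_{\G_t}})^\ell} \geq k\,\Erw\brk{(1-\scal{\psi_{a'}}{\mu_{\G_t}})^\ell} - o(1).
\end{align*}

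\smallskip\noindent\textbf{Step 3: Symmetrize via pinning, then represent marginals as a kernel.} The right-hand side compares Boltzmann averages over the $k$ random neighbors of $a$, over the $k$ effective coordinates at $s$ involved in $\psi_{a''}$, and over the single neighbor of $a'$ in $V_n$ paired with the fixed kernel $\mu$ at $s$. Apply Lemma~\ref{Lemma_pinning} to $\mu_{\G_t}$ (restricted to $V_n$), accepting a bounded number of extra clamps that shift $\Erw\ln Z(\G_t)$ by $o(n)$, to render $\mu_{\G_t}$ $\eps$-symmetric (Proposition~\ref{Cor_symmetry}). For symmetric measures the joint distribution of $k$ uniformly random variables factorizes into the product of their marginals up to vanishing error; hence if we encode the marginals $(\mu_{\G_t,v})_{v\in V_n}$ as a strong kernel $\mu^{(t)}\in\states$ (constant in $s$), we obtain, up to $o(1)$ in $\ell$-th moment,
\begin{align*}
\scal{\psi_a}{\mu_{\G_t}} &\approx \sum_{\sigma\in\Omega^k}\PSI(\sigma)\int_0^1\prod_{i=1}^k\mu^{(t)}_{s,\vx_i}(\sigma_i)\dd s,\\
\scal{\psi_{a'}}{\mu_{\G_t}} &\approx \sum_{\sigma\in\Omega^k}\PSI(\sigma)\int_0^1\mu^{(t)}_{s,\vx_{\vh}}(\sigma_{\vh})\!\!\prod_{i\neq\vh}\!\mu_{s,\vx_i}(\sigma_i)\dd s,\\
\scal{\psi_{a''}}{\mu_{\G_t}} &\approx \sum_{\sigma\in\Omega^k}\PSI(\sigma)\int_0^1\prod_{i=1}^k\mu_{s,\vx_i}(\sigma_i)\dd s.
\end{align*}

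\smallskip\noindent\textbf{Step 4: Apply {\bf POS}.} With $\mu^{(t)}$ playing the role of $\mu$ and the fixed interpolation kernel $\mu$ playing the role of $\mu'$, condition {\bf POS} with $k$ identical terms on the right (obtained by symmetry of $P$ under coordinate permutations) yields exactly the $\ell$-th order inequality of Step~2. Summing over $\ell$ with weights $1/\ell$ gives $\partial_t n^{-1}\Erw[\ln Z(\G_t)]\geq o(1)$, uniformly in $t\in(0,1)$.

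\smallskip\noindent\textbf{Main obstacle.} The delicate point is Step~3: quantifying the replacement of the combinatorial Boltzmann average $\scal{\psi}{\mu_{\G_t}}$ by an integral against the kernel $\mu^{(t)}$, uniformly in $\ell$ and robustly with respect to the randomness of the cavity neighborhoods created by the pairing construction. The pinning lemma gives $(\eps,\ell')$-symmetry for any fixed $\ell'$, which is enough because the logarithm truncates to a polynomial of degree growing like $\log(1/\eps)$ after controlling the tail via~\eqref{eqP}; tracking that the resulting $o(1)$ error is indeed independent of $t$ and small compared to $\eps$ is the main technical check.
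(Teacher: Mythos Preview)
Your Steps~1 and~2 are correct and match the paper. The problem is Step~3, which is both unnecessary and does not actually work.

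The paper uses no pinning at all. The point of the $\int_0^1\dd s$ inside {\bf POS} is precisely to absorb the Boltzmann expectation: one takes for the first kernel the representation $\dot\mu_{\G_t}\in\states$ of the \emph{full} Boltzmann distribution $\mu_{\G_t}$ on $\Omega^{V_n}\times[0,1]$, so that the $s$-variable indexes configurations and $\int_0^1\prod_i\dot\mu_{\G_t,s,\vx_i}(\sigma_i)\,\dd s$ returns the exact joint law $\mu_{\G_t,\partial a}(\sigma)$ without any factorization hypothesis. The second kernel is $\mu'$ defined by $\mu'_{s,x}=\mu_{\,\cdot\,,x}$ evaluated at the $s$-component of the configuration indexed by $s$; this couples the fixed interpolation kernel to the Boltzmann distribution through the shared variable node $s$. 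With these two choices the three expansions \eqref{eqLemma_intderiv7}--\eqref{eqLemma_intderiv9} are exact identities (up to the $o(1)$ from the random-pairing neighbor choice), and {\bf POS} applies verbatim.

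Your route via pinning fails at the $a'$-term. The quantity $\scal{\psi_{a'}}{\mu_{\G_t}}$ is an expectation over the \emph{joint} law of a random $V_n$-coordinate and the continuous variable $s$ under $\mu_{\G_t}$; these are genuinely coupled through the existing $a'$- and $a''$-constraints. Pinning a bounded set of $V_n$-coordinates (Lemma~\ref{Lemma_pinning} only applies to finite $\Omega$) decorrelates the $V_n$-variables from one another but does nothing to decouple any $V_n$-variable from $s$. Consequently your formula for $\scal{\psi_{a'}}{\mu_{\G_t}}$ is not an approximation of the true value. The same issue bites the $a''$-term: your $\int_0^1\dd s$ is Lebesgue, but the marginal of $s$ under $\mu_{\G_t}$ is tilted by $\prod_i\psi_{a_i''}(s)$ and is not uniform. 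Encoding the marginals as a kernel ``constant in $s$'' discards exactly the coupling that makes the three terms comparable, so the inequality you feed into {\bf POS} is not the one you need.
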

\begin{proof}
We recall the derivative of the Poisson density: for any $\lambda>0$, $\ell\geq1$,
	\begin{align}\label{eqLemma_intderiv1}
\frac{\partial}{\partial\lambda}\pr\brk{\Po(\lambda)=\ell}=
\frac{\partial}{\partial\lambda}\frac{\lambda^\ell}{\ell!}\exp(-\lambda)&=\frac{\lambda^{\ell-1}}{(\ell-1)!}\exp(-\lambda)-
	\frac{\lambda^\ell}{\ell!}\exp(-\lambda)=\pr\brk{\Po(\lambda)=\ell-1}-\pr\brk{\Po(\lambda)=\ell}.
\end{align}
The variable $t$ affects the distribution of $\G_t$ by way of the variables $\vm_t,\vm_t',\vm_t''$.
Specifically, let
	\begin{align*}
	\lambda_t&=(1-t)\exp(-\eps)dn/k,&\lambda_t'&=t\exp(-\eps)dn,&\lambda_t''&=(1-t)(k-1)\exp(-\eps)dn/k.
	\end{align*}
Recall that $\vm_t,\vm_t'$ are conditional Poisson variables $\Po(\lambda_t)$ and $\Po(\lambda_t')$, respectively, given that $k\vm_t+\vm_t'\leq dn$.
Since $\eps>0$ is independent of $n$, \eqref{eqLemma_intderiv1} shows that  for any two integers $m_t,m_t'\geq1$,
	\begin{align}\nonumber
	\frac1n\frac{\partial}{\partial t}
	\pr\brk{\vm_t=m_t,\  \vm_t'=m_t'}&=\exp(-\Omega(n))+	\frac1n\frac{\partial}{\partial t}\pr\brk{\Po(\lambda_t)=m_t}\pr\brk{\Po(\lambda_t')=m_t'}\\
		&=\exp(-\Omega(n))+
			\frac1n\frac{\partial}{\partial t}\pr\brk{\Po(\lambda_t)=m_t}\pr\brk{\Po(\lambda_t')=m_t'}\nonumber\\
		&=\exp(-\Omega(n))+\bc{\pr\brk{\Po(\lambda_t)=m_t}-\pr\brk{\Po(\lambda_t)=m_t-1}}\pr\brk{\Po(\lambda_t')=m_t'}\exp(-\eps)d/k\nonumber\\
		&\qquad +\bc{\pr\brk{\Po(\lambda_t')=m_t'-1}-\pr\brk{\Po(\lambda_t')=m_t'}}\cdot\pr\brk{\Po(\lambda_t)=m_t}\exp(-\eps)d.
			\label{eqLemma_intderiv2}
	\end{align}
Further, given the event $k\vm_t+\vm_t''\leq dn-k$ let $\G_t'$ be the random factor graph obtained from $\G_t$ by adding one more constraint node $a_{\vm_t+1}$ as per {\bf INT3}.
Similarly, given $k\vm_t+\vm_t''\leq dn-1$ obtain $\G_t''$ from $\G_t$ by adding $a_{\vm_t'+1}$ according to {\bf INT4}.
Additionally, obtain $\G_t'''$ from $\G_t$ by adding a unary $a_{\vm_t''+1}$ as described in {\bf INT5}.
Since $\vm_t''$ is independent of $\vm_t',\vm_t''$, (\ref{eqLemma_intderiv1}) and (\ref{eqLemma_intderiv2}) yield
	\begin{align}\nonumber
	\frac1n\frac\partial{\partial t}\Erw[\ln Z(\G_t)]&=\exp\bc{-\Omega(n)}+
		\sum_{\substack{m_t,m_t',m_t''\geq1\\km_t+m_t''\leq dn-k}}
			\Erw\brk{\ln Z(\G_t)\,\Bigg|\,
				\begin{pmatrix}\vm_t\\\vm_t'\\\vm_t''\end{pmatrix}
				=\begin{pmatrix}m_t\\m_t'\\m_t''\end{pmatrix}}
			\cdot\frac1n\frac{\partial}{\partial t}\pr\brk{\begin{pmatrix}\vm_t\\\vm_t'\\\vm_t''\end{pmatrix}
				=\begin{pmatrix}m_t\\m_t'\\m_t''\end{pmatrix}}\\
		&=\exp\bc{-\Omega(n)}-\exp\bc{-\eps}\frac dk\bigg[\Erw\ln\frac{Z(\G_t')}{Z(\G_t)}
			-k\Erw\ln\frac{Z(\G_t'')}{ Z(\G_t)}+(k-1)\Erw\ln\frac{Z(\G_t''')}{Z(\G_t)}\bigg].
			\label{eqLemma_intderiv3}
	\end{align}
Hence, it suffices to prove that for all $0<t<1$,
	\begin{align}			\label{eqLemma_intderiv4}
	\Erw\brk{\ln\frac{Z(\G_t')}{Z(\G_t)}}-k\Erw\brk{\ln\frac{Z(\G_t'')}{Z(\G_t)}}+(k-1)\Erw\brk{\ln\frac{Z(\G_t''')}{Z(\G_t)}}&\leq0.
	\end{align}

By the definition of the Boltzmann distribution (\ref{eqBoltzmannInt}),
	\begin{align*}
	\frac{Z(\G_t')}{Z(\G_t)}&=\scal{\psi_{a_{\vm_t+1}}}{\mu_{\G_t}},&
	\frac{Z(\G_t'')}{Z(\G_t)}&=\scal{\psi_{a_{\vm_t'+1}'}}{\mu_{\G_t}},&
	\frac{Z(\G_t''')}{Z(\G_t)}&=\scal{\psi_{a_{\vm_t''+1}''}}{\mu_{\G_t}}.
	\end{align*}
Hence,
	\begin{align}		\label{eqLemma_intderiv5}
	\ln \frac{Z(\G_t')}{Z(\G_t)}&=\ln \scal{\psi_{a_{\vm_t+1}}}{\mu_{\G_t}}
		=-\sum_{\ell\geq1}\frac1\ell\scal{1-\psi_{a_{\vm_t+1}}}{\mu_{\G_t}}^\ell.
	\end{align}
Further, in terms of the kernel representation $\dot\mu_{\G_t}$ of the Boltzmann distribution 
we obtain
	\begin{align}\label{eqLemma_intderiv6}
	\Erw\brk{\scal{1-\psi_{a_{\vm_t+1}}}{\mu_{\G_t}}^\ell}&=
		\Erw\brk{\bc{1-\sum_{\sigma\in\Omega^k}\PSI(\sigma)\int_0^1\prod_{i=1}^k\dot\mu_{\G_t,z,\vx_i}(\sigma_i)\dd z}^\ell}.
	\end{align}
Combining (\ref{eqLemma_intderiv5}) and (\ref{eqLemma_intderiv6}) yields
	\begin{align}\label{eqLemma_intderiv7a}
	\Erw\brk{\ln\frac{Z(\G_t')}{Z(\G_t)}}&=-\Erw\brk{\sum_{\ell\geq1}\frac1\ell
		\bc{1-\sum_{\sigma\in\Omega^k}\PSI(\sigma)\int_0^1\prod_{i=1}^k\dot\mu_{\G_t,z,\vx_i}(\sigma_i)\dd z}^\ell}.
	\end{align}
Due to (\ref{eqP}) and Fubini's theorem, we can exchange the sum and the expectation in (\ref{eqLemma_intderiv7a});
indeed, (\ref{eqP}) yields
	\begin{align*}
	\sum_{\ell\geq1}\Erw\abs{\bc{1-\sum_{\sigma\in\Omega^k}\PSI(\sigma)\int_0^1\prod_{i=1}^k\tilde\mu_{\G_t,z,\vx_i}(\sigma_i)\dd z}^\ell}
		&\leq\sum_{\ell\geq1}\Erw\brk{\max_{\sigma\in\Omega^k}|1-\PSI(\sigma)|^\ell}<\infty.
	\end{align*}
Thus, (\ref{eqLemma_intderiv7a}) becomes
	\begin{align}\label{eqLemma_intderiv7}
	\Erw\brk{\ln\frac{Z(\G_t')}{Z(\G_t)}}&=-\sum_{\ell\geq1}\frac1\ell
		\Erw\brk{\bc{1-\sum_{\sigma\in\Omega^k}\PSI(\sigma)\int_0^1\prod_{i=1}^k\tilde\mu_{\G_t,z,\vx_i}(\sigma_i)\dd z}^\ell}.
	\end{align}
Following similar steps, we obtain expansions for the other two terms from (\ref{eqLemma_intderiv4}) as well:
	\begin{align}\label{eqLemma_intderiv8}
	\Erw\brk{\ln\frac{Z(\G_t'')}{Z(\G_t)}}&=-\frac1k\sum_{h=1}^k\sum_{\ell\geq1}\frac1\ell
		\Erw\brk{\bc{1-\sum_{\sigma\in\Omega^k}\PSI(\sigma)\int_0^1\tilde\mu_{\G_t,z,\vx_h}(\sigma_h)
			\prod_{i\neq h}\mu_{z,\vx_i}(\sigma_i)\dd z}^\ell},\\
	\Erw\brk{\ln\frac{Z(\G_t''')}{Z(\G_t)}}&=-\sum_{\ell\geq1}\frac1\ell
		\Erw\brk{\bc{1-\sum_{\sigma\in\Omega^k}\PSI(\sigma)\int_0^1\prod_{i=1}^k\mu_{z,\vx_i}(\sigma_i)\dd z}^\ell}.
			\label{eqLemma_intderiv9}
	\end{align}
Finally, the assertion follows from {\bf POS} and (\ref{eqLemma_intderiv4}), (\ref{eqLemma_intderiv7}), (\ref{eqLemma_intderiv8}) and~(\ref{eqLemma_intderiv9}).
\end{proof}

\begin{proof}[Proof of \Prop~\ref{Prop_regint1}]
Integrating $t$ from $0$ to $1$ and applying \Lem~\ref{Lemma_intderiv}, we obtain for any $\eps>0$,
	\begin{equation}\label{eqProp_regint1_1}
	\Erw[\ln Z(\G_0)]\leq\Erw[\ln Z(\G_1)]+o(n).
	\end{equation}
Letting
	$$Y=\ln\int_0^1\prod_{i=1}^{\vm_0''}\sum_{\sigma\in\Omega^k}\PSI_{i}''(\sigma)
			\prod_{h=1}^k\mu_{z,\vx_{i,h}''}(\sigma_h)\dd z,$$
we claim that for a certain number $c=c(P)>0$,
	\begin{equation}\label{eqProp_regint1_2a}
	\Erw\ln Z(\G)+\Erw[Y]\leq \Erw\ln Z(\G_1) +\eps c n.
	\end{equation}
Indeed, at $t=0$ the variable node $s$ is adjacent to the constraint nodes $a_i''$, $i\in[\vm_t'']$, only.
Hence, $\G_0$ decomposes into connected components, one of which comprises $s$ and the $a_i''$.
Let $\G_0''$ be this component, and let $\G_0'$ be the remainder of $\G_0$.
Then by construction we have $\Erw\ln Z(\G_0'')=\Erw[Y]$.
Thus, (\ref{eqProp_regint1_1}) yields
	\begin{align}\label{eqProp_regint1_1_a}
	\Erw[\ln Z(\G_0')]+\Erw[Y]&=\Erw[\ln Z(\G_0)]\leq \Erw[\ln Z(\G_1)]+o(n).
	\end{align}
Furthermore, $\G_0'$ consists of the variable nodes $v_1,\ldots,v_n$ and the constraint nodes $a_1,\ldots,a_{\vm_1}$,
where $\vm_1$ is a Poisson variable $\Po(\exp(-\eps)dn/k)$ conditioned on taking a value of at most $dn/k$.
Thus, we can construct a random factor graph with the same distribution as $\G$ from $\G_0'$ by simply adding
$dn/k-\vm_1$ further random $k$-ary constraint nodes as per {\bf INT3}.
Since all weight functions $\psi\in\Psi$ take values in $(0,2)$, we obtain $c=c(P)>0$ such that
	\begin{align}\label{eqProp_regint1_1_b}
	\Erw\ln Z(\G)\leq\Erw\ln Z(\G_0')+\eps cn.
	\end{align}
Combining (\ref{eqProp_regint1_1_a}) and (\ref{eqProp_regint1_1_b}), we obtain (\ref{eqProp_regint1_2a}).

We further claim that there is a constant $c'=c'(P)>0$ such that
	\begin{equation}\label{eqProp_regint1_2}
	\frac1n\Erw[Y]\leq \eps c'+o(1)+\cB''(\mu).
	\end{equation}
Indeed, $\cB''(\mu)=\Erw[Y\mid\vm_0''=(k-1)dn/k]$.
In other words, we can think of $\cB''(\mu)$ as the free energy of $\G_0''$ given that $\vm_0''=(k-1)dn/k$.
Thus, obtain $\G_0'''$ from $\G_0''$ by adding $(k-1)dn/k-\vm_0''$ more constraint nodes according to {\bf INT5}, or by removing some random constraint nodes if $\vm_0''>(k-1)dn/k$.
Then $\cB''(\mu)=\Erw\ln Z(\G_0''')$.
Since $\vm_0''$ is a Poisson variable with mean $\exp(-\eps)(k-1)dn/k$, with probability $1-\exp(-\Omega(n))$ we do not need to add or remove more than $2\eps(k-1)dn$ constraint nodes.
The tail bound~(\ref{eqP}) therefore implies together with the Chernoff bound that (\ref{eqProp_regint1_2}) is satisfied for a certain $c'=c'(P)$.

By similar arguments, for a certain $c''=c''(P)$ we have
	\begin{equation}\label{eqProp_regint1_3}
	\frac1n\Erw[\ln Z(\G_1)]\leq \cB'(\mu)+ \eps c''+o(1).
	\end{equation}
Indeed, $\cB'(\mu)$ is nothing but the conditional expectation of $\ln Z(\G_1)$ given that $\vm_1'=dn$.
Hence, if we pad $\G_1$ by adding the missing $dn-\vm_1'$ constraint nodes $a_i'$ according to {\bf INT4}, then
the total number of constraints added does not exceed $2\eps dn$ with probability $1-\exp(-\Omega(n))$.
Hence, (\ref{eqProp_regint1_3}) follows from \eqref{eqP} and the Chernoff bound.

Finally, combining (\ref{eqProp_regint1_1})--(\ref{eqProp_regint1_3}), we conclude that
	\begin{align*}
	\frac1n\Erw\ln Z(\G)\leq \cB'(\mu)-\cB''(\mu)+\eps c'''+o(1)
	\end{align*}
for a certain $c'''=c'''(P)>0$.
Since this is true for any fixed $\eps>0$, the assertion follows.
\end{proof}

\subsection{Proof of \Prop~\ref{Prop_regint2}}\label{Sec_regint2}
Following Panchenko~\cite{Panchenko}, who worked with factor graphs of \Erdos-\Renyi\ type, we are going to use the invariance property of $\pi\in\MEAS$ under the $*(N,M)$-operation to simplify $\cB',\cB''$ separately.

\begin{lemma}\label{Lemma_Prop_regint2_B''}
Suppose that $\pi\in\MEAS$.
Then
	\begin{align}\label{eqLemma_Prop_regint2_B''}
	\Erw[\cB''(\MU^\pi)]&=
		\frac{d(k-1)n}{k}
		\Erw\brk{\ln\scal{\PSI_1}{\pi}}.
	\end{align}
\end{lemma}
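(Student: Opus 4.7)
The plan is a direct calculation exploiting the tensor-product structure of $\bigoplus_{i=1}^{M}\PSI_{1,i}$ with $M=(k-1)dn/k$; somewhat surprisingly, the invariance of $\pi\in\MEAS$ under the $\Join(N,M)$-operation is not needed for this half of \Prop~\ref{Prop_regint2} (it will be needed only for the $\cB'$ companion). The scheme is: prove a factorisation identity $\scal{\bigoplus_i\PSI_{1,i}}\mu=\prod_i\scal{\PSI_{1,i}}\mu$, pass to logarithms to convert the product into a sum, and finish by exchangeability of the $\PSI_{1,i}$.

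The factorisation is the heart of the matter. Unfolding the definition of $\scal{\,\cdot\,}{\mu}$ for the $(M\times k)$-ary function $\bigoplus_i\PSI_{1,i}$, I write
\[
\scal{\bigoplus_{i=1}^M \PSI_{1,i}}{\mu}=\int_0^1\!\!\cdots\!\int_0^1\prod_{i=1}^M\bc{\sum_{\sigma_i\in\Omega^k}\PSI_{1,i}(\sigma_i)\prod_{j=1}^k\mu_{s_i,\vx_j}(\sigma_{i,j})}\dd s_1\cdots\dd s_M .
\]
The $i$-th bracketed factor depends only on the row integration variable $s_i$ (and on the $\vx_j$, which are shared across rows by the very definition of $\scal{\,\cdot\,}{\mu}$), so Fubini immediately turns this into $\prod_{i=1}^M\scal{\PSI_{1,i}}{\mu}$. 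The sharing of the $\vx_j$ across $i$ matches what appears in $\scal{\PSI_1}{\pi}$ on the right-hand side of the lemma, so no renaming is needed.

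Taking logarithms, I obtain $\ln\scal{\bigoplus_i\PSI_{1,i}}{\mu}=\sum_{i=1}^M\ln\scal{\PSI_{1,i}}{\mu}$. Integrability of each summand follows from the elementary bound $\scal{\PSI_1}{\mu}\in[\min_\sigma\PSI_1(\sigma),1]$ (the upper bound because $\PSI_1<1$ and $\sum_{\sigma}\int_0^1\prod_j\mu_{s,\vx_j}(\sigma_j)\dd s=1$, the lower bound by monotonicity), combined with the tail condition~\eqref{eqP} which guarantees $\Erw[1/\min_\sigma\PSI_1(\sigma)]<\infty$. Thus I may exchange the sum and the expectation, yielding $\cB''(\mu)=\sum_{i=1}^M\Erw\ln\scal{\PSI_{1,i}}{\mu}$. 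The summands are not independent — they share the $\vx_j$ — but they are identically distributed because the $\PSI_{1,i}$ are i.i.d.\ copies of $\PSI_1$; hence each equals $\Erw\ln\scal{\PSI_1}{\mu}$, and $\cB''(\mu)=M\cdot\Erw\ln\scal{\PSI_1}{\mu}$. Averaging over $\MU^\pi\sim\pi$ via Fubini yields
\[
\Erw[\cB''(\MU^\pi)]=\frac{d(k-1)n}{k}\Erw\ln\scal{\PSI_1}{\pi},
\]
as claimed.

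I do not anticipate any significant obstacle; the only point requiring care is the bookkeeping for the shared random variables $\vx_j$, which must be uniform across the rows in order for the factorised terms to be genuine copies of $\scal{\PSI_1}{\mu}$ rather than of some modified object with fresh $\vx$'s. This is automatic from the chosen definition of $\scal{\,\cdot\,}{\mu}$ in \Sec~\ref{Sec_cutm} and needs only to be noted, not worked around.
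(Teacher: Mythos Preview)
Your factorisation step is incorrect because you have misread the $\oplus$ operation. By the definition in \Sec~\ref{Sec_cutm}, $f\oplus g$ juxtaposes \emph{columns}, keeping the row index fixed: for $f:\Omega^{M\times N}\to\RR$ and $g:\Omega^{M\times L}\to\RR$ we get $f\oplus g:\Omega^{M\times(N+L)}\to\RR$. Each $\PSI_{1,i}$ is a function on $\Omega^{1\times k}$, so $\bigoplus_{i=1}^{M}\PSI_{1,i}$ is a function on $\Omega^{1\times Mk}$, with a \emph{single} row. Consequently, in $\scal{\bigoplus_{i=1}^M\PSI_{1,i}}{\mu}$ there is only one $s$-integration and $Mk$ fresh column variables $\vx_1,\ldots,\vx_{Mk}$:
\[
\scal{\bigoplus_{i=1}^{M}\PSI_{1,i}}{\mu}
=\int_0^1\prod_{i=1}^{M}\bc{\sum_{\sigma\in\Omega^k}\PSI_{1,i}(\sigma)\prod_{j=1}^{k}\mu_{s,\vx_{(i-1)k+j}}(\sigma_j)}\dd s.
\]
All $M$ factors share the same $s$, so the integral of the product is not the product of the integrals, and your identity $\ln\scal{\bigoplus_i\PSI_{1,i}}{\mu}=\sum_i\ln\scal{\PSI_{1,i}}{\mu}$ fails whenever the bracketed factors genuinely depend on $s$. (What you wrote would be correct for $\bigotimes$, which stacks rows, but $\cB''$ is defined with $\bigoplus$.)

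This is precisely why the invariance $\pi\in\MEAS$ is essential, contrary to your opening remark. The paper's proof telescopes: writing the quotient
\[
\frac{\scal{\bigoplus_{i=1}^{m+1}\PSI_i}{\pi}}{\scal{\bigoplus_{i=1}^{m}\PSI_i}{\pi}}
=\scal{\PSI_{m+1}}{\bigoplus_{i=1}^{m}\PSI_i\Join\pi},
\]
one sees that dividing by $\scal{\bigoplus_{i=1}^{m}\PSI_i}{\pi}$ tilts the $s$-measure in $\pi$ by the first $m$ factors; the invariance of $\pi$ under $\Join(0,m)$ says this tilt leaves the law of $\MU^\pi$ unchanged, so all moments of the quotient match those of $\scal{\PSI_1}{\pi}$, and hence so does the expected logarithm. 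Without invariance the telescoping increments need not be equal and the lemma is false in general.
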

\begin{proof}
Let $\phi=\Erw\brk{\ln\scal{\PSI_1}{\pi}}$ for brevity.
We claim that for any integer $m\geq0$,
	\begin{align}\label{eqLemma_Prop_regint2_B''_1}
	\Erw\brk{\ln\frac{\scal{\bigoplus_{i=1}^{m+1}\PSI_i}{\pi}}{\scal{\bigoplus_{i=1}^{m}\PSI_i}{\pi}}}&=\phi.
	\end{align}
Then (\ref{eqLemma_Prop_regint2_B''}) follows by summing (\ref{eqLemma_Prop_regint2_B''_1}) on $0\leq m<d(k-1)n/k$.

Thus, we are left to prove (\ref{eqLemma_Prop_regint2_B''_1}).
Since $\pi\in\MEAS$, Corollaries~\ref{Cor_cntFct2} and~\ref{Cor_JoinNM} imply that for any integer $\ell\geq1$,
	\begin{align}\nonumber
	\Erw\brk{\bc{\frac{\scal{\bigoplus_{i=1}^{m+1}\PSI_i}{\pi}}{\scal{\bigoplus_{i=1}^{m}\PSI_i}{\pi}}}^\ell}
		&=\Erw\brk{\scal{\frac{\bigoplus_{i=1}^m\PSI_i}{\scal{\bigoplus_{j=1}^m\PSI_j}{\pi}}\oplus\PSI_{m+1}}{\pi}^\ell}\\
		&=\Erw\brk{\scal{\PSI_{m+1}}{\bigoplus_{i=1}^m\PSI_i\Join\pi}^\ell}=\Erw\brk{\scal{\PSI_{m+1}}{\pi}^\ell}=\Erw\brk{\scal{\PSI_{1}}{\pi}^\ell}.\nonumber
	\end{align}
Consequently, for all $\ell\geq1$ we have
	\begin{equation}\label{eqLemma_Prop_regint2_B''_4a}
	\Erw\brk{\bc{1-\frac{\scal{\bigoplus_{i=1}^{m+1}\PSI_i}{\pi}}{\scal{\bigoplus_{i=1}^{m}\PSI_i}{\pi}}}^\ell}=\Erw\brk{\bc{1-\scal{\PSI_{1}}{\pi}}^\ell}.
	\end{equation}
Further, because the continuous function $z\in[-1,1]\mapsto|z|$ is a uniform limit of polynomials, (\ref{eqLemma_Prop_regint2_B''_4a}) yields
	\begin{align*}
	\Erw\abs{\bc{1-\frac{\scal{\bigoplus_{i=1}^{m+1}\PSI_i}{\pi}}{\scal{\bigoplus_{i=1}^{m}\PSI_i}{\pi}}}^\ell}=\Erw\abs{\bc{1-\scal{\PSI_{1}}{\pi}}^\ell}
	\end{align*}
Therefore, invoking (\ref{eqP}), we obtain
	\begin{align*}
	\sum_{\ell\geq1}\frac1\ell\Erw\abs{\bc{1-\frac{\scal{\bigoplus_{i=1}^{m+1}\PSI_i}{\pi}}{\scal{\bigoplus_{i=1}^{m}\PSI_i}{\pi}}}^\ell}
		&=\sum_{\ell\geq1}\frac1\ell\Erw\abs{\bc{1-\scal{\PSI_{1}}{\pi}}^\ell}
		\leq\sum_{\ell\geq1}\Erw\brk{\max_{\sigma\in\Omega^k}|1-\PSI_1(\sigma)|^\ell}<\infty.
	\end{align*}
Hence, by (\ref{eqLemma_Prop_regint2_B''_4a}) and Fubini's theorem, 
	\begin{align*}
	\Erw\brk{\ln\frac{\scal{\bigoplus_{i=1}^{m+1}\PSI_i}{\pi}}{\scal{\bigoplus_{i=1}^{m}\PSI_i}{\pi}}}&=
		-\sum_{\ell\geq1}\frac1\ell\Erw\brk{\bc{1-\frac{\scal{\bigoplus_{i=1}^{m+1}\PSI_i}{\pi}}{\scal{\bigoplus_{i=1}^{m}\PSI_i}{\pi}}}^\ell}
		=-\sum_{\ell\geq1}\frac1\ell\Erw\brk{\bc{1-\scal{\PSI_{1}}{\pi}}^\ell}=\phi,
	\end{align*}
which is \eqref{eqLemma_Prop_regint2_B''_1}.
\end{proof}

\begin{lemma}\label{Lemma_Prop_regint2_B'}
Suppose that $\pi\in\MEAS$.
Then $\Erw[\cB'(\MU^\pi)]=\Erw\log\scal{\VARPHI_1}{\pi}.$
\end{lemma}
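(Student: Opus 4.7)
The plan is to mirror the proof of Lemma~\ref{Lemma_Prop_regint2_B''} almost verbatim, replacing the constraint-side weights $\PSI_i$ by the variable-side weights $\VARPHI_i$. Writing $\phi = \Erw\log\scal{\VARPHI_1}{\pi}$, I would first telescope the single logarithm defining $\cB'(\MU^\pi)$ into $n$ ratios,
\begin{align*}
\cB'(\MU^\pi) \;=\; \Erw\ln\scal{\bigoplus_{i=1}^n\VARPHI_i}{\MU^\pi} \;=\; \sum_{m=0}^{n-1}\Erw\ln\frac{\scal{\bigoplus_{i=1}^{m+1}\VARPHI_i}{\MU^\pi}}{\scal{\bigoplus_{i=1}^{m}\VARPHI_i}{\MU^\pi}},
\end{align*}
and then show that each summand has expectation $\phi$, yielding $\Erw[\cB'(\MU^\pi)]=n\phi$ (that is, the per-vertex contribution $\phi$ announced by the lemma).

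The key identity, following directly from the definition of the $\Join$ operation in \Sec~\ref{Sec_cutm}, is
\begin{align*}
\frac{\scal{\bigoplus_{i=1}^{m+1}\VARPHI_i}{\pi}}{\scal{\bigoplus_{i=1}^{m}\VARPHI_i}{\pi}} \;=\; \scal{\,\frac{\bigoplus_{i=1}^{m}\VARPHI_i}{\scal{\bigoplus_{j=1}^{m}\VARPHI_j}{\pi}}\oplus\VARPHI_{m+1}\,}{\pi} \;=\; \scal{\VARPHI_{m+1}}{\bigoplus_{i=1}^{m}\VARPHI_i\Join\pi}.
\end{align*}
Since $\pi\in\MEAS$, \Cor~\ref{Cor_JoinNM} applied with $N=m$, $M=0$ gives $\bigoplus_{i=1}^{m}\VARPHI_i\Join\pi \stacksign{d}= \pi$, so this ratio has the same distribution as $\scal{\VARPHI_{m+1}}{\pi}$, and by the i.i.d.\ construction of the $\VARPHI_i$, also the same distribution as $\scal{\VARPHI_1}{\pi}$.

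The analytic step passes from equidistribution to equality of expected logarithms. Exactly as in the proof of Lemma~\ref{Lemma_Prop_regint2_B''}, I would verify the identity first at the level of $\ell$-th moments of $1-(\text{ratio})$ via \Cor s~\ref{Cor_cntFct2}--\ref{Cor_JoinNM}, upgrade to absolute moments by uniform polynomial approximation of $|\cdot|$ on $[-1,1]$, and then expand $\ln(1-x)=-\sum_{\ell\ge1}x^\ell/\ell$, swapping sum and expectation via Fubini. The absolute convergence required for Fubini reduces to checking $\sum_{\ell\ge1}\ell^{-1}\Erw[\max_\sigma|1-\VARPHI_1(\sigma)|^\ell]<\infty$, which follows from the tail bound (\ref{eqP}): $\VARPHI_1$ is a bounded-degree polynomial in the $\PSI_{1,j}$'s taking values in $[0,1]$, so its tail is controlled by that of $1/\min\PSI_{1,j}$.

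The only point demanding genuine care beyond transcription is the lower tail of $\scal{\VARPHI_1}{\pi}$: unlike $\PSI_1\in(0,1)$, the integrand $\VARPHI_1$ vanishes on $\sigma$'s that are inconsistent across blocks, so one must exhibit a witness $\chi\in\Omega$ forcing $\scal{\VARPHI_1}{\pi}\ge c(p)\prod_{j=1}^d\min_\sigma\PSI_{1,j}(\sigma)$ almost surely, and then invoke (\ref{eqP}) to control the negative tail of the logarithm. This is the only obstacle; once it is handled, the entire argument is a mechanical adaptation of the proof of Lemma~\ref{Lemma_Prop_regint2_B''}.
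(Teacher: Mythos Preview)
Your proposal is correct and matches the paper's proof essentially line for line: the paper also telescopes, invokes Corollaries~\ref{Cor_cntFct2} and~\ref{Cor_JoinNM} to identify the $\ell$-th moments of the ratio with those of $\scal{\VARPHI_1}{\pi}$, passes to absolute moments by polynomial approximation of $|\cdot|$, and concludes via Fubini. Your additional remark on the lower tail of $\scal{\VARPHI_1}{\pi}$ is a valid detail that the paper leaves implicit, and the witness bound you propose is exactly what is needed to justify the Fubini step.
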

\begin{proof}
We use a similar argument as in the proof of \Lem~\ref{Lemma_Prop_regint2_B''}.
This time we set $\phi=\Erw\log\scal{\VARPHI_1}{\pi}$.
It suffices to show that for every $n\geq0$,
	\begin{align}\label{eqLemma_Prop_regint2_B'_1}
	\Erw\brk{\ln\frac{\scal{\bigoplus_{i=1}^{n+1}\VARPHI_i}\pi}{\scal{\bigoplus_{i=1}^{n}\VARPHI_i}\pi}}&=\phi.
	\end{align}
As in the proof of \Lem~\ref{Lemma_Prop_regint2_B''}, we  use that $\pi\in\MEAS$ and apply Corollaries~\ref{Cor_cntFct2} and~\ref{Cor_JoinNM} to obtain
for any $\ell\geq1$,
	\begin{align}%\nonumber
	\Erw\brk{\bc{\frac{\scal{\bigoplus_{i=1}^{n+1}\VARPHI_i}\pi}{\scal{\bigoplus_{i=1}^{n}\VARPHI_i}\pi}}^\ell}&=
		\Erw\brk{\bc{\scal{\frac{\bigoplus_{i=1}^{n}\VARPHI_i}{\scal{\bigoplus_{j=1}^{n}\VARPHI_j}{\pi}}\oplus\VARPHI_{n+1}}\pi}^\ell}
		&=
		\Erw\brk{\scal{\VARPHI_{n+1}}{\bigoplus_{i=1}^{n}\VARPHI_i\Join\pi}^\ell}=
		\Erw\brk{\scal{\VARPHI_1}\pi^\ell}.
		\label{eqLemma_Prop_regint2_B'_4}
	\end{align}
Hence, for any $\ell\geq1$,
	\begin{align}		\label{eqLemma_Prop_regint2_B'_4a}
	\Erw\brk{\bc{1-\frac{\scal{\bigoplus_{i=1}^{n+1}\VARPHI_i}\pi}{\scal{\bigoplus_{i=1}^{n}\VARPHI_i}\pi}}^\ell}
		&=\Erw\brk{\bc{1-\scal{\VARPHI_1}\pi}^\ell}.
	\end{align}
Further,  approximating the absolute value by polynomials, we obtain from (\ref{eqLemma_Prop_regint2_B'_4}) that 
	\begin{align*}
	\Erw\abs{\bc{1-\frac{\scal{\bigoplus_{i=1}^{n+1}\VARPHI_i}\pi}{\scal{\bigoplus_{i=1}^{n}\VARPHI_i}\pi}}^\ell}
		&=\Erw\abs{\bc{1-\scal{\VARPHI_1}\pi}^\ell}.
	\end{align*}
Thus, (\ref{eqLemma_Prop_regint2_B'_1}) follows from (\ref{eqLemma_Prop_regint2_B'_4a}) and Fubini's theorem.
\end{proof}

\noindent
Finally, \Prop~\ref{Prop_regint2} is immediate from \Lem s~\ref{Lemma_Prop_regint2_B''} and~\ref{Lemma_Prop_regint2_B'}.

\section{The free energy: lower bound}
\label{secAz}

\subsection{Outline}
In this section we prove the following lower bound on the free energy that matches the upper bound from \Prop~\ref{Prop_regint}.
The lower bound does not require the assumption {\bf POS}.

\begin{proposition}\label{MainAizLemReg}
We have 
	\begin{align*}
\liminf_{n \to \infty} \frac{1}{n} \Erw\brk{\log Z (\G)}&\ge \inf_{\pi\in\MEAS}\cB(\pi),&	
	\liminf_{n \to \infty} \frac{1}{n} \Erw\brk{\log Z (\GG)}\ge \inf_{\pi\in\MEAS}\cB(\pi).
\end{align*}
\end{proposition}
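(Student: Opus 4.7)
\noindent\textbf{Proof plan for \Prop~\ref{MainAizLemReg}.}
The strategy is an Aizenman--Sims--Starr type cavity computation combined with the compactness of $(\meas,\Cutm)$. The idea is to express the free energy increment when moving from $\G(n,d)$ to $\G(n+1,d)$ (or equivalently, when re-inserting one variable node and half-filling with $d/k$ constraint nodes) in terms of the Bethe decomposition of $\G(n,d)$, and to identify the kernel $\check\mu_{\G,\vX,\vY}$ as the object that organises these increments.

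\emph{Step 1 (Aizenman--Sims--Starr inequality).} I will first show that for every fixed $\omega\geq 1$,
\begin{align*}
\liminf_{n\to\infty}\frac1n\Erw[\ln Z(\G)] \;\geq\; \liminf_{n\to\infty}\cB(\check\pi_{n,\omega})-o_\omega(1).
\end{align*}
The plan is to compare $\G_{n,\omega}$ (with $\vX$ deleted variables and $\vY$ deleted constraint nodes, $\vX,\vY\sim\Po(\omega)$) to two graphs obtained from it by re-insertion: one in which we reinstate a single variable $v^+$ with its $d$ cavity-neighbours (this yields a graph close in total variation to $\G(n,d)$ with one fewer cavity), and one in which we reinstate $d/k$ constraint nodes attached to random cavities (yielding a graph close to $\G(n,d)$ with $d/k$ fewer cavity-constraints). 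Using \Thm~\ref{Thm_BP} to replace the conditional Boltzmann marginals of the cavities by the Bethe messages, and using extremality of these marginals (established during the proof of \Thm~\ref{Thm_BP}) to factorise, the net change in $\ln Z$ becomes, up to $o(1)$ errors, the two integrals that define $\cB$, evaluated on the kernel $\check\mu_{\G,\vX,\vY}$. Telescoping in $n$ then gives the inequality above. The weights $\check\vz_{\G,i}/\check\vz_{\G}$ in \eqref{eqzGi} are precisely the correct re-weighting of the Bethe states after cavity removal; this is where the specific form of \eqref{eqzGi} pays off.

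\emph{Step 2 (Extracting a limit in $\MEAS$).} By \Prop~\ref{Prop_compact}, $\meas$ is a compact Polish space, so the space $\Meas$ of probability measures on it, endowed with the Wasserstein--cut metric, is also compact. Thus there exist sequences $\omega_j\to\infty$ and $n_j\to\infty$ along which $\check\pi_{n_j,\omega_j}$ converges to some $\pi^\ast\in\Meas$, and along which $\cB(\check\pi_{n_j,\omega_j})$ converges to $\liminf_{\omega\to\infty}\liminf_{n\to\infty}\cB(\check\pi_{n,\omega})$. I claim $\pi^\ast\in\MEAS$. By \Cor~\ref{Cor_JoinNM} this is equivalent to checking
$\pi^{\ast\Join(N,M)}=\pi^\ast$ for every fixed $N,M\geq 0$. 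This invariance will be proved by a Poisson-thinning coupling: because $\vX,\vY\sim\Po(\omega_j)$, the graph $\G_{n_j,\omega_j}$ has the same distribution (up to $O(\sqrt{1/\omega_j})$ in total variation) as the graph obtained by deleting $\vX+N$ variables and $\vY+M$ constraints. Re-inserting $N$ variables and $M$ constraints into the first corresponds exactly to applying $\Join(N,M)$ (via \Lem~\ref{Lemma_JoinNM}) to the kernel $\check\mu_{\G,\vX,\vY}$. Passing to the limit, \Cor~\ref{Cor_JoinCont} ensures continuity of the $\Join(N,M)$ operation, and so $\pi^{\ast\Join(N,M)}=\pi^\ast$.

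\emph{Step 3 (Pass to the limit).} By \Cor~\ref{Cor_Bcont}, $\cB:\Meas\to\RR$ is continuous, hence
\begin{align*}
\liminf_{n\to\infty}\frac1n\Erw[\ln Z(\G)]\;\geq\;\cB(\pi^\ast)-o_\omega(1)\;\geq\;\inf_{\pi\in\MEAS}\cB(\pi)-o_\omega(1),
\end{align*}
and letting $\omega\to\infty$ yields the $\G$-bound. The bound for $\GG$ follows from Fact~\ref{Fact_simple}, Bayes' rule, and the Azuma concentration of $\ln Z(\G)$ around its mean (using \eqref{eqP}); these together give $\Erw[\ln Z(\GG)]=\Erw[\ln Z(\G)]+o(n)$, as in the proof of \Prop~\ref{Prop_regint}.

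\emph{Expected main obstacle.} Step~1 is where the technical substance lies. Unlike the \Erdos--\Renyi\ setting, one cannot directly couple $\G(n,d)$ to $\G(n+1,d)$ by adding a single vertex: the degree constraint forces a delicate two-stage ``cavity'' construction, and one must simultaneously control (i) the cut-metric distance between the Boltzmann distribution on cavities and its product of marginals, (ii) the $L^1$-error between conditional marginals and Bethe messages under each Bethe state $S_i$, and (iii) the reweighting $\check\vz_{\G,i}/\check\vz_{\G}$ that tracks how Bethe state masses shift under re-insertion. These ingredients are supplied by \Thm~\ref{Thm_BP} and the extremality/contiguity machinery of \Sec~\ref{Sec_cutm}, but assembling them so that all error terms are $o_n(1)+o_\omega(1)$ rather than constant requires care, especially because a failure on a $o(1)$-fraction of cavities can still shift $\ln Z$ by $\Theta(n)$ through the tail in \eqref{eqP}.
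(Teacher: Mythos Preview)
Your three-step scaffold (Aizenman--Sims--Starr increment, compactness plus approximate invariance to land in $\MEAS$, continuity of $\cB$) is exactly the paper's strategy, and your Step~2 argument via Poisson thinning is essentially the content of \Prop~\ref{Prop_apxInv}. The handling of $\GG$ via Azuma and Fact~\ref{Fact_simple} also matches.

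Where you diverge is in Step~1. The paper does \emph{not} use the Bethe-state kernel $\check\mu_{\G,\vX,\vY}$ and does not invoke \Thm~\ref{Thm_BP} at all for this proposition. Instead it works with the raw kernel $\hat\rho_{n,\omega}=\dot\mu_{\hat\G_{n,\omega},\hat\cC}$ representing the full joint Boltzmann distribution of the cavities. The point is that when one attaches a new variable (or constraint) to the cavity graph $\hat\G_{n,\omega}$, the ratio $Z(\G'')/Z(\hat\G_{n,\omega})$ has an \emph{exact} expression as a single $\scal{\,\cdot\,}{\mu_{\hat\G_{n,\omega},\hat\cC}}$ average (\Lem s~\ref{Lemma_G''}--\ref{Lemma_G'}), and this average is literally $\scal{\VARPHI_1}{\hat\rho_{n,\omega}}$ respectively $\scal{\bigoplus_i\PSI_{1,i}}{\hat\rho_{n,\omega}}$ up to with/without-replacement errors of size $o_\omega(1)$. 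No Bethe decomposition, no factorisation, no extremality is needed: the $s$-integral in the kernel formalism already carries the full correlation structure of the cavities. The passage from these quantities to $\cB(\hat\pi_{n,\omega})$ (\Lem~\ref{Lemma_G'G''}) uses only the approximate invariance from \Prop~\ref{Prop_apxInv}, mirroring the algebra of \Lem~\ref{Lemma_Prop_regint2_B''}.

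Your route through $\check\pi_{n,\omega}$ is not wrong in principle, but it effectively presupposes $\Cutm(\check\pi_{n,\omega},\hat\pi_{n,\omega})=o(1)$, which is the substance of \Prop~\ref{Prop_thmBethePlus} and relies heavily on \Thm~\ref{Thm_BP}. The paper deliberately sequesters that argument for the proof of \Thm~\ref{thmBethePlus}, keeping \Prop~\ref{MainAizLemReg} independent of the Bethe machinery. Your remark about ``factorising via extremality'' is thus a red herring here: drop it, work with $\hat\rho_{n,\omega}$, and Step~1 becomes much shorter.
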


\noindent
\Thm~\ref{Thm_freeEng} follows  immediately from \Prop s~\ref{Prop_regint} and~\ref{MainAizLemReg}.

The proof of \Prop~\ref{MainAizLemReg} is based on a kind of coupling argument that is colloquially referred to as the `Aizenman--Sims--Starr' scheme.
This technique has been applied with great success to random factor graphs of \Erdos-\Renyi\ type, where the degree distribution is approximately Poisson~\cite{BP,Bethe,Panchenko}.
The basic idea is to couple a random factor graph with $n$ variable nodes with a random factor graph with $n+1$ variable nodes and to calculate the difference of their free energies very precisely.
This coupling is very easy to set up in the \Erdos-\Renyi\ case due to the Stein-Chen property of the Poisson distribution.

However, in the case of random regular graphs matters are more intricate.
Due to the rigid local structure there is no obvious way of coupling random regular factor graphs with $n$ and $n+1$ variable nodes.
As in \Sec~\ref{Sec_Bethe}, we therefore resort to the idea of creating a bit of wiggling room by carving out a few cavities, in such a way that the free energy does not change significantly.
But the details of the construction are delicate.

Let $n,\omega$ be integers and let $\vX,\vY$ be two independent Poisson variables with mean $\omega$.
The protagonist of the proof is the random factor graph $\G_{n,\omega}$ defined as follows.
Let
	$$N_{n,\omega}={k\vee}(n-\vX)\qquad\mbox{and let}\qquad
		\Delta_{n,\omega}\disteq\Be(d N_{n,\omega}/k-\lfloor d N_{n,\omega}/k\rfloor)$$
be independent of $Y$.
Further, set
	$$M_{n,\omega}={d\vee}\bc{\lfloor d N_{n,\omega}/k\rfloor\wedge\bc{\lfloor d N_{n,\omega}/k\rfloor+\Delta_{n,\omega}-d\vX-\vY}}.$$
Then $\G_{n,\omega}$ has $N_{n,\omega}$ variable nodes $v_i$, $i\in[N_{n,\omega}]$, and
$M_{n,\omega}$ constraint nodes $a_i$, $i\in[M_{n,\omega}]$.
The weight functions $\psi_{a_i}$ are chosen independently from $P$.
Furthermore, the variable and constraint nodes are linked through a random (one-to-one) pairing
	$$F_{M_{n,\omega}}\times[k]\to V_{N_{n,\omega}}\times[d].$$
Since $kM_{n,\omega}\leq dN_{n,\omega}$  by construction, such a pairing exists, but some variable clones may go unpaired.
We are going to harness these unpaired `cavities' to set up a coupling of $\G_{n,\omega}$ and $\G_{n+1,\omega}$.

To this end, consider a further random factor graph $\hat\G_{n,\omega}$ with $N_{n,\omega}$ variable nodes and $\hat M_{n,\omega}=M_{n+1,\omega}-d$ constraint nodes.
The weight functions are chosen independently from $P$, and the connections between the constraint and variable nodes are induced by a random pairing 
	$$F_{\hat M_{n,\omega}}\times[k]\to V_{N_{n,\omega}}\times[d].$$
Rather than coupling  $\G_{n,\omega}$ and $\G_{n+1,\omega}$ directly, we will couple $\G_{n,\omega}$ and $\hat\G_{n,\omega}$ as well as $\G_{n+1,\omega}$ and $\hat\G_{n,\omega}$.

This construction leads to an approximate formula for the free energy of $\G_{n,\omega}$
that comes in terms of the kernel representation of the Boltzmann distribution of $\hat\G_{n,\omega}$.
To be precise, let $\hat\cC$ be the set of variables $v_i\in V_{N_{n,\omega}}$ with at least one unpaired clone in $\hat\G_{n,\omega}$.
Consider the random kernel
	$$\hat\rho_{n,\omega}=\dot\mu_{\hat\G_{n,\omega},\hat\cC}\in\meas$$
 representing  the joint distribution of the cavities $\hat\cC$.
Further, let $\hat\pi_{n,\omega}\in\Meas$ be the distribution of $\hat\rho_{n,\omega}$.
To deal with the conditioning on the event $\cS$, we also introduce versions $\GG_{n,\omega}$, $\hat\GG_{n,\omega}$ of the above random factor graphs conditional on $\cS$.
Let
	$\tilde\rho_{n,\omega}=\dot\mu_{\hat\GG_{n,\omega},\hat\cC}\in\meas$
be the kernel representation of the corresponding Boltzmann distribution, and let $\tilde\pi_{n,\omega}\in\Meas$
be the law of $\tilde\rho_{n,\omega}$.
In \Sec~\ref{Sec_Prop_Aizenman} we will derive the following formula.

\begin{proposition}\label{Prop_Aizenman}
For any $\eps>0$ there exists $\omega>0$ such that
	\begin{align*}
	\liminf_{n\to\infty}\, \Erw\brk{\ln\frac{Z(\G_{n+1,\omega})}{Z(\G_{n,\omega})}}-
		\Erw\brk{\ln\scal{\VARPHI_1}{\hat\rho_{n,\omega}}
		-\ln\scal{\bigoplus_{\hat M_{n,\omega}<i\leq M_{n,\omega}}\PSI_{1,i}}{\hat\rho_{n,\omega}}}&\geq-\eps,\\
	\liminf_{n\to\infty}\, \Erw\brk{\ln\frac{Z(\GG_{n+1,\omega})}{Z(\GG_{n,\omega})}}-
		\Erw\brk{\ln\scal{\VARPHI_1}{\tilde\rho_{n,\omega}}
		-\ln\scal{\bigoplus_{\hat M_{n,\omega}<i\leq M_{n,\omega}}\PSI_{1,i}}{\tilde\rho_{n,\omega}}}&\geq-\eps.
	\end{align*}
\end{proposition}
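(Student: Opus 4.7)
The strategy is the Aizenman–Sims–Starr scheme, treating $\hat\G_{n,\omega}$ as a common pivot between $\G_{n,\omega}$ and $\G_{n+1,\omega}$. The plan is to construct couplings realising
\begin{align*}
\frac{Z(\G_{n+1,\omega})}{Z(\hat\G_{n,\omega})} &\approx \scal{\VARPHI_1}{\hat\rho_{n,\omega}},
&
\frac{Z(\G_{n,\omega})}{Z(\hat\G_{n,\omega})} &\approx \scal{\bigoplus_{\hat M_{n,\omega}<i\leq M_{n,\omega}}\PSI_{1,i}}{\hat\rho_{n,\omega}}
\end{align*}
in the sense of matching expected logarithms up to $\eps/2$ for $\omega$ sufficiently large. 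Taking logs and subtracting then yields the first claim, since by construction the \emph{same} $\hat\rho_{n,\omega}$ appears in both terms; the second claim follows by repeating the argument conditional on $\cS$ and invoking Fact \ref{Fact_simple} to control the Bayes correction.

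For the first identity I would augment $\hat\G_{n,\omega}$ by a new variable $v^\star$ together with $d$ fresh constraints $b_1,\ldots,b_d$ whose weight functions are drawn independently from $P$; each $b_i$ contributes one leg at $v^\star$ and $k-1$ legs paired uniformly with the cavities $\hat\cC$. Because $\hat M_{n,\omega}=M_{n+1,\omega}-d$ by definition, this produces a factor graph with exactly the size and pairing law of $\G_{n+1,\omega}$ up to total-variation error $o_\omega(1)+o_n(1)$ —  the only discrepancy is whether the last $d$ constraints are distinguished or the pairing is refreshed over all $M_{n+1,\omega}$ constraints, but both laws are symmetric under the relevant permutations. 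Poisson concentration of $\vX,\vY$ ensures $|\hat\cC|\geq d(k-1)$ with probability $1-\exp(-\Omega(\omega))$, so the pairing is almost surely well defined. Unpacking the definition $\hat\rho_{n,\omega}=\dot\mu_{\hat\G_{n,\omega},\hat\cC}$ and averaging over the uniform cavity assignment and the random weights then produces the ratio $\scal{\VARPHI_1}{\hat\rho_{n,\omega}}$ on the nose, in analogy with the computations underlying Lemma \ref{Claim_fixp1}. The second identity is obtained in exactly the same way by inserting $M_{n,\omega}-\hat M_{n,\omega}$ fresh $k$-ary constraints, each with all $k$ legs paired to cavities; since $M_{n+1,\omega}-M_{n,\omega}\in\{\lfloor d/k\rfloor,\lceil d/k\rceil\}$ for large $n$, the random count $M_{n,\omega}-\hat M_{n,\omega}$ is deterministically $O(1)$ and the coupling again succeeds with probability $1-\exp(-\Omega(\omega))$.

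The main obstacle is converting distributional closeness into closeness of \emph{expected logarithms}. Here the tail bound \eqref{eqP} is essential: it guarantees uniform integrability of the log-ratios $\ln(Z(\G_{n+1,\omega})/Z(\hat\G_{n,\omega}))$ and $\ln(Z(\G_{n,\omega})/Z(\hat\G_{n,\omega}))$ with bounds depending only on $\omega$ and $d,k$, because each ratio is a product of $O(1)$ factors whose weights have sub-exponential tails. Consequently the total-variation error $o_\omega(1)+o_n(1)$ from the coupling upgrades to an error of at most $\eps/2$ in expected log-ratio once $\omega$ is chosen large. A secondary subtlety lies in ensuring that the Bernoulli correction $\Delta_{n,\omega}$ does not decouple $\hat\G_{n,\omega}$ on the two sides: because $\Delta_{n,\omega}$ is a function of $N_{n,\omega}$ alone while the coupling to $\G_{n+1,\omega}$ proceeds after fixing $\hat\G_{n,\omega}$, both expansions refer to the same underlying random pivot and the subtraction is clean. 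Finally, for the $\GG$-statement one notes that $\pr[\cS]=\Omega(1)$ uniformly in $n$ by Fact \ref{Fact_simple}, hence conditioning on $\cS$ changes each probability by at most a bounded factor, and all estimates carry over with $\hat\rho_{n,\omega}$ replaced by $\tilde\rho_{n,\omega}$.
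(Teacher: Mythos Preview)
Your plan is essentially the paper's own: use $\hat\G_{n,\omega}$ as a common pivot, augment it by one variable plus $d$ constraints to approximate $\G_{n+1,\omega}$ (the paper calls this $\G''_{n,\omega}$), augment it by $M_{n,\omega}-\hat M_{n,\omega}$ bare constraints to approximate $\G_{n,\omega}$ (the paper's $\G'_{n,\omega}$), and read off the two log-ratios as $\scal{\VARPHI_1}{\hat\rho_{n,\omega}}$ and $\scal{\bigoplus\PSI_{1,i}}{\hat\rho_{n,\omega}}$ respectively. The use of \eqref{eqP} for uniform integrability of the log-ratios, the Poisson concentration to ensure enough cavities, and the observation that the $O(1)$ discrepancy between with- and without-replacement sampling from $\hat\cC$ costs only $o_\omega(1)$ are all exactly as in the paper's Lemmas~\ref{Lemma_G'G''cpl}--\ref{Lemma_G'}.

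The one place your sketch is too thin is the simple case. The claim that ``conditioning on $\cS$ changes each probability by at most a bounded factor'' does not by itself close the argument: when you augment the \emph{simple} pivot $\hat\GG_{n,\omega}$ by new constraints, the result need not be simple, so its law is not that of $\GG_{n+1,\omega}$. Conversely, if you further condition the augmented graph on simplicity, this conditioning \emph{tilts} the law of $\hat\GG_{n,\omega}$ (configurations whose cavities admit more simple extensions get upweighted), and the tilt is not controlled by the global bound $\pr[\cS]=\Omega(1)$. The paper's Lemma~\ref{Lemma_G'G''cpl} spends about a page on precisely this point, splitting into degree-sequence events $\cD,\cD'$ and a bad event $\cE$ (a constraint adjacent only to cavities), and constructing explicit couplings on each piece so that the two graphs differ in only $\tilde O(1)$ constraints in expectation. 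Your argument would need an analogous refinement; the bounded-factor observation alone is not enough.
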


There are still two gaps to fill toward the proof of \Prop~\ref{MainAizLemReg}.
First, the estimate
of the free energy provided by \Prop~\ref{Prop_Aizenman} does not quite match the functional $\cB(\hat\pi_{n,\omega})$.
Second, the distribution $\hat\pi_{n,\omega}\in\Meas$ does not generally belong to the subspace $\MEAS$.
The following proposition deals with the second issue, which holds the key to resolving the first.
Recall that the topology of $\Meas$ is induced by the Wasserstein metric $\Cutm(\nix,\nix)$.
We introduce a relaxed version of $\MEAS$ by letting
	$$\MEAS_{\eps,N,M}=\cbc{\pi\in\Meas:\Cutm(\pi,\pi^{\Join(u,w)})\leq\eps\mbox{ for all $u\leq N$ and $w\leq M$}}.$$
Since (\ref{eqP}) and \Lem~\ref{Lemma_JoinCont} show that the map $\pi\mapsto\pi^{\Join(u,w)}$ is continuous, 
$\MEAS_{\eps,N,M}$ is a closed subspace of the compact Polish space $\Meas$.

\begin{proposition}\label{Prop_apxInv}
For any $\eps,L>0$ there is $\omega_0>0$ such that for every $\omega>\omega_0$ for large enough $n$ we have
	$$\hat\pi_{n,\omega},\tilde\pi_{n,\omega}\in \MEAS_{\eps,L,L}.$$
\end{proposition}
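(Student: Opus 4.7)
The plan is to realize the operator $\Join(N,M)$ combinatorially as a bounded perturbation of the random factor graph $\hat\G_{n,\omega}$ and then to exploit the approximate shift-invariance of Poisson laws under bounded offsets to compare the resulting cavity-kernel distribution with $\hat\pi_{n,\omega}$ itself.

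First, by \Lem~\ref{Lemma_JoinNM} the sample $\MU^{\hat\pi_{n,\omega}\Join(N,M)}$ has the same law as $(\bigoplus_{i=1}^{N}\hat\VARPHI_i\oplus\bigoplus_{i=1}^{M}\hat\PSI_i)\Join\hat\rho_{n,\omega}$. Unpacking the definitions of $\hat\VARPHI_i,\hat\PSI_i$ and of the $\Join$-operation, this equals the kernel representation of the joint cavity distribution of the factor graph $\hat\G_{n,\omega}^{+(N,M)}$ obtained from $\hat\G_{n,\omega}$ by attaching $N$ extra variable nodes of degree $d$, each adjacent to $d$ fresh constraint nodes whose remaining positions are paired to uniformly random cavity clones, together with $M$ additional constraint nodes paired entirely to random cavity clones; the weight functions of all added constraints are drawn independently from $P$. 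Indeed, $\hat\PSI_i$ averages the cavity-marginals against one extra constraint weight, whereas $\hat\VARPHI_i$ averages over the hidden spin of one extra variable node and its $d$ incident constraints---this is exactly the re-weighting of the cavity Boltzmann distribution produced by the attachments.

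Next, by the exchangeability of the random pairing in the definition of $\hat\G_{n,\omega}$, the extension $\hat\G_{n,\omega}^{+(N,M)}$ can be coupled with an instance of $\hat\G_{n,\omega}$ generated with $(\vX,\vY)$ replaced by $(\vX-N,\vY-M)$ on the event $\vX\geq N$, $\vY\geq M$ (modulo an $O(1)$ correction arising from the floor and the Bernoulli adjustment $\Delta_{n,\omega}$ in the definition of $M_{n,\omega}$), so that the two graphs are literally identical up to relabeling. Since $\dTV(\Po(\omega),\Po(\omega)-c)=O(c/\sqrt{\omega})$ as $\omega\to\infty$ for each fixed $c$, the laws of $(\vX,\vY)$ and $(\vX-N,\vY-M)$ are within $O(L/\sqrt{\omega})$ in total variation, uniformly for $N,M\leq L$. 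Composing these two couplings, $\hat\G_{n,\omega}^{+(N,M)}$ and $\hat\G_{n,\omega}$ can be coupled to produce the same graph with probability $1-O(L/\sqrt\omega)$, hence the identical cavity kernel with at least that probability.

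Because the cut-distance on $\meas$ is bounded by $1$, the above coupling yields $\Cutm(\hat\pi_{n,\omega}^{\Join(N,M)},\hat\pi_{n,\omega})=O(L/\sqrt{\omega})$ uniformly in $N,M\leq L$, so choosing $\omega_0$ large gives $\hat\pi_{n,\omega}\in\MEAS_{\eps,L,L}$. The same argument handles $\tilde\pi_{n,\omega}$ after conditioning on $\cS$: by Fact~\ref{Fact_simple}, $\pr\brk{\cS}=\Omega(1)$, so Bayes' rule transfers TV-bounds at the cost of only a bounded multiplicative constant. The main obstacle is the coupling in the previous paragraph, namely matching the delicate parity and floor structure of $M_{n,\omega}$ across the $+(N,M)$ and shifted-Poisson sides so that the inserted constraints exactly reverse previous deletions; small parity mismatches can be absorbed by inserting or removing $O(1)$ auxiliary constraints, and by~\eqref{eqP} the weight functions are a.s.\ bounded away from zero after a negligible truncation, so such $O(1)$ modifications contribute only $o_\omega(1)$ to the final Wasserstein estimate.
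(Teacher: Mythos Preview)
Your approach is essentially identical to the paper's. The paper also interprets $\Join(N,M)$ combinatorially as attaching $N$ variable nodes (each with $d$ fresh constraints) and $M$ extra constraints to the cavities of $\hat\G_{n,\omega}$, obtaining a graph $\hat\G_{n,\omega}[N,M]$, and then invokes the Poisson shift $\dTV((\vX,\vY),(\vX-N,\vY-M'))=o_\omega(1)$ to conclude $\dTV(\hat\G_{n,\omega},\hat\G_{n,\omega}[N,M])=o_\omega(1)$ (\Lem~\ref{Lemma_perturb1}), together with $\Cutm(\hat\pi_{n,\omega}^{\Join(N,M)},\hat\pi_{n,\omega}[N,M])=o_\omega(1)$ (\Lem~\ref{Lemma_perturb2}).

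Two small points where your write-up overstates equality rather than $o_\omega(1)$-closeness: first, the $\Join$-operation evaluates the kernel at \emph{independent} uniform points $\hat\vx_i$, which corresponds to attaching the new constraints to cavities drawn \emph{with replacement}, whereas your combinatorial $\hat\G_{n,\omega}^{+(N,M)}$ uses a genuine pairing; since only $O(dk(N+M))$ cavities are touched out of $\Omega(\omega)$, the collision probability is $o_\omega(1)$, but this must be said. Second, the kernel $\hat\rho_{n,\omega}^{\Join(N,M)}$ lives on the \emph{old} cavity set $\hat\cC$, while the cavity kernel of $\hat\G_{n,\omega}^{+(N,M)}$ lives on the smaller set $\hat\cC\setminus\{\text{consumed cavities}\}$; the $O(1)$ discrepancy is again negligible in $\Cutm$ because $|\hat\cC|=\Omega(\omega)$. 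Your remark about absorbing the floor/$\Delta_{n,\omega}$ corrections is correct, but the clean route (which you effectively describe) is to absorb them into the $\vY$-shift rather than into post-hoc constraint surgery on the graph.
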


The proof of \Prop~\ref{Prop_apxInv} can be found in \Sec~\ref{Sec_apxInv}.
Finally, in \Sec~\ref{Sec_MainAizLemReg} we derive \Prop~\ref{MainAizLemReg} from \Prop s~\ref{Prop_Aizenman} and~\ref{Prop_apxInv}.

\subsection{Proof of \Prop~\ref{Prop_Aizenman}}\label{Sec_Prop_Aizenman}
We assume throughout that $\omega>\omega_0$ for a big  enough $\omega_0=\omega_0(d,P)$ and that $n$  sufficiently large.

Obtain the random factor graph $\G'_{n,\omega}$ from $\hat\G_{n,\omega}$ by adding $M_n-\hat M_{n,\omega}$ new random constraint nodes
$a_{i}$, $\hat M_{n,\omega}<i\leq M_n$, whose weight functions are drawn from $P$ independently and that are linked with the variable nodes via a random pairing with the cavities $\hat\cC$ of $\hat\G_{n,\omega}$.

Further, if $k\hat M_{n,\omega}\leq dN_{n,\omega}-d(k-1)$, then obtain $\G''_{n,\omega}$ from $\hat\G_{n,\omega}$ by adding one new variable node $\hat v=v_{N_{n,\omega}+1}$ along with $d$ random constraint nodes $\hat a_1,\ldots,\hat a_d$ adjacent to $\hat v$ whose weight functions are drawn independently from $P$.
To be precise,  the clones of $\hat v$ are paired each with a uniformly random clone $\hat\vh_i$ of $\hat a_i$ for $i=1,\ldots,d$, and the remaining $d(k-1)$ clones of the $\hat a_i$ are paired with randomly chosen cavities of $\hat\G_{n,\omega}$.
If $k\hat M_{n,\omega}> dN_{n,\omega}-d(k-1)$, then obtain $\G''_{n,\omega}$ from $\hat\G_{n,\omega}$ by just adding a new isolated variable node $\hat v$.

Obtain $\GG_{n,\omega}',\GG_{n,\omega}''$ analogously from $\hat\GG_{n,\omega}$ while conditioning on the event that the outcome is simple.
If it is impossible to add the required number of constraint nodes in such a way that the resulting factor graph is simple, then do not add any.

\begin{lemma}\label{Lemma_G'G''cpl}
For $\omega>0$ we have
	\begin{align}\nonumber
	\abs{\Erw\ln Z(\G_{n,\omega})-\Erw\ln Z(\G'_{n,\omega})}&=o(1),&
	\abs{\Erw\ln Z(\G_{n+1,\omega})-\Erw\ln Z(\G''_{n,\omega})}&= o_\omega(1),\\
	\abs{\Erw\ln Z(\GG_{n,\omega})-\Erw\ln Z(\GG'_{n,\omega})}&=o_\omega(1),&
	\abs{\Erw\ln Z(\GG_{n+1,\omega})-\Erw\ln Z(\GG''_{n,\omega})}&= o_\omega(1).
		\label{eqLemma_G'G''cpl1}
	\end{align}
\end{lemma}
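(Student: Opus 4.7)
All four estimates follow from the same coupling strategy: jointly construct the two random factor graphs so that they agree on a high-probability event, then bound the exceptional contribution via Azuma-type concentration of $\ln Z$.

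For the first estimate, I claim that $\G_{n,\omega}$ and $\G'_{n,\omega}$ are \emph{identically distributed} on the event $\cE_{\leq}=\{\hat M_{n,\omega}\leq M_{n,\omega}\}$. Indeed, a uniform one-to-one pairing between $F_{M_{n,\omega}}\times[k]$ and $V_{N_{n,\omega}}\times[d]$ can be sampled in two stages: first reveal the sub-pairing on $F_{\hat M_{n,\omega}}\times[k]$---this is precisely the definition of $\hat\G_{n,\omega}$---and then uniformly match the remaining constraint clones to the unmatched ``cavity'' clones, which is exactly how $\G'_{n,\omega}$ is built. Using $\hat M_{n,\omega}=M_{n+1,\omega}-d$ together with the bounds $\lfloor d(N+1)/k\rfloor-\lfloor dN/k\rfloor\leq\lceil d/k\rceil\leq d-1$ (valid for $d\geq 3$, $k\geq 2$) and $|\Delta_{n+1,\omega}-\Delta_{n,\omega}|\leq 1$, a short case analysis shows that $\cE_{\leq}^c$ forces either $M_{n,\omega}$ or $M_{n+1,\omega}$ to be clamped at $d$, which in turn requires the Poisson variables to satisfy $d\vX+\vY\geq cn$ for some $c>0$; hence $\pr[\cE_{\leq}^c]=\exp(-\Omega(n))$ for fixed $\omega$. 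Combining this identity on $\cE_{\leq}$ with the Cauchy--Schwarz estimate $\Erw[|\ln Z|\vecone_{\cE_{\leq}^c}]\leq\sqrt{\Erw[(\ln Z)^2]\cdot\pr[\cE_{\leq}^c]}$ and the crude second-moment bound $\Erw[(\ln Z)^2]=O(n^2)$ (which follows from $|\ln Z|\leq M\cdot\max_a(-\ln\min_\sigma\psi_a)+n|\ln p_{\min}|$ together with~\eqref{eqP}) yields the first assertion.

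The second estimate requires a genuinely different coupling because $\G''_{n,\omega}$ and $\G_{n+1,\omega}$ are not identically distributed: in $\G''_{n,\omega}$ the $d$ fresh constraints are pairwise distinct and each meets $\hat v$ at exactly one clone, whereas in $\G_{n+1,\omega}$ the $d$ clones of $v_{n+1}$ are paired uniformly and so collide with probability $O(1/n)$. A direct pairing count shows that $\G''_{n,\omega}$ is distributed as $\G_{n+1,\omega}$ conditioned on the generic event $\cN$ that no collision occurs, and $\pr[\cN]=1-O(1/n)$. The target difference therefore factors as $\pr[\cN^c]\cdot|\Erw[\ln Z(\G_{n+1,\omega})\mid\cN^c]-\Erw[\ln Z(\G_{n+1,\omega})\mid\cN]|$, and Azuma's inequality applied to the constraint-exposure martingale gives $|\ln Z(\G_{n+1,\omega})-\Erw\ln Z(\G_{n+1,\omega})|=O(\sqrt n\log n)$ with probability $1-O(n^{-2})$, so the conditional-expectation gap is $O(\sqrt n)$ and the product is $O(n^{-1/2})=o(1)$. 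Both simple-graph estimates then follow from the preceding two after conditioning on the simplicity event $\cS$, since Fact~\ref{Fact_simple} guarantees $\pr[\cS]=\Theta(1)$ and so inflates the expected error by only the bounded factor $\pr[\cS]^{-1}$.

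\textbf{The main obstacle} is the Azuma concentration of $\ln Z$: the weight functions $\psi\in\Psi$ are not uniformly bounded away from $0$, so the martingale differences under constraint-exposure are not a priori bounded. I would handle this by a standard truncation, restricting to the event $\{\min_\sigma\PSI\geq n^{-c}\}$ for a small $c>0$, applying Azuma on the truncated process (whose differences are $O(\log n)$), and absorbing the truncation error into~\eqref{eqP}.
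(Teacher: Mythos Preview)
Your treatment of the first two (unconditioned) estimates is essentially sound, though it diverges from the paper in two respects. For the first, the paper observes that $\hat M_{n,\omega}\leq M_{n,\omega}$ holds \emph{deterministically} (a short computation using $d\geq3$, $k\geq2$), so $\G_{n,\omega}$ and $\G'_{n,\omega}$ are exactly identically distributed and the difference is zero---no exceptional event is needed. For the second, the paper avoids Azuma entirely: on the complement of your event $\cN$ it builds an explicit coupling under which $\G_{n+1,\omega}$ and $\G''_{n,\omega}$ differ in at most $2d$ constraint nodes, so the free-energy gap on that event is $O(1)$ pointwise by~\eqref{eqP}, and the product with $\pr[\cN^c]=O(1/n)$ is immediate. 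Your concentration route can be pushed through with the truncation you sketch, but it is noticeably heavier and the bounded-differences step needs care.

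The real gap is in the simple-graph estimates. Your claim that these ``follow from the preceding two after conditioning on $\cS$'' is incorrect, because $\GG'_{n,\omega}$ is \emph{not} the law of $\G'_{n,\omega}$ conditioned on simplicity. By construction, $\GG'_{n,\omega}$ first conditions $\hat\G_{n,\omega}$ on being simple and then adds constraints conditioned on the \emph{extension} being simple; this two-stage conditioning reweights each simple sub-graph $g$ by the factor $\pr[\text{extension simple}\mid g]^{-1}$, which is not constant across $g$ (it depends on the cavity structure, e.g.\ whether a variable has degree $d-2$ or whether two cavities share a constraint). Even though your first estimate gives $\G_{n,\omega}\stackrel{d}{=}\G'_{n,\omega}$ and hence $\GG_{n,\omega}\stackrel{d}{=}(\G'_{n,\omega}\mid\cS)$, you still owe a separate argument that $\Erw\ln Z(\G'_{n,\omega}\mid\cS)$ and $\Erw\ln Z(\GG'_{n,\omega})$ are close. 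The paper does not bypass this: it spends roughly a page coupling $\GG_{n,\omega}$ with $\GG'_{n,\omega}$ (and likewise $\GG_{n+1,\omega}$ with $\GG''_{n,\omega}$) through an explicit case analysis on the degree sequence of the cavities---handling separately the events that all cavities have degree $d-1$, that one has degree $d-2$, and (for $k=2$) that two cavities are adjacent to a common constraint---and showing that on each rare event the two graphs can be coupled to differ in only $\tilde O(1)$ constraints. Your one-line reduction does not capture any of this, and the stated inflation-by-$\pr[\cS]^{-1}$ heuristic is simply false for conditional expectations on events of constant probability.
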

\begin{proof}
Since $d\geq3$ and $k\geq2$,
	\begin{align*}
	\hat M_{n,\omega}-\lfloor dN_{n,\omega}/k\rfloor
		&\leq dN_{n,\omega}/k+1+d/k-d-\lfloor dN_{n,\omega}/k\rfloor
		\leq2-d(1-1/k)\leq1/2.
	\end{align*}
Because the left-hand side is an integer, we conclude that $\hat M_{n,\omega}\leq\lfloor dN_{n,\omega}/k\rfloor$.
Similarly,
	\begin{align*}
	\hat M_{n,\omega}-\bc{\lfloor d N_{n,\omega}/k\rfloor+\Delta_{n,\omega}-d\vX-\vY}
	&\leq 2-d(1-1/k)\leq1/2.
	\end{align*}
Thus, $M_{n,\omega}\geq\hat M_{n,\omega}$.
Hence, $\G_{n,\omega}$ and $\G'$ are identically distributed.

Moving on to the second claim, we consider the event $\cA$ that the last variable node is adjacent to precisely $d$ distinct constraint nodes.
Then 
	\begin{equation}\pr\brk{\G_{n+1,\omega}\in\cA}=1-O(\omega/n),\label{eqLemma_G'G''1}\end{equation}
while $\G_{n,\omega}''\in\cA$ with certainty.
Given $\cA$ and given that $\vX+\vY\leq\sqrt n$, say, the subgraph $\tilde\G_{n+1,\omega}$ obtained from $\G_{n+1,\omega}$ by deleting $\tilde v$ along with its adjacent constraint nodes is distributed precisely as $\hat\G_{n,\omega}$, and therefore $\G''_{n,\omega}$ and $\G_{n+1,\omega}$ can be coupled identically.
Hence,
	\begin{equation}\Erw\brk{\ln Z(\G''_{n,\omega})\mid \vX+\vY\leq\sqrt n}
			=\Erw\brk{\ln \G_{n+1,\omega}\mid\cA,\,\vX+\vY\leq\sqrt n}.\end{equation}
If, on the other hand, $\vX+\vY\leq\sqrt n$ but $\cA$ does not occur, then we can couple $\G_{n+1,\omega}$ and $\G''_{n,\omega}$ such that both disagree on at most $2d$ constraint nodes.
Indeed, suppose that $v_{N_{n+1,\omega}}$ has $\tilde d<d$ adjacent constraints in $\G_{n+1,\omega}$.
Then the subgraph obtained by removing $v_{N_{n+1,\omega}}$, its $\tilde d$ neighbors and another $d-\tilde d$ random constraint nodes is distributed precisely as $\hat\G_{n,\omega}$.
Hence, we can obtain both $\G_{n+1,\omega}$ and $\G_{n,\omega}''$ from $\hat\G_{n,\omega}$ by adding $d$ (possibly distinct) constraint nodes.
Thus, (\ref{eqP}) ensures that
	\begin{equation}\Erw\brk{\ln Z(\G''_{n,\omega})\mid\vX+\vY\leq\sqrt n}=\Erw\brk{\ln Z(\G_{n+1,\omega})\mid\overline{\cA},\,\vX+\vY\leq\sqrt n}+O(1).\label{eqLemma_G'G''3}\end{equation}
Furthermore, (\ref{eqP}) ensures that
		\begin{equation}\Erw\brk{\ln Z(\G''_{n,\omega})\mid\vX,\vY},\Erw\brk{\ln Z(\G_{n+1,\omega})\mid\vX,\vY}=O(n).\label{eqLemma_G'G''3a}\end{equation}
Since $\pr\brk{X+Y>\sqrt n}=o(n^{-2})$, 
(\ref{eqLemma_G'G''1})--(\ref{eqLemma_G'G''3a}) yield the second assertion.

Matters get slightly more complicated once we condition on $\cS$.
Since $\vX+\vY\leq\ln n$ with probability $1-O(n^{-k})$, due to (\ref{eqP}) the event $\vX+\vY>\ln n$ contributes no more than an additive $o(1)$ to the difference of the free energies.
Hence, we may condition on $\vX+\vY\leq\ln n$.
Let $\vec d'$ be the vector comprising the variable degrees in $\hat\GG_{n,\omega}$.
Let $\cD$ be the set of all such sequences with entries either $d$ or $d-1$.
A standard moment calculation shows that given any possible $\vec d'$, the event $\hat\G_{n,\omega}\in\cS$ has probability $(1+o(1))\exp\brk{-(d-1)(k-1)/2-\vecone\{k=2\}(d-1)^2/4}$ (cf.\ Fact~\ref{Fact_simple}).
Therefore, with $\tilde O(\nix)$ hiding poly-logarithmic terms,
	\begin{align}\label{eqDealWithSimple1a}
	\pr\brk{\vec d'\in\cD\mid\vX+\vY\leq\ln n}&=1-\tilde O(1/n).
	\end{align}
Similarly, let $\vec d$ comprise the variable degrees of the factor graph $\GG_{n,\omega}^-$ obtained from $\GG_{n,\omega}$ by deleting the last $d$ constraint nodes.
Then
	\begin{align}\label{eqDealWithSimple1b}
	\pr\brk{\vec d\in\cD\mid\vX+\vY\leq\ln n}&=1-\tilde O(1/n).
	\end{align}
Additionally, let $\cE$ be the set of all factor graphs that have a constraint node that is adjacent to variable nodes of degree less than $d$ only.
Then
	\begin{align}\label{eqDealWithSimple1c}
	\pr\brk{\hat\GG_{n,d}\in\cE\mid\vX+\vY\leq\ln n},\pr\brk{\GG_{n,\omega}^-\in\cE\mid\vX+\vY\leq\ln n}&=\tilde O(n^{1-k}).
	\end{align}
Further, on the event $\cD\setminus\cE$ we can couple $\GG_{n,\omega}'$ and $\GG_{n,\omega}$ identically,
because there is no way of adding the missing constraint nodes to $\hat\GG_{n,\omega}$ without obtaining a simple factor graph.
Hence,
	\begin{align}\label{eqDealWithSimple1}
	\Erw\brk{\ln Z(\GG_{n,\omega})\mid \vec d\in\cD,\,\GG_{n,\omega}^-\not\in\cE,\vX+\vY\leq\ln n}
		=\Erw\brk{\ln Z(\GG'_{n,\omega})\mid\vec{\hat d}\in\cD,\,\hat\GG_{n,\omega}\not\in\cE,\vX+\vY\leq\ln n}.
	\end{align}

But \eqref{eqDealWithSimple1} does not yet suffice to prove~\eqref{eqLemma_G'G''cpl1} because outside the event $\cD\setminus\cE$ the free energies of the two factor graphs may differ by $\Omega(n)$.
Hence, we also need to consider the event $\cD'$ that $\vec d$ has a single $d-2$ entry; this suffices because
	\begin{align}\label{eqDealWithSimple2}
	\pr\brk{\vec d\not\in\cD\cup\cD'\mid\vX+\vY\leq\ln n},\pr\brk{\vec{\hat d}\not\in\cD\cup\cD'\mid\vX+\vY\leq\ln n}&=1-\tilde O(n^{-2})
	\end{align}
and thus the contribution of the complement of $\cD\cup\cD'$ to the free energy difference is $o(1)$ due to (\ref{eqP}).
Considering the event $\cD'$ is indeed necessary because $\pr[\vec{\hat d}\in\cD'\mid\vX+\vY\leq\ln n]>\pr[\vec d\in\cD'\mid\vX+\vY\leq\ln n]$.
Indeed, while $\hat\GG_{n,\omega}$ is just a uniformly random simple factor graph with $N_{n,\omega}$ variable and $\hat M_{n,\omega}$ constraint nodes, $\GG_{n,\omega}^-$ has a tilted distribution, with each possible simple graph being weighed according to the number of extensions into a simple graph with $M_{n,\omega}$ constraints.
In effect, since variable nodes of degree less than $d-1$ leave us with fewer extensions, the event $\cD'$ is less likely in $\GG_{n,\omega}^-$.
Yet because $M_{n,\omega}-\hat M_{n,\omega}$ is bounded, 
on the event $\cD'$ we can couple $\GG_{n,\omega}$ and $\GG_{n,\omega}'$ such that both differ only in a bounded number of constraint nodes.
As a consequence,
	\begin{align}\label{eqDealWithSimple3}
	\Erw\brk{\ln Z(\GG_{n,\omega})\mid \vec d\in\cD',\,\GG_{n,\omega}^-\not\in\cE,\mid\vX+\vY\leq\ln n}
		=\Erw\brk{\ln Z(\GG'_{n,\omega})\mid\vec{\hat d}\in\cD',\,\hat\GG_{n,\omega}\not\in\cE,\mid\vX+\vY\leq\ln n}+O(1).
	\end{align}
Additionally, we claim that also $\GG_{n,\omega}$ given $\vec d\in\cD$ and $\GG'_{n,\omega}$ given $\vec{\hat d}\in\cD'$ can be coupled such that with probability $1-\tilde O(1/n)$ both differ only in $\tilde O(1)$ constraint nodes and that, in effect,
	\begin{align}\label{eqDealWithSimple4}
	\Erw\brk{\ln Z(\GG_{n,\omega})\mid \vec d\in\cD,\,\GG_{n,\omega}^-\not\in\cE,\mid\vX+\vY\leq\ln n}
			=\Erw\brk{\ln Z(\GG'_{n,\omega})\mid\vec{\hat d}\in\cD',\,\hat\GG_{n,\omega}\not\in\cE,\mid\vX+\vY\leq\ln n}+\tilde O(1).
	\end{align}
To see this, let $u_1,\ldots,u_\ell$ be the variables nodes of degree less than $d$ in $\GG_{n,\omega}^-$; suppose, indeed, that all of them have degree $d-1$.
Similarly, let $u_1',\ldots,u_{\ell-1}'$ be the cavities of $\hat\GG_{n,\omega}$, all of degree $d-1$ except for $u_{\ell-1}'$, which has degree $d-2$.
Pick a further variable node $u_\ell'$ of degree $d$ randomly.
Then with probability $1-\tilde O(n^{-1})$ the second neighborhoods $\partial^2\{u_1,\ldots,u_\ell\}$,
	$\partial^2\{u_1',\ldots,u_\ell'\}$ both have size $\ell(k-1)(d-1)$.
Consequently, the subgraphs of $\GG_{n,\omega}^-$ and $\hat\GG_{n,\omega}$ obtained by removing $u_1,\ldots,u_\ell$ and $u_1',\ldots,u_\ell'$ along with their neighbors, respectively, can be coupled such that both coincide with probability $1-\tilde O(1/n)$.
Thus, $\GG_{n,\omega}$ and $\GG_{n,\omega}'$ can be coupled such that the expected number of constraint nodes on which the two factor graphs differ is $\tilde O(1)$, whence we obtain (\ref{eqDealWithSimple4}).

To deal with the event $\cE$, we may assume that $k=2$ due to (\ref{eqDealWithSimple1a}).
Furthermore, because of (\ref{eqDealWithSimple1c}) and as
	\begin{align}\label{eqDealWithSimple8}
	\pr[\vec d'\not\in\cD\mid\hat\GG_{n,\omega}\in\cE,\mid\vX+\vY\leq\ln n]&=\tilde O(1/n),&
	\pr[\vec d\not\in\cD\mid\GG_{n,\omega}^-\in\cE,\mid\vX+\vY\leq\ln n]&=\tilde O(1/n),
	\end{align}	
we may assume that $\vec d,\vec d'\in\cD$.
Since 
	\begin{align}\label{eqDealWithSimple9}
	\pr\brk{\hat\GG_{n,\omega}\in\cE\mid\vec d'\in\cD,\vX+\vY\leq\ln n}&\geq
			\pr\brk{\GG_{n,\omega}^-\in\cE\mid\vec d\in\cD,\vX+\vY\leq\ln n}
	\end{align}	
because the event $\cE$ precludes certain extensions into a simple factor graph with $M_{n,\omega}$ constraints,
we just need to consider the case that $\hat\GG_{n,\omega}\in\cE$ and $\GG_{n,\omega}^-\not\in\cE$ given that $\vec d,\vec d'\in\cD$.
Let $u_1,\ldots,u_\ell$  and $u_1',\ldots,u_{\ell}'$ be the cavities of $\GG_{n,\omega}^-$ and $\hat\GG_{n,\omega}$, respectively.
Pick one further constraint node $b$ of $\hat\GG_{n,\omega}$.
Then with probability $1-\tilde O(1/n)$ the set $\partial^2\{u_1,\ldots,u_\ell\}$ has size $\ell(k-1)(d-1)$, and all variable nodes in this set have pairwise distance at least four.
The same is true of the set $\partial^2\{u_1',\ldots,u_\ell'\}\cup\partial b$ with probability $1-\tilde O(1/n)$.
If these two events occur, then $\GG_{n,\omega}$ and $\GG_{n,\omega'}$ can be coupled such that they only differ on the $\tilde O(1)$ constraint nodes that are adjacent to $u_1,\ldots,u_\ell$ and $u_1',\ldots,u_\ell'$ and $b$.
Hence, we obtain a coupling such that $\GG_{n,\omega}$ and $\GG_{n,\omega'}$  only differ on $\tilde O(1/n)$ variable nodes in expectation, and thus
	\begin{align}\label{eqDealWithSimple6}
	\Erw\brk{\ln Z(\GG_{n,\omega}')\mid \vec d'\in\cD,\,\hat\GG_{n,\omega}\in\cE,\vX+\vY\leq\ln n}
		&=\Erw\brk{\ln Z(\GG_{n,\omega})\mid \vec d\in\cD,\,\GG_{n,\omega}^-\not\in\cE,\vX+\vY\leq\ln n}+\tilde O(1).
	\end{align}
Moreover, because given $\cE$ there is precisely one constraint involving variables of degree $d-1$ only with probability $1-\tilde O(1/n)$, we obtain
	\begin{align}\label{eqDealWithSimple7}
	\Erw\brk{\ln Z(\GG_{n,\omega}')\mid \vec d'\in\cD,\,\hat\GG_{n,\omega}\in\cE,\vX+\vY\leq\ln n}
		&=\Erw\brk{\ln Z(\GG_{n,\omega})\mid \vec d\in\cD,\,\GG_{n,\omega}^-\in\cE,\vX+\vY\leq\ln n}+\tilde O(1).
	\end{align}
Combining (\ref{eqDealWithSimple6})--(\ref{eqDealWithSimple7}), we obtain the left bound stated in (\ref{eqLemma_G'G''cpl1}).

We proceed similarly to derive the right bound in \eqref{eqLemma_G'G''cpl1}.
Indeed, in this case we do not need to consider the event $\cE$ separately, because all additional constraint nodes are connected with a variable node that does not belong to $\hat\GG_{n,\omega}$ or $\GG_{n,\omega}^-$, respectively.
Hence, on the event $\cD$ we can couple $\GG_{n,\omega}''$ and $\GG_{n+1,\omega}$ identically, and thus
	\begin{align}\label{eqDealWithSimple10}
	\Erw\brk{\ln Z(\GG_{n+1,\omega})\mid\vec d\in\cD,\vX+\vY\leq\ln n}&=
		\Erw\brk{\ln Z(\GG_{n,\omega}'')\mid\vec d'\in\cD,\vX+\vY\leq\ln n}.
	\end{align}
In effect, due to \eqref{eqDealWithSimple2} we just need to construct a coupling in the event that $\hat\GG_{n,\omega}\in\cD'$ and $\hat\GG_{n,\omega}^-\in\cD$.
To this end, we proceed as above by coupling $\hat\GG_{n,\omega}$, $\GG_{n,\omega}^-$ given the second neighborhoods of the cavities such that both only differ in an expected $\tilde O(1/n)$ constraint nodes.
Since $\pr\brk{\vec d'\in\cD'\mid\vX+\vY\leq\ln n},\pr\brk{\vec d\in\cD'\mid\vX+\vY\leq\ln n}=\tilde O(1/n)$ and 
	$\pr\brk{\vec d\in\cD'\mid\vX+\vY\leq\ln n}\geq\pr\brk{\vec d'\in\cD'\mid\vX+\vY\leq\ln n}$, the second part of (\ref{eqLemma_G'G''cpl1}) follows from (\ref{eqDealWithSimple10}).
\end{proof}

We are ready to compare the free energies of $Z(\G'')$, $Z(\hat\G)$ and $Z(\G')$, $Z(\hat\G)$ and of the corresponding simple graphs.
We will carry the proofs out for the case of the simple random factor graph $\hat\GG$; the other case is simply obtained by skipping any deliberations pertinent to the event $\cS$.

\begin{lemma}\label{Lemma_G''}
We have
	\begin{align*}
\Erw\brk{\ln\frac{Z(\G_{n,\omega}'')}{Z(\hat\G_{n,\omega})}}&=\Erw\brk{\ln\scal{\VARPHI_1}{\hat\rho_{n,\omega}}}+o_\omega(1),&
\Erw\brk{\ln\frac{Z(\GG_{n,\omega}'')}{Z(\hat\GG_{n,\omega})}}&=\Erw\brk{\ln\scal{\VARPHI_1}{\tilde\rho_{n,\omega}}}+o_\omega(1).
\end{align*}
\end{lemma}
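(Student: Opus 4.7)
The plan is to compute $Z(\G''_{n,\omega})/Z(\hat\G_{n,\omega})$ by direct bookkeeping of partition functions and match the result against $\scal{\VARPHI_1}{\hat\rho_{n,\omega}}$ up to an $o_\omega(1)$ error.  Recall that $\G''_{n,\omega}$ is obtained from $\hat\G_{n,\omega}$ by inserting a new variable node $\hat v$ with prior $p$, together with $d$ new constraint nodes $\hat a_1,\ldots,\hat a_d$, each adjacent to $\hat v$ through its $\hat\vh_i$-th clone and to $k-1$ cavities $\vy_{i,j}$ ($j\neq\hat\vh_i$) selected uniformly without replacement from $\hat\cC$.  Expanding the numerator yields the exact identity
\begin{align*}
\frac{Z(\G''_{n,\omega})}{Z(\hat\G_{n,\omega})}=\sum_{\chi\in\Omega}p(\chi)\scal{\prod_{i=1}^d\psi_{\hat a_i}(\TAU^{(i)})}{\mu_{\hat\G_{n,\omega},\hat\cC}},
\end{align*}
where $\TAU^{(i)}\in\Omega^k$ is determined by $\TAU^{(i)}_{\hat\vh_i}=\chi$ and $\TAU^{(i)}_j=\SIGMA_{\vy_{i,j}}$ for $j\neq\hat\vh_i$.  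Thus $\Erw\ln[Z(\G''_{n,\omega})/Z(\hat\G_{n,\omega})]$ is a functional of the joint Boltzmann marginal $\mu_{\hat\G_{n,\omega},\hat\cC}$ alone, randomised over the weight functions $\psi_{\hat a_i}$, the anchor slots $\hat\vh_i$, and the choice of cavity neighbours $\vy_{i,j}$.

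The joint distribution $\mu_{\hat\G_{n,\omega},\hat\cC}$ is exactly what the kernel $\hat\rho_{n,\omega}=\dot\mu_{\hat\G_{n,\omega},\hat\cC}$ encodes.  Unfolding the definition of $\scal{\nix}{\nix}$ with $m=1$ and $n=dk$ shows that $\scal{\VARPHI_1}{\hat\rho_{n,\omega}}$ computes the same combinatorial sum as the display above, modulo two differences: first, the random $\psi_{\hat a_i}$, $\hat\vh_i$ are replaced by the identically distributed $\PSI_{1,j}$, $\vh_{1,j}$, which is cosmetic; second, the $d(k-1)$ cavity slots are filled by samples drawn uniformly \emph{with} replacement from $\hat\cC$ rather than without.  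Since $|\hat\cC|$ is at least of order $\omega$ with probability $1-\exp(-\Omega(\omega))$ by a Chernoff bound on the associated Poisson variables, the two sampling schemes can be coupled to agree on an event of probability $1-O(1/\omega)$.  Hence the random variables $R:=Z(\G''_{n,\omega})/Z(\hat\G_{n,\omega})$ and $R':=\scal{\VARPHI_1}{\hat\rho_{n,\omega}}$ satisfy $R=R'$ off an event $\cN$ of probability $O(1/\omega)$.

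To convert this pointwise coupling into a bound on $|\Erw\ln R-\Erw\ln R'|$ one needs uniform integrability of $\ln R$ and $\ln R'$.  The elementary lower bound $R,R'\geq\min_\chi p(\chi)\cdot\prod_{i=1}^d\min_{\tau\in\Omega^k}\psi_{\hat a_i}(\tau)$ together with hypothesis (\ref{eqP}) yields $L^2$-norms of $|\ln R|$ and $|\ln R'|$ that are uniformly bounded in $n$ and $\omega$, and Cauchy--Schwarz gives
\begin{align*}
\abs{\Erw\ln R-\Erw\ln R'}\leq\Erw\brk{\vecone_{\cN}(|\ln R|+|\ln R'|)}\leq\sqrt{\pr[\cN]\cdot\Erw\brk{(|\ln R|+|\ln R'|)^2}}=O(\omega^{-1/2})=o_\omega(1).
\end{align*}
The second assertion, for $\hat\GG_{n,\omega}$ and $\GG''_{n,\omega}$, follows by repeating the entire argument while conditioning on the event $\cS$; Fact~\ref{Fact_simple} gives $\pr[\cS]=\Theta(1)$, so the conditional moment bounds inflate by at most a constant factor and all error terms remain $o_\omega(1)$.

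The main obstacle is precisely the passage from pointwise equality of the ratios to comparison of their expected logarithms, since $\ln R$ and $\ln R'$ are unbounded when some weight function $\psi$ is close to zero; this is exactly what the exponential moment hypothesis (\ref{eqP}) is designed to handle, and what makes the Cauchy--Schwarz step go through.  A secondary subtlety is the bookkeeping that matches $\VARPHI_1$ with the combinatorial sum from the first paragraph: one must carefully unfold the $\scal{\nix}{\nix}$-integral for a $\dot\mu_{\hat\G_{n,\omega},\hat\cC}$-kernel with $m=1$, $n=dk$, observe that the $\sigma$-coordinate at each position $k(j-1)+\vh_{1,j}$ integrates against the indicator and the kernel value in a way that absorbs one copy of the mass, and identify the remaining $d(k-1)$ coordinates with the independent cavity samples.
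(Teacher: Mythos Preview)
Your proposal is correct and follows essentially the same route as the paper: write the exact ratio $Z(\G''_{n,\omega})/Z(\hat\G_{n,\omega})$ as an average against $\mu_{\hat\G_{n,\omega},\hat\cC}$, replace without-replacement sampling of the $d(k-1)$ cavity neighbours by with-replacement sampling (valid because $|\hat\cC|=\Omega(\omega)$), identify the result with $\scal{\VARPHI_1}{\hat\rho_{n,\omega}}$, and control the $o_\omega(1)$ error via the moment bound~(\ref{eqP}). The only cosmetic difference is that the paper first conditions on a good event $\cA$ (enough cavities, all of degree $d-1$, no shared constraint) and invokes~(\ref{eqP}) to discard its complement, whereas you couple the two random variables directly and use Cauchy--Schwarz with an $L^2$ bound on $|\ln R|$; both arguments are equivalent here.
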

\begin{proof}
Let $\cA$ be the event that $\hat\GG_{n,\omega}$ has at least $\omega/2$ cavities, that all variable nodes have degree either $d$ or $d-1$ and that no two variable nodes of degree $d-1$ are adjacent to the same constraint node.
Then $\pr\brk{\cA}=1-\exp(-\Omega_\omega(\omega))$.
Hence, (\ref{eqP}) ensures that
	\begin{align}\label{eqLemma_G''_1}
	\Erw[\ln(Z(\GG_{n,\omega}'')/Z(\hat\GG_{n,\omega}))]&=\Erw[\ln(Z(\GG_{n,\omega}'')/Z(\hat\GG_{n,\omega}))|\cA]+o_\omega(1).
	\end{align}
Moreover, on $\cA$ the random factor graph $\GG_{n,\omega}''$ is obtained from $\GG_{n,\omega}$ by adding one variable node $\hat v$ along with $d$ constraint nodes $\hat a_1,\ldots,\hat a_d$, whose weight functions are drawn from $P$ independently.
Further, on the event $\cA$ all neighbors of the $\hat a_i$ except $\hat v$ belong to the set $\hat\cC$ of cavities.
Therefore, we have the exact formula
	\begin{align}%\nonumber
	\frac{Z(\GG_{n,\omega}'')}{Z(\hat\GG_{n,\omega})}&=
		\scal{\sum_{\chi\in\Omega}p(\chi)\prod_{i=1}^d\sum_{\tau\in\Omega^{\partial\hat a_i}}\psi_{\hat a_i}
			\vecone\{\tau_{\hat x}=\chi,\,\forall y\in\partial\hat a_i\setminus\hat x:\SIGMA_y=\tau_y\}}{\mu_{\hat\GG_{n,\omega},\hat\cC}}.
			\label{eqLemma_G''_2}
	\end{align}
To proceed, let $(\vv_{i,j})_{i,j}$ be a sequence of uniformly and independently chosen cavities $\vv_{i,j}\in\hat\cC$.
We claim that on the event $\cA$,
	\begin{align}\nonumber
	\Erw&\brk{\log\frac{Z(\GG_{n,\omega}'')}{Z(\hat \GG_{n,\omega})}\,\bigg|\,\hat\GG_{n,\omega}}\\
		&\qquad=o_\omega(1)+
		\Erw\brk{\ln\scal{\sum_{\chi\in\Omega}p(\chi)\prod_{i=1}^d\sum_{\tau\in\Omega^{k}}\PSI_{i}(\tau)
			\vecone\{\tau_{\vh_i}=\chi,\,\forall h\in[k]\setminus \vh_i:\SIGMA_{\vv_{i,h}}=\tau_h\}}{\mu_{\hat\GG_{n,\omega},\hat\cC}}\,\bigg|\,\hat\GG_{n,\omega}}.			\label{eqLemma_G''_3}
	\end{align}
Indeed, the only difference between
(\ref{eqLemma_G''_2}) and (\ref{eqLemma_G''_3}) is that in the former the neighbours $\partial\hat a_i\setminus\hat v$ are chosen from $\hat\cC$ without replacement, whereas the $\vv_{i,j}$ are chosen independently, i.e., with replacement.
But since we choose a mere $dk$ cavities $(\vv_{i,j})_{i\in[d], j\in[k]}$ out of a total of at least $\omega/2$, the probability of hitting the same cavity twice is $o_\omega(1)$, and thus (\ref{eqLemma_G''_3}) follows from (\ref{eqP}).
Further, unravelling the definitions of $\tilde\rho_{n,\omega}$ and $\VARPHI_1$, we see that 
	\begin{align}
	\Erw&\brk{\ln\scal{\sum_{\chi\in\Omega}p(\chi)\prod_{i=1}^d\sum_{\tau\in\Omega^{k}}\PSI_{i}(\tau)
			\vecone\{\tau_{\vh_i}=\chi,\,\forall h\in[k]\setminus \vh_i:\SIGMA_{\vv_{i,h}}=\tau_h\}}{\mu_{\hat\GG_{n,\omega},\hat\cC}}\,\bigg|\,\hat\GG_{n,\omega}}\nonumber\\
			&\qquad\qquad\qquad=\Erw\brk{\log\scal{\VARPHI_1}{\tilde\rho_{n,\omega}}\mid\hat\GG_{n,\omega}}.
						\label{eqLemma_G''_4}
	\end{align}
Finally, the assertion follows from (\ref{eqLemma_G''_1})--(\ref{eqLemma_G''_4}) by taking the expectation on $\hat\GG_{n,\omega}$.
\end{proof}

\begin{lemma}\label{Lemma_G'}
We have
	\begin{align*}
	\Erw\brk{\ln\frac{Z(\G_{n,\omega}')}{Z(\hat\G_{n,\omega})}}&=
			\Erw\scal{\ln\bigoplus_{\hat M_{n,\omega}<i\leq M_{n,\omega}}\PSI_{1,i}}{\hat\rho_{n,\omega}}+o_\omega(1),&
	\Erw\brk{\ln\frac{Z(\GG_{n,\omega}')}{Z(\hat\GG_{n,\omega})}}&=
			\Erw\scal{\ln\bigoplus_{\hat M_{n,\omega}<i\leq M_{n,\omega}}\PSI_{1,i}}{\tilde\rho_{n,\omega}}+o_\omega(1).
	\end{align*}
\end{lemma}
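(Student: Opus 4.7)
The strategy mirrors the proof of \Lem~\ref{Lemma_G''}. The first key observation is that, unlike the random number of cavities, the difference $M_{n,\omega}-\hat M_{n,\omega}=M_{n,\omega}-(M_{n+1,\omega}-d)$ is bounded by a constant $C=C(d,k)$ depending only on $d$ and $k$: from the definitions one computes $M_{n+1,\omega}-M_{n,\omega}\approx d/k$ (up to the rounding correction $\Delta$), so $M_{n,\omega}-\hat M_{n,\omega}\approx d(k-1)/k$. Thus we are adding only $O(1)$ constraints, each attached to $k$ cavities, so only $O(1)$ cavity slots are touched.

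The plan is to introduce the high-probability event $\cA$ on which (i)~$|\hat\cC|\geq \omega/2$, (ii)~every variable node of $\hat\G_{n,\omega}$ has degree $d$ or $d-1$ with no two degree-$(d-1)$ variables adjacent to a common constraint, (iii)~$|M_{n,\omega}-\hat M_{n,\omega}|\leq C$, and (iv)~each weight function $\psi_{a_i}$ of the added constraints satisfies $\min_\sigma \psi_{a_i}(\sigma)\geq\omega^{-1/4}$, say. Conditions (i)--(iii) hold with probability $1-o_\omega(1)$ by construction, and (iv) holds with probability $1-o_\omega(1)$ by the tail bound~\eqref{eqP}. Using (\ref{eqP}) again, the contribution of $\cA^c$ to the free energy difference is $o_\omega(1)$, so we work on $\cA$. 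On $\cA$ we have the exact identity
\begin{align*}
\frac{Z(\G_{n,\omega}')}{Z(\hat\G_{n,\omega})}
=\scal{\prod_{i=\hat M_{n,\omega}+1}^{M_{n,\omega}}\sum_{\tau\in\Omega^{\partial a_i}}\psi_{a_i}(\tau)\vecone\{\forall y\in\partial a_i:\SIGMA_y=\tau_y\}}{\mu_{\hat\G_{n,\omega},\hat\cC}},
\end{align*}
because the neighbors of the added constraints are cavities of $\hat\G_{n,\omega}$.

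Next, let $(\vv_{i,j})_{\hat M<i\leq M,\,j\in[k]}$ be a sequence of cavities chosen independently and uniformly at random with replacement from $\hat\cC$, and compare with the actual uniform-without-replacement pairing. Because we choose only $k(M_{n,\omega}-\hat M_{n,\omega})\leq kC$ cavities out of at least $\omega/2$, the probability of any collision is $O(1/\omega)=o_\omega(1)$. Combined with the weight-function bound from (iv) and~\eqref{eqP}, this yields
\begin{align*}
\Erw\brk{\ln\frac{Z(\G_{n,\omega}')}{Z(\hat\G_{n,\omega})}\,\bigg|\,\hat\G_{n,\omega}}
=o_\omega(1)+\Erw\brk{\ln\scal{\prod_{i=\hat M+1}^{M}\sum_{\tau\in\Omega^k}\PSI_{1,i}(\tau)\prod_{j=1}^k\vecone\{\SIGMA_{\vv_{i,j}}=\tau_j\}}{\mu_{\hat\G_{n,\omega},\hat\cC}}\,\bigg|\,\hat\G_{n,\omega}}.
\end{align*}
Unravelling the definitions of $\hat\rho_{n,\omega}=\dot\mu_{\hat\G_{n,\omega},\hat\cC}\in\meas$ and of the $\oplus$ operation, the inner expression equals $\Erw[\ln\scal{\bigoplus_{\hat M<i\leq M}\PSI_{1,i}}{\hat\rho_{n,\omega}}\mid\hat\G_{n,\omega}]$, where the sampling of $s\in[0,1]$ selects a joint assignment of the cavities and the uniform $\vx_{i,j}\in[0,1]$ select cavity indices, exactly matching the with-replacement scheme above. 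Taking the outer expectation yields the first claim.

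For the second claim, involving $\hat\GG_{n,\omega}$ and $\tilde\rho_{n,\omega}$, the same argument goes through once we verify that conditioning on $\cS$ does not spoil (i)--(iv). Standard pairing-model arguments (along the lines of Fact~\ref{Fact_simple}) show that the event $\cS$ has probability bounded away from zero and is asymptotically independent of local cavity statistics, so $\pr[\cA\mid\cS]=1-o_\omega(1)$, and the collision bound goes through verbatim. The main technical obstacle is tracking the cumulative error in passing from without- to with-replacement sampling while controlling the singularities of $\ln\psi$ via~\eqref{eqP}, but as in \Lem~\ref{Lemma_G''} the boundedness of the number of added constraints in $\omega$ (and thus in $n$) makes all relevant errors $o_\omega(1)$.
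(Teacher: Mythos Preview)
Your proposal is correct and follows essentially the same approach as the paper: define the good event $\cA$ (cavities abundant, degrees $d$ or $d-1$, no shared constraint among cavities), write the exact ratio as a Boltzmann average over the cavity distribution, replace the without-replacement pairing by with-replacement sampling at cost $o_\omega(1)$ since only $O(1)$ constraints are added, and identify the result with $\scal{\bigoplus_i\PSI_{1,i}}{\hat\rho_{n,\omega}}$. The only cosmetic difference is that you make the weight-function lower bound (your condition (iv)) part of $\cA$, whereas the paper absorbs this directly into the $o_\omega(1)$ via~\eqref{eqP}; either way works.
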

\begin{proof}
The proof is similar in spirit to the previous one.
Once more we consider the event $\cA$ that $\hat\GG_{n,\omega}$ has at least $\omega/2$ cavities, that all variable nodes have degree either $d$ or $d-1$ and that no two variable nodes of degree $d-1$ are adjacent to a common constraint node.
Then $\pr\brk{\cA}=1-\exp(-\Omega_\omega(\omega))$ and
	\begin{align}\label{eqLemma_G'_1}
	\Erw[\ln(Z(\GG_{n,\omega}')/Z(\hat\GG_{n,\omega}))]&=\Erw[\ln(Z(\GG_{n,\omega}'')/Z(\hat\GG_{n,\omega}))|\cA]+o_\omega(1).
	\end{align}
Moreover, we have the pointwise exact formula
	\begin{align}\label{eqLemma_G'_2}
	\frac{Z(\GG_{n,\omega}')}{Z(\hat\GG_{n,\omega})}&=\scal{\prod_{\hat M_{n,\omega}<i\leq M_{n,\omega}}
		\sum_{\tau\in\Omega^{\partial a_i}}\psi_{a_i}(\tau)\vecone\{\SIGMA_{\partial a_i}=\tau\}}{\mu_{\hat\GG_{n,\omega},\hat\cC}}.
	\end{align}
With $(\vv_{i,j})_{i,j}$ a sequence of independently chosen cavities $\vv_{i,j}\in\hat\cC$, we claim that on $\cA$,	
	\begin{align}			\label{eqLemma_G'_3}
	\Erw\brk{\log\frac{Z(\GG_{n,\omega}')}{Z(\hat \GG_{n,\omega})}\,\bigg|\,\hat\GG_{n,\omega}}&=o_\omega(1)+
		\Erw\brk{\ln\scal{\prod_{i=1}^{M_{n,\omega}-\hat M_{n,\omega}}
		\sum_{\tau\in\Omega^{k}}\PSI_{i}(\tau)\prod_{j=1}^k\vecone\{\SIGMA_{\vv_{i,j}}=\tau_j\}}{\mu_{\hat\GG_{n,\omega},\hat\cC}}\,\bigg|\,\hat\GG_{n,\omega}}.
	\end{align}
Indeed, the only difference is that in (\ref{eqLemma_G'_3}) the $\vv_{i,j}$ are chosen independently, whereas in (\ref{eqLemma_G'_2}) the neighbors of the
$a_i$ are chosen without replacement.
But since $M_{n,\omega}-\hat M_{n,\omega}$ is bounded while there are at least $\omega/2$  cavities, the two terms coincide up to $o_\omega(1)$.
Finally, the construction of $\tilde\rho_{n,\omega}$ ensures that
	\begin{align}			\label{eqLemma_G'_4}
	\Erw\brk{\ln\scal{\prod_{i=1}^{M_{n,\omega}-\hat M_{n,\omega}}
		\sum_{\tau\in\Omega^{k}}\PSI_{i}(\tau)\prod_{j=1}^k\vecone\{\SIGMA_{\vv_{i,j}}=\tau_j\}}{\mu_{\hat\GG_{n,\omega},\hat\cC}}\,\bigg|\,\hat\GG_{n,\omega}}
			&=\Erw\brk{\ln\scal{\bigoplus_{i=1}^{M_{n,\omega}-\hat M_{n,\omega}}\PSI_{1,i}}{\hat\rho_{n,\omega}}\,\bigg|\,\hat\GG_{n,\omega}},
	\end{align}
and thus the assertion follows from (\ref{eqLemma_G'_1})--(\ref{eqLemma_G'_4}) by taking the expectation.
\end{proof}

\noindent
Finally, \Prop~\ref{Prop_Aizenman} is an immediate consequence of \Lem s~\ref{Lemma_G'G''cpl}, \ref{Lemma_G''} and~\ref{Lemma_G'}.

\subsection{Proof of \Prop~\ref{Prop_apxInv}}\label{Sec_apxInv}
Once more we will carry the proof out for the simple random factor graph, which is the (slightly) more intricate case;
the unconditional case follows by skipping any considerations pertaining to the conditioning.
The basic idea behind the proof of \Prop~\ref{Prop_apxInv} is quite simple.
With probability $1-o_\omega(1)$
the random graph $\hat\GG_{n,\omega}$ consists of $N_{n,\omega}$ variable and $\hat M_{n,\omega}$ constraint nodes and we have $N_{n,\omega}=n-\vX$ and 
	$$\hat M_{n,\omega}=M_{n+1,\omega}-d=\lfloor d N_{n,\omega}/k\rfloor\wedge\bc{\lfloor d N_{n,\omega}/k\rfloor+\Delta_{n,\omega}-d\vX-\vY}-d$$
with independent $\Po(\omega)$ variables $\vX,\vY$.
Fix two integers $\ell,\ell'\geq0$.
Given that $\vX\geq\ell$ and $k\hat M_{n,\omega}\leq dN_{n,\omega}-k\ell'-d(k-1)\ell$, let $\hat\GG_{n,\omega}\brk{\ell,\ell'}$ be the random factor graph obtained from $\hat\GG_{n,\omega}$ by adding
	\begin{itemize}
	\item $\ell$ more variable nodes $\hat v_1=v_{N_{n,\omega}+1},\ldots,\hat v_\ell=v_{N_{n,\omega}+\ell}$ along with $d\ell$ new constraint nodes
		$\hat a_{i,j}$, $i\in[\ell]$, $j\in[d]$, each with a weight function chosen from $P$ independently;
		connect a random clone $\hat\vh_{i,j}$ of each $\hat a_{i,j}$ with a random clone of $\hat v_i$ and pair the other $k-1$ clones of $\hat a_{i,j}$ with random cavities of $\hat\GG_{n,\omega}$ left pending by the previous additions.
	\item $\ell'$ more constraint nodes $\hat a_1,\ldots,\hat a_{\ell'}$, each endowed with a weight function chosen from $P$ independently and each connected with $k$ random cavities of $\hat\GG_{n,\omega}$ left vacant by the previous operations.
	\end{itemize}
The resulting random factor graph $\hat\GG_{n,\omega}\brk{\ell,\ell'}$ is not necessarily simple.
Yet the key insight behind \Prop~\ref{Prop_apxInv} is that for any $\ell,\ell'$ the distribution of $\hat\GG_{n,\omega}\brk{\ell,\ell'}$ is close to that of the original graph $\hat\GG_{n,\omega}$, provided that $\omega$ is big enough.
Moreover, the perturbation of the Boltzmann distribution that ensues upon going from $\hat\GG_{n,\omega}$ to $\hat\GG_{n,\omega}\brk{\ell,\ell'}$ is close to the perturbation induced by the $\Join(\ell,\ell')$-operation.
We introduce similar notation $\hat\G_{n,\omega}[\ell,\ell']$ for the random graph without the conditioning on $\cS$.

To formalize this idea, we first compare the distributions of $\hat\GG_{n,\omega}$ and $\hat\GG_{n,\omega}\brk{\ell,\ell'}$.
For integers $x,y$ we denote by $\hat\GG_{n,\omega,x,y}$ the conditional $\hat\GG_{n,\omega}$ given that $\vX=x$ and $\vY=y$.

\begin{lemma}\label{Lemma_perturb1}
For any $\ell,\ell'\geq0$  we have
	$\dTV\bc{\hat\GG_{n,\omega},\hat\GG_{n,\omega}\brk{\ell,\ell'}}=o_\omega(1)$ and analogously
 $\dTV\bc{\hat\G_{n,\omega},\hat\G_{n,\omega}\brk{\ell,\ell'}}=o_\omega(1)$.
\end{lemma}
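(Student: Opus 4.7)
The plan is to exhibit a coupling of $\hat\G_{n,\omega}$ and $\hat\G_{n,\omega}\brk{\ell,\ell'}$ under which the two factor graphs coincide with probability $1-o_\omega(1)$, and then transfer the result to the simple case. The key observation is that $\hat\G_{n,\omega}\brk{\ell,\ell'}$, being obtained from $\hat\G_{n,\omega}$ at Poisson parameters $(\vX,\vY)$ by adjoining $\ell$ new variable nodes together with $d\ell+\ell'$ new constraint nodes, is distributionally identical to $\hat\G_{n,\omega}$ at shifted parameters $(\vX-\ell,\vY+C)$ for a bounded integer $C=C(d,k,\ell,\ell')$ arising from the bookkeeping of $N_{n,\omega}$ and $\hat M_{n,\omega}$. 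Concretely, I would first solve the equations $N_{n,\omega}(\vX-\ell)=N_{n,\omega}(\vX)+\ell$ and $\hat M_{n,\omega}(\vX-\ell,\vY')=\hat M_{n,\omega}(\vX,\vY)+d\ell+\ell'$ for $\vY'$, carefully tracking the floor and the Bernoulli $\Delta_{n,\omega}$; the resulting $\vY'-\vY$ is bounded up to a $\{0,1\}$-valued correction.

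Second, I would invoke the Stein--Chen bound
\[
\dTV(\Po(\omega),\Po(\omega)+c)=O(|c|/\sqrt\omega)\qquad(c\in\ZZ),
\]
to couple $(\vX,\vY)$ with $(\vX+\ell,\vY-C)$ so that both sets of parameters agree with probability $1-o_\omega(1)$. Conditional on matching parameters, the factor graphs $\hat\G_{n,\omega}$ and $\hat\G_{n,\omega}\brk{\ell,\ell'}$ are equidistributed: in the pairing model, any sequential sampling of a uniform bijection of the full clone set (first a uniform pairing on the original clones, then a uniform extension to the new clones) yields a uniform bijection overall. Combined with the invariance of the model under relabeling of variable and constraint nodes and the independent sampling of weight functions, this gives $\dTV(\hat\G_{n,\omega},\hat\G_{n,\omega}\brk{\ell,\ell'})=o_\omega(1)$.

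For the conditioning on $\cS$, note that $\hat\GG_{n,\omega}\brk{\ell,\ell'}$ is obtained from $\hat\GG_{n,\omega}$ by the same extension process as in the unconditional case, so the coupling above lifts to the simple setting by conditioning both sides on their respective base graphs being simple; since $\pr\brk{\hat\G_{n,\omega}\in\cS}=\Omega(1)$ by Fact~\ref{Fact_simple}, the unconditional $o_\omega(1)$ bound transfers with only a constant multiplicative loss. The principal obstacle is the integer arithmetic in the first step: one must verify that the simultaneous shift of $\vX$, $\vY$, and $\Delta_{n,\omega}$ exactly matches the sizes of the enlarged and original graphs so that the Stein--Chen coupling closes without residual size mismatch, with the Bernoulli discrepancy from $\Delta_{n,\omega}$ absorbed at a further $o_\omega(1)$ cost in total variation.
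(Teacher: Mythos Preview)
Your central claim---that ``in the pairing model, any sequential sampling of a uniform bijection of the full clone set (first a uniform pairing on the original clones, then a uniform extension to the new clones) yields a uniform bijection overall''---is a true general fact, but it does not apply here, because the extension $[\ell,\ell']$ is \emph{not} a uniform extension of the pairing. In the construction of $\hat\G_{n,\omega}[\ell,\ell']$, each new constraint node $\hat a_{i,j}$ has one clone forced to be matched to the new variable node $\hat v_i$, and its remaining $k-1$ clones are forced to be matched to \emph{old} cavities; likewise the $\hat a_i$'s are matched only to old cavities. By contrast, under a uniform extension on the enlarged clone set, the probability that a given new constraint clone lands on one of the $d\ell$ new variable clones is $O(d\ell/\omega)=o_\omega(1)$, so with high probability \emph{none} of the new constraint clones would be matched to new variable clones. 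The actual extension forces exactly $d\ell$ such matches. Hence the extension process and the uniform pairing have disjoint typical supports, and the ``sequential sampling'' argument fails outright.

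The paper closes this gap differently. Rather than argue that the extension is uniform, it identifies a high-probability event $\cA'$ on $\hat\GG_{n,\omega}$ that encodes exactly the structural constraints imposed by the extension: that the last $\ell$ variable nodes each have $d$ distinct constraint neighbours, that their $d(k-1)$ second neighbours are distinct non-cavities, that the last $\ell'$ constraint nodes are adjacent only to non-cavities, and so on (conditions (i')--(v')). Dually, it identifies a high-probability event $\cA''$ on the shifted base graph ensuring all cavities have degree $d-1$ and no two share a constraint. The key claim is then that $\hat\GG_{n,\omega,\vX-\ell,\vY-\ell'}[\ell,\ell']$ given $\cA''$ is distributed precisely as $\hat\GG_{n,\omega}$ given $\cA'$: the conditioning on $\cA'$ pins down exactly the structure that the extension forces, so the measure-preserving bijection holds. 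Your proposal needs this conditioning; relabeling invariance alone does not supply it.

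Two smaller points. First, the bookkeeping of the $(\vX,\vY)$ shift is more delicate than ``bounded $C$'': shifting $\vX$ changes $N_{n,\omega}$ and hence also $\lfloor dN_{n,\omega}/k\rfloor$, so the required $\vY$-shift to match constraint counts is not just $\ell'$ plus the floor/Bernoulli residue but also picks up a $d\ell/k$-type term; you should write this out explicitly. Second, your treatment of the simple case is insufficient: $\hat\GG_{n,\omega}[\ell,\ell']$ need not be simple even though $\hat\GG_{n,\omega}$ is, so you cannot simply ``lift'' the coupling by conditioning both sides on simplicity. The paper's $\cA'$/$\cA''$ conditioning handles this as well, since under $\cA''$ the extension cannot create multi-edges (no two cavities share a constraint, and the new constraints connect only to cavities and the fresh $\hat v_i$).
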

\begin{proof}
The event 
	$\cA=\{\omega/2\leq\vX\leq2\omega,\, \omega/2\leq\vY\leq2\omega\}$
 has probability $1-o_\omega(1)$.
Further, because $\vX,\vY$ are independent Poisson variables with a large mean $\omega$ while $\ell,\ell'$ are fixed, the total variation distance of the pairs $(\vX,\vY)$ and $(\vX-\ell,\vY-\ell')$ is of order $O_\omega(\omega^{-1/2})$.
Hence, given $\cA$ the total variation distance of $\hat\GG_{n,\omega}$ and $\hat\GG_{n,\omega,\vX-\ell,\vY-\ell'}$ is $o_\omega(1)$;  in symbols,
	\begin{equation}\label{eqLemma_perturb1_1}
	\dTV\bc{\hat\GG_{n,\omega}\mid\cA,\hat\GG_{n,\omega,\vX-\ell,\vY-\ell'}\mid\cA}=o_\omega(1).
	\end{equation}
Further, let $\cA'$ be the event that $\hat\GG_{n,\omega}$ enjoys the following additional properties.
	\begin{enumerate}[(i')]
	\item The last $\ell$ variable nodes of $\hat\GG_{n,\omega}$ satisfy
			$\abs{\partial^2\{v_{N_{n,\omega}-\ell+1},\ldots,v_{N_{n,\omega}}\}\setminus\hat\cC}=\ell d(k-1).$
		Hence, there are $\ell d(k-1)$ distinct second neighbors, none of which is a cavity.
	\item The last $\ell'$ constraint nodes of $\hat\GG_{n,\omega}$ satisfy
			$\abs{\partial\{a_{\hat M_{n,\omega}-\ell'+1},\ldots,a_{\hat M_{n,\omega}}\}\setminus\hat\cC}=k\ell'$.
				Hence, there are $k\ell' $ distinct second neighbors, none of them a cavity.
	\item We have $\partial\{v_{N_{n,\omega}-\ell+1},\ldots,v_{N_{n,\omega}}\}\cap\{a_{\hat M_{n,\omega}-\ell'+1},\ldots,a_{\hat M_{n,\omega}}\}=\emptyset$.
	\item Let 
			$$\cU=\hat\cC\cup\partial^2\{v_{N_{n,\omega}-\ell+1},\ldots,v_{N_{n,\omega}}\}\cup
					\partial\{a_{\hat M_{n,\omega}-\ell'+1},\ldots,a_{\hat M_{n,\omega}}\}.$$
		Then for any constraint node 
		$a\not\in \partial\{v_{N_{n,\omega}-\ell+1},\ldots,v_{N_{n,\omega}}\}\cup a_{\hat M_{n,\omega}-\ell'+1},\ldots,a_{\hat M_{n,\omega}}$ we have $|\partial a\cap\cU|\leq1$.
		Thus, only the constraint nodes adjacent to the last $\ell$ variable nodes or the $a_i$ with $i>\hat M_{n,\omega}-\ell'$ may be adjacent to more than one variable node in $\cU$.
	\item All variable nodes $u\in\cU$ have degree $d$ or $d-1$.
	\end{enumerate}
Additionally, let $\cA''$ be the event that  $\hat\GG_{n,\omega,\vX-\ell,\vY-\ell'}$ has the following properties.
	\begin{enumerate}[(i'')]
	\item all variable nodes have degree either $d$ or $d-1$.
	\item no two variable nodes of degree $d-1$ are adjacent to the same constraint node.
	\end{enumerate}
Then
	\begin{align}\label{eqLemma_perturb1_1a}
	\pr\brk{\hat\GG_{n,\omega}\in\cA'\mid\cA}&=1-o_\omega(1),&
			\pr\brk{\hat\GG_{n,\omega,\vX-\ell,\vY-\ell'}\in\cA''\mid\cA}&=1-o_\omega(1).
	\end{align}	
Furthermore,  given $\cA''\cap\cA$, the random factor graph $\hat\GG_{n,\omega,\vX-\ell,\vY-\ell'}[\ell,\ell']$ obtained by attaching $\ell$ new variable nodes and $\ell'$ new constraint nodes is distributed precisely as
$\hat\GG_{n,\omega}$ given $\cA'\cap\cA$.
Indeed, the construction of the enhanced factor graph $\hat\GG_{n,\omega,\vX-\ell,\vY-\ell'}[\ell,\ell']$ expressly ensures that (i')--(iii') are satisfied, and (iv')--(v') follow from (i'')--(ii'').
Hence, (\ref{eqLemma_perturb1_1a}) yields
	\begin{equation}\label{eqLemma_perturb1_2}
	\dTV\bc{\hat\GG_{n,\omega}\mid\cA,\hat\GG_{n,\omega,\vX-\ell,\vY-\ell'}[\ell,\ell']\mid\cA}=o_\omega(1).
	\end{equation}
Finally, since $\pr\brk{\cA}=1-o_\omega(1)$, the assertion follows from (\ref{eqLemma_perturb1_1}) and (\ref{eqLemma_perturb1_2}).
\end{proof}

Let $\hat\cC[\ell,\ell']$ be the set of cavities of $\hat\G_{n,\omega}\brk{\ell,\ell'}$ and
 let $\hat\rho_{n,\omega}[\ell,\ell']\in\meas$ be the kernel representing $\mu_{\hat\G_{n,\omega}\brk{\ell,\ell'},\hat\cC[\ell,\ell']}$.
Let $\hat\pi_{n,\omega}\brk{\ell,\ell'}\in\Meas$ be the distribution of $\hat\rho_{n,\omega}[\ell,\ell']$.
Define $\tilde\cC[\ell,\ell']$, $\tilde\rho_{n,\omega}[\ell,\ell']$ analogously for $\hat\GG_{n,\omega}$.

\begin{lemma}\label{Lemma_perturb2}
For any $\ell,\ell'\geq0$  we have 
	\begin{align*}
	\Cutm\bc{\hat\pi_{n,\omega}^{\Join(\ell,\ell')},\hat\pi_{n,\omega}\brk{\ell,\ell'}}&=o_\omega(1),&
\Cutm\bc{\tilde\pi_{n,\omega}^{\Join(\ell,\ell')},\tilde\pi_{n,\omega}\brk{\ell,\ell'}}=o_\omega(1).
\end{align*}
\end{lemma}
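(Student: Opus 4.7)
The plan is to construct, for each realization of $\hat\G_{n,\omega}$, an explicit comparison between the two random kernels $\hat\rho_{n,\omega}[\ell,\ell']$ and $\hat\rho_{n,\omega}^{\Join(\ell,\ell')}$, and then integrate out $\hat\G_{n,\omega}$ to obtain the Wasserstein bound on the laws. By \Lem~\ref{Lemma_JoinNM}, $\MU^{\pi\Join(\ell,\ell')}$ is distributed as $f\Join \MU^\pi$ with $f=\bigoplus_{i=1}^\ell\hat\VARPHI_i\oplus\bigoplus_{i=1}^{\ell'}\hat\PSI_i$. Thus on a common probability space I can aim to show
\begin{align*}
\Erw\brk{\Cutm\bc{\hat\rho_{n,\omega}[\ell,\ell'],\ f\Join\hat\rho_{n,\omega}}}=o_\omega(1),
\end{align*}
from which the claim will follow because the Wasserstein distance on $\Meas$ is defined through $\Cutm$ on $\meas$.

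The key algebraic step is to rewrite the Boltzmann distribution of $\hat\G_{n,\omega}[\ell,\ell']$ restricted to its cavities. Conditional on $\hat\G_{n,\omega}$ and on the choice of which cavities get paired to the $\ell$ new variable nodes and $\ell'$ new constraint nodes, one integrates out the new variable spins $\sigma_{\hat v_i}$ and applies the weight functions of the new constraints to obtain
\begin{align*}
\mu_{\hat\G_{n,\omega}[\ell,\ell'],\hat\cC[\ell,\ell']}(\sigma)
&\propto\mu_{\hat\G_{n,\omega},\hat\cC}(\sigma)\cdot\prod_{i=1}^{\ell}\hat\VARPHI_i\bc{\sigma_{\mbox{\scriptsize attached}}}
\cdot\prod_{i=1}^{\ell'}\hat\PSI_i\bc{\sigma_{\mbox{\scriptsize attached}}}.
\end{align*}
Comparing with the definition of $f\Join\hat\rho_{n,\omega}$, which applies exactly the same tilting but draws the arguments of $f$ from independent uniform cavities encoded by the $\hat\vx_i$ variables, the two objects coincide up to the difference between choosing $\ell d(k-1)+\ell'k$ cavity indices without replacement vs.\ i.i.d.\ uniform.

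On the high-probability event $\cA=\{\omega/2\leq|\hat\cC|\leq 2d(k-1)\omega\}$, which holds with probability $1-o_\omega(1)$ by the Poisson tail, the total variation between without-replacement and i.i.d.\ sampling of a bounded number of elements from $\hat\cC$ is $O(1/\omega)$. Together with the exponential moment bound \eqref{eqP}, this translates into an $o_\omega(1)$ bound on the $L_1$ distance between the tilted densities, hence an $o_\omega(1)$ bound in $\Cutm$ on the two random kernels. Likewise, passing from the cavity set $\hat\cC$ to the slightly smaller set $\hat\cC[\ell,\ell']$ changes the embedding of cavities into the $x$-coordinate on a subset of measure $O(1/\omega)$, which contributes at most $o_\omega(1)$ to $\Cutm$. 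Continuity of $f\Join\nix$ (\Lem~\ref{Lemma_JoinCont}) and \Cor~\ref{Cor_cntFct2} propagate these pointwise bounds to the Wasserstein distance, giving the first claim. For the conditioned version, the same argument is carried out under $\cS$; since $\pr[\cS]=\Omega(1)$ by Fact~\ref{Fact_simple}, all the conditional probability estimates deteriorate only by a bounded factor.

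The main technical obstacle is not the tilting identity itself, which is just Bayes' rule for factor graphs, but rather the bookkeeping around the cavity set $\hat\cC[\ell,\ell']\subsetneq\hat\cC$ and the induced relabeling of the $x$-coordinate of the kernel representation $\dot\mu$. One has to argue that bounded-size changes to the cavity set produce only an $o_\omega(1)$ perturbation in $\Cutm$, which follows from the fact that only an $O(1/\omega)$-fraction of the $x$-variable's range is affected, combined with the boundedness of kernel values in $[0,1]$. Once this is absorbed, the chain of $o_\omega(1)$ bounds closes and the lemma follows.
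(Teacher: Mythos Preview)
Your proposal is correct and follows essentially the same approach as the paper's proof: both identify the tilting formula \eqref{eqLemma_perturb2_1} for the Boltzmann distribution of the cavities in the enhanced factor graph, and then reduce the comparison with the $\Join(\ell,\ell')$-operation to the difference between sampling a bounded number of cavities with and without replacement from a set of size $\Omega_\omega(\omega)$. If anything, you are slightly more careful than the paper in explicitly addressing the discrepancy between the cavity sets $\hat\cC[\ell,\ell']\subsetneq\hat\cC$ as an $O(1/\omega)$ perturbation of the $x$-coordinate; the paper simply asserts that the two kernels can be coupled to coincide with probability $1-o_\omega(1)$, which tacitly absorbs this point. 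Your appeals to \Lem~\ref{Lemma_JoinCont} and \Cor~\ref{Cor_cntFct2} at the end are not actually needed: once you have built a single coupling on which $\Erw[\Cutm(\hat\rho_{n,\omega}[\ell,\ell'],f\Join\hat\rho_{n,\omega})]=o_\omega(1)$, the Wasserstein bound follows directly from the definition.
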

\begin{proof}
The event $\cA_1=\cbc{\omega/2\leq X,Y\leq 2\omega}$ occurs with probability $1-o_\omega(1)$.
So does the event $\cA_2$ that all variable nodes of $\hat\GG_{n,\omega}$ have degree either $d$ or $d-1$, and thus the same is true of $\cA=\cA_1\cap\cA_2$.
Moreover, the construction of $\hat\GG_{n,\omega}[\ell,\ell']$ is such that on the event $\cA$ we have the exact formula
	\begin{align*}
	\frac{Z(\hat\GG_{n,\omega}[\ell,\ell'])}{Z(\hat\GG_{n,\omega})}&=\scal{\prod_{i=1}^\ell\varphi_i(\SIGMA)\prod_{i=1}^{\ell'}\psi_{\hat a_i}(\SIGMA)}{\mu_{\hat\GG_{n,\omega},\hat\cC}},\qquad\mbox{where}\\
		\varphi_i(\sigma)&=\sum_{\chi\in\Omega}p(\chi)\prod_{j=1}^d\sum_{\tau\in\Omega^{\partial \hat a_{i,j}}}
			\psi_{\hat a_{i,j}}(\tau)\vecone\{\tau_{\hat v_i}=\chi,\,\forall w\in\partial\hat a_{i,j}\setminus\hat v_i:\sigma_w=\tau_w \}.
	\end{align*}
Consequently, the joint distribution  of the cavities   $\tilde\cC[\ell,\ell']$ of $\hat\GG_{n,\omega}[\ell,\ell']$ reads
	\begin{align}\label{eqLemma_perturb2_1}
	\mu_{\hat\GG_{n,\omega}[\ell,\ell'],\tilde\cC[\ell,\ell']}(\sigma)&=\frac{
		\scal{\vecone\{\forall u\in\tilde\cC[\ell,\ell']:\SIGMA_u=\sigma_u\}\prod_{i=1}^\ell\varphi_i(\sigma)\prod_{i=1}^{\ell'}\psi_{\hat a_i}(\sigma)}{\mu_{\hat\GG_{n,\omega}}(\sigma)}}
			{\scal{\prod_{i=1}^\ell\varphi_i(\SIGMA)\prod_{i=1}^{\ell'}\psi_{\hat a_i}(\SIGMA)}{\mu_{\hat\GG_{n,\omega},\tilde\cC}}}\qquad(\sigma\in\Omega^{\tilde\cC[\ell,\ell']}).
	\end{align}

Thus,  with probability $1-o_\omega(1)$, namely on the event $\cA$,
$\tilde\rho_{n,\omega}[\ell,\ell']$ is just the kernel representing the right hand side of (\ref{eqLemma_perturb2_1}) 
We claim that in this case $\tilde\rho_{n,\omega}[\ell,\ell']$ and $\tilde\rho_{n,\omega}^{\Join(\ell,\ell')}$ can be coupled to coincide with probability $1-o_\omega(1)$.
Indeed, the weight functions associated with the $\hat a_i$ and the $\hat a_{i,j}$ are chosen from $P$ independently, and they are connected to the cavities of $\hat\GG_{n,\omega}$ by a random pairing.
By comparison, we construct $\tilde\rho_{n,\omega}^{\Join(\ell,\ell')}$ by adjoining $\VARPHI_1,\ldots,\VARPHI_\ell$ and $\PSI_1,\ldots,\PSI_{\ell'}$ that evaluate the kernel $\tilde\rho_{n,\omega}$ at independent uniformly random points of the unit interval.
Combinatorially, this is equivalent to attaching the new variable and constraint nodes to random cavities chosen with replacement,
rather than without replacement as in the construction of $\hat\GG_{n,\omega}[\ell,\ell']$.
But since the number of cavities of $\hat\GG$ is $\Omega_\omega(\omega)$, the two constructions have total variation distance $o_\omega(1)$.
\end{proof}

\begin{proof}[Proof of \Prop~\ref{Prop_apxInv}]
The proposition is immediate from \Lem s~\ref{Lemma_perturb1} and~\ref{Lemma_perturb2}.
\end{proof}

\subsection{Proof of \Prop~\ref{MainAizLemReg}}\label{Sec_MainAizLemReg}
We begin with the following lemma, whose proof is similar to the proof of \Lem~\ref{Lemma_Prop_regint2_B''}.

\begin{lemma}\label{Lemma_G'G''}
We have
	\begin{align*}
	\Erw\brk{\ln\scal{\VARPHI_1}{\hat\pi_{n,\omega}}
		-\ln\scal{\bigoplus_{\hat M_{n,\omega}<i\leq M_{n,\omega}}\PSI_{1,i}}{\hat\pi_{n,\omega}}}&=\cB(\hat\pi_{n,\omega})+o_\omega(1),\\
	\Erw\brk{\ln\scal{\VARPHI_1}{\tilde\pi_{n,\omega}}
		-\ln\scal{\bigoplus_{\hat M_{n,\omega}<i\leq M_{n,\omega}}\PSI_{1,i}}{\tilde\pi_{n,\omega}}}&=\cB(\tilde\pi_{n,\omega})+o_\omega(1).
	\end{align*}
\end{lemma}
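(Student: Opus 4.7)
Unpacking the definition of $\cB(\pi)$ in \eqref{eqBetheFunctional} and that of the auxiliary function $\VARPHI_1$ from \Sec~\ref{Sec_basics} yields $\cB(\pi)=\Erw\ln\scal{\VARPHI_1}{\pi}-\tfrac{d(k-1)}{k}\Erw\ln\scal{\PSI_1}{\pi}$. The first summand already appears on the left-hand side of the claim, so the task reduces to showing
\begin{align*}
\Erw\brk{\ln\scal{\bigoplus_{\hat M_{n,\omega}<i\leq M_{n,\omega}}\PSI_{1,i}}{\hat\pi_{n,\omega}}}=\tfrac{d(k-1)}{k}\Erw\ln\scal{\PSI_1}{\hat\pi_{n,\omega}}+o_\omega(1),
\end{align*}
together with its counterpart for $\tilde\pi_{n,\omega}$. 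The strategy is to imitate the telescoping computation of \Lem~\ref{Lemma_Prop_regint2_B''}, replacing the exact $\MEAS$-invariance used there with the approximate invariance supplied by \Prop~\ref{Prop_apxInv}.

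Set $R=M_{n,\omega}-\hat M_{n,\omega}=M_{n,\omega}-M_{n+1,\omega}+d$. On the high-probability event $\{\vX+\vY\leq\sqrt n\}$ neither the $d\vee$ nor the $\wedge$ truncations in the definition of $M_{\cdot,\omega}$ are active, and because $\Erw[\Delta_{n,\omega}\mid\vX]$ equals the fractional part $\{dN_{n,\omega}/k\}$ one has the exact identity $\Erw[\lfloor dN_{n,\omega}/k\rfloor+\Delta_{n,\omega}\mid\vX]=d(n-\vX)/k$. Subtracting the analogous expression at $n+1$ yields $\Erw[R]=d(k-1)/k+o_\omega(1)$, uniformly in $n$, and $R$ has uniformly bounded higher moments by the Poisson tails. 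Moreover, $R$ is a function of $\vX,\vY$ and the $\Delta$-variables, hence independent of the $\PSI_{1,i},\vh,\vx,s$ appearing in the bracket expressions. Conditional on $R$ we telescope and invoke \Cor~\ref{Cor_JoinNM}:
\begin{align*}
\ln\scal{\bigoplus_{i=1}^{R}\PSI_{1,i}}{\hat\pi_{n,\omega}}=\sum_{j=0}^{R-1}\ln\scal{\PSI_{j+1}}{\bigoplus_{i=1}^{j}\PSI_{1,i}\Join\hat\pi_{n,\omega}}=\sum_{j=0}^{R-1}\ln\scal{\PSI_{j+1}}{\hat\pi_{n,\omega}^{\Join(0,j)}}.
\end{align*}

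Expanding each summand through the series $\ln(1-x)=-\sum_{\ell\geq1}x^\ell/\ell$, justified by \eqref{eqP} and Fubini exactly as in the proof of \Lem~\ref{Lemma_Prop_regint2_B''}, reduces matters to comparing the polynomial moments $\Erw\scal{(1-\PSI_{j+1})^\ell}{\hat\pi_{n,\omega}^{\Join(0,j)}}$ with $\Erw\scal{(1-\PSI_1)^\ell}{\hat\pi_{n,\omega}}$ for each $\ell\geq1$. By \Cor~\ref{Cor_cntFct2}, each such moment is a continuous function on $\Meas$ under the Wasserstein cut metric, while \Prop~\ref{Prop_apxInv} furnishes $\Cutm(\hat\pi_{n,\omega}^{\Join(0,j)},\hat\pi_{n,\omega})=o_\omega(1)$ uniformly over $0\leq j\leq d$. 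Summing the series back and exchanging limits with the dominating envelope $\Erw\max_{\sigma\in\Omega^k}|1-\PSI(\sigma)|^\ell$ from \eqref{eqP} yields $\Erw\ln\scal{\PSI_{j+1}}{\hat\pi_{n,\omega}^{\Join(0,j)}}=\Erw\ln\scal{\PSI_1}{\hat\pi_{n,\omega}}+o_\omega(1)$ for every $0\leq j\leq d$. Applying the tower property, using the independence of $R$ from the bracket variables and the fact that $\Erw[R]=d(k-1)/k+o_\omega(1)$, one obtains the displayed identity; the reasoning for $\tilde\pi_{n,\omega}$ is verbatim, via the second assertion of \Prop~\ref{Prop_apxInv}. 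The principal technical obstacle is the passage from the purely topological approximate invariance of \Prop~\ref{Prop_apxInv} to an $\Erw\ln$-level estimate; this is bridged by the polynomial approximation of the logarithm and the uniform integrability coming from \eqref{eqP}.
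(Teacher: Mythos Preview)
Your overall strategy matches the paper's: reduce to the second summand, telescope, and use the approximate invariance from \Prop~\ref{Prop_apxInv}. However, there is a genuine gap in the decoupling step.

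You assert that $R=M_{n,\omega}-\hat M_{n,\omega}$ is ``independent of the $\PSI_{1,i},\vh,\vx,s$ appearing in the bracket expressions'' and then apply the tower property. That independence is true but insufficient: the bracket $\scal{\,\cdot\,}{\hat\pi_{n,\omega}}$ here has to be read as $\scal{\,\cdot\,}{\hat\rho_{n,\omega}}$ (this is how the lemma is used together with \Prop~\ref{Prop_Aizenman}), and $\hat\rho_{n,\omega}$ is a function of $\hat\G_{n,\omega}$, which depends on $N_{n,\omega}=n-\vX$ and $\hat M_{n,\omega}$. On the high-probability event one has
\[
R=\lfloor dN_{n,\omega}/k\rfloor-\lfloor d(N_{n,\omega}+1)/k\rfloor+\Delta_{n,\omega}-\Delta_{n+1,\omega}+d,
\]
so $R$ depends on $\vX$ and on $\Delta_{n+1,\omega}$, exactly the variables entering $\hat N_{n,\omega}$ and $\hat M_{n,\omega}$. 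Hence conditioning on $R$ \emph{does} alter the law of $\hat\rho_{n,\omega}$, and your tower-property conclusion $\Erw[\sum_{j<R}\ldots]=\Erw[R]\cdot\Erw[\ldots]+o_\omega(1)$ is not justified.

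The paper handles precisely this point: it shows that, because $\vY$ is Poisson with large mean $\omega$ while $W:=R$ is bounded, for $\hat n,\hat m$ near their means one has
\[
\pr\brk{\hat M_{n,\omega}=\hat m\mid \hat N_{n,\omega}=\hat n}=(1+o_\omega(1))\,\pr\brk{\hat M_{n,\omega}=\hat m\mid \hat N_{n,\omega}=\hat n,\,W=h}
\]
for every $0\le h\le d+1$. This allows $W$ to be replaced by an \emph{independent} copy $W'$, after which your telescoping and the approximate-invariance argument via \Prop~\ref{Prop_apxInv} go through exactly as you wrote. Without this decoupling step the argument is incomplete.
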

\begin{proof}
Let $\pi=\hat\pi_{n,\omega}$ or $\pi=\tilde\pi_{n,\omega}$.
Since $\Erw[M_{n,\omega}-\hat M_{n,\omega}]=d(k-1)/k+o_\omega(1)$, due to \eqref{eqP} it suffices to show that
	\begin{align}\label{eqLemma_G'G''_1}
	\Erw\brk{\ln\scal{\bigoplus_{\hat M_{n,\omega}<i\leq M_{n,\omega}}\PSI_{1,i}}{\pi}}
			&=o_{\omega}(1)+\Erw[M_{n,\omega}-\hat M_{n,\omega}]\cdot
				\Erw\brk{\ln\scal{\PSI_{1,1}}{\pi}}.
	\end{align}
Thus, we need to cope with the correlations between $M_{n,\omega}-\hat M_{n,\omega}$ and $\hat\G_{n,\omega}$ or  $\hat\GG_{n,\omega}$, respectively.
In other words, we need to assess the correlations between $M_{n,\omega}-\hat M_{n,\omega}$ and $\hat N_{n,\omega}$, $\hat M_{n,\omega}$.
With probability  $1-\exp(-\Omega_\omega(\omega))$ we have
	\begin{align}\label{eqW}
	M_{n,\omega}-\hat M_{n,\omega}&=W,\qquad\mbox{where}&
	W&=\lfloor d N_{n,\omega}/k\rfloor-\lfloor d(1+N_{n,\omega})/k\rfloor+\Delta_{n,\omega}-\Delta_{n+1,\omega}+d,
	\end{align}
with independent Bernoulli variables $\Delta_{n,\omega},\Delta_{n+1,\omega}$.
Thus, $W\leq d+1$ and \eqref{eqP} ensures that
	\begin{align}	\nonumber
	\Erw&\brk{\ln\scal{\bigoplus_{\hat M_{n,\omega}<i\leq M_{n,\omega}}\PSI_{1,i}}{\pi}}=
		\Erw\brk{\ln\scal{\bigoplus_{i=1}^W\PSI_{1,i}}{\pi}}+o_\omega(1)\\
		&=\Erw\brk{\vecone\cbc{\abs{\hat N_{n,\omega}-\Erw[\hat N_{n,\omega}]},\abs{\hat M_{n,\omega}-\Erw[\hat M_{n,\omega}]}
			\leq\sqrt{\omega\ln\omega}}\ln\scal{\bigoplus_{i=1}^W\PSI_{1,i}}{\pi}}+o_\omega(1).
			\label{eqLemma_G'G''_2}
		\end{align}
Furthermore, since $\vX,\vY$ are independent Poisson variables with mean $\omega$ while $W$ is bounded, for any $\hat n,\hat m$ such that
$|\hat n-\Erw[\hat N_{n,\omega}]|,|\hat m-\Erw[\hat M_{n,\omega}]|\leq\sqrt{\omega\ln\omega}$ we obtain from (\ref{eqW}) that 
	\begin{align*}
	\pr\brk{\hat M_{n,\omega}=\hat m\mid\hat N_{n,\omega}=\hat n}&=(1+o_\omega(1))\pr\brk{\hat M_{n,\omega}=\hat m\mid N_{n,\omega}=\hat n,\, W=h}\qquad\mbox{for any }0\leq h\leq d+1.
	\end{align*}
Hence, introducing an independent copy $W'$ of $W$, we obtain from (\ref{eqP}) and (\ref{eqLemma_G'G''_2}) that
	\begin{align}\label{eqLemma_G'G''_3}
	\Erw\brk{\ln\scal{\bigoplus_{\hat M_{n,\omega}<i\leq M_{n,\omega}}\PSI_{1,i}}{\pi}}
		&=\Erw\brk{\ln\scal{\bigoplus_{i=1}^{W'}\PSI_{1,i}}{\pi}}+o_\omega(1).
		\end{align}
Additionally, we claim that for any $0\leq w\leq d+1$,
	\begin{align}\label{eqLemma_G'G''_4}
	\Erw\brk{\ln\frac{\scal{\bigoplus_{i=1}^{w+1}\PSI_{1,i}}{\pi}}{\scal{\bigoplus_{i=1}^{w}\PSI_{1,i}}{\pi}}}
		&=\Erw\brk{\ln\scal{\PSI_{1,1}}{\pi}}+o_\omega(1).
	\end{align}
Indeed, as in the proof of \Lem~\ref{Lemma_Prop_regint2_B''} we obtain
	\begin{align}\label{eqLemma_Prop_regint2_B''_5}
	\Erw\brk{\bc{\frac{\scal{\bigoplus_{i=1}^{w+1}\PSI_i}{\pi}}{\scal{\bigoplus_{i=1}^{w}\PSI_i}{\pi}}}^\ell}
		&=\Erw\brk{\scal{\PSI_{w+1}}{\bigoplus_{i=1}^w\PSI_i\Join\pi}^\ell}.
	\end{align}
Further, (\ref{eqP}), \Cor~\ref{Cor_cntFct2} and \Prop~\ref{Prop_apxInv} yield
	\begin{align}\label{eqLemma_Prop_regint2_B''_5a}
	\Erw\brk{\scal{\PSI_{w+1}}{\bigoplus_{i=1}^w\PSI_i\Join\pi}^\ell}&=\Erw\brk{\scal{\PSI_{w+1}}{\pi}^\ell}		+o_\omega(1).
	\end{align}
As the logarithm can be approximated arbitrarily well by polynomials due to (\ref{eqP}),
(\ref{eqLemma_G'G''_4}) follows from (\ref{eqLemma_Prop_regint2_B''_5})--(\ref{eqLemma_Prop_regint2_B''_5a}).
Finally, (\ref{eqLemma_G'G''_1}) follows from (\ref{eqLemma_G'G''_3}) and (\ref{eqLemma_G'G''_4}).
\end{proof}

\begin{proof}[Proof of \Prop~\ref{MainAizLemReg}]
\Prop~\ref{Prop_apxInv} and \Lem~\ref{Lemma_G'G''} show that for any $\ell\geq1$ there exists $\omega_{\ell}>\omega_{\ell-1}$ such that for all sufficiently large $n$ we have $\hat\pi_{n,\omega_\ell}\in\MEAS_{1/\ell,\ell,\ell}$ and
	\begin{align}\label{eqMainAizLemReg_1}
	\frac{1}{n} \Erw \log Z (\G) &\ge\mathcal B(\hat\pi_{n,\omega_\ell})-1/\ell.
	\end{align}
Since $\Meas$ is compact,  the sequence $(\hat\pi_{n,\omega_\ell})_n$ has a convergent subsequence, whose limit $\hat\pi^{(\ell)}$ lies in the closed set $\MEAS_{1/\ell,\ell,\ell}$.
Furthermore, because \Lem~\ref{Lemma_cntFct} shows that $\cB(\nix)$ is continuous, (\ref{eqMainAizLemReg_1}) yields
	\begin{align}\label{eqMainAizLemReg_2}
	\liminf_{n\to\infty}\frac{1}{n} \Erw \log Z (\G) &\ge\cB(\hat\pi^{(\ell)})-1/\ell.
	\end{align}
Additionally, $(\hat\pi^{(\ell)})_\ell$ has a subsequence that converges to $\tilde\pi\in\MEAS=\bigcap_\ell\MEAS_{1/\ell,\ell,\ell}$.
Thus, the first assertion follows from (\ref{eqMainAizLemReg_2}) and the continuity of $\cB(\nix)$ established by \Cor~\ref{Cor_Bcont}.

The second assertion concerning the simple random factor graph $\GG$ is immediate from the first.
Indeed, due to \eqref{eqP} a standard application of Azuma's inequality shows that
$n^{-0.51}|\ln Z(\G)-\Erw\ln Z(\G)|\to0$ in probability.
Hence, Fact~\ref{Fact_simple} and Bayes' rule imply that $\Erw\ln Z(\GG)-\Erw\ln Z(\G)=o(n)$.
\end{proof}

\subsection{Proof of \Thm~\ref{thmBethePlus}}\label{Sec_thmBethePlus}
We begin by showing that the free energy of $\G$ can be expressed in terms of the functional $\cB$ applied to $\hat\pi_{n,\omega}$ or $\tilde\pi_{n,\omega}$, respectively.
Once more we will carry the details out for $\GG$; the unconditioned random factor graph $\G$ is easier to deal with, and the proofs are just obtained from the $\GG$ case by dropping any considerations regarding multiple edges.

\begin{lemma}\label{Lemma_thmBethePlus1}
If {\bf POS} is satisfied, then
	\begin{align*}
	\lim_{n\to\infty}\frac1n\Erw\ln Z(\G)&=\liminf_{\omega\to\infty}\,\liminf_{n\to\infty}\cB(\hat\pi_{n,\omega}),&
\lim_{n\to\infty}\frac1n\Erw\ln Z(\GG)&=\liminf_{\omega\to\infty}\,\liminf_{n\to\infty}\cB(\tilde\pi_{n,\omega}).
	\end{align*}
\end{lemma}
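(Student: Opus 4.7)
The plan is to sandwich $\lim_n n^{-1}\Erw\ln Z(\G)$ between two expressions that both equal $\liminf_\omega\liminf_n\cB(\hat\pi_{n,\omega})$. The lower bound on the free energy comes from the Aizenman--Sims--Starr machinery developed in Section~\ref{secAz}, while the matching upper bound rests on identifying double-limit points of $(\hat\pi_{n,\omega})_{n,\omega}$ as elements of $\MEAS$ and then invoking the variational formula of Theorem~\ref{Thm_freeEng}.

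For the inequality $\lim_n n^{-1}\Erw\ln Z(\G)\geq\liminf_\omega\liminf_n\cB(\hat\pi_{n,\omega})$, combining Proposition~\ref{Prop_Aizenman} with the first identity of Lemma~\ref{Lemma_G'G''} yields, for every $\eps>0$, a threshold $\omega_0(\eps)$ such that for $\omega\geq\omega_0$ and all sufficiently large $n$,
\begin{align*}
\Erw\ln\frac{Z(\G_{n+1,\omega})}{Z(\G_{n,\omega})}\geq\cB(\hat\pi_{n,\omega})-2\eps-o_\omega(1).
\end{align*}
Telescoping $\Erw\ln Z(\G_{n,\omega})=\Erw\ln Z(\G_{1,\omega})+\sum_{m=1}^{n-1}\bc{\Erw\ln Z(\G_{m+1,\omega})-\Erw\ln Z(\G_{m,\omega})}$ and averaging gives $\liminf_n n^{-1}\Erw\ln Z(\G_{n,\omega})\geq\liminf_n\cB(\hat\pi_{n,\omega})-2\eps-o_\omega(1)$ via the elementary Cesaro bound $\liminf_n n^{-1}\sum_{m<n}a_m\geq\liminf_m a_m$. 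Since $\G_{n,\omega}$ differs from $\G(n,d,p,P)$ by the deletion of an expected $O(\omega)$ variable and constraint nodes, the tail bound~\eqref{eqP} yields $|\Erw\ln Z(\G_{n,\omega})-\Erw\ln Z(\G)|=O(\omega)$ uniformly in $n$, so $\lim_n n^{-1}\Erw\ln Z(\G_{n,\omega})=\lim_n n^{-1}\Erw\ln Z(\G)$ for each fixed $\omega$; existence of the latter limit is furnished by Theorem~\ref{Thm_freeEng}. Sending $\omega\to\infty$ and then $\eps\to 0$ closes this half.

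For the reverse inequality, set $L=\liminf_\omega\liminf_n\cB(\hat\pi_{n,\omega})$ and, using the definition of the iterated $\liminf$ together with Proposition~\ref{Prop_apxInv}, select sequences $\omega_k\to\infty$ and $n_k\to\infty$ satisfying simultaneously $\cB(\hat\pi_{n_k,\omega_k})\leq L+1/k$ and $\hat\pi_{n_k,\omega_k}\in\MEAS_{1/k,k,k}$. Since $\Meas$ is compact (Proposition~\ref{Prop_compact}), a subsequence $\hat\pi_{n_{k_j},\omega_{k_j}}$ converges to some $\pi^*\in\Meas$; because each $\MEAS_{\eps,N,M}$ is closed in $\Meas$, we have $\pi^*\in\bigcap_{K\geq 1}\MEAS_{1/K,K,K}=\MEAS$. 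Continuity of $\cB$ on $\Meas$ (Corollary~\ref{Cor_Bcont}) then gives $\cB(\pi^*)\leq L$, so $L\geq\cB(\pi^*)\geq\min_{\pi\in\MEAS}\cB(\pi)=\lim_n n^{-1}\Erw\ln Z(\G)$ by Theorem~\ref{Thm_freeEng}. The conditioned model $\GG$ is treated along identical lines using the tilde variants of Propositions~\ref{Prop_Aizenman} and~\ref{Prop_apxInv} and Lemma~\ref{Lemma_G'G''}, together with the standard comparison $\Erw\ln Z(\GG)=\Erw\ln Z(\G)+o(n)$ that follows from Azuma's inequality and Fact~\ref{Fact_simple}.

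The main conceptual subtlety is aligning the two iterated limits: the approximate $\Join$-invariance of $\hat\pi_{n,\omega}$ hardens into genuine invariance only in the joint limit $\omega,n\to\infty$ along a diagonal subsequence, and this is exactly what allows a subsequential limit to enter $\MEAS$ and thus become an admissible competitor for the variational formula. The telescoping/Cesaro step is then routine, the $o_\omega(1)$ corrections from Proposition~\ref{Prop_Aizenman} and Lemma~\ref{Lemma_G'G''} being uniform in $n$ for each fixed $\omega$.
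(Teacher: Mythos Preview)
Your proof is correct and follows essentially the same strategy as the paper's: the lower bound comes from Proposition~\ref{Prop_Aizenman} and Lemma~\ref{Lemma_G'G''} (the paper also implicitly telescopes and compares $\G_{n,\omega}$ with $\G$ via the $O(\omega)$ bound), and the upper bound comes from extracting a subsequential limit in $\MEAS$ via Proposition~\ref{Prop_apxInv} and then invoking the interpolation upper bound. The only cosmetic difference is that the paper performs the upper-bound extraction in two stages---first taking $n\to\infty$ for each fixed $\omega$ to get $\tilde\pi^{(\omega)}$ with $\cB(\tilde\pi^{(\omega)})=\liminf_n\cB(\tilde\pi_{n,\omega})$, then $\omega\to\infty$---whereas you take a single diagonal subsequence; both routes land the limit point in $\bigcap_K\MEAS_{1/K,K,K}=\MEAS$ and yield the same conclusion.
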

\begin{proof}
\Prop~\ref{Prop_Aizenman} and \Lem~\ref{Lemma_G'G''} show that for any $\eps>0$ there exists $\omega_0$ such that for all $\omega>\omega_0$ there exists $n_0$ such that for all $n>n_0$ we have	$n^{-1}\Erw\log Z(\GG)\geq\cB(\tilde\pi_{n,\omega})-\eps.$
Hence, for any $\eps>0$ there is $\omega_0>0$ such that for all $\omega>\omega_0$ we have
	\begin{align}\label{eqLemma_thmBethePlus1_2}
	\liminf_{n\to\infty}\frac1n\Erw\log Z(\GG)&\geq\liminf_{n\to\infty}\cB(\tilde\pi_{n,\omega})-\eps.
	\end{align}
Indeed, since \Prop s~\ref{Prop_regint} and~\ref{MainAizLemReg} show that $(\frac1n\Erw\log Z(\GG))_n$ converges, (\ref{eqLemma_thmBethePlus1_2}) yields
	\begin{align}\label{eqLemma_thmBethePlus1_1}
	\lim_{n\to\infty}\frac1n\Erw\log Z(\GG)=\liminf_{n\to\infty}\frac1n\Erw\log Z(\GG)&\geq\liminf_{\omega\to\infty}\,\liminf_{n\to\infty}\cB(\tilde\pi_{n,\omega}).
	\end{align}

We are left to prove the converse inequality.
The space $\Meas$ is compact and separable.
Therefore, for any $\omega$ the sequence $(\tilde\pi_{n,\omega})_n$ has a subsequence that converges to $\tilde\pi^{(\omega)}\in\Meas$ such that
	$\liminf_{n\to\infty}\cB(\tilde\pi_{n,\omega})=\cB(\tilde\pi^{(\omega)})$.
Further, $(\tilde\pi^{(\omega)})_\omega$ has a subsequence that converges to $\tilde\pi^*$ such that
	\begin{align}\label{eqLemma_thmBethePlus1_3a}
	\liminf_{\omega\to\infty}\cB(\tilde\pi^{(\omega)})&=\cB(\tilde\pi^{*}).
	\end{align}
\Prop~\ref{Prop_apxInv} shows that $\tilde\pi^{*}\in\MEAS$.
Hence, \Prop~\ref{Prop_regint} implies that
	\begin{align}\label{eqLemma_thmBethePlus1_3}
	\lim_{n\to\infty}\frac1n\Erw\log Z(\GG)&\leq\cB(\tilde\pi^{*})=
		\liminf_{\omega\to\infty}\cB(\tilde\pi^{(\omega)})=
		\liminf_{\omega\to\infty}\,\liminf_{n\to\infty}\cB(\tilde\pi_{n,\omega}).
	\end{align}
Thus, the assertion follows from (\ref{eqLemma_thmBethePlus1_1})--(\ref{eqLemma_thmBethePlus1_3}).
\end{proof}

To proceed we need a small twist on \Lem~\ref{Lemma_thmBethePlus1}. 
Namely, instead of using $\hat\GG_{n,\omega}$ as our reference point, we are going to work with $\GG_{n,\omega}$.
Thus, let $\cC$ be the set of cavities of $\GG_{n,\omega}$ and let 
	$\rho_{n,\omega,\cS}\in\meas$ be the kernel representing $\mu_{\GG_{n,\omega},\cC}$.
Further, let $\pi_{n,\omega,\cS}\in\Meas$ be the distribution of $\rho_{n,\omega,\cS}$.
Define $\rho_{n,\omega}$, $\pi_{n,\omega}$ analogously with respect to $\G_{n,\omega}$.

\begin{corollary}\label{Cor_thmBethePlus1}
If {\bf POS} is satisfied, then
		\begin{align*}
\lim_{n\to\infty}\frac1n\Erw\ln Z(\G)&=\liminf_{\omega\to\infty}\,\liminf_{n\to\infty}\cB(\pi_{n,\omega}),&	\lim_{n\to\infty}\frac1n\Erw\ln Z(\GG)&=\liminf_{\omega\to\infty}\,\liminf_{n\to\infty}\cB(\pi_{n,\omega,\cS}).
\end{align*}
\end{corollary}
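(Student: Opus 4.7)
The plan is to show that the kernel representations of the cavity marginals of $\GG_{n,\omega}$ and $\hat\GG_{n,\omega}$ are close in cut distance, so that \Cor~\ref{Cor_Bcont} together with \Lem~\ref{Lemma_thmBethePlus1} yield the claim. The same argument handles $\G_{n,\omega}$ and $\hat\G_{n,\omega}$ once the conditioning on $\cS$ is dropped, so I describe only the simple case.

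First, I would observe that both $\GG_{n,\omega}$ and $\hat\GG_{n,\omega}$ share the variable set $V_{N_{n,\omega}}$ and differ only in their number of constraint nodes: the difference $M_{n,\omega}-\hat M_{n,\omega}=M_{n,\omega}-M_{n+1,\omega}+d$ is a bounded random variable independent of the rest. Mimicking the coupling used in the proof of \Lem~\ref{Lemma_G'G''cpl} (conditioning on the degree sequence having the typical profile where all variable nodes have degree $d$ or $d-1$ and no two deficient variables share a neighbour, plus the bounded tail bound on $\vX,\vY$), we can couple the two factor graphs on an event of probability $1-o_\omega(1)$ so that $\GG_{n,\omega}$ is obtained from $\hat\GG_{n,\omega}$ by adding (or deleting) exactly $M_{n,\omega}-\hat M_{n,\omega}=O_\omega(1)$ simple constraint nodes attached to random cavities.

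Second, on this coupling the total weight functions satisfy
$\psi_{\GG_{n,\omega}}/\psi_{\hat\GG_{n,\omega}}\in[\alpha^{O_\omega(1)},\alpha^{-O_\omega(1)}]$
for typical realizations of the newly attached $\PSI$'s, by the tail assumption \eqref{eqP}. Hence the Boltzmann distributions $\mu_{\GG_{n,\omega}}$ and $\mu_{\hat\GG_{n,\omega}}$ are mutually $O_\omega(1)$-contiguous with probability $1-o_\omega(1)$. Applying \Lem~\ref{Lemma_extremal_contig} after the extremality-inducing pinning used in \Sec~\ref{Sec_Bethe} (or, alternatively, directly comparing the messages via \Lem~\ref{Lemma_localRemoval} on the $O_\omega(1)$ affected neighbourhoods), we get
$\cutm\bc{\mu_{\GG_{n,\omega},\cC\cap\hat\cC},\mu_{\hat\GG_{n,\omega},\cC\cap\hat\cC}}=o_\omega(1)$
in expectation, where $\cC\cap\hat\cC$ is the common set of cavities.

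Third, the cavity sets themselves differ only in $O_\omega(1)$ variable nodes, namely those freshly paired or freshly released by the $O_\omega(1)$ added/removed constraints. Since $|\cC|,|\hat\cC|=\Omega_\omega(\omega)$ \whp, symmetrizing over the identity of these at most $O_\omega(1)$ discrepant cavities costs only $O_\omega(1/\omega)$ in the cut metric thanks to \eqref{eqCutmcutm} and the definition \eqref{eqDefCutMetric}. Combining with the previous step yields $\Erw\Cutm(\rho_{n,\omega,\cS},\tilde\rho_{n,\omega})=o_\omega(1)$, whence $\Cutm(\pi_{n,\omega,\cS},\tilde\pi_{n,\omega})=o_\omega(1)$ in the Wasserstein metric on $\Meas$. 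By \Cor~\ref{Cor_Bcont} this gives $\cB(\pi_{n,\omega,\cS})=\cB(\tilde\pi_{n,\omega})+o_\omega(1)$, and \Lem~\ref{Lemma_thmBethePlus1} closes the argument. The main technical nuisance to be careful about is the third step: one must verify that the small, $\omega$-dependent relabelling of the cavity coordinates does not interact badly with the measure-preserving infima in \eqref{eqDefCutMetric}. This is where having $|\cC|$ grow with $\omega$ is essential, as it ensures the discrepant coordinates occupy Lebesgue measure $o_\omega(1)$ in the kernel representation.
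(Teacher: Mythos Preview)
Your approach takes an unnecessarily hard route and, as written, has a real gap in the second step. The paper's proof is a one-liner: since $\vX,\vY$ are Poisson with large mean $\omega$ and the deterministic offset between $M_{n,\omega}$ and $\hat M_{n,\omega}$ is bounded, one can couple the random pairs $(N_{n,\omega},M_{n,\omega})$ and $(N_{n,\omega},\hat M_{n,\omega})$ so that they \emph{coincide} with probability $1-o_\omega(1)$ (a Poisson shift by $O(1)$ costs $O(\omega^{-1/2})$ in total variation). On that event the two factor graphs are literally identically distributed, hence can be coupled to be equal; then $\rho_{n,\omega,\cS}=\tilde\rho_{n,\omega}$ on an event of probability $1-o_\omega(1)$, so $\Cutm(\pi_{n,\omega,\cS},\tilde\pi_{n,\omega})=o_\omega(1)$ trivially and \Cor~\ref{Cor_Bcont} plus \Lem~\ref{Lemma_thmBethePlus1} finish. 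No contiguity, no comparison of cavity sets, no regularity considerations are needed.

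The gap in your argument is step~2. Mutual $c$-contiguity of $\mu_{\GG_{n,\omega}}$ and $\mu_{\hat\GG_{n,\omega}}$ does \emph{not} by itself yield small cut distance between their cavity marginals: \Lem~\ref{Lemma_extremal_contig} requires one of the measures to be $\eps$-extremal for an $\eps$ adapted to $c$, and the unpinned Boltzmann distribution on $\cC$ has no reason to be extremal (indeed, the whole point of \Sec~\ref{Sec_Bethe} is that it generally is not). Invoking ``the extremality-inducing pinning used in \Sec~\ref{Sec_Bethe}'' does not fix this, because pinning produces a \emph{different} measure from $\mu_{\GG_{n,\omega},\cC}$, whereas $\rho_{n,\omega,\cS}$ is defined from the unpinned one. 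The alternative you mention (comparing messages via \Lem~\ref{Lemma_localRemoval}) controls only local marginals near the $O_\omega(1)$ modified constraints, not the global cut distance. A concrete obstruction: a bimodal measure and a $2$-contiguous reweighting of it can sit at cut distance $\Omega(1)$ regardless of $n$. So unless you first decompose into extremal pieces and track how contiguity reweights them, the inference $\text{contiguity}\Rightarrow\cutm=o_\omega(1)$ fails. The paper sidesteps all of this by coupling the graphs to be identical.
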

\begin{proof}
Since $X,Y$ are Poisson variables with a large mean $\omega$, $\hat M_{n,\omega}$ and $M_{n,\omega}$ can be coupled so that both coincide with probability $1-o_\omega(1)$.
This coupling naturally extends to a coupling of $\hat\GG_{n,\omega}$ and $\GG_{n,\omega}$ under which $\GG_{n,\omega}=\hat\GG_{n,\omega}$ with probability $1-o_\omega(1)$.
Consequently, recalling that $\Cutm(\nix,\nix)$ stands for the Wasserstein metric on $\Meas$, we have $\Cutm(\pi_{n,\omega,\cS},\tilde\pi_{n,\omega})=o_\omega(1)$.
Thus, the assertion follows from the \Cor~\ref{Cor_Bcont}.
\end{proof}

We remember the construction of the kernel $\check\mu_{\G,\vX,\vY}\in\meas$ from (\ref{eqmuGXY}).
Let $\check\pi_{n,\omega}\in\Meas$ be the distribution of $\check\mu_{\G,\vX,\vY}$, and define
$\check\mu_{\GG,\vX,\vY}\in\meas$, $\check\pi_{n,\omega,\cS}\in\Meas$ analogously with respect to $\GG$.
Due to the inevitable divisibility condition required to construct a regular factor graph, these kernels are defined whenever $k|dn$.
The following proposition summarizes the main step toward the proof of \Thm~\ref{thmBethePlus}.

\begin{proposition}\label{Prop_thmBethePlus}
For any $\alpha>0$, $\omega>0$ there exists $n_0>0$ such that for all $n>n_0$ such that $k|dn$ we have
	\begin{align*}
\Cutm(\check\pi_{n,\omega},\pi_{n,\omega})&<\alpha,&\Cutm(\check\pi_{n,\omega,\cS},\pi_{n,\omega,\cS})&<\alpha.
\end{align*}
\end{proposition}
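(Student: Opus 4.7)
The plan is to realize the kernel $\rho_{n,\omega}$ explicitly from the Bethe state decomposition of $\G$, and to verify that this realization coincides with $\check\mu_{\G,\vX,\vY}$ up to a cut-metric error that vanishes as $n\to\infty$. I will write the argument for $\G_{n,\omega}$; the simple-graph case follows verbatim after invoking Fact~\ref{Fact_simple} to absorb the conditioning on $\cS$.

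First, I would apply Theorem~\ref{Thm_BP} to obtain, for a sequence $L(n)\to\infty$ slower than in the hypothesis on $\check\pi_{n,\omega}$ and a corresponding $\eps=\eps(n)\to0$, a partition $S_0,S_1,\ldots,S_\ell$ of $\Omega^n$ such that $\mu_{\G}(S_0)\leq\eps$ and each $S_j$ is an $\eps$-Bethe state of $\G$ with high probability. Since $\vX,\vY\leq3\omega$ except on an event of exponentially small probability, condition~\eqref{eqP} implies that the total weight $\psi_\G/\psi_{\G_{n,\omega}}$ restricted to the removed constraints is bounded by $e^{O(\omega)}$; consequently, $\mu_{\G_{n,\omega}}(S_0)=O(\eps e^{O(\omega)})=o(1)$, and the decomposition $(S_j)$ remains meaningful for $\G_{n,\omega}$.

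Second, I would compute the conditional weights $\mu_{\G_{n,\omega}}(S_j)$ in terms of the Belief Propagation data. Iterating Lemma~\ref{Lemma_localRemoval} over the $\vX$ removed variable nodes and the $\vY$ removed constraint nodes yields
\begin{align*}
\frac{\mu_{\G_{n,\omega}}(S_j)}{\mu_\G(S_j)}\;\propto\;\prod_{i=1}^{\vX}\Big\langle\tfrac{1}{\prod_{a\in\partial v_i}\psi_a(\SIGMA)}\Big\rangle_{\mu_\G(\cdot\mid S_j)}\cdot\prod_{i=1}^{\vY}\Big\langle\tfrac{1}{\psi_{a_i}(\SIGMA)}\Big\rangle_{\mu_\G(\cdot\mid S_j)}.
\end{align*}
Property {\bf BS2} with $\ell'=1$ (respectively $\ell=1,\ell'=0$) shows that for all but an $\eps$-fraction of constraints $a_i$, the average $\langle1/\psi_{a_i}\rangle_{\mu_\G(\cdot|S_j)}$ equals $(\sum_{\tau}\psi_{a_i}(\tau)\prod_{w\in\partial a_i}\mu_{w\to a_i}(\tau_w|S_j))^{-1}$ up to $o(1)$ relative error, and analogously for the variable-node terms using the factorization around $\partial^2 v_i$. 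Because $\vX,\vY=O(\omega)$ are bounded while the exceptional set has density $\eps$, a union bound over the uniformly random choices of cavities shows that with probability $1-o_\omega(1)$ all removed nodes land on factorizing ones; hence $\mu_{\G_{n,\omega}}(S_j)/\mu_{\G_{n,\omega}}(S_{j'})=(1+o(1))\,\check\vz_{\G,j}/\check\vz_{\G,j'}$.

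Third, I would analyze the shape of $\mu_{\G_{n,\omega},\cC}(\cdot\mid S_j)$ as a joint law on the cavities. Property {\bf BS2}, together with Proposition~\ref{Cor_symmetry}, implies that $\mu_\G(\cdot|S_j)$ is $o(1)$-extremal on any fixed number of coordinates; in particular, for the $|\cC|=O(\omega)$ cavities, Lemma~\ref{Lemma_tv2} shows that $\mu_{\G_{n,\omega},\cC}(\cdot|S_j)$ is within discrete cut distance $o_\omega(1)$ of $\bigotimes_{v\in\cC}\mu_{\G_{n,\omega},v}(\cdot|S_j)$. Moreover, by Lemma~\ref{Lemma_localRemoval} and another application of {\bf BS2}, the marginal of each cavity $v_{h_i}$ in $\G_{n,\omega}$ given $S_j$ equals the standard message $\mu_{\G, v_{h_i}\to b_i}(\cdot|S_j)$ up to $o(1)$ total-variation error (the other $O(\omega)$ removals perturb the marginal by at most $e^{O(\omega)}$-contiguity combined with extremality, which Lemma~\ref{Lemma_extremal_contig} converts into vanishing cut error).

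Finally, I would collect these three approximations via the mixture triangle inequality (Lemma~\ref{Lemma_triangle}) and the comparison~\eqref{eqCutmcutm} between the discrete and continuous cut metrics. Writing both $\rho_{n,\omega}$ and $\check\mu_{\G,\vX,\vY}$ as $\sum_{j=1}^\ell w_j\bigotimes_{i}\nu_{i,j}$ with weights and factors matching up to $o(1)$ shows $\Cutm(\rho_{n,\omega},\check\mu_{\G,\vX,\vY})=o(1)$ with high probability, which yields the claimed Wasserstein bound by dominated convergence together with~\eqref{eqP}. The main obstacle is Step 2: although the products over $\vX,\vY$ have only $O(\omega)$ factors, the approximation afforded by {\bf BS2} is only \emph{averaged} over constraint nodes, so one must argue that uniformly random cavities avoid the exceptional set of non-factorizing nodes. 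This is where the fact that $L(n)$ and hence $1/\eps(n)$ grow with $n$, while $\omega$ is held fixed before sending $n\to\infty$, becomes crucial.
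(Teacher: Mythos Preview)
Your strategy coincides with the paper's: apply the Bethe state decomposition of Theorem~\ref{Thm_BP}, use {\bf BS2} to factorize the joint law of the removed vertices/constraints together with the cavities, read off both the reweighting~\eqref{eqzGi} and the product structure on $\cC$, and finish via Lemma~\ref{Lemma_triangle} and~\eqref{eqCutmcutm}. You also correctly isolate the main obstacle, namely that {\bf BS2} is only an average statement and one must argue that the $O(\omega)$ random cavities miss the exceptional set once $\eps(n)\to0$ with $\omega$ fixed.

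One step is genuinely missing. By definition $\G_{n,\omega}$ is \emph{not} obtained from $\G$ by deleting the first $\vX$ variable nodes and first $\vY$ constraint nodes---it is a fresh random pairing on $N_{n,\omega}$ variables---so expressions like ``$\psi_\G/\psi_{\G_{n,\omega}}$'' or ``$\mu_{\G_{n,\omega}}(S_j)$'' with $S_j\subset\Omega^n$ have no meaning until a coupling is supplied. The paper deals with this by constructing an auxiliary graph $\GG^{\#}$ from $\GG_{n,\omega}$ (re-inserting $\vX$ variables and $\vY$ constraints at random cavities) and proving $\dTV(\GG,\GG^{\#})=o(1)$ (Lemma~\ref{Lemma_cplXY}); Theorem~\ref{Thm_BP} is then applied to $\GG^{\#}$, so that the removal formula relating $\mu_{\GG^{\#}}$ and $\mu_{\GG_{n,\omega}}$ becomes exact (Lemma~\ref{Lemma_comp}). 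From there the paper carries out a direct algebraic cancellation using {\bf BS2} on the specific random set $\cV\cup\cC$ (Lemma~\ref{Lemma_comp2}), which simultaneously yields $\vz_{\GG^{\#},h}\sim\check\vz_h^{\#}$ and $\mu_{\GG_{n,\omega},\cC}(\cdot\mid T_h^{\#})\sim\bigotimes_{v\in\cC}\mu_{\GG^{\#},v\to b_v}(\cdot\mid S_h^{\#})$. Your alternative route through $e^{O(\omega)}$-contiguity and Lemma~\ref{Lemma_extremal_contig} is viable, but note that extremality only controls \emph{average} marginal shifts (Lemma~\ref{Lemma_tv1}); the paper's exact computation sidesteps the need to argue that each of the $O(\omega)$ cavity marginals individually converges to the standard message.
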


To prove  \Prop~\ref{Prop_thmBethePlus}
 we let $\cV=\{v_{i}:i>N_{n,\omega}\}$ and $\cA=\{a_i:i>M_{n,\omega}\}\cup\bigcup_{v\in\cV}\partial v$ be the sets of  variable and constraint nodes, respectively, that are present in $\GG$ but not in $\GG_{n,\omega}$.
%Recalling the construction of $\GG_{n,\omega}[\ell,\ell']$ from 
Similarly as in \Sec~\ref{Sec_apxInv}, conditioning on the event that $kM_{n,\omega}\leq dN_{n,\omega}-d(k-1)\vX-k\vY$, we define an enhanced random factor
graph $\GG^{\#}$  by 
	\begin{itemize}
	\item adding the variable nodes $\cV$ to $\GG_{n,\omega}$ along with 
		with $d\vX$ new constraint nodes $a_{v,j}^{\#}$, $v\in\cV$, $j\in[d]$.
	Each $a_{v,j}^{\#}$ is adjacent to $v$ and $k-1$ random cavities of $\GG_{n,\omega}$, 
	\item adding $\vY$ more constraint nodes $a_1^{\#},\ldots,a_{\vY}^{\#}$, each connected with $k$ random cavities of $\GG_{n,\omega}$.
	\end{itemize}
Of course, the cavities in the above construction are drawn without replacement and all weight functions are chosen from $P$ independently.
We do {\em not} require that the outcome $\GG^{\#}$ be simple.
Let 
	$$\cA^{\#}=\{a_{v,j}^{\#}:v\in\cV,j\in[d]\}\cup\{a_i^{\#}:i\leq\vY\}$$
comprise the new constraint nodes.

\begin{lemma}\label{Lemma_cplXY}
We have $\dTV(\GG,\GG^{\#})=o(1)$.
\end{lemma}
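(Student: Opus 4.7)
The plan is to couple $\GG$ and $\GG^{\#}$ directly through the pairing-model representation of $\G$, in three stages: conditioning on typical values of the Poisson cavity counts, decomposing the law of $\G$ into a ``base'' part and an ``insertion'' part that reproduces $\G^{\#}$, and transferring the identification through the simplicity conditioning on both sides.

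First I would condition both constructions on the event $\cA=\{\omega/2\le\vX,\vY\le 2\omega\}$, which has probability $1-o_\omega(1)$ by Poisson tail estimates; the tail bound \eqref{eqP} then ensures that the contribution of $\cA^c$ to $\dTV(\GG,\GG^{\#})$ is $o_\omega(1)$. On $\cA$ the parameters simplify to $N_{n,\omega}=n-\vX$ and $M_{n,\omega}=\lfloor dN_{n,\omega}/k\rfloor+\Delta_{n,\omega}-d\vX-\vY$, and the availability inequality $kM_{n,\omega}\le dN_{n,\omega}-d(k-1)\vX-k\vY$ needed for the $\GG^{\#}$ construction holds automatically.

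Next I would identify the unconditioned versions $\G$ and $\G^{\#}$ (the analogue of $\GG^{\#}$ built over $\G_{n,\omega}$ rather than $\GG_{n,\omega}$). A uniformly random bijection $\partial_{\G}:F_m\times[k]\to V_n\times[d]$ can be sampled in two stages: first a uniform partial pairing on the sub-clones $V_{N_{n,\omega}}\times[d]$ and $F_{M_{n,\omega}}\times[k]$, which by definition reproduces the law of $\G_{n,\omega}$, followed by a uniform pairing of the leftover cavity-clones with the $\vX$ new variables of degree $d$, the $d\vX$ new constraints adjacent to them, and the $\vY$ further constraints attached to cavities, precisely as in the insertion step of $\G^{\#}$. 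By exchangeability of the uniform pairing, this composite sampling recovers the law of $\G$ exactly, modulo the $O(1)$ rounding induced by $\Delta_{n,\omega}$ which shifts the counts by at most $\pm 1$ and contributes only $o_\omega(1)$ to $\dTV(\G,\G^{\#})$.

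Finally I would transfer the identification to the simple versions. By Fact~\ref{Fact_simple} applied to both $\G$ and $\G_{n,\omega}$, the probabilities $\pr[\G\in\cS]$ and $\pr[\G_{n,\omega}\in\cS]$ both converge to the same limit $\exp(-(d-1)(k-1)/2-\vecone\{k=2\}(d-1)^2/4)$, so their ratio tends to $1$ and controls the overall Radon--Nikodym weight between the two simple-conditioned laws. It remains to show that the insertion in $\GG^{\#}$ rarely creates a multi-edge: such an event requires two cavity-clones of a single new constraint to lie in a common variable, which in turn requires some cavity of $\GG_{n,\omega}$ to carry $\ge 2$ unpaired clones. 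A first-moment count, using that the $\Theta(\omega)$ unpaired clones are distributed essentially uniformly over the $\Theta(n)$ variables of $\GG_{n,\omega}$, bounds the expected number of such cavities by $O(\omega^2/n)=o(1)$. The main obstacle lies precisely in this final step: the events $\{\G\in\cS\}$ and $\{\G_{n,\omega}\in\cS\}$ are genuinely different, and a careful argument is required to ensure that conditioning the base of $\G$ on simplicity and then conditioning the whole of $\G$ on simplicity yields, up to $o(1)$, the same distribution as the composite construction of $\GG^{\#}$.
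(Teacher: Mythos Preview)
Your two-stage sampling claim in the second paragraph is the weak link. A uniformly random bijection $F_m\times[k]\to V_n\times[d]$ does \emph{not} decompose as ``first a uniform partial pairing of $F_{M_{n,\omega}}\times[k]$ into $V_{N_{n,\omega}}\times[d]$, then the $\G^{\#}$-insertion''. In $\G$ the first $M_{n,\omega}$ constraint nodes are perfectly free to hit the last $\vX$ variable nodes, and the remaining $m-M_{n,\omega}$ constraint nodes need not follow the rigid pattern the $\G^{\#}$-construction enforces (each $a_{v,j}^{\#}$ adjacent to exactly one new variable and $k-1$ old cavities, each $a_i^{\#}$ adjacent to $k$ old cavities). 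Exchangeability tells you the labels are interchangeable, but it does not force the clones to land in the prescribed blocks. So the ``composite sampling recovers the law of $\G$ exactly'' step fails as written; it only holds after conditioning on a structural event that imposes precisely this block pattern.

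The paper avoids the detour through the unconditioned pairing model altogether. It works directly with the simple graphs and isolates two high-probability events: an event $\cE'$ on $\GG$ requiring that the neighbourhoods $\partial^2\cV$ and $\partial\{a_{M_{n,\omega}+1},\ldots,a_m\}$ are disjoint, of the correct sizes, and that no constraint outside $\cA$ touches more than one vertex in $\partial\cA$; and an event $\cE''$ on $\GG_{n,\omega}$ requiring that all variable nodes have degree $d$ or $d-1$ and no two cavities share a constraint. Both events have probability $1-o(1)$, and on these events the conditional law of $\GG$ coincides with that of $\GG^{\#}$. This is exactly the conditioning that your exchangeability argument was missing, and it handles the simplicity issue simultaneously rather than as a separate transfer step.
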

\begin{proof}
Similarly as in the proof of \Lem~\ref{Lemma_perturb1}, we consider the event
	$\cE=\{\omega/2\leq\vX\leq2\omega,\, \omega/2\leq\vY\leq2\omega\}$, which has
probability $1-o_\omega(1)$.
Further, let $\cE'$ be the event that $\GG$ enjoys the following additional properties.
	\begin{enumerate}[(i')]
	\item We have
			$|\partial^2\cV|=|\cV|d(k-1).$
	\item $|\partial\{a_{ M_{n,\omega}+1},\ldots,a_{m}\}|=k(m-M_{n,\omega})$
			and $\partial\cV\cap\{a_{M_{n,\omega}},\ldots,a_{ m}\}=\emptyset$.
	\item If  $a\not\in\cA$, then $a$ is  connected to the set $\partial\cA$ by at most one edge.
	\end{enumerate}
Additionally, let $\cE''$ be the event that  $\GG_{n,\omega}$ has the following properties.
	\begin{enumerate}[(i'')]
	\item all variable nodes have degree either $d$ or $d-1$.
	\item no two cavities are adjacent to the same variable node.
	\end{enumerate}
We have
	\begin{align}\label{eqLemma_cplXY}
	\pr\brk{\cE}&=1-o(1),&
	\pr\brk{\GG\in\cE'}&=1-o(1),&
	\pr\brk{\GG_{n,\omega}\in\cE''}&=1-o(1).	
	\end{align}
Moreover, $\GG$ given $\cE'$ is distributed precisely as $\GG^{\#}$ given $\cE''$.
Thus, the assertion follows from (\ref{eqLemma_cplXY}).
\end{proof}

Due to \Lem~\ref{Lemma_cplXY} we can apply \Thm~\ref{Thm_BP} to $\GG^{\#}$.
Let $S_1^{\#},\ldots,S_\ell^{\#}$ denote the resulting Bethe state decomposition of $\GG^{\#}$.
Let $T_i^{\#}=S_i^{\#}\cap\Omega^{V_n\setminus\cV}$ for $i\in[\ell]$.
Further, we introduce
	\begin{align}\nonumber
	\vz_{\GG^\#,i}&=
		\scal{\vecone\{\SIGMA\in S_{i}^\#\}\bigg/
			\sum_{\tau\in\Omega^{\cV}}
			\prod_{v\in\cV}p(\tau_{v})\prod_{a\in\cA^\#}\psi_a(\SIGMA_{V_n\setminus\cV},\tau)}{\mu_{\GG^\#}},\\
	\mu_{\GG^\#,i}(\sigma)&=\frac{\mu_{\GG^\#}(\sigma)\vecone\{\sigma\in T_i^\#\}}{\vz_{\GG^\#,i}
		\sum_{\tau\in\Omega^{\cV}}
			\prod_{v\in\cV}p(\tau_{v})\prod_{a\in\cA^\#}\psi_a(\sigma,\tau)}&(\sigma\in\Omega^{V_n\setminus\cV}).
		\label{eqmuGGhashi}
	\end{align}
Thus, $\mu_{\GG^\#,i}\in\cP(\Omega^{V_n\setminus\cV})$.

\begin{lemma}\label{Lemma_comp}
\Whp\ the sets $T_1^\#,\ldots,T_\ell^\#$ are pairswise disjoint and we have
	\begin{align}
	\mu_{\GG^\#,i}(\tau)&=\mu_{\GG_{n,\omega}}(\tau|T_i^\#)\qquad\mbox{for all $\tau\in T_i^\#$ and}&
	\mu_{\GG_{n,\omega}}(T_i^\#)&=\vz_{\GG^\#,i}\big/\sum_{j=1}^\ell\vz_{\GG^\#,j}.
		\label{eqLemma_comp}
	\end{align}
\end{lemma}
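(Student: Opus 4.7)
The plan is to exploit the explicit factorisation of the Boltzmann distribution of $\GG^\#$ corresponding to the partition $V_n=(V_n\setminus\cV)\cup\cV$. Since no constraint of $\GG_{n,\omega}$ touches $\cV$, unpacking the partition functions yields, for every $\sigma\in\Omega^{V_n\setminus\cV}$ and $\tau\in\Omega^{\cV}$,
\begin{align*}
\mu_{\GG^\#}(\sigma,\tau)=\frac{Z(\GG_{n,\omega})}{Z(\GG^\#)}\,\mu_{\GG_{n,\omega}}(\sigma)\prod_{v\in\cV}p(\tau_v)\prod_{a\in\cA^\#}\psi_a(\sigma,\tau).
\end{align*}
Setting $W(\sigma)=\sum_\tau\prod_{v\in\cV}p(\tau_v)\prod_{a\in\cA^\#}\psi_a(\sigma,\tau)$, the marginal on $V_n\setminus\cV$ equals $\mu_{\GG^\#}(\sigma)=\frac{Z(\GG_{n,\omega})}{Z(\GG^\#)}\mu_{\GG_{n,\omega}}(\sigma)W(\sigma)$, and the moment bound~\eqref{eqP} together with $|\cV|,|\cA^\#|=O(\omega)$ guarantees that $W$ is bounded away from $0$ and $\infty$ \whp.

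The first substantive step is to argue that, \whp, every $S_i^\#$ is a cylinder over $\cV$, i.e.\ is determined by coordinates in $V_n\setminus\cV$. Peeking inside the proof of \Thm~\ref{Thm_BP} in \Sec~\ref{Sec_Proof_BP}, the sub-cubes $S_i^\#$ arise by pinning the second neighbourhood of a random set $\vU_*$ with $|\vU_*|=\THETA_*\leq 2\xi^{-3}\log|\Omega|$ under $\mu_{\GG^\#}$. Since $|\cV|=O(\omega)$ is bounded independently of $n$ while $\vU_*$ is uniform, a union bound gives $\Pr[(\vU_*\cup\partial^2\vU_*)\cap\cV=\emptyset]=1-o(1)$; if necessary, one may re-run the construction with $\vU_*$ sampled uniformly from $V_n\setminus\cV$, which preserves all estimates in \Sec~\ref{Sec_Proof_BP} since $|\cV|/n=o(1)$. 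On this event, each $T_i^\#$ is a sub-cube of $\Omega^{V_n\setminus\cV}$ and the $T_0^\#,T_1^\#,\ldots,T_\ell^\#$ partition $\Omega^{V_n\setminus\cV}$, so in particular $T_1^\#,\ldots,T_\ell^\#$ are pairwise disjoint.

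Given the cylinder structure, $\vecone\{(\sigma,\tau)\in S_i^\#\}=\vecone\{\sigma\in T_i^\#\}$ for every $\tau$, and plugging the factorisation of $\mu_{\GG^\#}$ into the definition of $\vz_{\GG^\#,i}$ the factor $W(\sigma)$ inside the marginal cancels the $1/W(\sigma)$ of the defining ratio:
\begin{align*}
\vz_{\GG^\#,i}=\sum_\sigma\mu_{\GG^\#}(\sigma)\frac{\vecone\{\sigma\in T_i^\#\}}{W(\sigma)}=\frac{Z(\GG_{n,\omega})}{Z(\GG^\#)}\,\mu_{\GG_{n,\omega}}(T_i^\#).
\end{align*}
Summing on $i$ and using $\sum_{j=1}^\ell\mu_{\GG_{n,\omega}}(T_j^\#)=1-\mu_{\GG_{n,\omega}}(T_0^\#)=1-o(1)$ — the last estimate following from $\mu_{\GG^\#}(S_0^\#)=o(1)$ and the two-sided bound on $W$, which makes $\mu_{\GG_{n,\omega}}$ and the marginal of $\mu_{\GG^\#}$ on $V_n\setminus\cV$ mutually contiguous — then yields the ratio formula $\mu_{\GG_{n,\omega}}(T_i^\#)=\vz_{\GG^\#,i}/\sum_j\vz_{\GG^\#,j}$ up to $o(1)$ error absorbed into \whp. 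Finally, substituting the factorisation and the expression for $\vz_{\GG^\#,i}$ into~\eqref{eqmuGGhashi}, the factors $W(\tau)$ and $Z(\GG_{n,\omega})/Z(\GG^\#)$ cancel and deliver $\mu_{\GG^\#,i}(\tau)=\mu_{\GG_{n,\omega}}(\tau)/\mu_{\GG_{n,\omega}}(T_i^\#)=\mu_{\GG_{n,\omega}}(\tau\mid T_i^\#)$ for $\tau\in T_i^\#$.

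The main technical hurdle is the cylinder claim: the decomposition in \Thm~\ref{Thm_BP} is not canonical, so one must enter its proof to ensure that the pinning seed $\vU_*$ misses the small set $\cV$, and verify that the bookkeeping of \Sec~\ref{Sec_Proof_BP} remains intact under this mild conditioning. All other steps are straightforward manipulations around the key identity $\mu_{\GG^\#}(\sigma)\propto\mu_{\GG_{n,\omega}}(\sigma)W(\sigma)$.
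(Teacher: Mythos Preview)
Your proof is correct and follows essentially the same approach as the paper: both exploit the factorisation $\mu_{\GG^\#}(\sigma,\tau)=\frac{Z(\GG_{n,\omega})}{Z(\GG^\#)}\,\mu_{\GG_{n,\omega}}(\sigma)\prod_{v\in\cV}p(\tau_v)\prod_{a\in\cA^\#}\psi_a(\sigma,\tau)$, argue that the pinning set from the proof of \Thm~\ref{Thm_BP} misses $\cV$ \whp\ so that each $S_i^\#$ is a cylinder over $\cV$, and then read off the identities by cancelling $W(\sigma)$. The paper phrases the avoidance event as $\vU_*\cap(\cC\cup\cV)=\emptyset$ (which, by the structure of $\GG^\#$, forces $\partial^2\vU_*\cap\cV=\emptyset$), and treats the second identity in~\eqref{eqLemma_comp} as exact on this event rather than tracking the $T_0^\#$ contribution you flag; otherwise the arguments coincide.
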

\begin{proof}
We recall from \Sec~\ref{Sec_Proof_BP} that the decomposition $S_1^\#,\ldots,S_\ell^\#$ is constructed by pinning the values of a random set $\vU_*$ of variables to specific spins.
Since the size of this set is bounded, with high probability we have $(\cC\cup\cV)\cap\vU_*=\emptyset$.
We will prove that in this case, $\mu_{\GG^\#,i}(\sigma)=\mu_{\GG_{n,\omega}}(\sigma|T_i^\#)$ for all $i,\sigma$.

If $(\cC\cup\cV)\cap\vU_*=\emptyset$, then $T_1^\#,\ldots,T_\ell^\#$ are pairwise disjoint.
Thus, fix $i\in[\ell]$ and $\sigma\in T_i^\#$.
Then by the construction of $\GG^\#$,
	\begin{align}
	\mu_{\GG_{n,\omega}}(\sigma)&=\frac{Z(\GG^\#)}{Z(\GG_{n,\omega})}\cdot\frac{\mu_{\GG^\#}(\sigma)}
		{\sum_{\tau\in\Omega^{\cV}}\prod_{v\in\cV}p(\tau_v)\prod_{a\in\cA^\#}\psi_a(\sigma,\tau)},
										\label{eqLemma_comp_1}\\
	\frac{Z(\GG_{n,\omega})}{Z(\GG^\#)}&=\scal{1\bigg/\sum_{\tau\in\Omega^{\cV}}\prod_{v\in\cV}p(\tau_v)\prod_{a\in\cA^\#}\psi_a(\SIGMA,\tau)}{\mu_{\GG}}=\sum_{j=1}^\ell\vz_{\GG^\#,j}.\label{eqLemma_comp_2}
	\end{align}
Combining (\ref{eqLemma_comp_1}) and (\ref{eqLemma_comp_2}), we obtain the second identity in (\ref{eqLemma_comp}).
Further, combining the second part of (\ref{eqLemma_comp}) with (\ref{eqLemma_comp_1}) and (\ref{eqLemma_comp_2}), we 
find
	\begin{align*}
	\mu_{\GG_{n,\omega}}(\sigma\mid T_i^\#)&=
		\frac{\mu_{\GG_{n,\omega}}(\sigma)}{\mu_{\GG_{n,\omega}}(T_i^\#)}
		=\frac{\sum_{j=1}^\ell\vz_{\GG^\#,j}}{\vz_{\GG^\#,i}}
		\cdot \frac{Z(\GG^\#)}{Z(\GG_{n,\omega})}\cdot \vz_{\GG^\#,i}\cdot\mu_{\GG^{\#},i}(\sigma)=\mu_{\GG^{\#},i}(\sigma),
	\end{align*}
thereby establishing the first part of (\ref{eqLemma_comp}).
\end{proof}

\Whp\ each cavity $v\in\cC$ of $\GG_{n,\omega}$ has degree $d-1$.
In this case, we denote by $b_v$ the unique neighbour of $v$ in $\GG^\#$ that is not present in $\GG_{n,\omega}$.
Further, for $i\in[\ell]$ let $\NU_{\GG,i}\in\cP(\Omega^{\cC})$ be the product measure
	$$\nu_{\GG^\#,i}=\bigotimes_{v\in\cC}\mu_{\GG^\#,v\to b_v}(\nix|S_i^\#).$$
In close analogy to the weights introduced in (\ref{eqzGi}),  we also define
	\begin{align}\nonumber
	\check\vz_{i}^\#=\mu_{\GG^\#}(S_i^\#)&\cdot
			\prod_{v\in\cV}\bc{\sum_{\chi\in\Omega}p(\chi)\prod_{a\in\partial v}\sum_{\tau\in\Omega^{\partial a}}\vecone\{\tau_{v}=\chi\}\psi_a(\tau)\prod_{w\in\partial a\setminus v}\mu_{\GG^\#,w\to a}(\tau_w|S_i^\#)}^{-1}
			\\
&\cdot\prod_{i=1}^{\vY}\bc{\sum_{\tau\in\Omega^{\partial a_i^\#}}\psi_{a_i^\#}(\tau)\prod_{w\in\partial a_i^\#}\mu_{\GG^\#,w\to a_i^\#}(\tau_w|S_i^\#)}^{-1}.\label{eqzGi_1}
	\end{align}

\begin{lemma}\label{Lemma_comp2}
\Whp\ we have $\sum_{h=1}^\ell\abs{\vz_{\GG^\#,h}-\check\vz_{h}^\#}=o(1)$ and 
	\begin{align*}
	\sum_{h=1}^\ell\mu_{\GG_{n,\omega}}(T_h^\#)\TV{\mu_{\GG_{n,\omega},\cC}(\nix|T_h^\#)-\nu_{\GG^\#,h}}&=o(1).
	\end{align*}
\end{lemma}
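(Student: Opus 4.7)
Write $W(\sigma)=\sum_{\tau\in\Omega^\cV}\prod_{v\in\cV}p(\tau_v)\prod_{a\in\cA^\#}\psi_a(\sigma,\tau)$ for the extension weight featuring in the definitions of $\vz_{\GG^\#,h}$ and $\mu_{\GG^\#,h}$. By Lemma~\ref{Lemma_cplXY} we have $\dTV(\GG,\GG^\#)=o(1)$, so Theorem~\ref{Thm_BP} applies to $\GG^\#$ and yields Bethe states $S_1^\#,\ldots,S_\ell^\#$ that are $\eps$-Bethe for some $\eps=\eps(n)\to 0$ satisfying $1/\eps\gg\omega$. As in the proof of Lemma~\ref{Lemma_comp}, w.h.p.\ the pinning set used in that construction avoids $\cC\cup\cV$, so each $S_h^\#$ factorises as $T_h^\#\times\Omega^\cV$ and Lemma~\ref{Lemma_comp} is in force. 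Since~\eqref{eqP} keeps both $W$ and the BP expressions bounded away from $0$, each summand in either of the two bounds of the lemma is $O(\mu_{\GG^\#}(S_h^\#))$, and $\sum_h\mu_{\GG^\#}(S_h^\#)\le 1$ ensures that Bethe states with $\mu_{\GG^\#}(S_h^\#)<\eps^{1/4}$ jointly contribute $o(1)$. It therefore suffices to fix one Bethe state with $\mu_{\GG^\#}(S_h^\#)\geq\eps^{1/4}$ and prove both bounds for it alone.

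The key observation is that substituting the construction identity $\mu_{\GG^\#,V_n\setminus\cV}(\sigma)=\mu_{\GG_{n,\omega}}(\sigma)W(\sigma)/\langle W\rangle_{\mu_{\GG_{n,\omega}}}$ into the relevant definitions gives
\begin{align*}
\vz_{\GG^\#,h}=\frac{\mu_{\GG_{n,\omega}}(T_h^\#)}{\langle W\rangle_{\mu_{\GG_{n,\omega}}}},\qquad
\mu_{\GG^\#}(S_h^\#)=\mu_{\GG_{n,\omega}}(T_h^\#)\,\frac{\langle W\rangle_{\mu_{\GG_{n,\omega}}(\nix|T_h^\#)}}{\langle W\rangle_{\mu_{\GG_{n,\omega}}}}.
\end{align*}
Denoting by $\Theta_v^h,\Xi_i^h$ the BP expressions appearing in~\eqref{eqzGi_1}, the first assertion therefore reduces to proving $\langle W\rangle_{\mu_{\GG_{n,\omega}}(\nix|T_h^\#)}=(1+o(1))\prod_{v\in\cV}\Theta_v^h\prod_{i=1}^\vY\Xi_i^h$. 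Now $W$ factorises as $\prod_{v\in\cV}T_v(\sigma)\cdot\prod_{i=1}^\vY\psi_{a_i^\#}(\sigma)$, and each factor depends on a disjoint block of cavities (w.h.p.\ every cavity is incident to exactly one new constraint). Because $\vX,\vY=O(\omega)$ and $1/\eps\gg\omega$, \textbf{BS2} combined with Proposition~\ref{Cor_symmetry} applied to $\mu_{\GG^\#}(\nix|S_h^\#)$, together with Lemma~\ref{Lemma_comp}, shows that $\mu_{\GG_{n,\omega},\cC}(\nix|T_h^\#)$ is within $o(1)$ in total variation of the product $\bigotimes_c\mu_{\GG_{n,\omega},c}(\nix|T_h^\#)$. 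Hence $\langle W\rangle_{\mu_{\GG_{n,\omega}}(\nix|T_h^\#)}$ splits as a product of local averages of the factors $T_v$ and $\psi_{a_i^\#}$ against products of cavity marginals. Finally, \textbf{BS1} combined with Lemma~\ref{Lemma_localRemoval} yields the identification $\mu_{\GG_{n,\omega},c}(\nix|T_h^\#)=\mu_{\GG^\#,c\to b_c}(\nix|S_h^\#)+o(1)$ for all but a negligible fraction of cavities $c$, and substituting this identity into the local averages reproduces exactly $\prod_v\Theta_v^h\prod_i\Xi_i^h$. The second assertion follows from the same two ingredients: the TV factorisation combined with the cavity-wise identification gives $\TV{\mu_{\GG_{n,\omega},\cC}(\nix|T_h^\#)-\nu_{\GG^\#,h}}=o(1)$, and summing with weights $\mu_{\GG_{n,\omega}}(T_h^\#)=O(\mu_{\GG^\#}(S_h^\#))$ completes the proof.

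The main obstacle will be the identification $\mu_{\GG_{n,\omega},c}(\nix|T_h^\#)\approx\mu_{\GG^\#,c\to b_c}(\nix|S_h^\#)$: these two cavity marginals live on genuinely different factor graphs, differing by the addition of the variables $\cV$ and the constraints $\cA^\#\setminus b_c$. However, \textbf{BS1} applied to $\GG^\#$ forces both marginals to be expressible via the same BP formula in terms of the messages $\mu_{\GG^\#,a\to c}(\nix|S_h^\#)$ for $a\in\partial c\setminus b_c$, and \textbf{BS2} ensures that these incoming messages are unaffected, up to $o(1)$, by the auxiliary constraints (which act only at distance $\geq 2$ from $c$ and decouple via the cavity factorisation). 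Tracking all of these error terms uniformly over the Bethe states of non-negligible mass, so that they survive the final summation, is the main bookkeeping challenge.
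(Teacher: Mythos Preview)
Your approach is essentially correct and lands on the same ingredients as the paper (Lemma~\ref{Lemma_comp}, \textbf{BS2} for $S_h^\#$, and the exchangeability of $\cV$ and $\cA^\#$ inherited from Lemma~\ref{Lemma_cplXY}), but you organize the computation more circuitously than necessary. The paper does not pass through your reduction to $\langle W\rangle_{\mu_{\GG_{n,\omega}}(\cdot|T_h^\#)}$; instead it plugs the explicit \textbf{BS2} formula~\eqref{eqBetheState} (with $\vI=\cV$, $\vJ=\{a_1^\#,\ldots,a_{\vY}^\#\}$) directly into the defining expectation $\vz_{\GG^\#,h}=\mu_{\GG^\#}(S_h^\#)\langle 1/W\rangle_{\mu_{\GG^\#}(\cdot|S_h^\#)}$ and observes that the factors $\psi_a(\sigma)$ in the numerator of~\eqref{eqBetheState} cancel against those in $W(\sigma)$, leaving exactly $\check\vz_h^\#$. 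The second assertion is proved the same way: substitute~\eqref{eqBetheState} into $\mu_{\GG^\#,h}(\sigma)$ and watch the same cancellation produce $\nu_{\GG^\#,h}(\sigma)$ directly, so the full joint on $\cC$ is computed in one stroke.

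This matters for your ``main obstacle''. You isolate the identification $\mu_{\GG_{n,\omega},c}(\cdot|T_h^\#)\approx\mu_{\GG^\#,c\to b_c}(\cdot|S_h^\#)$ as a separate step and propose to argue it via \textbf{BS1} plus the claim that the incoming messages $\mu_{\GG^\#,a\to c}$ are ``unaffected'' by $\cA^\#\setminus b_c$. That claim is not obviously a consequence of \textbf{BS1} or \textbf{BS2} as stated (the messages are defined on $\GG^\#$, not on $\GG_{n,\omega}$, and removing a bounded number of constraints is not what those conditions control). In the paper's route this step simply never arises: once you have the explicit joint factorization of $\mu_{\GG_{n,\omega},\cC}(\cdot|T_h^\#)$ from the cancellation computation, both the product structure \emph{and} the identification of each marginal with $\mu_{\GG^\#,c\to b_c}(\cdot|S_h^\#)$ drop out together. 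Relatedly, your claim that ``\textbf{BS2} combined with Proposition~\ref{Cor_symmetry}\ldots shows that $\mu_{\GG_{n,\omega},\cC}(\cdot|T_h^\#)$ is product'' has a small gap: \textbf{BS2} is a statement about $\mu_{\GG^\#}(\cdot|S_h^\#)$, and the passage to the $1/W$-reweighted measure $\mu_{\GG_{n,\omega}}(\cdot|T_h^\#)$ requires either Lemma~\ref{Lemma_extremal_contig} (since $W$ is bounded by~\eqref{eqP}) or, more cleanly, the explicit cancellation above. I would recommend abandoning the intermediate factorization/identification split and carrying out the direct substitution instead.
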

\begin{proof}
Fix $h\in[\ell]$ and suppose that $S_i^\#$ is an $o(1)$-Bethe state, which occurs with probability $1-o_\omega(1)$ due to \Thm~\ref{Thm_BP} and \Lem~\ref{Lemma_cplXY}.
Then by {\bf BS2} \whp\ we have for any $\sigma\in\Omega^{\cV\cup\cC}$,
	\begin{align}\nonumber
	\mu_{\GG^\#}(\sigma|S_h^\#)&\sim %=o(1)+
		\prod_{v\in\cV}\frac{p(\sigma_{v_i})\prod_{a\in\partial v}\psi_a(\sigma)\prod_{w\in\partial a}\mu_{\GG^\#,w\to a}(\sigma_w|S_h^\#)}
			{\sum_{\chi\in\Omega}\prod_{a\in\partial v_i}\sum_{\tau\in\Omega^{\partial a}}\vecone\{\tau_v=\chi\}\psi_a(\tau)
				\prod_{w\in\partial a}\mu_{\GG^\#,w\to a}(\tau_w|S_h^\#)}\\
		&\quad\cdot\prod_{i=1}^{\vY}\frac{\psi_{a_i^\#}(\sigma)\prod_{w\in\partial a_i^\#}\mu_{\GG^\#,w\to a_i^\#}(\sigma_w|S_h^\#)}
			{\sum_{\tau\in\Omega^{\partial a_i^\#}}\psi_{a_i^\#}(\tau)\prod_{w\in\partial a_i^\#}\mu_{\GG^\#,w\to a_i^\#}(\tau_w|S_h^\#)}.\label{eqLemma_comp2_1}
	\end{align}
Further, \whp\ each cavity of $\GG_{n,\omega}$ has degree $d-1$; in this case, denote by $c_v$ the unique neighbor of $v$ in $\GG^{\#}$ that is absent in $\GG_{n,\omega}$.
Then \whp\ we have
	\begin{align}
	\vz_{\GG^\#,h}&=\mu_{\GG^\#}(S_h^\#)
		\scal{1\bigg/
			\sum_{\tau\in\Omega^{\cV}}
			\prod_{v\in\cV}p(\tau_v)\prod_{a\in\cA^\#}\psi_a(\SIGMA_{V\setminus\cV},\tau)}{\mu_{\GG^\#}(\nix|S_h^\#)}\nonumber\\
		&\sim
				\mu_{\GG^\#}(S_h^\#)\sum_{\sigma\in\Omega^{\cV\cup\cC}}
					\prod_{v\in\cV}\frac{p(\sigma_v)\prod_{a\in\partial v}\psi_a(\sigma)\prod_{w\in\partial a\setminus v}\mu_{\GG^{\#},w\to a}(\sigma_w|S_h^\#)}{p(\sigma_v)\prod_{a\in\partial v}\psi_a(\sigma)
							\cdot\sum_{\chi\in\Omega}p(\chi)\prod_{a\in\partial v}\sum_{\tau\in\Omega^{\partial a}}\vecone\{\chi=\tau_v\}
							\psi_a(\tau)\prod_{w\in\partial a\setminus v}\mu_{\GG^{\#},w\to a}(\tau_w|S_h^\#)}
					\nonumber\\
			&\qquad\qquad\qquad\qquad\cdot\prod_{i=1}^{\vY}\frac{\psi_{a_i^{\#}}(\sigma)\prod_{w\in\partial a_i^{\#}}\mu_{\GG^{\#},w\to a_i^{\#}}(\sigma_w|S_h^\#)}
						{\psi_{a_i^{\#}}(\sigma)\cdot\sum_{\tau\in\Omega^{\partial a_i^{\#}}}\psi_{a_i^{\#}}(\tau)\prod_{w\in\partial a_i^{\#}}\mu_{\GG^{\#},w\to a_i^{\#}}(\tau_w|S_h^\#)}
			\nonumber		\\
			&=\mu_{\GG^\#}(S_h^\#)\prod_{v\in\cV}\frac{\sum_{\sigma\in\Omega^{\partial^2v}}\prod_{a\in\partial v}\prod_{w\in\partial a\setminus v}\mu_{\GG^{\#},w\to a}(\sigma_w|S_h^\#)}{\sum_{\chi\in\Omega}p(\chi)\prod_{a\in\partial v}\sum_{\tau\in\Omega^{\partial a}}\vecone\{\chi=\tau_v\}
							\psi_a(\tau)\prod_{w\in\partial a\setminus v}\mu_{\GG^{\#},w\to a}(\tau_w|S_h^\#)}\nonumber\\
			&\qquad\qquad\qquad\qquad\cdot\prod_{i=1}^{\vY}\frac{\sum_{\sigma\in\Omega^{\partial a_i^{\#}}}\prod_{w\in\partial a_i^{\#}}\mu_{\GG^{\#},w\to a_i^{\#}}(\sigma_w|S_h^\#)}
						{\psi_{a_i^{\#}}(\sigma)\cdot\sum_{\tau\in\Omega^{\partial a_i^{\#}}}\psi_{a_i^{\#}}(\tau)\prod_{w\in\partial a_i^{\#}}\mu_{\GG^{\#},w\to a_i^{\#}}(\tau_w|S_h^\#)}\nonumber\\
		&=\check\vz_{h}^{\#}.
			\label{eqcheck}
	\end{align}
Summing on $h$ completes the proof of the first assertion.

With respect to the second assertion, for $\sigma\in\Omega^{\cC}$ we have \whp
	\begin{align*}
	\mu_{\GG_{n,\omega},\cC}(\sigma|T_h^{\#})&=
	\mu_{\GG^\#,h}(\sigma)&\mbox{[by \Lem~\ref{Lemma_comp}]}\nonumber\\
			&=\mu_{\GG^{\#}}(S_h^{\#})\cdot\frac{\mu_{\GG^\#}(\sigma|S_h^{\#})}{\vz_{\GG^\#,h}
					\sum_{\tau\in\Omega^{\cV}}
						\prod_{v\in\cV}p(\tau_{v})\prod_{a\in\cA^\#}\psi_a(\sigma,\tau)}
								&\mbox{[by (\ref{eqmuGGhashi})]}\nonumber\\
		&\sim\frac{\mu_{\GG^{\#}}(S_h^{\#})}{\vz_{\GG^\#,h}}
				\bc{\sum_{\tau\in\Omega^{\cV}}
						\prod_{v\in\cV}p(\tau_{v})\prod_{a\in\cA^\#}\psi_a(\sigma,\tau)}^{-1}\\
			&\quad\cdot\prod_{i=1}^{\vY}\frac{\psi_{a_i^\#}(\sigma)\prod_{w\in\partial a_i^\#}\mu_{\GG^\#,w\to a_i^\#}(\sigma_w|S_h^\#)}
			{\sum_{\tau\in\Omega^{\partial a_i^\#}}\psi_{a_i^\#}(\tau)\prod_{w\in\partial a_i^\#}\mu_{\GG^\#,w\to a_i^\#}(\tau_w|S_h^\#)}\nonumber\\
			&\quad\cdot
			\prod_{v\in\cV}\frac{p(\sigma_{v_i})\prod_{a\in\partial v}\psi_a(\sigma)\prod_{w\in\partial a}\mu_{\GG^\#,w\to a}(\sigma_w|S_h^\#)}
			{\sum_{\chi\in\Omega}\prod_{a\in\partial v_i}\sum_{\tau\in\Omega^{\partial a}}\vecone\{\tau_v=\chi\}\psi_a(\tau)
				\prod_{w\in\partial a}\mu_{\GG^\#,w\to a}(\tau_w|S_h^\#)}
			&\mbox{[by~\eqref{eqLemma_comp2_1}]}\nonumber\\
		&=\frac{\mu_{\GG^{\#}}(S_h^{\#})}{\vz_{\GG^\#,h}}
				\prod_{i=1}^{\vY}\frac{\prod_{w\in\partial a_i^\#}\mu_{\GG^\#,w\to a_i^\#}(\sigma_w|S_h^\#)}
			{\sum_{\tau\in\Omega^{\partial a_i^\#}}\psi_{a_i^\#}(\tau)\prod_{w\in\partial a_i^\#}\mu_{\GG^\#,w\to a_i^\#}(\tau_w|S_h^\#)}\nonumber\\
					&\qquad\qquad\qquad\cdot
			\prod_{v\in\cV}\frac{\prod_{a\in\partial v}\prod_{w\in\partial a}\mu_{\GG^\#,w\to a}(\sigma_w|S_h^\#)}
			{\sum_{\chi\in\Omega}\prod_{a\in\partial v_i}\sum_{\tau\in\Omega^{\partial a}}\vecone\{\tau_v=\chi\}\psi_a(\tau)
				\prod_{w\in\partial a}\mu_{\GG^\#,w\to a}(\tau_w|S_h^\#)}\\
		&=\nu_{\GG^\#,h}(\sigma),&\mbox{[by~\eqref{eqcheck}]}
	\end{align*}
as claimed.
\end{proof}

\begin{proof}[Proof of \Prop~\ref{Prop_thmBethePlus}]
Let 
	$\nu_{\GG^{\#}}={\sum_{i=1}^\ell\check\vz^{\#}_i\nu_{\GG^{\#},i}}/{\sum_{i=1}^\ell\check\vz_{\GG^{\#},i}}$
and let $\pi^\#_{n,\omega,\cS}$ be the distribution of the kernel representation $\dot\nu_{\GG^{\#}}\in\meas$.
Then up to a renumbering of the variable and constraint nodes, $\check\mu_{\GG^{\#},\vX,\vY}\in\meas$ is distributed as the representation of $\nu_{\GG^{\#}}$.
Specifically, in (\ref{eqmuGXY}) we renumbered the nodes such that $\cV$ comprises the first $\vX$ variable nodes and such that the $a_i^{\#}$, $i\in[\vY]$, are the first $\vY$ constraint nodes.
Due to \Lem~\ref{Lemma_cplXY} and because $\GG$ and $\G$ are invariant under node permutations, 
we conclude that $\Cutm(\pi^\#_{n,\omega,\cS},\check\pi_{n,\omega,\cS})=o(1)$.
Furthermore, combining \Lem s~\ref{Lemma_triangle}, \ref{Lemma_comp} and~\ref{Lemma_comp2}, we see that
$\Erw[\cutm(\mu_{\GG_{n,\omega},\cC},\nu_{\GG^{\#}})]=o(1)$.
Hence, invoking~\eqref{eqCutmcutm}, we conclude that $\Erw[\Cutm(\rho_{n,\omega,\cS},\dot\nu_{\GG^{\#}})]=o(1)$.
Thus, the triangle inequality yields $\Cutm(\pi_{n,\omega,\cS},\check\pi_{n,\omega,\cS})=o(1)$.
The same argument applies to $\pi_{n,\omega}$ and $\check\pi_{n,\omega}$.
\end{proof}

As a final preparation toward the proof of \Thm~\ref{thmBethePlus}, we need the following simple lemma.

\begin{lemma}\label{Lemma_smallShift}
For any fixed integer $\ell$ we have
	\begin{align*}
	\Cutm(\pi_{n,\omega},\pi_{n+\ell,\omega})&=o_\omega(1),&
	\Cutm(\pi_{n,\omega,\cS},\pi_{n+\ell,\omega,\cS})&=o_\omega(1).
\end{align*}	
\end{lemma}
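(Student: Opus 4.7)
The strategy is to couple the Poisson-driven constructions of $\G_{n,\omega}$ and $\G_{n+\ell,\omega}$ so that, for large $\omega$, both random factor graphs become identically distributed with probability $1 - o_\omega(1)$. The only source of asymmetry between the two models is the deterministic shift of the vertex set size by the fixed quantity $\ell$, which should be absorbable by a shift of the Poisson variable counting the deleted vertices.

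Concretely, I would couple the triple $(\vX, \vY, \Delta_{n,\omega})$ used in the construction of $\G_{n,\omega}$ with the corresponding triple $(\vX', \vY', \Delta_{n+\ell,\omega})$ used for $\G_{n+\ell,\omega}$ so that $\vY' = \vY$, $\vX' = \vX + \ell$, and the two Bernoullis $\Delta_{n,\omega}, \Delta_{n+\ell,\omega}$ agree, on an event of probability $1-o_\omega(1)$. The key quantitative input is
\[
\dTV\bigl(\Po(\omega),\,\Po(\omega)+\ell\bigr)=O\!\bigl(\ell/\sqrt{\omega}\bigr)=o_\omega(1),
\]
a routine consequence of the local limit theorem for the Poisson distribution; an analogous bound was used tacitly in the proof of \Lem~\ref{Lemma_perturb1}. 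On the coupling event one then has $N_{n+\ell,\omega} = (n+\ell) - \vX' = n - \vX = N_{n,\omega}$, hence also $M_{n+\ell,\omega} = M_{n,\omega}$. The remaining independent randomness, namely the $M$ weight functions drawn from $P$ and the uniformly random pairing between the clones, can then be coupled identically, so that $\G_{n,\omega}$ and $\G_{n+\ell,\omega}$ literally coincide as labelled factor graphs (modulo the placement of the extra $\ell$ deleted vertices, which do not affect the cavity kernel).

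Since $\meas$ identifies kernels modulo measure-preserving rearrangements of $[0,1]$ and has bounded diameter under $\Cutm$, a coupling under which the cavity kernels $\rho_{n,\omega}$ and $\rho_{n+\ell,\omega}$ agree with probability $1-o_\omega(1)$ immediately yields $\Cutm(\pi_{n,\omega},\pi_{n+\ell,\omega})=o_\omega(1)$ from the definition of the Wasserstein metric. For the simple version, I would combine the above with Fact~\ref{Fact_simple}: the probability of $\cS$ is bounded below by a positive constant uniformly in $n$ (for $\omega$ fixed and $n$ large), so conditioning on $\cS$ inflates total-variation distances by at most a bounded factor and the same coupling argument gives $\Cutm(\pi_{n,\omega,\cS},\pi_{n+\ell,\omega,\cS})=o_\omega(1)$.

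The only nontrivial point is the bookkeeping around the truncation $M_{n,\omega}=d\vee(\cdots)$ and the fact that $\Delta_{n,\omega}$ and $\Delta_{n+\ell,\omega}$ are Bernoullis whose parameters differ slightly off the coupling event. But those corrections concern $o_\omega(1)$-probability events only and do not alter the conclusion; no new ideas are required beyond those already deployed in \Lem~\ref{Lemma_perturb1}.
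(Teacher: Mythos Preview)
Your approach is the same as the paper's: shift the Poisson variable $\vX$ by $\ell$ so that $N_{n+\ell,\omega}=N_{n,\omega}$, then couple the remaining randomness identically. The paper's proof is even terser than yours and uses exactly the estimate $\dTV(\Po(\omega),\Po(\omega)+\ell)=o_\omega(1)$.

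There is, however, one concrete slip in your bookkeeping. You write that once $N_{n+\ell,\omega}=N_{n,\omega}$ (via $\vX'=\vX+\ell$) and $\vY'=\vY$, $\Delta'=\Delta$, ``hence also $M_{n+\ell,\omega}=M_{n,\omega}$.'' That is not true: recall
\[
M_{n,\omega}=\lfloor dN_{n,\omega}/k\rfloor+\Delta_{n,\omega}-d\vX-\vY
\]
on the typical event, so $M$ depends on $\vX$ \emph{directly}, not only through $N_{n,\omega}$. With your coupling you get $M_{n+\ell,\omega}=M_{n,\omega}-d\ell$, not equality. The fix is immediate: couple $\vY'=\vY-d\ell$ as well, which is possible with probability $1-o_\omega(1)$ by the same Poisson shift estimate $\dTV(\Po(\omega),\Po(\omega)-d\ell)=O(d\ell/\sqrt\omega)$. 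Then $(N_{n+\ell,\omega},M_{n+\ell,\omega})=(N_{n,\omega},M_{n,\omega})$ and the rest of your argument goes through verbatim. The paper's proof glosses over this point entirely, so your write-up is in fact more careful once this correction is made.
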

\begin{proof}
The random factor graph $\GG_{n,\omega}$ or $\G_{n,\omega}$, respectively, has $n-\vX$ variable nodes with probability $1-o_\omega(1)$.
Similarly, the number of variable nodes of $\GG_{n+\ell,\omega}$ or $\G_{n+\ell,\omega}$ is $n+\ell-\vX$
with probability $1-o_\omega(1)$.
Since $\vX$ is a Poisson variable with mean $\omega$, we have $\dTV(n+\ell-\vX,n-\vX)=o_\omega(1)$.
Hence, we can couple $\GG_{n+\ell,\omega}$ and $\GG_{n,\omega}$ as well as 
$\G_{n+\ell,\omega}$ and $\G_{n,\omega}$ in such a way that both coincide \whp\
This coupling extends to the distributions $\rho_{n,\omega,\cS},\rho_{n+\ell,\omega,\cS}$ and
$\rho_{n,\omega},\rho_{n+\ell,\omega}$.
\end{proof}

\begin{proof}[Proof of \Thm~\ref{thmBethePlus}]
\Cor~\ref{Cor_thmBethePlus1} yields the free energy formula in terms of the distributions
$\pi_{n,\omega}$ and $\pi_{n,\omega,\cS}$, respectively.
Furthermore, \Prop~\ref{Prop_thmBethePlus} implies together with \Cor~\ref{Cor_Bcont} that
	\begin{align}\label{eqthmBethePlus1}
	\liminf_{\omega\to\infty}\,\liminf_{n\to\infty, k|dn}\cB(\pi_{n,\omega})&=
			\liminf_{\omega\to\infty}\,\liminf_{n\to\infty, k|dn}\cB(\check\pi_{n,\omega}),\\
	\liminf_{\omega\to\infty}\,\liminf_{n\to\infty, k|dn}\cB(\pi_{n,\omega,\cS})&=
		\liminf_{\omega\to\infty}\,\liminf_{n\to\infty, k|dn}\cB(\check\pi_{n,\omega,\cS}),\label{eqthmBethePlus2}
	\end{align}
with the limit on $n$ confined to integers such that $k|dn$ each time.
But \Lem~\ref{Lemma_smallShift} implies with \Cor~\ref{Cor_Bcont}  that this divisibility condition does not alter the limits on the left hand side of these equations, i.e.,
	\begin{align}
	\liminf_{\omega\to\infty}\,\liminf_{n\to\infty, k|dn}\cB(\pi_{n,\omega})&=
			\liminf_{\omega\to\infty}\,\liminf_{n\to\infty}\cB(\pi_{n,\omega}),\label{eqthmBethePlus3}\\
	\liminf_{\omega\to\infty}\,\liminf_{n\to\infty,k|dn}\cB(\pi_{n,\omega,\cS})&=
		\liminf_{\omega\to\infty}\,\liminf_{n\to\infty}\cB(\pi_{n,\omega,\cS}).\label{eqthmBethePlus4}
	\end{align}
Thus, combining (\ref{eqthmBethePlus1})--(\ref{eqthmBethePlus4}) and invoking \Cor~\ref{Cor_thmBethePlus1}, we obtain
	\begin{align*}
	\lim_{n\to\infty}\frac1n\Erw\ln Z(\G)&=\liminf_{\omega\to\infty}\,\liminf_{n\to\infty}\cB(\check\pi_{n,\omega}),&	\lim_{n\to\infty}\frac1n\Erw\ln Z(\GG)&=\liminf_{\omega\to\infty}\,\liminf_{n\to\infty}\cB(\check\pi_{n,\omega,\cS}),
	\end{align*}
where, of course, the limit is confined to $n$ such that $k|dn$ because $\GG$, $\G$ and $\check\pi_{n,\omega}$, $\check\pi_{n,\omega,\cS}$ are defined only for such $n$; this is the assertion.
\end{proof}

\section{Applications}\label{Sec_app}

\noindent
In \Sec~\ref{Sec_app_sg} we prove that the spin glass model from \Sec~\ref{Sec_diluted_sg} satisfies the condition {\bf POS};
the results stated in \Sec~\ref{Sec_diluted_sg} are then immediate from those in \Sec~\ref{Sec_results}.
Further, in \Sec s~\ref{Sec_app_Potts} and~\ref{Sec_app_kSAT} we apply the results from \Sec~\ref{Sec_results} to two further models, the Potts antiferromagnet and the random regular $k$-SAT model.
Finally, in \Sec~\ref{secHCintro} we show how the theorems from \Sec~\ref{Sec_results} can be brought to bear on the hard-core model, thereby proving the results stated in \Sec~\ref{Sec_introhc}.

\subsection{The spin glass}\label{Sec_app_sg}
To derive the results on the spin glass model stated in \Sec~\ref{Sec_intro} from the general theorems in \Sec~\ref{Sec_results}, we just need to verify the condition {\bf POS} for the spin glass model.
In Example~\ref{Ex_sg} we introduced the relevant weight function even in the more general case of the $k$-spin model; the case $k=2$ corresponds to the spin glass on the Bethe lattice.

\begin{lemma}\label{Lemma_kspin}
The $k$-spin model satisfies {\bf POS} for all $d\geq3$, $\beta>0$ and all even $k\geq2$.
\end{lemma}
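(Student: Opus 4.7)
The plan is to reduce POS to a pointwise algebraic inequality via a binomial expansion that exploits the symmetry of the Gaussian couplings. Parametrize any $\mu\in\states$ by its magnetization $M_\mu(s,x)=\mu_{s,x}(1)-\mu_{s,x}(-1)\in[-1,1]$. Since $\sum_{\sigma_i\in\{\pm1\}}\mu_{s,\vx_i}(\sigma_i)=1$ and $\sum_{\sigma_i}\sigma_i\mu_{s,\vx_i}(\sigma_i)=M_\mu(s,\vx_i)$, direct substitution into $\PSI(\sigma)=\frac12(1+\tanh(\beta\vJ)\prod_{i=1}^k\sigma_i)$ gives
\begin{align*}
\sum_{\sigma\in\{\pm1\}^k}\PSI(\sigma)\prod_{i=1}^k\mu_{s,\vx_i}(\sigma_i)=\frac12\bc{1+\tanh(\beta\vJ)\prod_{i=1}^kM_\mu(s,\vx_i)}.
\end{align*}
Setting $A_\mu=\int_0^1\prod_{i=1}^kM_\mu(s,\vx_i)\dd s$, the quantity inside the $\ell$-th power on the left of POS becomes $\frac12(1-\tanh(\beta\vJ)A_\mu)$; analogously $A_{\mu'}$ arises from $\mu'$, while the $h$-th mixed term on the right produces $B_h=\int_0^1M_\mu(s,\vx_h)\prod_{i\neq h}M_{\mu'}(s,\vx_i)\dd s$.

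Next I would expand each $\ell$-th power via the binomial theorem and take expectation over $\vJ$ first. Since $\tanh$ is odd and $\vJ$ is symmetric, $\Erw[\tanh(\beta\vJ)^j]=0$ for $j$ odd, and $\tau_j:=\Erw[\tanh(\beta\vJ)^j]\geq 0$ for $j$ even. The sign $(-1)^j$ in the binomial expansion is $+1$ on the surviving even-$j$ terms, so POS reduces to
\begin{align*}
\sum_{j\text{ even}}\binom{\ell}{j}\tau_j\cdot\Erw_{\vx}\brk{A_\mu^j+(k-1)A_{\mu'}^j-\sum_{h=1}^kB_h^j}\geq 0.
\end{align*}
Because $\binom{\ell}{j}\tau_j\geq 0$, it suffices to prove that for every even $j\geq 0$, $\Erw_\vx[A_\mu^j]+(k-1)\Erw_\vx[A_{\mu'}^j]\geq\sum_{h=1}^k\Erw_\vx[B_h^j]$.

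To evaluate these moments I would introduce $j$ independent uniform replicas $s_1,\ldots,s_j\in[0,1]$, writing $A_\mu^j=\int_{[0,1]^j}\prod_{r=1}^j\prod_{i=1}^kM_\mu(s_r,\vx_i)\dd\vs$ and similarly for $A_{\mu'}^j$ and $B_h^j$. With $D_\mu(\vs)=\int_0^1\prod_{r=1}^jM_\mu(s_r,x)\dd x$ and likewise $D_{\mu'}(\vs)$, taking $\Erw_\vx$ and using independence of $\vx_1,\ldots,\vx_k$ yields
\begin{align*}
\Erw_\vx[A_\mu^j]&=\int_{[0,1]^j}D_\mu(\vs)^k\dd\vs,&
\Erw_\vx[A_{\mu'}^j]&=\int_{[0,1]^j}D_{\mu'}(\vs)^k\dd\vs,&
\Erw_\vx[B_h^j]&=\int_{[0,1]^j}D_\mu(\vs)D_{\mu'}(\vs)^{k-1}\dd\vs,
\end{align*}
the last being independent of $h$ by relabeling the i.i.d.\ $\vx_i$. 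The inequality therefore reduces to the pointwise bound $a^k+(k-1)b^k\geq kab^{k-1}$ with $a=D_\mu(\vs)$, $b=D_{\mu'}(\vs)$, integrated over $\vs$.

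The final step, and where parity becomes essential, is to verify $a^k+(k-1)b^k\geq kab^{k-1}$ for all real $a,b$ when $k$ is even. Since $D_\mu(\vs)$ can be negative, AM-GM is not directly available; instead, fix $b$ and view $f(a)=a^k-kab^{k-1}$ as a function of $a$. Its second derivative $k(k-1)a^{k-2}$ is non-negative (because $k$ even forces $k-2$ even), so $f$ is convex; its first derivative $k(a^{k-1}-b^{k-1})$ vanishes precisely at $a=b$ (since $k-1$ is odd), making $a=b$ the unique global minimum with $f(b)=-(k-1)b^k$, which rearranges to the claim. The parity hypothesis is sharp: for odd $k$ the inequality fails at $a=1,\,b=-1$. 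The only genuine obstacle in the whole argument is this convexity lemma, which does require $k$ even; once it is in hand, integration over $\vs$ concludes the verification of POS.
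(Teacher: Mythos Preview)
Your proof is correct and follows essentially the same approach as the paper: substitute the $k$-spin weight function, use the binomial expansion together with the symmetry of $\vJ$ to reduce to even powers, then apply Fubini with replica variables $\vs_1,\ldots,\vs_j$ to factor out the $\vx_i$-dependence and reduce to the pointwise inequality $a^k+(k-1)b^k\geq kab^{k-1}$ for real $a,b$ and even $k$. The only difference is that you supply an explicit convexity proof of this last inequality, whereas the paper simply states it as a known fact.
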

\begin{proof}
The lemma is already implicit in~\cite{FranzLeone,PanchenkoTalagrand}; but let us carry the simple proof out for completeness.
Let $\vec J$ be a standard Gaussian.
Upon substituting the weight functions from Example~\ref{Ex_sg} into {\bf POS} and multiplying by $2^\ell$, {\bf POS} reads
	\begin{align}\nonumber
	\Erw&\brk{\bc{1-\tanh(\beta\vJ)\int_0^1\prod_{i=1}^k\bc{2\mu_{s,\vx_i}-1}\dd s}^\ell}
		+(k-1)\Erw\brk{\bc{1-\tanh(\beta\vJ)\int_0^1\prod_{i=1}^k\bc{2\mu_{s,\vx_i}-1}\dd s}^\ell}\\
		&-k\Erw\brk{\bc{1-\tanh(\beta\vJ)\int_0^1(2\mu_{s,\vx_1}-1)\prod_{i=2}^k(2\mu'_{s,\vx_i}-1)\dd s}^\ell}\geq0.
		\label{eqLemma_sg1}
	\end{align}
for all measurable $\mu,\mu':[0,1]^2\to[0,1]$.
Expanding the first expectation yields
	\begin{align*}
	\Erw&\brk{\bc{1-\tanh(\beta\vJ)\int_0^1\prod_{i=1}^k\bc{2\mu_{s,\vx_i}-1}\dd s}^\ell}
		=\sum_{j=0}^\ell\bink\ell j(-1)^j\Erw\brk{\tanh(\beta\vJ)^j\bc{\int_0^1\prod_{i=1}^k\bc{2\mu_{s,\vx_i}-1}\dd s}^j}
	\end{align*}
Since $\vJ$ is independent of the $\vx_i$, the last expectation vanishes if $j$ is odd, while $\tanh(\beta\vJ)^j\geq0$ if $j$ is even.
Thus, in order to establish (\ref{eqLemma_sg1}) it suffices to show that for any even $j\geq2$,
	\begin{align}
	\Erw&\brk{\bc{\int_0^1\prod_{i=1}^k\bc{2\mu_{s,\vx_i}-1}\dd s}^j
		+(k-1)\bc{\int_0^1\prod_{i=1}^k\bc{2\mu_{s,\vx_i}-1}\dd s}^j
		-k\bc{\int_0^1(2\mu_{s,\vx_1}-1)\prod_{i=2}^k(2\mu'_{s,\vx_i}-1)\dd s}^j}\geq0.
		\label{eqLemma_sg2}
	\end{align}
Let $\vs_1,\ldots,\vs_j\in[0,1]$ be uniformly distribution and mutually independent as well as independent of the $\vx_i$.
Then Fubini's theorem yields
	\begin{align}\label{eqLemma_sg3}
	\Erw\brk{\bc{\int_0^1\prod_{i=1}^k\bc{2\mu_{s,\vx_i}-1}\dd s}^j},
	&=\Erw\brk{\Erw\brk{\prod_{h=1}^j(2\mu_{\vs_h,\vx_1}-1)\bigg|\vs_1,\ldots,\vs_j}^k},\\
				\label{eqLemma_sg4}
	\Erw\brk{\bc{\int_0^1\prod_{i=1}^k\bc{2\mu'_{s,\vx_i}-1}\dd s}^j}
		&=\Erw\brk{\Erw\brk{\prod_{h=1}^j(2\mu'_{\vs_h,\vx_1}-1)\bigg|\vs_1,\ldots,\vs_j}^k},\\
	\Erw\brk{\bc{\int_0^1\bc{2\mu_{s,\vx_1}-1)}\prod_{i=2}^k(2\mu'_{s,\vx_2}-1)\dd s}^j}
		&=		\Erw\brk{\prod_{h=1}^j(2\mu_{\vs_h,\vx_1}-1)\bigg|\vs_1,\ldots,\vs_\ell}
			\Erw\brk{\prod_{h=1}^j(2\mu'_{\vs_h,\vx_1}-1)\bigg|\vs_1,\ldots,\vs_\ell}^{k-1}.
				\label{eqLemma_sg5}
	\end{align}
Since for even $k$ we have $X^k+(k-1)Y^k-kXY^{k-1}\geq0$ for all $X,Y\in\RR$,  (\ref{eqLemma_sg3})--(\ref{eqLemma_sg5}) yield (\ref{eqLemma_sg2}).
\end{proof}

\noindent
Due to \Lem~\ref{Lemma_kspin}, \Thm~\ref{Thm_sg} follows from \Thm~\ref{Thm_BP}, 
\Thm~\ref{Thm_sgZ} follows from \Thm~\ref{thmBethePlus} and
\Thm~\ref{Thm_sgvariational} follows from \Thm~\ref{Thm_freeEng}.

\begin{remark}
Indeed, together with \Lem~\ref{Lemma_kspin} the results from \Sec~\ref{Sec_results} yield the Bethe state decomposition and the corresponding formulas for the free energy for the $k$-spin model for any even $k\geq2$. 
\end{remark}

\subsection{The Potts model}\label{Sec_app_Potts}
For an integer $q\geq2$ let $\Omega=\{1,\ldots,q\}$ be a set of $q$ distinct colors
Also let $\beta>0$ be a real parameter, the {\em inverse temperature}.
The {\em Potts antiferromagnet} on $\GG$ is the distribution on $\Omega^{V_n}$ defined by
	\begin{align*}
	\mu_{\GG,\beta}(\sigma)&=\frac1{Z_\beta(\GG)}\exp\brk{-\beta\sum_{1\leq i<j\leq n}\vecone\{v_i\in\partial v_j,\,
		\sigma(v_i)=\sigma(v_j)\}},&(\sigma\in\Omega^{V_n}),
	\end{align*}
where the partition function $Z_\beta(\GG)$ provides normalization; we omit the reference to $\beta$ where possible.
Thus, for a given $\sigma\in\Omega^{V_n}$ each monochromatic edge of $\GG$ incurs an $\exp(-\beta)$ penalty factor.

The Potts antiferromagnet and the associated optimization problems, the {\sc Max $q$-Cut} problem, are of fundamental importance in combinatorics.
Krzakala and Zdeborov\'a~\cite{FLPotts} brought the cavity method to bear on this model.
In the following we show how the main results of the present paper apply to this model to underpin the predictions from~\cite{FLPotts} rigorously.
In particular, we specialize the Belief Propagation equations to the Potts model, work out the variational formula for the free energy and apply this formula to the {\sc Max $q$-Cut} problem on the random regular graph.

The Potts model on $\GG(n,d)$ can be cast as a random factor graph model with a single weight function
	\begin{align*}
	\psi_\beta&:\Omega^2\to(0,1),&(\sigma,\tau)\mapsto\exp(-\beta\vecone\{\sigma=\tau\}).
	\end{align*}
Thus, $k=2$, $\Psi=\{\psi_\beta\}$ and $P(\psi_\beta)=1$ and the prior distribution $p$ is uniform on $\Omega$.
Since the constraints are binary, the random regular factor graph $\G$ can be identified with the usual random $d$-regular graph $\GG$, with the edges representing the factor nodes.

\begin{lemma}\label{Lemma_PottsPOS}
The Potts model satisfies condition {\bf POS} for all $\beta>0$.
\end{lemma}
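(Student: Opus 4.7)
The plan is to reduce condition \textbf{POS} to an obviously non-negative Hilbert space expression by exploiting the fact that the Potts weight function is a rank-one perturbation of the constant function. First I would substitute $\psi_\beta(\sigma_1,\sigma_2)=1-c\vecone\{\sigma_1=\sigma_2\}$ with $c=1-\eul^{-\beta}\in(0,1)$ into the quantity appearing inside POS to obtain, for any $\nu,\nu'\in\states$,
\begin{align*}
1-\int_0^1\sum_{\sigma_1,\sigma_2\in\Omega}\psi_\beta(\sigma_1,\sigma_2)\nu_{s,\vx_1}(\sigma_1)\nu'_{s,\vx_2}(\sigma_2)\dd s
=c\cdot\Phi(\nu,\nu'),
\end{align*}
where I write $\Phi(\nu,\nu')=\int_0^1\sum_{\sigma\in\Omega}\nu_{s,\vx_1}(\sigma)\nu'_{s,\vx_2}(\sigma)\dd s$. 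Since the distribution $P$ is a point mass and $\vx_1,\vx_2$ are exchangeable, the two terms on the right-hand side of \textbf{POS} (with $h=1,2$) coincide, so after dividing by $c^\ell$ the inequality reduces to showing
\begin{align*}
\Erw[\Phi(\mu,\mu)^\ell]+\Erw[\Phi(\mu',\mu')^\ell]\geq 2\Erw[\Phi(\mu,\mu')^\ell].
\end{align*}

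Next I would encode $\Phi$ as a Hilbert-space inner product. Let $\cH=L^2([0,1],\dd s)^{\Omega}$ and, for a kernel $\nu\in\states$, let $F^\nu:[0,1]\to\cH$ be the vector-valued map whose $\sigma$-component at $x$ is the function $s\mapsto\nu_{s,x}(\sigma)$. Then by Fubini,
\begin{align*}
\Phi(\nu,\nu')=\bck{F^\nu(\vx_1),F^{\nu'}(\vx_2)}_{\cH}.
\end{align*}
Now expanding the $\ell$-th power as a sum over index tuples and using that $\vx_1$ and $\vx_2$ are independent, every expectation $\Erw[\Phi(\nu,\nu')^\ell]$ factorizes along the two variables, which after taking the expectation becomes an inner product of tensors in $\cH^{\otimes\ell}$. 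Concretely, writing $X$ for a uniform random variable on $[0,1]$ and setting
\begin{align*}
M=\Erw\brk{F^\mu(X)^{\otimes\ell}}\in\cH^{\otimes\ell},\qquad M'=\Erw\brk{F^{\mu'}(X)^{\otimes\ell}}\in\cH^{\otimes\ell},
\end{align*}
one gets $\Erw[\Phi(\mu,\mu)^\ell]=\norm{M}_{\cH^{\otimes\ell}}^2$, $\Erw[\Phi(\mu',\mu')^\ell]=\norm{M'}_{\cH^{\otimes\ell}}^2$ and $\Erw[\Phi(\mu,\mu')^\ell]=\bck{M,M'}_{\cH^{\otimes\ell}}$.

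Finally, the desired inequality is just the expansion of $\norm{M-M'}_{\cH^{\otimes\ell}}^2\geq0$, and \textbf{POS} follows. There is no substantial obstacle: the key observation is that the binary Potts weight function is affine in the indicator $\vecone\{\sigma_1=\sigma_2\}$, which lets one linearize the integrand and recognize the resulting quadratic form as an inner product on $\cH$, after which tensorization immediately produces a perfect square. With \textbf{POS} verified, Theorems~\ref{Thm_BP}, \ref{thmBethePlus} and~\ref{Thm_freeEng} can then be specialized to yield the Belief Propagation decomposition, the Bethe formula and the variational formula for the Potts free energy for all $d\geq3$, $q\geq2$ and $\beta>0$.
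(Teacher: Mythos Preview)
Your proof is correct and follows essentially the same route as the paper's: both reduce \textbf{POS} to the inequality $\Erw[\Phi(\mu,\mu)^\ell]+\Erw[\Phi(\mu',\mu')^\ell]\ge 2\Erw[\Phi(\mu,\mu')^\ell]$ and then recognize it as a sum of squares. The only difference is packaging---the paper expands via Fubini over $\sigma_1,\ldots,\sigma_\ell$ and random $\vs_1,\ldots,\vs_\ell$ to exhibit $\Erw\brk{\Erw[\prod_h\mu_{\vs_h,\vx_1}(\sigma_h)\mid\vs_1,\ldots,\vs_\ell]^2}$ explicitly, whereas you phrase the identical computation as $\norm{M-M'}_{\cH^{\otimes\ell}}^2\ge 0$ with $M=\Erw[F^\mu(X)^{\otimes\ell}]$; unpacking the tensor product recovers the paper's sum term by term.
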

\begin{proof}
We plug the definition of $\psi_\beta$ into {\bf POS} and notice that the $1-\eul^{-\beta}$ factors cancel.
Hence, the desired inequality reads
	\begin{align}\label{eqLemma_PottsPOS1}
	\Erw\brk{\bc{
		\sum_{\sigma=1}^q\int_0^1\mu_{s,\vx_1}(\sigma)\mu_{s,\vx_2}(\sigma)}^\ell
		+\bc{\sum_{\sigma=1}^q\int_0^1\mu'_{s,\vx_1}(\sigma)\mu'_{s,\vx_2}(\sigma)}^\ell-
			2\bc{\sum_{\sigma=1}^q\int_0^1\mu_{s,\vx_1}(\sigma)\mu'_{s,\vx_2}(\sigma)}^\ell}&\geq0
			&(\mu,\mu'\in\meas).
	\end{align}
Applying Fubini's theorem to take the expectation on $\vx_1,\vx_2$ inside, we find
	\begin{align}\nonumber
	\Erw\brk{\bc{\sum_{\sigma=1}^q\int_0^1\mu_{s,\vx_1}(\sigma)\mu_{s,\vx_2}(\sigma)}^\ell}&=
		\sum_{\sigma_1,\ldots,\sigma_\ell=1}^q\Erw\brk{\prod_{h=1}^\ell\mu_{\vs_h,\vx_1}(\sigma_h)\mu_{\vs_h,\vx_2}(\sigma_h)}\\
		&=\sum_{\sigma_1,\ldots,\sigma_\ell=1}^q
			\Erw\brk{\Erw\brk{\prod_{h=1}^\ell\mu_{\vs_h,\vx_1}(\sigma_h)\bigg|\vs_1,\ldots,\vs_\ell}
			\Erw\brk{\prod_{h=1}^\ell\mu_{\vs_h,\vx_2}(\sigma_h)\bigg|\vs_1,\ldots,\vs_\ell}}\nonumber\\
		&=\sum_{\sigma_1,\ldots,\sigma_\ell=1}^q
			\Erw\brk{\Erw\brk{\prod_{h=1}^\ell\mu_{\vs_h,\vx_1}(\sigma_h)\bigg|\vs_1,\ldots,\vs_\ell}^2}.
			\label{eqLemma_PottsPOS2}
	\end{align}
Similar manipulations yield
	\begin{align}			\label{eqLemma_PottsPOS3}
	\Erw\brk{\bc{\sum_{\sigma=1}^q\int_0^1\mu'_{s,\vx_1}(\sigma)\mu'_{s,\vx_2}(\sigma)}^\ell}
		&=\sum_{\sigma_1,\ldots,\sigma_\ell=1}^q
			\Erw\brk{\Erw\brk{\prod_{h=1}^\ell\mu'_{\vs_h,\vx_1}(\sigma_h)\bigg|\vs_1,\ldots,\vs_\ell}^2},\\
	\Erw\brk{\bc{\sum_{\sigma=1}^q\int_0^1\mu_{s,\vx_1}(1)\mu'_{s,\vx_2}(1)}^\ell}
		&=\sum_{\sigma_1,\ldots,\sigma_\ell=1}^q\Erw\brk{
			\Erw\brk{\prod_{h=1}^\ell\mu_{\vs_h,\vx_1}(\sigma_h)\bigg|\vs_1,\ldots,\vs_\ell}
			\Erw\brk{\prod_{h=1}^\ell\mu'_{\vs_h,\vx_1}(\sigma_h)\bigg|\vs_1,\ldots,\vs_\ell}}.
				\label{eqLemma_PottsPOS4}
	\end{align}
Combining (\ref{eqLemma_PottsPOS2})--(\ref{eqLemma_PottsPOS4}), we conclude that the l.h.s.\ of (\ref{eqLemma_PottsPOS1}) is the expectation of a sum of squares, and thus non-negative.
\end{proof}

The message space $\cS(\GG)$ of the Potts model boils down to the set of all families $(\mu_{v\to w})_{v\in V_n,w\in\partial w}$, with $\mu_{v\to w}\in\cP(\Omega)$.
With this simplification the Belief Propagation operator $\BP:\cS(\GG)\to\cS(\GG)$, $\nu\mapsto\hat\nu$ of the Potts model reads
	\begin{align}\label{eqBPPotts}
	\hat\nu_{v\to u}(\sigma)&=\frac{\prod_{w\in\partial v\setminus u}1-(1-\eul^{-\beta})\mu_{w\to v}(\sigma)}
		{\sum_{\tau\in\Omega}\prod_{w\in\partial v\setminus u}1-(1-\eul^{-\beta})\mu_{w\to v}(\tau)}
		&(\sigma\in\Omega).
	\end{align}

With respect to Bethe states, we expect that the phase space $\Omega^n$ decomposes into $S_1,\ldots,S_\ell$ such that the conditional distribution $\mu_{\GG}[\nix|S_i]$ are free of long-range correlations, that their standard messages form an approximate fixed point of BP and that the conditional marginals derive from the messages.
In formulas, with high probability over the choice of the graph 
and with $(\hat\mu_{\GG,v\to u}[\nix|S_h])_{u\in\partial v}=\BP(\mu_{\GG,v\to u}[\nix|S_h])_{u\in\partial v}$,
we aim to show that
	\begin{align}\label{eqPureState1}
	\frac1{n^2}\sum_{1\leq i<j\leq n} \TV{\mu_{\GG,v_i,v_j}[\nix|S_h]-\mu_{\GG,v_i}[\nix|S_h]\tensor\mu_{\GG,v_j}[\nix|S_h]}&=o(1),\\
				\label{eqPureState2}
	\frac1n\sum_{i=1}^n\sum_{u\in\partial v_i}\TV{\mu_{\GG,v_i\to u}[\nix|S_h]
		-\hat\mu_{\GG,v_i\to u}[\nix|S_h]}&=o(1),\\
		\label{eqPureState2}
	\frac1n\sum_{i=1}^n\sum_{\sigma\in\Omega}\abs{\mu_{\GG,v_i}[\sigma|S_h]-
		\frac{\prod_{w\in\partial v_i}1-(1-\eul^{-\beta})\mu_{\GG,v_i\to w}(\sigma)}
			{\sum_{\tau\in\Omega}\prod_{w\in\partial v_i}1-(1-\eul^{-\beta})\mu_{\GG,v_i\to w}(\tau)}}&=o(1).
	\end{align}
The following theorem establishes these facts.

\begin{theorem}\label{Thm_Potts}
For any sequence $L=L(n)\to\infty$ and all $d\geq3$, $\beta>0$ the following is true.
With high probability $\GG$ admits a decomposition $S_0,S_1,\ldots,S_\ell$, $\ell\leq L$, of the phase space $\Omega^n$ such that
$\mu_{\GG}(S_0)=o(1)$ and such that (\ref{eqPureState1})--(\ref{eqPureState2}) are satisfied for $h=1,\ldots,\ell$.
\end{theorem}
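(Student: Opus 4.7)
The plan is to derive Theorem~\ref{Thm_Potts} as a direct specialization of Theorem~\ref{Thm_BP}. The Potts antiferromagnet fits the random factor graph framework with $k=2$, spin set $\Omega=\{1,\ldots,q\}$, uniform prior $p$, and the single deterministic weight function $\psi_\beta(\sigma,\tau)=\exp(-\beta\vecone\{\sigma=\tau\})$; condition {\bf POS} holds by \Lem~\ref{Lemma_PottsPOS}. Because $k=2$, the random factor graph $\GG$ (conditioned on $\cS$) is canonically the random $d$-regular simple graph on $V_n$, with edges playing the role of constraint nodes. Applying \Thm~\ref{Thm_BP} to the sequence $L(n)\to\infty$ immediately yields a decomposition $S_0,S_1,\ldots,S_\ell$ of $\Omega^n$ with $\ell\le L$, $\mu_{\GG}(S_0)=o(1)$, and $S_1,\ldots,S_\ell$ being $\eps$-Bethe states \whp\ for some $\eps=\eps(n)=o(1)$.

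Condition (\ref{eqPureState1}) is then an instance of (\ref{eqRepSym}), which in turn is the specialization of {\bf BS2} to $\ell=2$, $\ell'=0$ already worked out in \Sec~\ref{Sec_BetheStates}; no additional work is needed beyond substituting $S=S_h$. It remains to translate the two-sided BP fixed point equations and the marginal formula (\ref{eqBP_3}) into the single-sided Potts form. For any edge-factor $a=\{u,v\}$ a direct computation with $\psi_\beta$ gives, on every $\eps$-Bethe state,
\begin{align*}
\mu_{\GG,a\to v}(\sigma\mid S_h)=\frac{1-(1-\eul^{-\beta})\mu_{\GG,u\to a}(\sigma\mid S_h)}{q-(1-\eul^{-\beta})}+O(\eps),
\end{align*}
because the numerator and denominator of the $a\to v$ update in \Sec~\ref{Sec_BetheStates} collapse exactly to $1-(1-\eul^{-\beta})\mu_{u\to a}(\sigma\mid S_h)$ and $q-(1-\eul^{-\beta})$, respectively. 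Identifying the variable-to-factor message $\mu_{\GG,u\to a}[\nix\mid S_h]$ with the Potts message $\mu_{\GG,u\to v}[\nix\mid S_h]$, substituting into the general variable update, and observing that the uniform denominator $q-(1-\eul^{-\beta})$ cancels between numerator and normaliser, recovers exactly the operator (\ref{eqBPPotts}); this yields the BP fixed point statement in (\ref{eqPureState2}). Substituting the same identity into the marginal formula (\ref{eqBP_3}) with uniform prior $p$ yields the marginal representation in (\ref{eqPureState2}).

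No serious obstacle is anticipated: the entire argument is algebraic bookkeeping once \Thm~\ref{Thm_BP} is invoked. The only point that requires a moment of care is that the $O(\eps)$ error in the $a\to v$ elimination must remain $o(1)$ when summed over all incident edges of all but $o(n)$ vertices; this is automatic because $q-(1-\eul^{-\beta})\ge1$ is bounded below uniformly in $\sigma$ and $\beta$, all weight functions take values in $[\eul^{-\beta},1]$, and the vertex degrees are the constant $d$, so the per-vertex product of $d$ such substitutions distorts the marginal by $O(d\eps)$, which averages to $o(1)$. The claim for the unconditioned $\G$ version is identical and yields, via Fact~\ref{Fact_simple} and Bayes' rule, the statement as formulated for $\GG$.
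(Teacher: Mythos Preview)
Your proposal is correct and takes essentially the same approach as the paper, which simply states that the theorem ``is immediate from \Thm~\ref{Thm_BP} applied to the factor graph representation of the Potts model.'' Your explicit reduction of the two-sided messages to the one-sided Potts form (\ref{eqBPPotts}) is fine bookkeeping that the paper leaves implicit; one minor remark is that invoking \Lem~\ref{Lemma_PottsPOS} is unnecessary here, since \Thm~\ref{Thm_BP} does not require condition {\bf POS}.
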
	
\begin{proof}
This is immediate from \Thm~\ref{Thm_BP} applied to the factor graph representation of the Potts model.
\end{proof}

With respect to the free energy,  let $\vX,\vY$ be two independent Poisson variables with mean $\omega$.
Let $\vu_1,\ldots,\vu_{\vX}$ and $\vv_1\vw_1,\ldots,\vv_{\vY}\vw_{\vY}$ be uniformly random vertices and edges of $\GG$, chosen independently.
With $S_1,\ldots,S_\ell$ the decomposition from \Thm~\ref{Thm_Potts}, we introduce the weights 
	\begin{align*}
	\vz_{\GG,h}&=\mu_{\GG}(S_h)\prod_{i=1}^{\vX}\bc{
		\sum_{\sigma\in\Omega}\prod_{v\in\partial\vu_i}1-(1-\eul^{-\beta})\mu_{\GG,v\to\vu_i}(\sigma|S_h)}^{-1}
		\prod_{i=1}^{\vY}\bc{1-(1-\eul^{-\beta})\sum_{\sigma\in\Omega}
			\mu_{\GG,\vv_i\to\vw_i}(\sigma|S_h)\mu_{\GG,\vw_i\to\vv_i}(\sigma|S_h)}^{-1}
	\end{align*}
and $\vz_{\GG}=\sum_{h=1}^\ell\vz_{\GG,h}$.
Further, let $\cC(\GG)$ be the set of all vertices of degree less than $d$ in the graph obtained from $\GG$ by removing $\vv_1,\ldots,\vv_{\vX}$ and $\vv_1\vw_1,\ldots,\vv_{\vY}\vw_{\vY}$.
Then with high probability each $c\in\cC(\GG)$ has degree precisely $d-1$, and we write $c'$ for the missing $d$'th neighbor of $c$.
Then with $\vc_1,\vc_2,\ldots$ a sequence of uniformly and independently chosen elements of $\cC(\GG)$, we let
	\begin{align*}
	\cB(\GG)&=\Erw\brk{
		\ln{\sum_{h=1}^\ell\frac{\vz_{\GG,h}}{\vz_{\GG}}
			\sum_{\sigma\in\Omega}\prod_{i=1}^d1-(1-\eul^{-\beta})\mu_{\GG,\vc_i\to\vc_i'}(\sigma)}
			+\frac d2\ln{
			\sum_{h=1}^\ell\frac{\vz_{\GG,h}}{\vz_{\GG}}1-(1-\eul^{-\beta})
			\sum_{\sigma\in\Omega}\mu_{\GG,\vc_1\to\vc_1'}(\sigma)\mu_{\GG,\vc_2\to\vc_2'}(\sigma)
			}\Bigg|\GG}.
	\end{align*}

\begin{theorem}\label{Thm_PottsZ}
For all $d\geq3,\beta>0$ we have
	$\lim_{n\to\infty}\frac1n\Erw[\ln Z(\GG)]=\liminf_{\omega\to\infty}\,\liminf_{n\to\infty}\,\Erw[\cB(\GG)].$
\end{theorem}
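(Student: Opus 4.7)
The plan is to specialise \Thm~\ref{thmBethePlus} to the Potts antiferromagnet; essentially no new analytical content is required, and the argument reduces to an unpacking of definitions. First I would cast Potts as a random factor graph model by taking $k=2$, $\Omega=\{1,\ldots,q\}$, $p$ uniform on $\Omega$, and $\Psi=\{\psi_\beta\}$ with $\psi_\beta(\sigma_1,\sigma_2)=\exp(-\beta\vecone\{\sigma_1=\sigma_2\})$ and $P=\delta_{\psi_\beta}$. The tail bound~\eqref{eqP} and the coordinate-permutation symmetry are immediate since $\Psi$ is a singleton and $\psi_\beta$ is bounded below. Under this identification the simple factor graph of \Sec~\ref{Sec_results} coincides with the random $d$-regular graph $\GG$ (each edge becoming a binary constraint node), and the partition functions satisfy $Z(\GG)=q^{-n}Z_\beta(\GG)$. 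Invoking \Lem~\ref{Lemma_PottsPOS} to verify condition~\textbf{POS}, \Thm~\ref{thmBethePlus} yields
\[
\lim_{n\to\infty}\tfrac{1}{n}\Erw\brk{\ln Z_\beta(\GG)} \;=\; \ln q \;+\; \liminf_{\omega\to\infty}\,\liminf_{n\to\infty}\,\cB(\check\pi_{n,\omega,\cS}),
\]
with $\cB$ the abstract functional~\eqref{eqBetheFunctional} and $\check\pi_{n,\omega,\cS}$ the law of the kernel~\eqref{eqmuGXY}.

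The remaining task is to identify the right-hand side with $\liminf\liminf\Erw[\cB(\GG)]$, the Potts-specific quantity defined just before the theorem. The combinatorial dictionary is transparent: deleting $\vX$ variable nodes of $\G$ together with their $d$ adjacent factor nodes corresponds to deleting $\vX$ vertices of $\GG$ along with their incident edges, and deleting $\vY$ further constraint nodes corresponds to deleting $\vY$ edges. Hence the abstract cavity set agrees with $\cC(\GG)$, and standard cavity messages are probability measures on $\Omega$. Substituting $\psi_\beta$ into the weights~\eqref{eqzGi} and applying $\sum_{\sigma_1,\sigma_2}\psi_\beta(\sigma_1,\sigma_2)\nu_1(\sigma_1)\nu_2(\sigma_2)=1-(1-\eul^{-\beta})\sum_\sigma\nu_1(\sigma)\nu_2(\sigma)$ (together with its one-sided version, where one argument is fixed) collapses the abstract weights $\check\vz_{\G,i}$ to the Potts weights $\vz_{\GG,h}$; the spurious factor $q^{\vX}$ arising from the uniform prior cancels when one passes to the normalised ratio $\check\vz_{\G,i}/\check\vz_\G$.

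Next I would expand the functional~\eqref{eqBetheFunctional} itself for $k=2$ and $\PSI_i\equiv\psi_\beta$. The symmetry of $\psi_\beta$ in its two arguments makes the random index $\vh_i$ immaterial, and the first summand of~\eqref{eqBetheFunctional} becomes $\Erw\ln\int_0^1 q^{-1}\sum_\sigma\prod_{i=1}^d[1-(1-\eul^{-\beta})\MU^{\check\pi}_{s,\vx_{i,1}}(\sigma)]\,\dd s$, where the factor $q^{-1}$ contributes an additive $-\ln q$ that precisely cancels the $+\ln q$ shift above. The second summand, with prefactor $d(1-k^{-1})=d/2$, reduces to the edge contribution $(d/2)\ln\int_0^1[1-(1-\eul^{-\beta})\sum_\sigma\MU^{\check\pi}_{s,\vx_{1,1}}(\sigma)\MU^{\check\pi}_{s,\vx_{1,2}}(\sigma)]\,\dd s$. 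Invoking the combinatorial content of \Thm~\ref{thmBethePlus} established in \Sec~\ref{Sec_thmBethePlus} -- namely that a sample $\MU^{\check\pi_{n,\omega,\cS}}$ encodes the cavity messages $(\mu_{\GG,\vc_i\to\vc_i'}(\cdot|S_h))_h$ mixed with weights $\vz_{\GG,h}/\vz_{\GG}$ -- each of these integrals over $s$ translates into the corresponding mixture $\sum_h (\vz_{\GG,h}/\vz_{\GG})(\cdot)$ appearing in $\Erw[\cB(\GG)]$, completing the identification.

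The obstacles in this plan are entirely bookkeeping: keeping track of the additive $\pm\ln q$ generated by the uniform prior, and reconciling the sign and prefactor conventions between the edge term in~\eqref{eqBetheFunctional} and the statement of \Thm~\ref{Thm_PottsZ}. All of the substantive work -- existence and value of the limit, compactness of $\meas$ under the cut metric, the interpolation upper bound, the Aizenman--Sims--Starr lower bound, and the combinatorial interpretation of the optimising distribution -- has already been performed at the level of general random factor graph models in \Thm s~\ref{Thm_BP}, \ref{Thm_freeEng} and~\ref{thmBethePlus}.
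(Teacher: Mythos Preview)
Your proposal is correct and follows exactly the same approach as the paper, which simply states that the theorem is an immediate consequence of \Thm~\ref{thmBethePlus} and \Lem~\ref{Lemma_PottsPOS}. You have supplied the detailed unpacking of definitions and bookkeeping (prior normalisation, sign and prefactor of the edge term, identification of the abstract kernel with the Potts cavity messages) that the paper omits entirely.
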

\begin{proof}
This is an immediate consequence of \Thm~\ref{thmBethePlus} and \Lem~\ref{Lemma_PottsPOS}.
\end{proof}

Additionally, \Thm~\ref{Thm_freeEng} yields a variational formula for the free energy.
Writing out the specifics of the Potts case, we see that $\MEAS$ consists of all $\pi\in\Meas$ that satisfy the following property.
For a measurable $\mu:[0,1]^2\to\cP(\Omega)$ with $\Omega=[q]$ and integers $N,M\geq0$ let 
	\begin{align*}
	\vz_{\mu,s}^{N,M}&=\prod_{i=1}^N\bc{\sum_{\sigma=1}^q\prod_{j=1}^d1-(1-\eul^{-\beta})\mu_{s,\vx_{i,j}}(\sigma)}
		\prod_{i=1}^M\bc{1-(1-\eul^{-\beta})\sum_{\sigma=1}^q\mu_{s,\vx_{i+N,1}}(\sigma)\mu_{s,\vx_{i+N,2}}(\sigma)},
		&\mbox{and set}\\
	\vt&=\vt(s)=
	\inf\cbc{u\in[0,1]:\int_0^u\vz_{\mu,u}^{N,M}\dd s\geq s\int_0^1\vz_{\mu,u}^{N,M}\dd u}.
	\end{align*}
Then we let $\mu^{*(N,M)}_{s,x}=\mu_{\vt,x}$.
Now $\MEAS_\beta$ is the set of all $\pi\in\Meas$ such that for a random $\MU^{\pi}\in\meas$ drawn from $\pi$, the perturbed
$\MU^{\pi*(N,M)}\in\meas$ again has distribution $\pi$.
Furthermore, in the Potts model the functional $\cB(\nix)$ reads
	\begin{align*}
	\cB_\beta(\pi)&=\Erw\brk{\ln\bc{\sum_{\sigma=1}^q\int_0^1
		\prod_{j=1}^d1-(1-\eul^{-\beta})\mu_{s,\vx_{1,j}}(\sigma)
		\dd s}
		+\frac d2\log\bc{1-(1-\eul^{-\beta})\sum_{\sigma=1}^q\int_0^1\mu_{s,\vx_{1,1}}(\sigma)\mu_{s,\vx_{1,2}}(\sigma)\dd s}}.
	\end{align*}

\begin{theorem}\label{Thm_PottsZ'}
For all $d\geq3$, $\beta>0$ we have
	\begin{align*}
\lim_{n\to\infty}\frac1n\Erw[\ln Z(\GG)]&=\Phi_{d,\beta}
	&\mbox{with }\qquad\Phi_{d,\beta}&=\inf_{\pi\in\MEAS_\beta}\cB_\beta(\pi).
\end{align*}
\end{theorem}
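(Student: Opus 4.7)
The plan is to derive this statement as a direct specialization of the general variational formula \Thm~\ref{Thm_freeEng} to the factor graph representation of the Potts model discussed above. Recall that this representation uses $\Omega = \{1,\ldots,q\}$, arity $k=2$, uniform prior $p$, and the single deterministic weight function $\psi_\beta(\sigma,\tau) = \exp(-\beta\vecone\{\sigma=\tau\})$, so that $\Psi = \{\psi_\beta\}$ and $P(\psi_\beta)=1$. Since $k=2$, the random factor graph $\GG$ in this representation coincides (up to multi-edges, which are handled by Fact~\ref{Fact_simple} and the conditioning on $\cS$) with the random $d$-regular graph, with constraint nodes playing the role of edges. Because $\psi_\beta$ is bounded in $(\eul^{-\beta},1)$, the tail condition (\ref{eqP}) is trivially satisfied, and \Lem~\ref{Lemma_PottsPOS} shows that {\bf POS} holds for every $\beta>0$. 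Hence \Thm~\ref{Thm_freeEng} applies and yields
\begin{align*}
\lim_{n\to\infty}\frac1n\Erw[\ln Z(\GG)]=\min_{\pi\in\MEAS}\cB(\pi),
\end{align*}
where $\MEAS$ and $\cB$ are the general objects from \Sec~\ref{secSelfContain}.

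The second and essentially only remaining step is to check that, under the Potts parametrization, the general $\MEAS$ and $\cB$ collapse to $\MEAS_\beta$ and $\cB_\beta$ as defined in the statement. For $\cB$ this is a direct substitution: the inner sums $\sum_{\tau\in\Omega^k}\PSI_i(\tau)\prod_{j\neq \vh_i}\MU_{s,\vx_{i,j}}^\pi(\tau_j)$ in the definition~\eqref{eqBetheFunctional} become $\prod_{j=1}^{d}\bigl(1-(1-\eul^{-\beta})\MU^\pi_{s,\vx_{i,j}}(\sigma)\bigr)$ after summing over the single free coordinate and using $\psi_\beta(\sigma,\tau)=1-(1-\eul^{-\beta})\vecone\{\sigma=\tau\}$; the normalizing $-d(1-k^{-1})\ln(\cdot)$ term becomes $+\frac{d}{2}\ln(1-(1-\eul^{-\beta})\sum_\sigma \int_0^1 \MU^\pi_{s,\vx_{1,1}}(\sigma)\MU^\pi_{s,\vx_{1,2}}(\sigma)\dd s)$ after absorbing the minus sign into the $1-(\cdot)$ inside the logarithm and recalling that $\sum_{\tau \in \Omega^2}\psi_\beta(\tau)\prod_j \mu_{s,\vx_{1,j}}(\tau_j) = 1 - (1-\eul^{-\beta})\sum_\sigma \prod_j \mu_{s,\vx_{1,j}}(\sigma)$. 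A similar substitution in the weight~\eqref{eqZNMmus} defining the $\Join(N,M)$-operation reduces $\vz(s)$ exactly to $\vz^{N,M}_{\mu,s}$ from the statement of the theorem, so that $\MU^{\pi\Join(N,M)}$ coincides in distribution with $\MU^{\pi*(N,M)}$; consequently the fixed-point condition defining $\MEAS$ agrees with the one defining $\MEAS_\beta$.

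Finally, the existence of a minimizer (so that $\min$ rather than $\inf$ is attained) follows from the compactness of $\MEAS_\beta$, which is inherited from the compactness of $\Meas$ established in \Prop~\ref{Prop_compact} together with the continuity of $\pi\mapsto\pi^{*(N,M)}$ provided by \Cor~\ref{Cor_JoinCont}, and from the continuity of $\cB_\beta$ on $\Meas$, which is the content of \Cor~\ref{Cor_Bcont}. There is no genuine obstacle here: all the heavy lifting has already been done in the general theorems, and the main thing to execute carefully is the bookkeeping in the translation of the general weight functions into the Potts-specific expressions. The only mild subtlety is ensuring that the $k=2$ regular factor graph model with a single deterministic weight function really is the random regular graph with the Potts Boltzmann distribution, which is immediate from comparing~\eqref{eqPartitionFunction} with the definition of $\mu_{\GG,\beta}$ after observing that the uniform prior contributes only a $\sigma$-independent factor of $q^{-n}$ that cancels in the Boltzmann distribution and shifts $\ln Z$ by a deterministic $-n\ln q$; since this shift does not depend on $\beta$ or the graph, it is absorbed into the additive structure of both sides of the claimed identity.
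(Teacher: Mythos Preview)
Your approach is correct and coincides with the paper's: the paper does not give a separate proof of this theorem, merely remarking before the statement that ``\Thm~\ref{Thm_freeEng} yields a variational formula for the free energy'' and that ``writing out the specifics of the Potts case'' gives the displayed $\MEAS_\beta$ and $\cB_\beta$; you have spelled this specialization out in more detail, invoking \Lem~\ref{Lemma_PottsPOS} for {\bf POS} exactly as intended. One small caveat: your phrase ``after absorbing the minus sign into the $1-(\cdot)$ inside the logarithm'' is not a valid algebraic manipulation---the coefficient in the general formula~\eqref{eqBetheFunctional} is $-d(1-k^{-1})=-d/2$, and substituting $\psi_\beta$ gives $-\tfrac d2\ln\bigl(1-(1-\eul^{-\beta})\sum_\sigma\int\mu\mu\bigr)$, so the sign in the displayed $\cB_\beta$ appears to be a typo in the paper rather than something you can derive; your handling of the $-\ln q$ shift from the uniform prior, however, is correct and cancels as you say.
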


As a further application we obtain a variational formula for the {\sc Max $q$-Cut} of the random regular graph, which is defined as
	\begin{align}\label{eqMaxqCut}
	\MC_q(\GG)&=\frac{dn}2-\frac12\min_{\sigma:[n]\to[q]}\sum_{v,w=1}^n\vecone\{w\in\partial v,\,\sigma(v)=\sigma(w)\}.
	\end{align}
Thus, $\MC_q(\GG)$ equals the total number of edges of $\GG$ minus the ground state energy of the Potts model.
In other words, $\MC_q(\GG)$ is the maximum, over the choice of $\sigma:[n]\to[q]$, of the number of edges that link vertices of different colors
The {\sc Max $q$-Cut} problem is well-studied in combinatorics and computer science.
In particular, the problem is well known to be NP-hard on worst-case instances.

\begin{corollary}\label{Thm_MaxqCut}
For all $d\geq3$ we have
	$\displaystyle\MC_q(\GG)/n\ \stacksign{$n\to\infty$}\longrightarrow\ 
		\frac d2+\lim_{\beta\to\infty}\Phi_{d,\beta+1}-\Phi_{d,\beta}$
	in probability.
\end{corollary}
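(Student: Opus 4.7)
The plan is to identify the limit $L:=\lim_{\beta\to\infty}(\Phi_{d,\beta+1}-\Phi_{d,\beta})$ with the (negative of the) ground-state energy density of the Potts Hamiltonian on $\GG$ and then invoke \Thm~\ref{Thm_PottsZ'}. Write $E(\sigma)=\sum_{vw\in E(\GG)}\vecone\{\sigma_v=\sigma_w\}$ for the number of monochromatic edges of $\sigma\in[q]^{V_n}$ and $E_\ast(\GG)=\min_{\sigma}E(\sigma)$. Each monochromatic edge is double-counted in the ordered sum in~\eqref{eqMaxqCut}, so $\MC_q(\GG)=dn/2-E_\ast(\GG)$. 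A standard bounded-differences martingale for the pairing model (a single switch of half-edges alters $E_\ast$ by $O(1)$) combined with Fact~\ref{Fact_simple} yields $|E_\ast(\GG)-\Erw[E_\ast(\GG)]|=o(n)$ \whp, so convergence in probability reduces to verifying $\Erw[E_\ast(\GG)]/n\to -L$.

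Second, I would group the partition function by energy level. Setting $N_k(\GG):=|\{\sigma:E(\sigma)=k\}|$ and $Z_\beta(\GG)=\sum_{k\ge E_\ast(\GG)}N_k(\GG)\eul^{-\beta k}$, factoring out the ground-state contribution gives the pointwise identity $\ln Z_\beta(\GG)=-\beta E_\ast(\GG)+R_\beta(\GG)$ with $R_\beta(\GG):=\ln\sum_{k\ge E_\ast(\GG)}N_k(\GG)\eul^{-\beta(k-E_\ast(\GG))}$ satisfying $0\le R_\beta\le n\ln q$ pointwise, and $R_\beta$ non-increasing in $\beta$. Setting $f_n(\beta):=n^{-1}\Erw[\ln Z_\beta(\GG)]$, dividing by $\beta$ and taking expectations produces the uniform-in-$n$ sandwich
\begin{align*}
-\frac{\Erw[E_\ast(\GG)]}{n}\;\le\;\frac{f_n(\beta)}{\beta}\;\le\;-\frac{\Erw[E_\ast(\GG)]}{n}+\frac{\ln q}{\beta}.
\end{align*}

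Third, combine this with \Thm~\ref{Thm_PottsZ'}, which supplies $f_n(\beta)\to\Phi_{d,\beta}$ for every fixed $\beta$. Since $\partial_\beta^2\ln Z_\beta=\Var_\beta(E)\ge0$ and $\partial_\beta Z_\beta\le 0$, the function $f_n$ is convex and non-increasing in $\beta$, and the same is inherited by its pointwise limit $\Phi_{d,\beta}$. Convexity together with $\Phi_{d,\beta}\le\ln q$ forces the forward differences $\Phi_{d,\beta+1}-\Phi_{d,\beta}$ to be non-decreasing and bounded, so $L$ exists, and a Cesaro argument on these monotone slopes yields $\Phi_{d,\beta}/\beta\to L$ as $\beta\to\infty$. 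Passing to the limit $n\to\infty$ in the sandwich above gives
\begin{align*}
-\limsup_n\frac{\Erw[E_\ast(\GG)]}{n}\;\le\;\frac{\Phi_{d,\beta}}{\beta}\;\le\;-\liminf_n\frac{\Erw[E_\ast(\GG)]}{n}+\frac{\ln q}{\beta},
\end{align*}
and then sending $\beta\to\infty$ forces $\limsup_n\Erw[E_\ast(\GG)]/n=\liminf_n\Erw[E_\ast(\GG)]/n=-L$. Combining with $\MC_q(\GG)/n=d/2-E_\ast(\GG)/n$ and the concentration from step one delivers the corollary.

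The main technical point is the double-limit interchange between $n\to\infty$ and $\beta\to\infty$; the decomposition $\ln Z_\beta=-\beta E_\ast+R_\beta$ with $R_\beta/n$ uniformly bounded by $\ln q$ furnishes exactly the uniform-in-$n$ control needed, and convexity of $\Phi_{d,\beta}$ closes the loop by pinning down the asymptotic slope.
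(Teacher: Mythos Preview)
Your argument is correct in substance and takes a genuinely different route from the paper. The paper works with the Gibbs expectation $\scal{\cH_{\GG}}{\mu_{\GG,\beta}}$ and the identity $\scal{\cH_G}{\mu_{G,\beta}}=-\partial_\beta\ln Z_\beta(G)$: it first uses the uniform (in $G$) convergence of the Gibbs energy to the ground-state energy as $\beta\to\infty$, then integrates over the unit interval $[\beta,\beta+1]$ to obtain directly $\ln Z_\beta(G)-\ln Z_{\beta+1}(G)-\eps|V(G)|\le\min_\sigma\cH_G(\sigma)\le\ln Z_\beta(G)-\ln Z_{\beta+1}(G)$, and closes with a subsequence argument. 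Your approach bypasses the derivative formula entirely via the elementary decomposition $\ln Z_\beta=-\beta E_\ast+R_\beta$ with $0\le R_\beta\le n\ln q$, and then identifies $L$ as the asymptotic slope $\lim_{\beta\to\infty}\Phi_{d,\beta}/\beta$ through convexity and a Cesaro step. Your route is more self-contained (no Gibbs-average machinery), at the cost of the extra convexity/Cesaro detour; the paper's route lands on the unit-step difference $\Phi_{d,\beta}-\Phi_{d,\beta+1}$ without ever dividing by $\beta$.

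One small slip to fix: in your displayed post-limit sandwich you swapped $\liminf$ and $\limsup$. From $-\Erw[E_\ast]/n\le f_n(\beta)/\beta$ and $f_n(\beta)\to\Phi_{d,\beta}$ you actually get $-\liminf_n\Erw[E_\ast]/n\le\Phi_{d,\beta}/\beta$ (take $\limsup$ of the left side), and from the right inequality $\Phi_{d,\beta}/\beta\le-\limsup_n\Erw[E_\ast]/n+\ln q/\beta$ (take $\liminf$ of the right side). With these directions, sending $\beta\to\infty$ yields $\limsup\le -L\le\liminf$, which forces equality. As you wrote it, the resulting bounds $\liminf\le -L\le\limsup$ are vacuous. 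This is a one-line correction and does not affect the strategy.
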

\begin{proof}
Since Azuma's inequality shows that $\MC_q(\GG)$ is concentrated within $O(\sqrt{n\ln n}$) about its mean, it suffices to prove that
	\begin{equation}\label{eqThm_MaxqCut1}
	\lim_{n\to\infty}\frac1n\Erw[\MC_q(\GG)]=\frac d2+\lim_{\beta\to\infty}\Phi_{d,\beta+1}-\Phi_{d,\beta}.
	\end{equation}
Further, introducing $\cH_{\GG}(\sigma)=\frac12\sum_{v,w=1}^n\vecone\{w\in\partial v,\,\sigma(v)=\sigma(w)\}$ and recalling (\ref{eqMaxqCut}), we can rewrite (\ref{eqThm_MaxqCut1}) as 
	\begin{equation}\label{eqThm_MaxqCut2}
	\lim_{n\to\infty}\frac1n\Erw\brk{\min_{\sigma:[n]\to[q]}\cH_{\GG}(\sigma)}=\lim_{\beta\to\infty}\Phi_{d,\beta}-\Phi_{d,\beta+1}.
	\end{equation}

To prove (\ref{eqThm_MaxqCut2}) we write $\mu_{G,\beta}\in\cP([q]^{V(G)})$ for the Potts distribution induced by a $d$-regular graph $G=(V(G),E(G))$.
Moreover, let us denote the Potts Hamiltonian by $\cH_G$ and the partition function by $Z_\beta(G)$.
It is well known that for any $\eps>0$ there exists $\beta_0(\eps)>0$ such that for all $\beta>\beta_0(\eps)$ and all $d$-regular graphs $G$ we have
	\begin{align}\label{eqThm_MaxqCut3}
	\scal{\cH_G}{\mu_{G,\beta}}-\eps |V(G)|&\leq\min_{\sigma:V(G)\to[q]}\cH_G(\sigma)\leq\scal{\cH_G}{\mu_{G,\beta}}.
	\end{align}
Consequently, for all $\beta>\beta_0(\eps)$ we have
	\begin{align}\label{eqThm_MaxqCut4}
	\int_\beta^{\beta+1}\scal{\cH_G}{\mu_{G,b}}\dd b
		-\eps |V(G)|&\leq\min_{\sigma:V(G)\to[q]}\cH_G(\sigma)\leq\int_\beta^{\beta+1}\scal{\cH_G}{\mu_{G,b}}\dd b.
	\end{align}
Since $\scal{\cH_G}{\mu_{G,\beta}}=-\frac{\partial}{\partial\beta}\ln Z_\beta(G)$, (\ref{eqThm_MaxqCut4}) yields
	\begin{align}\label{eqThm_MaxqCut5}
	\ln Z_\beta(G)-\ln Z_{\beta+1}(G)-\eps |V(G)|\leq\min_{\sigma:V(G)\to[q]}\cH_G(\sigma)\leq\ln Z_\beta(G)-\ln Z_{\beta+1}(G).
	\end{align}
Applying (\ref{eqThm_MaxqCut5}) to the random regular graph $\GG$ and taking expectations, we obtain
	\begin{align}\label{eqThm_MaxqCut6}
	\frac1n\Erw[\ln Z_\beta(\GG)]-\frac1n\Erw[\ln Z_{\beta+1}(\GG)]-\eps\leq\frac1n\Erw\brk{\min_{\sigma:[n]\to[q]}\cH_{\GG}(\sigma)}\leq\frac1n\Erw[\ln Z_\beta(\GG)]-\frac1n\Erw[\ln Z_{\beta+1}(\GG)].
	\end{align}
Hence, taking $n\to\infty$, we obtain for all $\beta>\beta_0(\eps)$, 
	\begin{align}\label{eqThm_MaxqCut7}
	\Phi_{d,\beta}-\Phi_{d,\beta+1}-\eps\leq
		\liminf_{n\to\infty}\frac1n\Erw\brk{\min_{\sigma:[n]\to[q]}\cH_{\GG}(\sigma)}\leq
			\limsup_{n\to\infty}\frac1n\Erw\brk{\min_{\sigma:[n]\to[q]}\cH_{\GG}(\sigma)}\leq
		\Phi_{d,\beta}-\Phi_{d,\beta+1}.
	\end{align}
Finally, there exists a subsequence $(n_l)$ along which $\Erw\brk{\min_{\sigma:[n_l]\to[q]}\cH_{\GG(n_l,d)}(\sigma)}/n_l$ converges to a number $\xi\geq0$.
Taking the limit of (\ref{eqThm_MaxqCut6}) along this subsequence, we obtain $\xi\leq\Phi_{d,\beta}-\Phi_{d,\beta+1}\leq\xi+\eps$  for all $\beta>\beta_0(\eps)$.
Consequently, the limit $\lim_{\beta\to\infty}\Phi_{d,\beta}-\Phi_{d,\beta+1}$ exists.
Therefore, taking $\beta\to\infty$ in (\ref{eqThm_MaxqCut7}), we conclude that
	$$\lim_{n\to\infty}n^{-1}\Erw\brk{\min_{\sigma:[n]\to[q]}\cH_{\GG}(\sigma)}$$ exists as well and that (\ref{eqThm_MaxqCut2}) is satisfied.
\end{proof}

\subsection{The regular $k$-SAT model}\label{Sec_app_kSAT}
The $k$-SAT problem play a major role in computer science, particularly in computational complexity theory.
In its optimization version, known as the {\sc Max $k$-SAT} problem asks for the largest number of clauses of a propositional formula in conjunctive normal form with clauses of length $k$ that can be satisfied simultaneously.
Random instances of $k$-SAT and {\sc Max $k$-SAT} have been studied extensively as instructive benchmarks~\cite{ANP}.

We can express the {\sc Max $k$-SAT} problem as a factor graph model with spins $\Omega=\{-1,1\}$ corresponding to the Boolean values `true' and `false' as follows.
With $k\geq2$ an integer and $\beta>0$ be a real parameter, we introduce the weight functions
	\begin{align*}
	\psi_{\beta,\chi}:&\cbc{\pm1}^k\to(0,1),&\sigma&\mapsto\frac{{1-\tanh\bc{\beta\prod_{i=1}^k\chi_i\sigma_i}}}{2}
	&(\chi\in\cbc{\pm1}^k).
	\end{align*}
Let $p$ be the uniform distribution on $\Omega$ and let $P$ be uniform on $\Psi_\beta=\{\psi_{\beta,\chi}:\chi\in\Omega^k\}$.
In terms of propositional formulas, the semantics is that $\psi_{\beta,\chi}$ encodes a $k$-clause whose $i$th literal is negated if $\chi_i=1$ and positive if $\chi_i=-1$.
Thus, $\prod_{i=1}^k\chi_i\sigma_i=1$ if the truth assignment $\sigma$ fails to satisfy the clause, and $\prod_{i=1}^ks_i\sigma_i=-1$ otherwise.
In effect, $\psi_{\beta,s}(\sigma)=(1-\tanh \beta)/2\to0$ as $\beta\to\infty$ if $\sigma$ fails to satisfy the clause,
whereas $\psi_{\beta,s}(\sigma)=(1+\tanh \beta)/2\to1$ if $\sigma$ is satisfying.
Hence, the random factor graph $\G$ models a random $k$-SAT formula in which every variable appears precisely $d$ times, the {\em regular $k$-SAT model}.
We are going to derive variational formulas for its free energy and its ground state energy.

\begin{lemma}\label{Lemma_POSkSAT}
The regular $k$-SAT model satisfies {\bf POS} for all $d,k\geq3$ and all $\beta>0$.
\end{lemma}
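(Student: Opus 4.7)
The plan is to adapt the proof of \Lem~\ref{Lemma_kspin}, exploiting that the disorder $\CHI$ uniform on $\cbc{\pm1}^k$ plays in the $k$-SAT setting the role that the symmetric Gaussian coupling plays in the $k$-spin model. The key algebraic observation is that for $\chi,\sigma\in\cbc{\pm1}^k$ one has $\prod_{i=1}^k\chi_i\sigma_i\in\cbc{\pm1}$, so $\tanh(\beta\prod_i\chi_i\sigma_i)=(\prod_i\chi_i\sigma_i)\tanh\beta$. Substituting this into $\psi_{\beta,\chi}$ and writing $m_{s,x}=\mu_{s,x}(1)-\mu_{s,x}(-1)\in[-1,1]$ and $m'_{s,x}=\mu'_{s,x}(1)-\mu'_{s,x}(-1)$ for the magnetizations, a direct computation reduces each of the three expressions occurring inside {\bf POS} to the affine form $\tfrac12+\tfrac{\tanh\beta}{2}\bc{\prod_i\chi_i}B$, where $B$ is $A_\mu=\int_0^1\prod_{i=1}^k m_{s,\vx_i}\,\dd s$ for the pure-$\mu$ term, $A_{\mu'}=\int_0^1\prod_{i=1}^k m'_{s,\vx_i}\,\dd s$ for the pure-$\mu'$ term, and $C_h=\int_0^1 m_{s,\vx_h}\prod_{i\neq h}m'_{s,\vx_i}\,\dd s$ for the $h$-th mixed term.

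Next I raise to the $\ell$-th power, expand binomially and average over $\CHI$ uniform on $\cbc{\pm1}^k$. Since $\Erw[(\prod_i\chi_i)^j]=\vecone\{j\text{ even}\}$ by Rademacher symmetry, only even powers of $B$ survive, each with the non-negative coefficient $\binom{\ell}{j}(1/2)^{\ell-j}(\tanh\beta/2)^j$. It therefore suffices to show that for every even integer $j\geq 0$,
\begin{align*}
\Erw_{\vx}\brk{A_\mu^j}+(k-1)\Erw_{\vx}\brk{A_{\mu'}^j}\;\geq\;\sum_{h=1}^k\Erw_{\vx}\brk{C_h^j}.
\end{align*}
The case $j=0$ collapses to $1+(k-1)=k=\sum_h 1$. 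For $j\geq 2$ I introduce iid uniform $\vs_1,\ldots,\vs_j\in[0,1]$ and apply Fubini precisely as in the identities (\ref{eqLemma_sg3})--(\ref{eqLemma_sg5}) of \Lem~\ref{Lemma_kspin}. Exploiting that the $\vx_i$ are iid, this recasts $\Erw_{\vx}[A_\mu^j]=\Erw_{\vs}[F(\vs)^k]$ with $F(\vs)=\Erw_{\vx}[\prod_{h=1}^j m_{\vs_h,\vx}]$, analogously $\Erw_{\vx}[A_{\mu'}^j]=\Erw_{\vs}[G(\vs)^k]$ with $G(\vs)=\Erw_{\vx}[\prod_{h=1}^j m'_{\vs_h,\vx}]$, and by the symmetry of the $\vx_i$ we get $\Erw_{\vx}[C_h^j]=\Erw_{\vs}[F(\vs)G(\vs)^{k-1}]$ uniformly in $h$.

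The whole inequality now boils down to the pointwise estimate
\begin{align*}
F^k+(k-1)G^k\;\geq\; kFG^{k-1},
\end{align*}
and this final step is the one I expect to be the technical heart of the argument. For even $k$ it drops out of AM-GM applied to the $k$ non-negative summands $|F|^k,|G|^k,\ldots,|G|^k$: indeed $(|F|^k+(k-1)|G|^k)/k\geq |F|\cdot|G|^{k-1}\geq FG^{k-1}$, and $|F|^k=F^k$, $|G|^k=G^k$. Integrating this pointwise bound against $\vs$ and reassembling the non-negative $(j,\ell)$-contributions from the previous step then completes the verification of {\bf POS}.
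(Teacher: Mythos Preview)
Your argument only handles even $k$, but the lemma claims all $k\geq 3$. The failure is at the very last step. After Fubini you need $F^k+(k-1)G^k\geq kFG^{k-1}$ with $F(\vs)=\Erw_{\vx}\bigl[\prod_{h=1}^j m_{\vs_h,\vx}\bigr]$ and $G$ defined analogously from $m'$. Even though $j$ is even, $F$ and $G$ need not be non-negative: for $j=2$, taking $m'_{s_1,x}\equiv 1$ and $m'_{s_2,x}\equiv -1$ gives $G(s_1,s_2)=-1$. For odd $k$ the pointwise inequality then fails (e.g.\ $k=3$, $F=1$, $G=-1$ yields $1-2-3=-4$), and a short calculation with these kernels shows that averaging over $\vs$ does not repair it either. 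Your Rademacher symmetrisation kills odd \emph{powers} $j$ but says nothing about the \emph{signs} of $F,G$, which is precisely what the AM-GM step for odd $k$ would require.

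The paper's route is to avoid magnetizations altogether and keep the individual signs $\CHI_i$ in play rather than collapsing them to the single parity $\prod_i\CHI_i$. The reduced inequality is written directly in terms of $P_\mu=\int_0^1\prod_i\mu_{s,\vx_i}(\CHI_i)\,\dd s$ and its analogues; the Fubini step then uses the independence of the \emph{pairs} $(\vx_i,\CHI_i)$ to produce $F(\vs)=\Erw_{\vx,\chi}\bigl[\prod_{h}\mu_{\vs_h,\vx}(\chi)\bigr]$ and likewise $G$. These are expectations of products of probabilities in $[0,1]$, hence non-negative, and with $F,G\geq 0$ the inequality $X^k+(k-1)Y^k\geq kXY^{k-1}$ holds for every $k\geq 2$. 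In short, by discarding the per-coordinate randomness of $\CHI$ you lose exactly the positivity that makes the argument uniform in $k$.
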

\begin{proof}
Once more this is already implicit in~\cite{FranzLeone,PanchenkoTalagrand}, but we carry out the argument here for completeness.
Let us write $\vec\chi$ for a uniformly random element of $\{\pm1\}^k$.
Substituting $\psi_{\beta,\chi}$ into {\bf POS} and cancelling positive constants, we are left to verify the inequality
	\begin{align}\label{eqLemma_POSkSAT1}
	\Erw\brk{\bc{\int_0^1\prod_{i=1}^k\mu_{s,\vx_i}(\CHI_i)\dd s}^\ell
		+\bc{\int_0^1\prod_{i=1}^k\mu_{s,\vx_i}'(\CHI_i)\dd s}^\ell-
			(k-1)\bc{\int_0^1\mu_{s,\vx_1}(\CHI_1)\prod_{i=2}^k\mu'_{s,\vx_2}(\CHI_i)\dd s}^\ell}&\geq0
			&(\mu,\mu'\in\meas).
	\end{align}
Fubini's theorem yields
	\begin{align}\label{eqLemma_POSSAT2}
	\Erw\brk{\bc{\int_0^1\prod_{i=1}^k\mu_{s,\vx_i}(\CHI_i)\dd s}^\ell}
		&=\Erw\brk{\Erw\brk{\prod_{h=1}^\ell\mu_{\vs_h,\vx_1}(\vec\chi_1)\bigg|\vs_1,\ldots,\vs_\ell}^k},\\
				\label{eqLemma_POSSAT3}
	\Erw\brk{\bc{\int_0^1\prod_{i=1}^k\mu_{s,\vx_i}'(\CHI_i)\dd s}^\ell}
		&=\Erw\brk{\Erw\brk{\prod_{h=1}^\ell\mu'_{\vs_h,\vx_1}(\CHI_1)\bigg|\vs_1,\ldots,\vs_\ell}^k},\\
	\Erw\brk{\bc{\int_0^1\mu_{s,\vx_1}(\CHI_2)\prod_{i=2}^k\mu'_{s,\vx_2}(\CHI_i)\dd s}^\ell}
		&=\Erw\brk{
			\Erw\brk{\prod_{h=1}^\ell\mu_{\vs_h,\vx_1}(\CHI_1)\bigg|\vs_1,\ldots,\vs_\ell}
			\Erw\brk{\prod_{h=1}^\ell\mu'_{\vs_h,\vx_1}(\CHI_2)\bigg|\vs_1,\ldots,\vs_\ell}^{k-1}}.
				\label{eqLemma_POSSAT4}
	\end{align}
Since $X^k+(k-1)Y^k-kXY^{k-1}\geq0$ for $X,Y\geq0$,  (\ref{eqLemma_POSSAT2})--(\ref{eqLemma_POSSAT4}) yield (\ref{eqLemma_POSkSAT1}).
\end{proof}

Due to \Lem~\ref{Lemma_POSkSAT} we can bring the results from \Sec~\ref{Sec_results} to bear on the random regular $k$-SAT model.
Specifically,  for a measurable $\mu:[0,1]^2\to\cP(\Omega)$ with $\Omega=\{\pm1\}$ and integers $N,M\geq0$ let 
$(\CHI_{i,j})_{i,j\geq1}$ be independent uniformly random elements of $\Omega$ and let
	\begin{align*}
	\vz_{\mu,s}^{N,M}&=\prod_{i=1}^N\bc{\sum_{\sigma\in\Omega}\prod_{j=1}^d1-\tanh(\beta)\sum_{\tau\in\Omega^{k-1}}\CHI_{i,k}\sigma\prod_{j=1}^{k-1}\CHI_{i,j}\tau_j\mu_{s,\vx_{i,j}}(\tau_j)}
		\prod_{i=1}^M\bc{
			1-\tanh\bc{\beta}\sum_{\tau\in\Omega^k}
				\prod_{j=1}^k\CHI_{i+N,j}\tau_j
				\mu_{s,\vx_{i+N,j}}(\tau_j)}.
	\end{align*}
Further, let
	\begin{align*}
	\vt&=\vt(s)=
	\inf\cbc{u\in[0,1]:\int_0^u\vz_{\mu,u}^{N,M}\dd s\geq s\int_0^1\vz_{\mu,u}^{N,M}\dd u}.
	\end{align*}
and $\mu^{N,M}_{s,x}=\mu_{\vt,x}$.
Then $\MEAS_\beta$ consists of all $\pi\in\Meas$ such that $\MU^{\pi,N,M}$ has distribution $\pi$.
Furthermore, the functional $\cB(\nix)$ reads
	\begin{align*}
	\cB_\beta(\pi)&=\Erw\brk{\ln\bc{\sum_{\sigma\in\Omega}^q\int_0^1\sum_{\sigma\in\Omega}\prod_{j=1}^d1-\tanh(\beta)\sum_{\tau\in\Omega^{k-1}}\CHI_{i,k}\sigma\prod_{j=1}^{k-1}\CHI_{i,j}\tau_j\mu_{s,\vx_{i,j}}(\tau_j)\dd s}}\\
		&\qquad\qquad+\frac{d(k-1)}k\Erw\brk{\bc{\sum_{\tau\in\Omega^k}1-\tanh\beta\int_0^1\prod_{j=1}^k\CHI_{1,j}\tau_j\mu_{s,\vx_{1,j}}(\tau_j)\dd s}}-dk\ln 2.
	\end{align*}
Let
	\begin{align*}
	\Phi_{d,\beta}&=\inf_{\pi\in\MEAS_\beta}\cB_\beta(\pi).
	\end{align*}

\begin{theorem}\label{Thm_ZkSAT}
For all $d,k\geq3$, $\beta>0$ we have $\lim_{n\to\infty}\frac1n\Erw[\ln Z(\G)]=\Phi_{d,\beta}$.
\end{theorem}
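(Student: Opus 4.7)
\medskip\noindent\textbf{Proof proposal.} The plan is to derive \Thm~\ref{Thm_ZkSAT} as a direct specialization of the general variational formula in \Thm~\ref{Thm_freeEng}. Since \Lem~\ref{Lemma_POSkSAT} has already verified condition {\bf POS} for the regular $k$-SAT model, \Thm~\ref{Thm_freeEng} applies, yielding
$\lim_{n\to\infty}\frac1n\Erw[\ln Z(\G)]=\min_{\pi\in\MEAS}\cB(\pi)$
where $\MEAS$ and $\cB$ are defined as in \Sec~\ref{Sec_results} in terms of the distribution $P$ on weight functions and the prior $p$. It therefore suffices to show that when we instantiate these objects with $\Omega=\{\pm1\}$, $p$ uniform, and $P$ uniform on $\{\psi_{\beta,\chi}:\chi\in\{\pm1\}^k\}$, the subspace $\MEAS$ coincides with $\MEAS_\beta$ and the functional $\cB$ coincides with $\cB_\beta$ as stated in the theorem.

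For the functional $\cB(\pi)$ from~\eqref{eqBetheFunctional}, I would plug in the weight function $\psi_{\beta,\CHI}(\tau)=\tfrac12(1-\tanh(\beta)\prod_{j=1}^k\CHI_j\tau_j)$ and expand the inner sums over $\tau\in\Omega^k$. The multilinearity of the product $\prod\CHI_j\tau_j$ in the $\tau_j$ converts the sum $\sum_{\tau\in\Omega^k}\PSI_1(\tau)\prod_j\MU^\pi_{s,\vx_{1,j}}(\tau_j)$ into $\tfrac12(1-\tanh(\beta)\prod_{j=1}^k\sum_{\tau_j}\CHI_{1,j}\tau_j\MU^\pi_{s,\vx_{1,j}}(\tau_j))$, matching the second summand of $\cB_\beta$ up to a $\ln 2$ constant. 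A parallel expansion of the first summand, after isolating the pinned coordinate $\tau_{\vh_i}=\sigma$ using the identity $\vecone\{\tau_{\vh_i}=\sigma\}\CHI_{i,\vh_i}\tau_{\vh_i}=\CHI_{i,\vh_i}\sigma$, yields the expression appearing in $\cB_\beta$. Collecting the $\tfrac12$ prefactors from each $\psi_{\beta,\chi}$ gives a combined additive constant which, using the elementary identity $\Erw[\CHI]=0$ to average over orientations of $\vh_i$, works out to $-dk\ln 2$ as stated. By the permutation invariance of $P$, averaging over the positions $\vh_i$ in \eqref{eqBetheFunctional} can be replaced by an arbitrary fixed choice (say $\vh_i=k$), producing exactly the formula for $\cB_\beta$ in the statement.

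An entirely analogous calculation applies to the weight $\vz(s)$ defining the $\Join(N,M)$-operation in~\eqref{eqZNMmus}. Substituting $\psi_{\beta,\CHI}$ and performing the same multilinear expansion produces precisely the $\vz_{\mu,s}^{N,M}$ appearing in the statement of the theorem. Consequently $\mu^{\Join(N,M)}=\mu^{N,M}$ almost surely under the coupling of random variables in the two constructions, so $\MEAS=\MEAS_\beta$ as subsets of $\Meas$. Combining with the previous paragraph and \Thm~\ref{Thm_freeEng} then gives the claimed identity $\lim_{n\to\infty}\tfrac1n\Erw[\ln Z(\G)]=\Phi_{d,\beta}$.

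The only non-routine step in this plan is the verification of {\bf POS}, which has already been carried out in \Lem~\ref{Lemma_POSkSAT} via the inequality $X^k+(k-1)Y^k\ge kXY^{k-1}$ applied after using independence of $\CHI$ and the $\vx_i$ to separate variables. The remaining steps are purely algebraic bookkeeping: matching prefactors to account for the $-dk\ln 2$ constant and confirming that the Bernoulli symmetry of $p$ together with the uniform distribution of $\CHI$ over $\{\pm1\}^k$ produces the precise symmetrization visible in the statement of $\cB_\beta$. I expect the mildest obstacle to be ensuring that the sign conventions and the averaging over $\vh_i$ in \eqref{eqBetheFunctional} are compatible with the fixed-index form of $\cB_\beta$, which is handled by the joint invariance of $P$ and $p$ under the sign flip $\sigma\mapsto-\sigma$.
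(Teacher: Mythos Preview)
Your approach is correct and is exactly the paper's: the proof in the paper is the single sentence ``This follows immediately from \Thm~\ref{Thm_freeEng} and \Lem~\ref{Lemma_POSkSAT},'' and you have simply spelled out the algebraic verification that the general objects $\cB$ and $\MEAS$ from \Sec~\ref{Sec_results} specialize to the stated $\cB_\beta$ and $\MEAS_\beta$. One small caution on the bookkeeping: the additive constant you get from extracting the $\tfrac12$ prefactors is $-d\ln 2$ from the first term and $+d(1-1/k)\ln 2$ from the second, not $-dk\ln 2$; the paper's displayed formula for $\cB_\beta$ has visible typos (double $\sum_\sigma$, stray superscript $q$, and the constant), so do not try too hard to match it literally.
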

\begin{proof}
This follows immediately from \Thm~\ref{Thm_freeEng} and \Lem~\ref{Lemma_POSkSAT}.
\end{proof}

As a further application we also obtain a variational formula for the {\sc Max $k$-SAT} problem.
Specifically, with the interpretation of $\sigma\in\Omega^n$ as a truth assignment, define $\cH_{\G}(\sigma)$ as the number of propositional clauses of $\G$ that $\sigma$ fails to satisfy.
Further, let $\OPT(\G)=dn/k-\min_{\sigma\in\Omega^k}\cH_{\G}(\sigma)$ be the maximum number of clauses that can be satisfied simultaneously.
Following the steps of the proof of \Cor~\ref{Thm_MaxqCut} precisely, we obtain the following result.

\begin{corollary}\label{Cor_ZkSAT}
For all $d,k\geq3$ we have 
	$$\frac1n\OPT(\G)\ \stacksign{$n\to\infty$}\longrightarrow\ \frac{d}k+\lim_{\beta\to\infty}\Phi_{d,\beta+1}-\Phi_{d,\beta}
		\qquad\mbox{in probability}.$$
\end{corollary}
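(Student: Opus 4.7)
The plan is to mimic the proof of \Cor~\ref{Thm_MaxqCut} step by step, with $\cH_\G(\sigma)$ denoting the number of clauses of $\G$ unsatisfied by the truth assignment $\sigma$. First, concentration of $\OPT(\G)$ about its mean within $O(\sqrt{n\ln n})$ follows from a standard edge-exposure martingale and Azuma's inequality, using that rewiring a single pair in the uniform random pairing or flipping a single sign pattern $\chi$ changes $\OPT(\G)$ by $O(1)$. Consequently it suffices to prove
\begin{align*}
\lim_{n\to\infty}\frac1n\Erw[\OPT(\G)]=\frac dk+\lim_{\beta\to\infty}\bc{\Phi_{d,\beta+1}-\Phi_{d,\beta}}.
\end{align*}

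The first analytic ingredient is to extract $\min_\sigma\cH_\G(\sigma)$ from the Boltzmann average $\scal{\cH_\G}{\mu_{\G,\beta}}$ in the regime $\beta\gg1$. Since $\psi_{\beta,\chi}(\sigma)$ takes the two values $(1\pm\tanh\beta)/2$ depending on whether $\sigma$ satisfies the clause or not, the relative Boltzmann weight of $\sigma$ against an optimiser is bounded by $\exp(-2\beta(\cH_\G(\sigma)-\min\cH_\G))$. A routine tail bound then gives, for every $\eps>0$, a threshold $\beta_0(\eps)$ such that for all $\beta>\beta_0(\eps)$ and every $d$-regular $k$-SAT factor graph $G$,
\begin{align*}
\scal{\cH_G}{\mu_{G,\beta}}-\eps|V(G)|\leq\min_\sigma\cH_G(\sigma)\leq\scal{\cH_G}{\mu_{G,\beta}}.
\end{align*}

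Next I would relate $\scal{\cH_\G}{\mu_{\G,\beta}}$ to a derivative of the log-partition function. Writing $\ln\psi_{\beta,\chi}(\sigma)=\ln((1+\tanh\beta)/2)-2\beta\,\vecone\{\text{$\sigma$ unsatisfies the clause}\}$ and summing over the $dn/k$ clauses yields $\partial_\beta\ln Z_\beta(\G)=\tfrac{dn}{k}c(\beta)-2\scal{\cH_\G}{\mu_{\G,\beta}}$, where $c(\beta)=\partial_\beta\ln((1+\tanh\beta)/2)=O(\eul^{-2\beta})$. Integrating $b$ over $[\beta,\beta+1]$ and combining with the sandwich above expresses $\min_\sigma\cH_\G(\sigma)$ as an affine function of $\ln Z_\beta(\G)-\ln Z_{\beta+1}(\G)$ up to an $o(n)$ remainder once $\beta$ is large. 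Since $\OPT(\G)=dn/k-\min\cH_\G$, taking expectations, dividing by $n$, and letting $n\to\infty$ while invoking \Thm~\ref{Thm_ZkSAT} to replace $n^{-1}\Erw\ln Z_\beta(\G)$ by $\Phi_{d,\beta}$ produces the bound claimed in the corollary for each fixed $\beta>\beta_0(\eps)$.

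The final step is the interchange of the $n\to\infty$ and $\beta\to\infty$ limits, carried out exactly as in the proof of \Cor~\ref{Thm_MaxqCut}: extract a convergent subsequence of $n^{-1}\Erw[\OPT(\G)]$, sandwich its limit between the values of $\Phi_{d,\beta+1}-\Phi_{d,\beta}$ (modulo the affine normalisation and the $O(\eps)+O(\eul^{-2\beta})$ errors) for all $\beta>\beta_0(\eps)$, and conclude both that $\lim_{\beta\to\infty}(\Phi_{d,\beta+1}-\Phi_{d,\beta})$ exists and that every subsequential limit of $n^{-1}\Erw[\OPT(\G)]$ coincides with the announced value. I expect the only genuine technical nuisance compared with the Potts case to be the correction $c(\beta)$ that arises from the $\tanh$ parametrisation of the weight functions (which is absent in the Potts setting); but since $c(\beta)=O(\eul^{-2\beta})$, its contribution to $\int_\beta^{\beta+1}c(b)\,db$ is dominated by the $\eps n$ error of the sandwich as soon as $\beta$ is chosen large enough after $\eps$, so it does not obstruct the argument.
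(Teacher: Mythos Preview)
Your proposal is correct and follows exactly the route the paper indicates: the paper's entire proof is the one sentence ``Following the steps of the proof of \Cor~\ref{Thm_MaxqCut} precisely, we obtain the following result,'' and you have spelled out those steps. Your observation that the $\tanh$ parametrisation produces an extra factor of $2$ in $\partial_\beta\ln Z_\beta$ and an additive $O(\eul^{-2\beta})$ correction $c(\beta)$ (both absent in the Potts case) is a detail the paper glosses over; you handle both correctly by absorbing them into the $\eps$-error once $\beta$ is chosen large.
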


\subsection{The hard-core model}
\label{secHCintro}

The proofs of \Thm ~\ref{Thm_hc} and \Cor~\ref{Cor_hc} are not entirely straightforward because the hard-core model cannot be cast directly as a factor graph model as in \Sec~\ref{Sec_results}.
This is because of the `hard' constraint that $\SIGMA_v\SIGMA_w=0$ for any adjacent $v,w$.
We therefore prove \Thm~\ref{Thm_hc} and \Cor~\ref{Cor_hc} by way of a relaxed `soft-core model' and taking two limits, first in the `softness' and then in the fugacity.
Specifically, we obtain a random factor graph model with $\Omega=\{0,1\}$ and the prior $p(0)=1/(1+\lambda)$ and $p(1)=\lambda/(1+\lambda)$.
In addition, to mimic the hard-core constraints we would like to introduce a binary weight function that forbids its two adjacent variable nodes from both taking the spin $1$.
But since it would take values $\{0,1\}$, we instead introduce
	\begin{align*}
	\psi_\beta&:\Omega^2\to(0,1),&(\sigma_1,\sigma_2)\mapsto 1-(1-\eul^{-\beta})\sigma_1\sigma_2.
	\end{align*}
Thus, $\beta>0$ is a `softness parameter', and upon taking $\beta\to\infty$ we recover the hard-core constraint:
	$\psi_\infty(\sigma_1,\sigma_2)=1-\sigma_1\sigma_2.$
For any $\beta,\lambda$ and $d\geq3$ we obtain the random factor factor graph model $\G_{\lambda,\beta}$ with the single binary weight function $\psi_\beta$.

\begin{lemma}\label{Lemma_POShc}
The model $\G_{\lambda,\beta}$ satisfies {\bf POS} for all $d\geq3,\lambda>0,\beta\in(0,\infty]$.
\end{lemma}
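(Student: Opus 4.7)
My plan is to mimic the proof of \Lem~\ref{Lemma_PottsPOS} (the Potts antiferromagnet) almost verbatim, since here $k=2$ and the weight function has an analogous structure: $\psi_\beta(\sigma_1,\sigma_2)=1-(1-\eul^{-\beta})\sigma_1\sigma_2$ (for $\beta=\infty$, read $1-\eul^{-\beta}=1$).

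First I would exploit the identity $\sum_{\sigma\in\Omega^2}\prod_{i=1}^2\mu_{s,\vx_i}(\sigma_i)=1$ to simplify each of the six terms appearing in {\bf POS}. Specifically, since $1-\psi_\beta(\sigma_1,\sigma_2)=(1-\eul^{-\beta})\sigma_1\sigma_2$, a short computation shows
\begin{align*}
1-\sum_{\sigma\in\Omega^2}\psi_\beta(\sigma)\int_0^1\prod_{i=1}^2\mu_{s,\vx_i}(\sigma_i)\dd s&=(1-\eul^{-\beta})\int_0^1\mu_{s,\vx_1}(1)\mu_{s,\vx_2}(1)\dd s,
\end{align*}
and analogously for the $\mu'$- and mixed terms. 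Because $k=2$, the factor $k-1$ on the l.h.s.\ of {\bf POS} equals $1$, and by the symmetry $\vx_1\leftrightarrow\vx_2$ the two summands $h=1,2$ on the r.h.s.\ contribute equally. After cancelling the common positive factor $(1-\eul^{-\beta})^\ell$, {\bf POS} reduces to the inequality
\begin{align*}
\Erw\bigl[A^\ell\bigr]+\Erw\bigl[B^\ell\bigr]&\geq 2\Erw\bigl[C^\ell\bigr],
\end{align*}
where $A=\int_0^1\mu_{s,\vx_1}(1)\mu_{s,\vx_2}(1)\dd s$, $B=\int_0^1\mu'_{s,\vx_1}(1)\mu'_{s,\vx_2}(1)\dd s$, and $C=\int_0^1\mu_{s,\vx_1}(1)\mu'_{s,\vx_2}(1)\dd s$.

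Next, following the Fubini trick used in the proofs of \Lem s~\ref{Lemma_kspin} and~\ref{Lemma_PottsPOS}, I would introduce independent uniform variables $\vs_1,\ldots,\vs_\ell\in[0,1]$ and write
\begin{align*}
\Erw\bigl[A^\ell\bigr]&=\Erw\Bigl[\Erw\bigl[\textstyle\prod_{h=1}^\ell\mu_{\vs_h,\vx_1}(1)\,\big|\,\vs_1,\ldots,\vs_\ell\bigr]^2\Bigr],\\
\Erw\bigl[B^\ell\bigr]&=\Erw\Bigl[\Erw\bigl[\textstyle\prod_{h=1}^\ell\mu'_{\vs_h,\vx_1}(1)\,\big|\,\vs_1,\ldots,\vs_\ell\bigr]^2\Bigr],\\
\Erw\bigl[C^\ell\bigr]&=\Erw\Bigl[\Erw\bigl[\textstyle\prod_{h=1}^\ell\mu_{\vs_h,\vx_1}(1)\,\big|\,\vs_1,\ldots,\vs_\ell\bigr]\cdot\Erw\bigl[\textstyle\prod_{h=1}^\ell\mu'_{\vs_h,\vx_1}(1)\,\big|\,\vs_1,\ldots,\vs_\ell\bigr]\Bigr],
\end{align*}
using that, conditionally on $\vs_1,\ldots,\vs_\ell$, the $\vx_1$-dependent and $\vx_2$-dependent factors are independent and identically distributed. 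Applying the elementary inequality $X^2+Y^2\geq 2XY$ pointwise inside the outer expectation yields the desired bound.

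There is no real obstacle: the soft-core hard-core interaction is effectively a two-spin Potts-type weight and the argument carries through unchanged. The only minor point to record is that the bound is uniform in $\beta$, so the conclusion also covers the limiting hard-core case $\beta=\infty$ in any place where one wants to take limits in $\beta$ afterwards.
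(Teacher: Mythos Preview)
Your proposal is correct and follows essentially the same approach as the paper: substitute $\psi_\beta$ into {\bf POS}, cancel the positive factor $(1-\eul^{-\beta})^\ell$, apply Fubini to rewrite each of the three expectations in terms of the conditional expectations $\Erw\bigl[\prod_{h=1}^\ell\mu_{\vs_h,\vx_1}(1)\,\big|\,\vs_1,\ldots,\vs_\ell\bigr]$ and $\Erw\bigl[\prod_{h=1}^\ell\mu'_{\vs_h,\vx_1}(1)\,\big|\,\vs_1,\ldots,\vs_\ell\bigr]$, and conclude via $X^2+Y^2\geq 2XY$ that the difference is the expectation of a square.
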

\begin{proof}
Substituting $\psi_\beta$ into {\bf POS} and noticing that $1-\eul^{-\beta}>0$, we see that it suffices to verify the inequality
	\begin{align}\label{eqLemma_POShc1}
	\Erw\brk{\bc{\int_0^1\mu_{s,\vx_1}(1)\mu_{s,\vx_2}(1)\dd s}^\ell
		+\bc{\int_0^1\mu'_{s,\vx_1}(1)\mu'_{s,\vx_2}(1)\dd s}^\ell-2\bc{\int_0^1\mu_{s,\vx_1}(1)\mu'_{s,\vx_2}(1)\dd s}^\ell}&\geq0
			&(\mu,\mu'\in\meas).
	\end{align}
By Fubini's theorem,
	\begin{align}\nonumber
	\Erw\brk{\bc{\int_0^1\mu_{s,\vx_1}(1)\mu_{s,\vx_2}(1)}^\ell}&=
		\Erw\brk{\prod_{h=1}^\ell\mu_{\vs_h,\vx_1}(1)\mu_{\vs_h,\vx_2}(1)}
		=\Erw\brk{\Erw\brk{\prod_{h=1}^\ell\mu_{\vs_h,\vx_1}(1)\bigg|\vs_1,\ldots,\vs_\ell}
			\Erw\brk{\prod_{h=1}^\ell\mu_{\vs_h,\vx_2}(1)\bigg|\vs_1,\ldots,\vs_\ell}}\\
		&=\Erw\brk{\Erw\brk{\prod_{h=1}^\ell\mu_{\vs_h,\vx_1}(1)\bigg|\vs_1,\ldots,\vs_\ell}^2},
			\label{eqLemma_POShc2}
	\end{align}
and analogously
	\begin{align}			\label{eqLemma_POShc3}
	\Erw\brk{\bc{\int_0^1\mu'_{s,\vx_1}(1)\mu'_{s,\vx_2}(1)}^\ell}
		&=\Erw\brk{\Erw\brk{\prod_{h=1}^\ell\mu'_{\vs_h,\vx_1}(1)\bigg|\vs_1,\ldots,\vs_\ell}^2},\\
	\Erw\brk{\bc{\int_0^1\mu_{s,\vx_1}(1)\mu'_{s,\vx_2}(1)}^\ell}
		&=\Erw\brk{
			\Erw\brk{\prod_{h=1}^\ell\mu_{\vs_h,\vx_1}(1)\bigg|\vs_1,\ldots,\vs_\ell}
			\Erw\brk{\prod_{h=1}^\ell\mu'_{\vs_h,\vx_1}(1)\bigg|\vs_1,\ldots,\vs_\ell}}.
				\label{eqLemma_POShc4}
	\end{align}
Combining (\ref{eqLemma_POShc2})--(\ref{eqLemma_POShc4}), we conclude that the l.h.s.\ of (\ref{eqLemma_POShc1}) is the expectation of a square.
\end{proof}

We proceed to prove \Thm~\ref{Thm_hc}.
In light of \Lem~\ref{Lemma_POShc}, \Thm~\ref{Thm_freeEng} readily yields a variational formula for $\G_{\lambda,\beta}$.
The main issue that we have to confront is that the resulting variational problem for given $\lambda,\beta$ ranges over a spaces that depends on these parameters.
In effect, it is not a priori clear that these variational problems bear any relationship to the one stated in \Thm~\ref{Thm_freeEng}.
To deal with this issue, let  $\Meas_{\lambda}$ be the set of all $\pi\in\Meas$ that are supported
on $\mu\in\meas$ such that $\mu_{s,x}(1)\leq\lambda/(1+\lambda)$ for all $s,x\in[0,1]$.
Further, for $\pi\in\Meas_{\lambda}$ we let $\pi^{\Join_\beta(N,M)}$ be the distribution obtained by the adjoining operation with respect to the weight function $\psi_\beta$.
Finally, let 
	$$\MEAS_{\lambda,\beta}=\cbc{\pi\in\Meas_{\lambda}:\mbox{for all $N,M\geq0$ we have $\pi^{\Join_\beta(N,M)}=\pi$}}.$$

\begin{lemma}\label{Lemma_inftyCont}
For any $N,M\geq0$ the map $\pi\in\Meas_\lambda\mapsto\pi^{\Join_\infty(N,M)}$ is continuous.
\end{lemma}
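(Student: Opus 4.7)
The plan is to realize the operation $\pi\mapsto\pi^{\Join_\infty(N,M)}$ as a uniform-on-$\Meas_\lambda$ limit of the operations $\pi\mapsto\pi^{\Join_\beta(N,M)}$ attached to the strictly positive soft-core weights $\psi_\beta$, and then to leverage the continuity of each $\pi\mapsto\pi^{\Join_\beta(N,M)}$ supplied by \Cor~\ref{Cor_JoinCont}.

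First I would establish uniform two-sided bounds on the weights driving these operations. For any $\pi\in\Meas_\lambda$ and any $\mu$ in its support, the constraint $\mu_{s,x}(1)\leq\lambda/(1+\lambda)$ forces $\mu_{s,x}(0)\geq 1/(1+\lambda)$. Unpacking the weight $\vz_s^\infty$ that governs the $\Join_\infty(N,M)$ operation, each of the $N$ variable-node factors lies in $[1,\,1+\lambda]$ and each of the $M$ edge factors lies in $[1-(\lambda/(1+\lambda))^2,\,1]$; in particular $\vz_s^\infty$ is pinched between a positive constant $c=c(\lambda,d,N,M)$ and $(1+\lambda)^N$, uniformly in $s$ and in admissible $\mu$. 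The same two-sided bounds hold verbatim for the soft-core weights $\vz_s^\beta$ built from $\psi_\beta$, say for $\beta\geq 1$.

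Second I would establish uniform convergence. Since $\psi_\beta(\sigma_1,\sigma_2)-\psi_\infty(\sigma_1,\sigma_2)=\eul^{-\beta}\sigma_1\sigma_2$, expanding the products defining $\vz_s^\bullet$ and using $\mu_{s,x}(\sigma)\leq 1$ gives $|\vz_s^\beta-\vz_s^\infty|\leq C(\lambda,d,N,M)\,\eul^{-\beta}$ uniformly in $s$ and in admissible $\mu$. Combined with the positivity $\vz_s^\infty\geq c$ from the previous step, this forces the probability densities on $[0,1]$ proportional to $\vz^\beta$ and to $\vz^\infty$ respectively to be close in total variation at rate $O(\eul^{-\beta})$. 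Because the $\Join_\bullet$ operation merely reshuffles the $s$-coordinate according to these densities, a straightforward coupling of the auxiliary $\hat\vx_i$'s yields $\Cutm(\mu^{\Join_\beta(N,M)},\mu^{\Join_\infty(N,M)})=O(\eul^{-\beta})$ uniformly over admissible $\mu$, and taking expectations in $\pi$ gives the key uniform bound $\lim_{\beta\to\infty}\sup_{\pi\in\Meas_\lambda}\Cutm(\pi^{\Join_\beta(N,M)},\pi^{\Join_\infty(N,M)})=0$.

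Third I would close by a three-term triangle inequality. Given $\pi_n\to\pi$ in $\Meas_\lambda$ and $\eps>0$, fix $\beta$ large enough that both $\Cutm(\pi_n^{\Join_\beta(N,M)},\pi_n^{\Join_\infty(N,M)})<\eps$ for every $n$ and $\Cutm(\pi^{\Join_\beta(N,M)},\pi^{\Join_\infty(N,M)})<\eps$, using the bound from the previous step. \Cor~\ref{Cor_JoinCont} applied to the strictly positive $\psi_\beta$ delivers $\Cutm(\pi_n^{\Join_\beta(N,M)},\pi^{\Join_\beta(N,M)})\to 0$, whence $\limsup_n\Cutm(\pi_n^{\Join_\infty(N,M)},\pi^{\Join_\infty(N,M)})\leq 2\eps$. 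The main obstacle is executing the cut-metric stability estimate in the second step cleanly; the essential input is the uniform positivity of $\vz^\infty$ on $\Meas_\lambda$, which fails on $\Meas$ in general and is precisely the reason one must restrict to $\Meas_\lambda$ in the hard-core setting.
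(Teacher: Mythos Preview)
Your argument is correct, but it takes a genuinely different route from the paper's proof. The paper proves \Lem~\ref{Lemma_inftyCont} by re-running the proof of \Lem~\ref{Lemma_JoinCont} directly: that proof, via the regularity lemma (\Thm~\ref{Thm_reg}), only invokes strict positivity of $f$ at one point, namely to bound the normalizing constant $z$ away from zero, and on $\Meas_\lambda$ the constraint $\mu_{s,x}(1)\leq\lambda/(1+\lambda)$ supplies exactly that bound. So the paper's argument is a one-line observation once \Lem~\ref{Lemma_JoinCont}'s proof is in hand.

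Your approach instead treats \Cor~\ref{Cor_JoinCont} as a black box and reaches the hard-core case by uniform approximation through the soft-core operations $\Join_\beta$, closing with a three-term triangle inequality. This is more modular---you never reopen the regularity-lemma machinery---and your second step is essentially a quantitative version of \Lem~\ref{Lemma_betaCont}, which the paper proves separately and needs anyway; so your route effectively merges the two lemmas. The trade-off is that you rely on the cut-metric stability claim (``reshuffling the $s$-coordinate according to TV-close densities gives cut-close kernels''), which is correct but deserves a sentence of justification: coupling the auxiliary $\hat\vx_i$'s identically and then exhibiting a measure-preserving $\xi$ on the $s$-axis that matches $\vt_\beta$ with $\vt_\infty$ off a set of small measure. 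The paper makes the same move in its proof of \Lem~\ref{Lemma_betaCont}.
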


Like in the case of \Lem~\ref{Lemma_JoinNM}, the proof is based on fairly arguments revolving around the cut metric. The details can be found in Appendix~\ref{Sec_Lemma_JoinCont}.

\begin{lemma}\label{Lemma_betaCont}
Let $N,M\geq0$ be integers.
Uniformly for all $\pi\in\Meas_\lambda$ we have $\pi^{\Join_\beta(N,M)}\to\pi^{\Join_\infty(N,M)}$ as $\beta\to\infty$.
\end{lemma}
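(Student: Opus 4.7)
The plan is to reduce via Lemma~\ref{Lemma_JoinNM} to a pointwise statement about the $\Join$ operation, exploit the exponentially fast convergence $\psi_\beta\to\psi_\infty$, derive uniform control on the weights $\vz$, and finally overcome the fact that $\mu$ is only measurable by a step-function approximation combined with compactness of $\meas_\lambda$.

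First, by Lemma~\ref{Lemma_JoinNM}, $\mu^{\Join_\beta(N,M)}=f_\beta\Join\mu$ with $f_\beta=\bigoplus_{i=1}^N\hat\VARPHI_i^\beta\oplus\bigoplus_{j=1}^M\hat\PSI_j^\beta$, where $\hat\PSI_j^\beta=\psi_\beta$ and the $\hat\VARPHI_i^\beta$ are built from $\psi_\beta$ and $p$ using the random data $\hat\vx_{i,j},\hat\vh_{i,j}$. Since $|\psi_\beta(\sigma)-\psi_\infty(\sigma)|\leq\eul^{-\beta}$ pointwise in $\sigma\in\Omega^k$, coupling the randomness identically for $\beta$ and $\infty$ yields $\|f_\beta-f_\infty\|_\infty=O(\eul^{-\beta})$ deterministically. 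The Wasserstein bound
\[
\Cutm\bc{\pi^{\Join_\beta(N,M)},\pi^{\Join_\infty(N,M)}}\leq\Erw\brk{\Cutm\bc{f_\beta\Join\MU^\pi,\,f_\infty\Join\MU^\pi}}
\]
then reduces the problem to showing $\Cutm(f_\beta\Join\mu,f_\infty\Join\mu)\to0$ uniformly in $\mu$ lying in the support of some $\pi\in\Meas_\lambda$.

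Next I would establish uniform control on the weights $\vz^{f_\beta,\mu}(s)$. For $\mu\in\mathrm{supp}(\pi)$ with $\pi\in\Meas_\lambda$ we have $\mu_{s,x}(1)\leq\lambda/(1+\lambda)$. Unfolding the product, each $M$-factor equals $1-(1-\eul^{-\beta})\mu_{s,\hat\vx_1}(1)\mu_{s,\hat\vx_2}(1)$, which is bounded below by $1-(\lambda/(1+\lambda))^2>0$; each of the $d$ sub-products inside the $N$-factors contains the summand $\sigma=0$ with value $1$, contributing at least $p(0)=1/(1+\lambda)>0$. Combined with the obvious upper bound, this yields $c_1\leq\vz^{f_\beta,\mu}(s)\leq c_2$ with constants depending only on $d,k,N,M,\lambda$ and $|\vz^{f_\beta,\mu}(s)-\vz^{f_\infty,\mu}(s)|\leq C\eul^{-\beta}$, uniformly in $s,\mu,\beta\in(0,\infty]$. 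Consequently the CDFs $F_\beta(u)=\int_0^u\vz^{f_\beta,\mu}/\bar\vz^{f_\beta,\mu}$ are bi-Lipschitz with uniform constants and satisfy $\|F_\beta-F_\infty\|_\infty,\|F_\beta^{-1}-F_\infty^{-1}\|_\infty=O(\eul^{-\beta})$, uniformly in $\mu$.

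The remaining task is to convert this $L^\infty$-closeness of the rearrangement maps $F_\beta^{-1},F_\infty^{-1}$ into cut-metric closeness of $(f_\beta\Join\mu)_{s,x}=\mu_{F_\beta^{-1}(s),x}$ and $(f_\infty\Join\mu)_{s,x}=\mu_{F_\infty^{-1}(s),x}$. For step-function kernels $\tilde\mu$ in the $s$-coordinate (say $\tilde\mu_{s,x}=\tilde\mu^{(k)}_x$ on $[(k-1)/L,k/L]$), the two rearrangements differ only on the preimages of the $L-1$ interval boundaries under $F_\beta^{-1},F_\infty^{-1}$, which have total Lebesgue measure $O(L\eul^{-\beta})$; hence $\Cutm(f_\beta\Join\tilde\mu,f_\infty\Join\tilde\mu)=O(L\eul^{-\beta})$. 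Since $\meas_\lambda$ is a compact Polish space under the cut metric, for any $\eta>0$ there is a finite $\eta$-net of step-function kernels with $L=L(\eta)$ pieces. Using continuity of $f\Join(\cdot)$ from Lemma~\ref{Lemma_JoinCont} applied to $f_\beta$ and $f_\infty$ (with continuity moduli uniform in $\beta$ via the above weight bounds), the triangle inequality yields
\[
\Cutm(f_\beta\Join\mu,f_\infty\Join\mu)\leq2\eta+O(L(\eta)\eul^{-\beta}),
\]
uniformly in $\mu$; taking $\beta\to\infty$ and then $\eta\to0$ completes the proof.

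The main obstacle is the last step: translating $L^\infty$-closeness of the reparametrizations into $\Cutm$-closeness of the rearranged kernels, a step that is genuinely nontrivial because $\mu$ is only measurable and one cannot hope to match $\mu_{F_\beta^{-1}(s),x}$ with $\mu_{F_\infty^{-1}(s),x}$ pointwise. The step-function approximation—valid uniformly in $\mu$ thanks to compactness of $\meas_\lambda$—is the key device that converts the easy pointwise estimate on the reparametrization maps into a genuine cut-metric bound.
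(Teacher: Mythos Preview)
Your preliminary bounds are correct and match the paper: the uniform convergence $\vz^{f_\beta,\mu}(s)\to\vz^{f_\infty,\mu}(s)$ and the uniform positive lower bound on $\vz^{f_\infty,\mu}(s)$ coming from $\mu_{s,x}(1)\leq\lambda/(1+\lambda)$ are exactly the two ingredients the paper isolates.

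However, you misidentify the ``main obstacle'' and then take a long detour around a non-obstacle. The paper's proof is direct and much shorter: once the densities $\rho_\beta(s)=\vz^{f_\beta,\mu}(s)/\bar\vz^{f_\beta,\mu}$ and $\rho_\infty(s)$ are uniformly close and uniformly bounded below, observe that $\mu^{\Join_\beta(N,M)}$ and $\mu^{\Join_\infty(N,M)}$ are, as elements of $\meas$, literally the \emph{same} strong kernel $(u,x)\mapsto\mu_{u,x}$ equipped with the $s$-measures $\rho_\beta\,du$ and $\rho_\infty\,du$ respectively. Since the cut metric is defined modulo measure-preserving rearrangements of the $s$-axis, one may use the total-variation-optimal coupling of $\rho_\beta\,du$ and $\rho_\infty\,du$; its off-diagonal mass is $\frac12\int|\rho_\beta-\rho_\infty|\,du=O(\eul^{-\beta})$, and on the diagonal the kernels coincide. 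This immediately gives $\Cutm(\mu^{\Join_\beta(N,M)},\mu^{\Join_\infty(N,M)})=O(\eul^{-\beta})$ uniformly in $\mu$ and pointwise in the auxiliary randomness---no step functions, no compactness, no net.

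Your route through step-function approximation, compactness of $\meas_\lambda$, and uniform continuity of $f\Join(\cdot)$ is salvageable at the level of $\Meas$ (Wasserstein) but carries a subtle gap as written: Lemma~\ref{Lemma_JoinCont} gives continuity $\meas\to\Meas$, not a pointwise bound on $\Cutm(f_\beta\Join\mu,f_\beta\Join\tilde\mu)$ under a fixed coupling of the randomness, so your triangle inequality at the kernel level does not literally hold. You would also need to verify that the continuity modulus in Lemma~\ref{Lemma_JoinCont} is uniform in $\beta\in(0,\infty]$, which is plausible from the weight bounds but not stated. All of this machinery is avoided by the paper's one-line coupling argument.
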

\begin{proof}
Let $\eps>0$
For any $\mu\in\states$ let $\vZ^{N,M}_{\mu,\beta}(s)$ be the weight from \eqref{eqZNMmus} with respect to the weight function $\psi_\beta$.
Then we see that, uniformly for all $\mu$ and $s$,
	\begin{align}\label{eqLemma_betaCont1}
	\vZ^{N,M}_{\mu,\beta}(s)&\to\vZ^{N,M}_{\mu,\infty}(s)\qquad\mbox{ as }\qquad\beta\to\infty.
	\end{align}
Furthermore, if $\mu_{s,x}\leq\lambda/(1+\lambda)$ for all $s,x$, then for all $\beta\in(0,\infty]$ we have
	\begin{align}\label{eqLemma_betaCont2}
	\vZ^{N,M}_{\mu,\beta}(s)&\geq\bcfr{1}{1+\lambda}^N\bc{1-\bcfr{\lambda}{1+\lambda}^2}^M>0.
	\end{align}
Combining (\ref{eqLemma_betaCont1}) and (\ref{eqLemma_betaCont2}) and recalling the construction of $\mu^{\Join_\beta(N,M)}$, we can construct a measurable map $\xi:[0,1]\to[0,1]$ that preserves the Lebesgue measure such that for large enough $\beta$ for all $S,X\subset[0,1]$,
	\begin{align*}
	\abs{\int_S\int_X\mu_{s,x}^{\Join_\beta(N,M)}-\mu_{\xi(s),x}^{\Join_\infty(N,M)}\,\dd x\,\dd s}&<\eps.
	\end{align*}
Thus, $\Cutm(\mu^{\Join_\beta(N,M)},\mu^{\Join_\infty(N,M)})<\eps$ for large $\beta$.
Since $\Meas_\lambda$ is endowed with the $W_1$-metric, the assertion follows.
\end{proof}

\begin{lemma}\label{Lemma_PlambdaClosed}
The set $\meas_\lambda$ is closed.
\end{lemma}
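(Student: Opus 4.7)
Plan: To show $\meas_\lambda$ is a closed subset of $\meas$, I will verify that the pointwise bound $\mu_{s,x}(1)\leq\lambda/(1+\lambda)$ persists under cut-distance limits. Specifically, I would take any sequence $(\mu^{(n)})_n$ in $\meas_\lambda$ converging in $\Cutm$ to some $\mu\in\meas$, and aim to produce a representative of $\mu$ whose values lie in $[0,\lambda/(1+\lambda)]$; once achieved, this shows $\mu\in\meas_\lambda$ and we are done.

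Next, using the definition of $\Cutm$ I would select for each $n$ measure-preserving maps $\varphi_n,\varphi'_n:[0,1]\to[0,1]$ and a sequence $\eps_n\to 0$ such that
\[\sup_{S,X\subset[0,1]}\abs{\int_S\int_X \mu^{(n)}_{\varphi_n(s),\varphi'_n(x)}(1)-\mu_{s,x}(1)\dd x\,\dd s}\leq\eps_n.\]
Since each $\mu^{(n)}$ admits a representative valued in $[0,\lambda/(1+\lambda)]$ and the measure-preserving substitution preserves the essential range, the function $(s,x)\mapsto\mu^{(n)}_{\varphi_n(s),\varphi'_n(x)}(1)$ is bounded by $\lambda/(1+\lambda)$ almost everywhere on $[0,1]^2$. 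Integrating this uniform upper bound over an arbitrary measurable rectangle $S\times X$ and then letting $n\to\infty$ would yield
\[\int_S\int_X \mu_{s,x}(1)\dd x\,\dd s\ \leq\ \frac{\lambda}{1+\lambda}\,|S|\,|X|\qquad\text{for all measurable }S,X\subset[0,1].\]

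Finally, to upgrade this integrated bound to a pointwise inequality, I would invoke the Lebesgue differentiation theorem on $[0,1]^2$: at almost every $(s_0,x_0)$, the averages of $\mu_{s,x}(1)$ over shrinking rectangles centered at $(s_0,x_0)$ converge to $\mu_{s_0,x_0}(1)$, and hence $\mu_{s_0,x_0}(1)\leq\lambda/(1+\lambda)$ for a.e.\ $(s_0,x_0)\in[0,1]^2$. Modifying $\mu$ on a Lebesgue-null set---which has no effect on the equivalence class in $\meas$---then yields a representative contained in $\meas_\lambda$, so that $\mu\in\meas_\lambda$ as required. The only mildly delicate step is this final passage from the integrated bound over arbitrary rectangles to the pointwise a.e.\ inequality, since the integrated condition is genuinely weaker, but the Lebesgue differentiation theorem is precisely the tool designed for this transition and no substantial obstacle arises.
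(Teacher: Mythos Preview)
Your argument is correct and takes a genuinely different route from the paper's. The paper observes that $\meas_\lambda$ is, up to an affine rescaling of the target interval, a copy of the weak kernel (graphon) space $\meas$ itself; since $\meas$ is compact and hence complete by \Prop~\ref{Prop_compact}, so is $\meas_\lambda$, and a complete subspace of a metric space is closed. Your approach instead works directly from the definition of $\Cutm$: you realign each term of the sequence via measure-preserving maps, pass the uniform pointwise bound through the integral inequality over products $S\times X$, and then recover the almost-everywhere pointwise bound on the limit via Lebesgue differentiation. The paper's argument is shorter because it leverages the already-established compactness of the graphon space, but it requires spotting the structural isomorphism between $\meas_\lambda$ and $\meas$; your argument is more self-contained and shows explicitly how the value constraint survives the cut-metric limit, at the cost of a few more lines of real analysis.
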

\begin{proof}
We can view $\meas_\lambda$ as a scaled version of the space of weak kernels.
Therefore, since $\meas$ is complete, so is $\meas_\lambda$ is complete.
Hence, any Cauchy sequence in $\meas_\lambda$ has a limit within this set, and thus
$\meas_\lambda$ is a closed subspace of $\meas$.
\end{proof}

\begin{corollary}\label{Cor_PlambdaClosed}
The set $\Meas_\lambda$ is closed.
\end{corollary}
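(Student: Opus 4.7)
The plan is to reduce this to a standard Portmanteau-type statement using the fact, established in Lemma \ref{Lemma_PlambdaClosed}, that $\meas_\lambda$ is a closed subset of the compact Polish space $\meas$. By definition, $\pi \in \Meas_\lambda$ iff $\pi$ is a Borel probability measure on $\meas$ with $\pi(\meas_\lambda)=1$, so the claim is precisely that the set of measures assigning full mass to the closed set $\meas_\lambda$ is closed in the weak topology on $\Meas$.

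First, I would recall that $\Meas$ carries the weak topology, which on the compact Polish space $\meas$ is induced by the Wasserstein metric $\Cutm(\cdot,\cdot)$; in particular, convergence $\pi_n \to \pi$ in $\Meas$ is convergence in distribution of the associated random kernels $\MU^{\pi_n}$. Next, I would take an arbitrary sequence $(\pi_n)_n$ in $\Meas_\lambda$ with $\pi_n \to \pi$ in $\Meas$, and apply the Portmanteau theorem: for any closed set $C \subset \meas$,
\[
\pi(C) \;\geq\; \limsup_{n\to\infty} \pi_n(C).
\]
Taking $C = \meas_\lambda$, which is closed by Lemma \ref{Lemma_PlambdaClosed}, and using $\pi_n(\meas_\lambda)=1$ for all $n$, yields $\pi(\meas_\lambda)=1$, hence $\pi \in \Meas_\lambda$.

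There is no real obstacle here beyond confirming the two ingredients already in hand: (i) $\meas_\lambda$ is closed in $\meas$ (Lemma \ref{Lemma_PlambdaClosed}) and (ii) the Portmanteau characterization of weak convergence applies, which is legitimate because $\meas$ is a compact Polish space by Proposition \ref{Prop_compact}. The only point worth double-checking is that the topology on $\Meas$ being used in the paper (the Wasserstein/weak topology) is the standard one for which Portmanteau holds; this was already noted in Section \ref{Sec_cutm}. With these pieces in place the proof is a two-line application of Portmanteau, so I would keep the write-up correspondingly short.
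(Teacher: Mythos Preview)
Your proof is correct. Both your argument and the paper's rest on the same key input, Lemma~\ref{Lemma_PlambdaClosed}, but the execution differs slightly: you invoke the Portmanteau theorem directly (closed sets have upper-semicontinuous mass under weak limits), whereas the paper carries out by hand what is essentially the proof of that implication---it chooses an increasing sequence of continuous functions $u_n:\meas\to[0,1]$ converging pointwise to $1-\vecone_{\meas_\lambda}$ and writes $\Meas_\lambda=\bigcap_n\{\pi:\int u_n\,\dd\pi=0\}$ as an intersection of closed sets. Your version is cleaner if one is willing to cite Portmanteau as a black box; the paper's version is more self-contained and avoids naming an external theorem. Either way the argument is two lines once $\meas_\lambda$ is known to be closed.
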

\begin{proof}
By \Lem~\ref{Lemma_PlambdaClosed} there exists an increasing sequence of continuous functions $u_n:\meas\to[0,1]$ that converges pointwise to $1-\vecone\meas_\lambda$.
Thus,  $\Meas_\lambda=\bigcap_{n\geq1}\cbc{\pi\in\Meas:\int u_n\dd \pi=0}$ is closed in the weak topology.
\end{proof}

\begin{corollary}\label{Cor_hcMargs}
We have $\liminf_{n \to \infty} \frac{1}{n} \Erw\brk{\log Z (\G_{\lambda,\beta})}\ge \inf_{\pi\in\MEAS_\lambda}\cB(\pi).$
\end{corollary}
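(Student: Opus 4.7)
The plan is to apply \Thm~\ref{thmBethePlus} to $\G_{\lambda,\beta}$---valid because \Lem~\ref{Lemma_POShc} verifies {\bf POS}---and then argue that subsequential limits of the distributions $\check\pi_{n,\omega}$ it produces are automatically concentrated on $\meas_\lambda$, and therefore live in $\MEAS_{\lambda,\beta}=\MEAS\cap\Meas_\lambda$. Concretely, \Thm~\ref{thmBethePlus} gives
$$\lim_{n\to\infty}\tfrac1n\Erw[\log Z(\G_{\lambda,\beta})]=\liminf_{\omega\to\infty}\liminf_{n\to\infty}\cB(\check\pi_{n,\omega}),$$
and the kernel $\check\mu_{\G,\vX,\vY}$ of \eqref{eqmuGXY} encodes the variable-to-factor Bethe messages $\mu_{\G,v_{h_i}\to b_i}(\nix|S_j)$. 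So the key step is to bound these messages by $\lambda/(1+\lambda)$.

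The bound is a direct BP computation for $\psi_\beta$. A factor-to-variable BP update satisfies $\hat\nu_{a\to v}(1)=A/(1+A)\le\tfrac12$ with $A=\nu_{w\to a}(0)+\eul^{-\beta}\nu_{w\to a}(1)\le 1$, so every BP-updated incoming message obeys $\nu_{b\to v}(1)\le\nu_{b\to v}(0)$. Plugged into the variable-to-factor update this yields
$$\hat\nu_{v\to a}(1)=\frac{\lambda\prod_b\nu_{b\to v}(1)}{\prod_b\nu_{b\to v}(0)+\lambda\prod_b\nu_{b\to v}(1)}\le\frac{\lambda}{1+\lambda}.$$
By {\bf BS1} the standard messages inside an $\eps$-Bethe state $S_j$ are an $\eps$-BP fixed point, so Markov's inequality converts this into $\mu_{\G,v\to a}(1|S_j)\le\lambda/(1+\lambda)+o(1)$ for all but an $o(1)$ fraction of edges $(v,a)$; the bad state $S_0$ carries mass $o(1)$ by \Thm~\ref{Thm_BP}. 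Averaging over $(v,a)$ and $S_j$, the pointwise truncation $\tilde\mu_{s,x}=\check\mu_{s,x}\wedge\lambda/(1+\lambda)$ satisfies $\Cutm(\check\mu_{\G,\vX,\vY},\tilde\mu)=o(1)$ in probability, with $\tilde\mu\in\meas_\lambda$.

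To finish, I extract subsequential limits: by compactness of $\Meas$ (\Prop~\ref{Prop_compact}), $\check\pi_{n,\omega}\to\check\pi^{(\omega)}$ along a subsequence of $n$, and $\check\pi^{(\omega)}\to\check\pi^*$ along a subsequence of $\omega$. Since $\Meas_\lambda$ is closed (\Cor~\ref{Cor_PlambdaClosed}) and $\check\pi_{n,\omega}$ puts mass $1-o(1)$ on every open cut-neighborhood of $\meas_\lambda$ by the previous paragraph, the Portmanteau theorem gives $\check\pi^*\in\Meas_\lambda$. Approximate $\Join_\beta$-invariance of $\check\pi_{n,\omega}$ follows by combining \Prop~\ref{Prop_apxInv} (stated for $\hat\pi_{n,\omega}$) with the small-shift coupling of \Lem~\ref{Lemma_smallShift} and the identification $\Cutm(\check\pi_{n,\omega},\pi_{n,\omega})=o(1)$ from \Prop~\ref{Prop_thmBethePlus}, so $\check\pi^*\in\MEAS$ and hence $\check\pi^*\in\MEAS_{\lambda,\beta}$. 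Continuity of $\cB$ (\Cor~\ref{Cor_Bcont}) then yields
$$\liminf_n\tfrac1n\Erw[\log Z(\G_{\lambda,\beta})]=\cB(\check\pi^*)\ge\inf_{\pi\in\MEAS_{\lambda,\beta}}\cB(\pi).$$

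The main technical obstacle will be converting the per-edge $L^1$-slack guaranteed by {\bf BS1} together with $\mu_{\G}(S_0)=o(1)$ into a genuine cut-distance bound between $\check\mu_{\G,\vX,\vY}$ and $\meas_\lambda$ that is uniform enough in $\omega$ to survive the double limit; this is where the structure of $\check\mu$ as an average of BP messages pays off, since $\Cutm$ is dominated by the $L^1$-mean, so per-edge slack aggregates benignly. A secondary subtlety is bridging between $\hat\pi_{n,\omega}$ (for which \Prop~\ref{Prop_apxInv} is stated) and $\check\pi_{n,\omega}$ (the object furnished by \Thm~\ref{thmBethePlus}), but the needed couplings are already essentially in place in \Sec~\ref{Sec_thmBethePlus}.
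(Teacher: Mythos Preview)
Your proposal is correct, but it takes a noticeably heavier route than the paper's own argument.

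The paper does not go through \Thm~\ref{thmBethePlus} or the Bethe state decomposition at all. Instead it invokes \Prop~\ref{MainAizLemReg} directly, so the relevant kernel is $\hat\rho_{n,\omega}$, the representation of the \emph{actual Boltzmann distribution} $\mu_n=\mu_{\hat\G_{n,\omega},\hat\cC}$ of the cavities, not the BP-message kernel $\check\mu_{\G,\vX,\vY}$. The $\lambda/(1+\lambda)$ bound is then obtained by a one-line monotonicity (``flip'') argument: apply the regularity decomposition of \Cor~\ref{Lemma_rl} to $\mu_n$, pinning a bounded set $\Theta_\eps\subset\hat\cC$; for any cavity $v\notin\Theta_\eps$ and any $\sigma\in S_i$ with $\sigma_v=1$, replacing $\sigma_v$ by $0$ keeps the configuration in $S_i$ and can only increase each factor $\psi_\beta$, while the prior ratio is $\lambda$, so $\mu_n(\sigma\mid S_i)\le\lambda\,\mu_n(\sigma'\mid S_i)$ and hence $\mu_{n,v}(1\mid S_i)\le\lambda/(1+\lambda)$ exactly. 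This is sharper and more elementary than your BP-iteration bound (which needs two rounds of {\bf BS1} and yields only $\lambda/(1+\lambda)+o(1)$ on all but an $o(1)$-fraction of edges).

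What you gain by your route is that the argument stays entirely inside the BP/Bethe-state formalism the paper develops, and you reuse \Prop~\ref{Prop_thmBethePlus} and \Prop~\ref{Prop_apxInv} rather than reopening the Aizenman--Sims--Starr construction. What the paper gains is simplicity: no Bethe states, no {\bf BS1}, no bridging between $\hat\pi_{n,\omega}$, $\pi_{n,\omega}$ and $\check\pi_{n,\omega}$ is needed, and the marginal bound is exact rather than approximate. Your two ``technical obstacles'' (aggregating per-edge $L^1$ slack, and the $\hat\pi$/$\check\pi$ bridge) simply do not arise in the paper's proof.
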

\begin{proof}
Since $\MEAS$ is compact,  \Prop~\ref{MainAizLemReg} shows that there exists $\pi\in\MEAS$ such that
	\begin{align}\label{eqCor_hcMargs2}	
	\liminf_{n \to \infty} \frac{1}{n} \Erw\brk{\log Z (\G_{\lambda,\beta})}\ge \cB(\pi).
	\end{align}
The construction of the $\pi$ for which the lower bound is attained is based on \Prop~\ref{Prop_Aizenman}, whose proof shows that the measure $\pi_{\lambda,\beta}$
for which the lower bound is attained in the limit of a sequence of distributions $(\pi_{\lambda,\beta,n})_{n\geq1}$ that come from random factor graphs with the weight function $\psi_\beta$.
Specifically, we considered a random factor graph $\G_{\lambda,\beta,n,\omega}$ with a random number of `cavities' for a slowly growing $\omega=\omega_n\to\infty$.
With $\mu_{n}\in\cP(\Omega^{\cC})$ the joint Boltzmann distribution of the spins of the cavities $\cC$, the measure $\pi_{\lambda,\beta,n}$ is defined as the distribution of the representation of $\mu_{n}$ as an element of $\cM$.
Thus, we just need to show that these representations converge to points in $\meas_\lambda$.

The proof of this fact is based on \Cor~\ref{Lemma_rl}.
Specifically, let $\eps>0$.
We obtain a decomposition $S_1,\ldots,S_\ell$ of $\Omega^{\cC}$ into classes by pinning a random set $\Theta_\eps$ of cavities.
The size $|\Theta_\eps|$ of this set depends on $\eps$ only and
	\begin{align}\label{eqCor_hcMargs1}
	\cutm(\mu_n,\bar\mu_n)&<\eps,\quad\mbox{where}\qquad
		\bar\mu_n=\sum_{i=1}^\ell\mu(S_i)\bigotimes_{v\in\cC}\mu_v(\nix|S_i).
	\end{align}
Now, consider a cavity $v\in\cC\setminus\Theta_\eps$, let $1\leq i\leq\ell$ and consider a configuration $\sigma\in S_i$ with $\sigma_v=1$.
Obtain $\sigma'$ by setting $\sigma'_v=0$ and $\sigma'_w=\sigma_w$ for all $w\neq v$.
Then $\sigma'\in S_i$ and the construction of the Boltzmann distribution ensures that $\mu_n(\sigma|S_i)\leq \lambda\mu_n(\sigma'|S_i)$.
Hence, $\mu_v(1|S_i)\leq\lambda/(1+\lambda)$.
Since $|\Theta_\eps|$ is bounded in terms of $\eps$ only, whereas $|\cC|\geq\omega_n/2\to\infty$ with high probability, 
we deduce from (\ref{eqCor_hcMargs1}) that the representation $\check\mu_n\in\meas$ satisfies $\Cutm(\check\mu_n,\meas_\lambda)<\eps$ with high probability.
Since, furthermore, the Wasserstein metric induces the weak topology on $\Meas$, we conclude that $\pi_{\lambda,\beta,n}$ converges to a point $\pi$ on in the closure of $\Meas_\lambda$; but since $\Meas_\lambda$ is closed, we conclude that $\pi\in\Meas_\lambda$.
Finally, \Cor~\ref{Cor_PlambdaClosed} implies that $\pi\in\Meas_\lambda\cap\MEAS=\MEAS_\lambda$.
Thus, the assertion follows from (\ref{eqCor_hcMargs2}).
\end{proof}

We are ready to establish the lower bound on the free energy.

\begin{proposition}\label{Prop_hcLower}
For all $d\geq3,\lambda>0$ we have $\liminf_{n\to\infty}\frac1n\Erw[\ln Z(\GG_{\lambda,\infty})]\geq\Phi_{d,\lambda}.$
\end{proposition}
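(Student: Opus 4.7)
The plan is to execute the Aizenman--Sims--Starr scheme of Section~\ref{secAz} directly for the hard-core model $\GG_{\lambda,\infty}$, adapting every appeal to~\eqref{eqP} to use instead the uniform two-sided boundedness of the relevant Bethe integrals on $\meas_\lambda$. It is tempting to try to deduce the result from Corollary~\ref{Cor_hcMargs} by sending $\beta\to\infty$, but this is blocked: the pointwise monotonicity $Z(\G_{\lambda,\beta})\ge Z(\G_{\lambda,\infty})$, together with the compactness--continuity argument (via Corollary~\ref{Cor_PlambdaClosed} and Lemmas~\ref{Lemma_inftyCont}--\ref{Lemma_betaCont}) that yields $F(\beta):=\lim_n\tfrac1n\Erw\ln Z(\G_{\lambda,\beta})\ge\Phi_{d,\lambda}$, only delivers $\liminf_n\tfrac1n\Erw\ln Z(\G_{\lambda,\infty})\le F(\beta)$, which is the opposite of the desired direction.

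First I would copy the construction of Section~\ref{secAz} verbatim with the hard-core weight $\psi_\infty(\sigma_1,\sigma_2)=1-\sigma_1\sigma_2$: the perturbed random graphs $\G_{n,\omega}$, $\hat\G_{n,\omega}$, $\G'_{n,\omega}$, $\G''_{n,\omega}$; the cavity-marginal kernel $\hat\rho_{n,\omega}\in\meas$; and its law $\hat\pi_{n,\omega}\in\Meas$. By the pinning argument that drives the proof of Corollary~\ref{Cor_hcMargs}, every Boltzmann marginal on a cavity is at most $\lambda/(1+\lambda)$, so $\hat\pi_{n,\omega}\in\Meas_\lambda$. The structural observation that allows the ASS scheme to be rerun is that, even though $\min\psi_\infty=0$, both Bethe weights
$$\sum_{\sigma\in\Omega}p(\sigma)\prod_{i=1}^{d}\sum_{\tau:\,\tau_{\vh_i}=\sigma}\psi_\infty(\tau)\prod_{j\neq\vh_i}\mu_{s,\vx_{i,j}}(\tau_j)\qquad\text{and}\qquad 1-\mu_{s,\vx_{1,1}}(1)\,\mu_{s,\vx_{1,2}}(1)$$
lie in $[c_\lambda,1]$ for every $\mu\in\meas_\lambda$, with $c_\lambda=\min\{(1+\lambda)^{-1},\,1-(\lambda/(1+\lambda))^2\}>0$. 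This uniform two-sided bound on $\meas_\lambda$ assumes the role~\eqref{eqP} plays throughout the proofs of Propositions~\ref{Prop_Aizenman} and~\ref{Prop_apxInv} and Lemmas~\ref{Lemma_G'G''cpl}--\ref{Lemma_G'}: it bounds the total-variation distance between random-graph models that differ in $O(1)$ nodes, it legitimises the Taylor--Fubini exchanges in the $\ln$-expansions, and it gives continuity of $\cB_\infty$ on $\Meas_\lambda$ (the analogue of Corollary~\ref{Cor_Bcont}).

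With the adapted estimates in hand the proof is closed by the compactness argument of Section~\ref{Sec_MainAizLemReg}. Since $\Meas_\lambda$ is a closed, hence compact, subset of $\Meas$ by Corollary~\ref{Cor_PlambdaClosed}, some subsequence of $\hat\pi_{n,\omega}$ converges weakly to a limit $\pi^*\in\Meas_\lambda$. The continuity of $\pi\mapsto\pi^{\Join_\infty(N,M)}$ supplied by Lemma~\ref{Lemma_inftyCont} transfers the approximate $\Join_\infty$-invariance of $\hat\pi_{n,\omega}$ (the hard-core analogue of Proposition~\ref{Prop_apxInv}) to the limit, so $\pi^*\in\MEAS_{\lambda,\infty}=\MEAS_\lambda$ in the notation of Theorem~\ref{Thm_hc}. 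The hard-core analogue of Lemma~\ref{Lemma_G'G''} identifies the limiting Aizenman--Sims--Starr increment with $\cB_\infty(\pi^*)$, and since $\pi^*\in\MEAS_\lambda$ we obtain $\cB_\infty(\pi^*)\ge\Phi_{d,\lambda}$. Telescoping over $n$ and using Fact~\ref{Fact_simple} to pass from $\G$ to $\GG$ concludes the proof.

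The main obstacle is the pedantic but routine bookkeeping: each of the many invocations of~\eqref{eqP} in Section~\ref{secAz} must be reinspected and replaced by the corresponding estimate valid on $\meas_\lambda$. No individual step is difficult, because every integrand that appears is a polynomial of degree at most $d$ in cavity marginals that lie in the compact interval $[0,\lambda/(1+\lambda)]$, but the total volume of rewriting is substantial; in particular, the steps in the proofs of Lemmas~\ref{Lemma_G'G''cpl}--\ref{Lemma_G'} that discard constraints with atypically small weight function simply disappear, while the exchanges of sum and expectation in logarithmic expansions now rest on the bound $c_\lambda\le\psi_\infty$-average$\le 1$ rather than on the exponential tail bound in~\eqref{eqP}.
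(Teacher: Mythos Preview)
Your alternative route---rerunning the entire Aizenman--Sims--Starr scheme of Section~\ref{secAz} directly at $\beta=\infty$, using the uniform lower bound $c_\lambda>0$ on $\meas_\lambda$ in place of every appeal to~\eqref{eqP}---is valid and would succeed. But your dismissal of the $\beta\to\infty$ limiting approach is mistaken, and that limiting approach is exactly what the paper uses, far more economically.

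The point you missed is that the one-sided monotonicity $Z_\beta\geq Z_\infty$ can be upgraded to a \emph{two-sided} estimate uniform in $n$. For any $d$-regular graph $G$ and any edge $uv$, conditioning on the remaining spins gives $\mu_{G,\beta}(\sigma_u{=}\sigma_v{=}1)\leq\lambda e^{-\beta}$; hence $-\partial_\beta\ln Z_\beta(G)=\scal{\cH_G}{\mu_{G,\beta}}\leq\tfrac{dn\lambda}{2}e^{-\beta}$, and integrating yields
\[
0\;\leq\;\tfrac1n\ln Z_\beta(G)-\tfrac1n\ln Z_\infty(G)\;\leq\;\tfrac{d\lambda}{2}\,e^{-\beta}.
\]
Similarly $|\cB_{d,\lambda,\beta}-\cB_{d,\lambda,\infty}|=O(e^{-\beta})$ uniformly on $\Meas_\lambda$, because every integrand differs by $O(e^{-\beta})$ and is bounded below by your $c_\lambda$. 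With these two uniform estimates in hand, Corollary~\ref{Cor_hcMargs} produces $\pi_{\lambda,\beta}\in\MEAS_{\lambda,\beta}$ with
\[
\liminf_{n\to\infty}\tfrac1n\Erw\ln Z(\G_{\lambda,\infty})\;\geq\;\cB_{d,\lambda,\infty}(\pi_{\lambda,\beta})-O(e^{-\beta}).
\]
One then extracts a convergent subsequence $\pi_{\lambda,\beta_j}\to\pi_\lambda$ in the compact space $\Meas_\lambda$ (Corollary~\ref{Cor_PlambdaClosed}), uses Lemmas~\ref{Lemma_inftyCont} and~\ref{Lemma_betaCont} to conclude $\pi_\lambda\in\MEAS_\lambda$, and finishes by continuity of $\cB_{d,\lambda,\infty}$ on $\Meas_\lambda$. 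This is half a page that recycles the soft-core lower bound as a black box; your plan reproves all of Section~\ref{secAz}.
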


\begin{proof}
For any $\beta,\lambda>0$ \Cor~\ref{Cor_hcMargs} supplies $\pi_{\lambda,\beta}\in\MEAS_\lambda$ such that
	\begin{equation}\label{eqProp_hcLower_1}
	\liminf_{n \to \infty} \frac{1}{n} \Erw\brk{\log Z (\G_{\lambda,\beta})}\geq\cB_{d,\lambda,\beta}(\pi_{\lambda,\beta}).
	\end{equation}
Now consider the sequence $(\pi_{\lambda,\beta})_{\beta=1,2,\ldots}$.
Since $\Meas_\lambda$ is compact, a subsequence $(\pi_{\lambda,\beta_j})_j$ converges to $\pi_{\lambda}\in\Meas_\lambda$, i.e.,
	\begin{equation}\label{eqProp_hcLower_2}
	\lim_{j\to\infty}\Cutm(\pi_{\lambda,\beta_j},\pi_{\lambda})=0.
	\end{equation}
Further, since $\pi_{\lambda,\beta_j}^{\Join_{\beta_j}(N,M)}=\pi_{\lambda,\beta_j}$ for all $j$ and $N,M\geq0$, \Lem~\ref{Lemma_betaCont} implies that for all pairs $N,M\geq0$,
	\begin{equation}\label{eqProp_hcLower_3}
	\lim_{j\to\infty}\Cutm(\pi_{\lambda,\beta_j},\pi_{\lambda,\beta_j}^{\Join_{\infty}(N,M)})=0.
	\end{equation}
Combining (\ref{eqProp_hcLower_2}) and (\ref{eqProp_hcLower_3}) with \Lem~\ref{Lemma_inftyCont}, we conclude that $\pi_\lambda\in\MEAS_\lambda$.
Finally, since for every $\beta>0$ we have $\cB_{d,\lambda,\beta}(\nix)\geq\cB_{d,\lambda,\infty}(\nix)$ on $\Meas_\lambda$, the assertion follows from (\ref{eqProp_hcLower_1}) and the continuity of the functional $\cB_{d,\lambda,\infty}(\nix)$.
\end{proof}

A separate argument is needed to derive the upper bound on the free energy.
Basically, we will prove the following proposition by checking that the interpolation argument from \Sec~\ref{secInterpolate} goes through for the hard-core model.

\begin{proposition}\label{Prop_hcUpper}
For all $d\geq3,\lambda>0$ we have $\limsup_{n\to\infty}\frac1n\Erw[\ln Z(\GG_{\lambda,\infty})]\leq\Phi_{d,\lambda}.$
\end{proposition}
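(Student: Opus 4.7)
The plan is to run the interpolation scheme of \Sec~\ref{secInterpolate} directly at $\beta=\infty$, bypassing any passage through the soft-core limit.  Although the tail condition \eqref{eqP} fails literally for $\psi_\infty$ since $\min_\sigma\psi_\infty(\sigma)=0$, the only purpose of \eqref{eqP} in the proofs of \Prop s~\ref{Prop_regint1}--\ref{Prop_regint2} is to force certain Boltzmann averages to stay uniformly below~$1$.  For the hard-core model on any kernel supported in $\meas_\lambda$ the same uniform separation is provided by the classical flipping bound $\mu(\sigma_v=1\mid\sigma_{-v})\le\lambda/(1+\lambda)$, and the POS inequality needed for the derivative estimate is already supplied at $\beta=\infty$ by Lemma~\ref{Lemma_POShc}.

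The first step is a hard-core analogue of \Prop~\ref{Prop_regint1}: for every $\mu\in\meas_\lambda$,
$$\Erw\ln Z(\G_{\lambda,\infty})\ \le\ o(n)+\cB'_\infty(\mu)-\cB''_\infty(\mu).$$
I would reproduce the interpolating graph $\G_t$ of \Sec~\ref{Sec_regint1} verbatim, noting that for $\mu\in\meas_\lambda$ the $\mu$-derived weights $\psi_{a_i'}$ and $\psi_{a_i''}$ are strictly positive.  A conditional-flip argument against the prior ratio $p(1)/p(0)=\lambda$ then yields $\mu_{\G_t}(\sigma_v=1\mid\sigma_{-v})\le\lambda/(1+\lambda)$ uniformly in $t\in[0,1]$, in the Poisson counts $\vm_t,\vm_t',\vm_t''$, and in the auxiliary variable $s\in[0,1]$; hence $\scal{1-\psi_a}{\mu_{\G_t}}\le\lambda/(1+\lambda)<1$ for every weight $\psi_a$ appearing in the interpolation.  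This replaces \eqref{eqP} in the Fubini exchange of Lemma~\ref{Lemma_intderiv} with a geometrically decaying series, and the same uniform bound gives a one-constraint Lipschitz constant of $\ln Z$ equal to $O(\log(1+\lambda))$, so the rounding constant $c(P)$ in \Prop~\ref{Prop_regint1} is replaced by a finite $c(\lambda)$.  The derivative computation of Lemma~\ref{Lemma_intderiv} then closes via Lemma~\ref{Lemma_POShc}, and the remaining steps in the proof of \Prop~\ref{Prop_regint1} transcribe without change.

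The second step is a hard-core analogue of \Prop~\ref{Prop_regint2}: for every $\pi\in\MEAS_\lambda$, $\Erw_\pi\brk{\cB'_\infty(\MU)-\cB''_\infty(\MU)}=n\,\cB_\infty(\pi)$.  The telescoping identities of \Lem s~\ref{Lemma_Prop_regint2_B''} and~\ref{Lemma_Prop_regint2_B'} carry over verbatim, using the defining $\Join_\infty$-invariance of $\MEAS_\lambda$ in place of the $\Join$-invariance used for $\MEAS$, and the sum-expectation swap that previously invoked \eqref{eqP} is now justified by the inequalities $|1-\scal{\psi_\infty}{\pi}|\le\lambda/(1+\lambda)$ and $|1-\scal{\VARPHI_{\infty,1}}{\pi}|\le\lambda/(1+\lambda)$, both immediate from $\pi\in\Meas_\lambda$.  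Combining Step~1 with Step~2 and taking the infimum over $\pi\in\MEAS_\lambda$ yields $\limsup_n n^{-1}\Erw\ln Z(\G_{\lambda,\infty})\le\Phi_{d,\lambda}$, and the one-edge Lipschitz bound above lets Azuma's inequality together with Fact~\ref{Fact_simple} transfer the bound from $\G_{\lambda,\infty}$ to $\GG_{\lambda,\infty}$, as at the end of \Prop~\ref{Prop_regint}.  I expect the main technical obstacle to be the uniform marginal bound of Step~1, which must be verified jointly in $t$, in the Poisson counts, in the continuous variable $s$, and in the kernel $\mu\in\meas_\lambda$; once that bound is in hand the rest is a line-by-line transcription from \Sec~\ref{secInterpolate} with the weight interval $(0,2)$ replaced by the $\lambda$-dependent interval $[1/(1+\lambda)^2,1]$.
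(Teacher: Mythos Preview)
Your proposal is correct and follows essentially the same route as the paper's own proof. The paper likewise runs the interpolation of \Sec~\ref{secInterpolate} directly at $\beta=\infty$ (\Lem~\ref{Lemma_hcInt}), observing that the only place \eqref{eqP} was used is the power-series expansion of the logarithm in \eqref{eqLemma_intderiv7}--\eqref{eqLemma_intderiv9}, and that the required separation from~$1$ is instead supplied by the bound $\mu_{\G_t,v}(1)\le\lambda/(1+\lambda)$ on the Boltzmann side and by $\mu\in\meas_\lambda$ on the kernel side; it then reruns the telescoping argument of \Lem s~\ref{Lemma_Prop_regint2_B''}--\ref{Lemma_Prop_regint2_B'} under the $\Join_\infty$-invariance of $\MEAS_\lambda$, exactly as in your Step~2.
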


With $\VARPHI_i$ and $\PSI_{1,i}$ defined with respect to the hard-core weight function $\psi_\infty$, let
	\begin{align*}
	\cB'(\mu)&=\Erw\ln\scal{\bigoplus_{i=1}^n\VARPHI_i}\mu,&
	\cB''(\mu)&=\Erw\ln\scal{\bigoplus_{1\leq i\leq dn/2}\PSI_{1,i}}\mu.
	\end{align*}

\begin{lemma}\label{Lemma_hcInt}
For any $\lambda>0$ and any $\mu\in\meas_\lambda$ we have
	$\Erw\brk{\ln Z(\G_{\lambda,\infty})}\leq \cB'(\mu)-\cB''(\mu)+o(n)$.
\end{lemma}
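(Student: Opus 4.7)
The plan is to import \Prop~\ref{Prop_regint1} from the soft-core model $\G_{\lambda,\beta}$ with finite $\beta$ and then to pass to the limit $\beta\to\infty$. For every finite $\beta$ the weight function $\psi_\beta$ takes values in $(0,1)$ and satisfies $\min_\sigma\psi_\beta(\sigma)=\eul^{-\beta}>0$, so the tail condition~\eqref{eqP} holds trivially, and $\G_{\lambda,\beta}$ fits into the random factor graph framework of \Sec~\ref{Sec_results}. By \Lem~\ref{Lemma_POShc} condition {\bf POS} is also satisfied, so \Prop~\ref{Prop_regint1} applies and yields
\[
\Erw\brk{\ln Z(\G_{\lambda,\beta})}\ \leq\ \cB'_\beta(\mu)-\cB''_\beta(\mu)+o(n)\qquad\mbox{for every }\mu\in\states,
\]
where $\cB'_\beta,\cB''_\beta$ are the functionals from \Sec~\ref{Sec_regint1} defined using $\psi_\beta$ in place of $\psi_\infty$. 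The $o(n)$ rate depends on $\beta$; accordingly we must send $n\to\infty$ for each fixed $\beta$ before sending $\beta\to\infty$.

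Next I would combine this estimate with the pointwise monotonicity $\psi_\beta(\sigma_1,\sigma_2)\geq\psi_\infty(\sigma_1,\sigma_2)$ for all $\sigma_1,\sigma_2\in\Omega$ and all $\beta>0$, which gives $Z(\G_{\lambda,\infty})\leq Z(\G_{\lambda,\beta})$ for every realisation of the random factor graph. Taking expectations and combining with the previous display produces, for every fixed $\beta>0$,
\[
\limsup_{n\to\infty}\frac1n\Erw\brk{\ln Z(\G_{\lambda,\infty})}\ \leq\ \cB'_\beta(\mu)-\cB''_\beta(\mu).
\]
It therefore remains to show that $\cB'_\beta(\mu)\to\cB'(\mu)$ and $\cB''_\beta(\mu)\to\cB''(\mu)$ as $\beta\to\infty$, which is where the assumption $\mu\in\meas_\lambda$ enters crucially.

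The main obstacle, and the reason for the restriction $\mu\in\meas_\lambda$, is that in the $\beta\to\infty$ limit the arguments of the various logarithms could in principle drift to $0$, which would obstruct a naive dominated convergence argument. The bound $\mu_{s,x}(1)\leq\lambda/(1+\lambda)$ defining $\meas_\lambda$ forces $\mu_{s,x}(0)\geq1/(1+\lambda)$, and substituting the all-zero configuration into the sums that sit inside each logarithm produces a positive lower bound depending only on $\lambda$ and $d$ (for $\cB'_\beta$) or on $\lambda$ alone (for $\cB''_\beta$), uniformly in $\beta\in(0,\infty]$ and in the auxiliary random variables $\vx_{i,j},\vh_i$. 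A matching uniform upper bound is trivial since $\psi_\beta\leq 1$ and $\mu_{s,x}\leq1$. With these two-sided pointwise bounds in hand the logarithms are uniformly integrable and $\psi_\beta\to\psi_\infty$ pointwise, so dominated convergence delivers the required limits. Plugging in and letting $\beta\to\infty$ in the display above then yields the desired bound $\limsup_n n^{-1}\Erw[\ln Z(\G_{\lambda,\infty})]\leq\cB'(\mu)-\cB''(\mu)$.
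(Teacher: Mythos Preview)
Your approach is correct and genuinely different from the paper's. The paper reruns the interpolation argument of \Prop~\ref{Prop_regint1} directly at $\beta=\infty$, observing that strict positivity of the weight function was used only to justify the power-series expansions of the logarithms in \eqref{eqLemma_intderiv7}--\eqref{eqLemma_intderiv9}, and that these expansions remain valid because the arguments stay bounded away from zero: $\scal{\psi_{a_{\vm_t+1}}}{\mu_{\G_t}}\geq 1-\lambda/(1+\lambda)$ since every marginal of the hard-core Boltzmann distribution satisfies $\mu_{\G_t,v}(1)\leq\lambda/(1+\lambda)$, and similarly for the terms involving $\mu$ thanks to the assumption $\mu\in\meas_\lambda$. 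You instead use \Prop~\ref{Prop_regint1} as a black box at finite $\beta$, combine it with the pointwise monotonicity $\psi_\beta\geq\psi_\infty$ (hence $Z(\G_{\lambda,\infty})\leq Z(\G_{\lambda,\beta})$), and pass to the limit. Your route is arguably cleaner in that it avoids reopening the interpolation proof; the paper's route pinpoints exactly where positivity matters.

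One presentational point to tighten: $\cB'_\beta(\mu)$ and $\cB''_\beta(\mu)$ depend on $n$ (they involve $\bigoplus_{i=1}^n$ and $\bigoplus_{i\leq dn/2}$), so the display $\limsup_n n^{-1}\Erw[\ln Z(\G_{\lambda,\infty})]\leq\cB'_\beta(\mu)-\cB''_\beta(\mu)$ is not well-formed as written. What you actually need, and what your lower bounds deliver, is the uniform estimate $|\cB'_\beta(\mu)-\cB'(\mu)|+|\cB''_\beta(\mu)-\cB''(\mu)|\leq Cn\eul^{-\beta}$ for some $C=C(\lambda,d)$: each of the $O(n)$ factors inside the logarithm is bounded below by a constant independent of $\beta,n$, and each changes by $O(\eul^{-\beta})$ when $\psi_\beta$ is replaced by $\psi_\infty$. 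With this bound, first choose $\beta$ large and then $n$ large to obtain the claimed $o(n)$ error.
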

\begin{proof}
This follows along the lines of the proof of \Prop~\ref{Prop_regint1}.
In that proof we required the assumption that all weight functions are strictly positive, but only in one place.
Namely, we required positivity in order expand the logarithm into a power series in equations  (\ref{eqLemma_intderiv7})--(\ref{eqLemma_intderiv9}).
Yet this approximation is still valid in the hardcore model.
Indeed, the term $\scal{\psi_{a_{\vm_t+1}}}{\mu_{\G_t}}$, whose logarithm we calculate in (\ref{eqLemma_intderiv7}), is lower-bounded by $1-\lambda/(1+\lambda)$, because in the hard-core model the marginal probability that a single variable node has spin one is upper-bounded by  $\lambda/(1+\lambda)$.
Similarly, the arguments of the logarithms in (\ref{eqLemma_intderiv8}) and (\ref{eqLemma_intderiv9}) are lower-bounded by $1-\lambda/(1+\lambda)$ because $\mu\in\meas_\lambda$.
\end{proof}

\begin{proof}[Proof of \Prop~\ref{Prop_hcUpper}]
Based on \Lem~\ref{Lemma_hcInt}, we follow the proof of \Prop~\ref{Prop_regint2} to complete the proof of \Prop~\ref{Prop_hcUpper}.
Specifically, we claim that for any $\pi\in\MEAS_\lambda$,
	\begin{align}\label{eqProp_hcUpper1}
	\Erw[\cB''(\MU^\pi)]&=\frac{dn}{2}\Erw\brk{\ln\scal{\psi_\infty}{\pi}},&&\mbox{and}&
	\Erw[\cB'(\MU^\pi)]&=\Erw\log\scal{\VARPHI_1}{\pi}.%\label{eqProp_hcUpper1}
	\end{align}
This follows along the lines of \Lem s~\ref{Lemma_Prop_regint2_B''} and~\ref{Lemma_Prop_regint2_B'}.
In both cases we assumed that the weight functions are strictly positive in order to ensure that the arguments of the logarithms on the l.h.s.\ are bounded away from zero so that the logarithmic series applies.
But the condition $\pi\in\MEAS_\lambda$ guarantees that
	\begin{align*}
	\scal{\bigoplus_{i=1}^n\VARPHI_i}\mu&\geq(1/(1+\lambda))^n&&\mbox{ and }&\scal{\bigoplus_{1\leq i\leq dn/2}\PSI_{1,i}}\mu&\geq(1/(1+\lambda))^{dn/2}.
	\end{align*}
Thus, the same manipulations as before yield (\ref{eqProp_hcUpper1}).
Finally, the assertion follows from (\ref{eqProp_hcUpper1}) and \Lem~\ref{Lemma_hcInt}.
\end{proof}

\begin{proof}[Proof of \Thm~\ref{Thm_hc}]
The theorem is an immediate consequence of \Prop s~\ref{Prop_hcLower} and~\ref{Prop_hcUpper}.
\end{proof}

\begin{proof}[Proof of \Cor~\ref{Cor_hc}]
For a graph $G=(V(G),E(G))$ let $\mu_{G,\lambda}\in\cP(\{0,1\}^{V(G)})$ denote the hard-core model on $G$ with fugacity $\lambda$, and let $Z_\lambda(G)$ be the corresponding partition function.
Further, let $\alpha_\lambda(G)=\sum_{v\in V(G)}\scal{\SIGMA_v}{\mu_{G,\lam}}$ be the average size of an independent set drawn from $\mu_{G,\lambda}$.
Additionally, we write $\alpha(G)$ for the maximum independent set size.
It is well known that
	\begin{align}\label{eqCor_hc1}
	\alpha_\lambda(G)=\lambda\frac{\partial}{\partial\lambda}\ln Z_\lambda(G)
	\end{align}
and that
	\begin{align}\label{eqCor_hc2}
	\frac{\alpha_\lambda(G)}{|V(G)|}&\quad\stacksign{$\lambda\to\infty$}\longrightarrow\quad\frac{\alpha(G)}{|V(G)|}
	&&\mbox{uniformly for all $G$}.
	\end{align}
As an immediate consequence of (\ref{eqCor_hc1}) we obtain
	\begin{align*}
	\ln Z_{\lambda+1}(G)-\ln Z_{\lambda}(G)&=\int_\lambda^{\lambda+1}\frac{\alpha_t(G)}{t}\dd t\begin{cases}\leq\alpha_{\lambda+1}(G)/\lambda,\\
	\geq\alpha_\lambda(G)/(\lambda+1).
		\end{cases}
	\end{align*}
Hence, (\ref{eqCor_hc2}) shows that for any $\eps>0$ there exists $\lambda_0>0$ such that for all $\lambda\geq\lambda_0$ and all
	$d$-regular graphs $G$ we have
	\begin{align}\label{eqCor_hc3}
	(1-\eps)\alpha(G)\leq\frac\lambda{1+\lambda}\alpha_\lambda(G)&\leq
		\lambda(\ln Z_{\lambda+1}(G)-\ln Z_{\lambda}(G))\leq\alpha_\lambda(G)\leq\alpha(G).
	\end{align}
Applying (\ref{eqCor_hc3}) to the random graph $\G_\lambda$ and taking expectations, we obtain
	\begin{align}\label{eqCor_hc4}
	(1-\eps)\Erw\brk{\frac{\alpha(\GG)}n}&\leq
		\Erw\brk{\lambda(\ln Z_{\lambda+1}(\GG)-\ln Z_{\lambda}(\GG))}\leq
			\Erw\brk{\frac{\alpha(\GG)}n}.
	\end{align}
\Thm~\ref{Thm_hc} guarantees that the sequence $\bc{\Erw\brk{\lambda(\ln Z_{\lambda+1}(\GG)-\ln Z_{\lambda}(\GG))}}_n$ converges, and thus (\ref{eqCor_hc4}) yields
	\begin{align}\label{eqCor_hc5}
	(1-\eps)\limsup_{n\to\infty}\Erw\brk{\frac{\alpha(\GG)}n}&\leq
		\lambda(\Phi_{d,\lambda+1}-\Phi_{d,\lambda})\leq
			\liminf_{n\to\infty}\,\Erw\brk{\frac{\alpha(\GG)}n}.
	\end{align}
Further, there exists a subsequence $(n_l)_{l\geq1}$ along which $\Erw[\alpha(\GG)/n]$ converges to $\alpha_*\in[0,1]$, whence (\ref{eqCor_hc4}) yields
	\begin{align}\label{eqCor_hc6}
	(1-\eps)\alpha_*&\leq\lambda(\Phi_{d,\lambda+1}-\Phi_{d,\lambda})\leq\alpha_*.
	\end{align}
Since (\ref{eqCor_hc6}) holds for every $\eps>0$ for large enough $\lambda$, we conclude that
$\lim_{\lambda\to\infty}\lambda(\Phi_{d,\lambda+1}-\Phi_{d,\lambda})$ exists.
Hence, taking the limit $\eps\to0$, and thus $\lambda\to\infty$, in (\ref{eqCor_hc5}) completes the proof.
\end{proof}

\subsection*{Acknowledgement}
The first author thanks Max Hahn-Klimroth for helpful discussions on the cut metric.

\begin{appendix}

\section{Proof of \Lem s~\ref{Lemma_JoinCont} and~\ref{Lemma_inftyCont}%
	}\label{Sec_Lemma_JoinCont}

\noindent
The proof of \Lem~\ref{Lemma_JoinCont} requires the regularity lemma for measures from~\cite{Limits}.\footnote{The arguments in the appendix are special cases of more general results on the cut metric from~\cite{MHK}.}
Let $\lambda$ denote the Lebesgue measure.
For $\mu\in\states$ and measurable $S,X\subset[0,1]$ we write
	$$\mu_{S,X}=\frac1{\lambda(S)\lambda(X)}\int_S\int_X\mu_{s,x}\dd x\dd s\in\cP(\Omega),$$
with the convention that $\mu_{S,X}$ is uniform if $\lambda(S)\lambda(X)=0$.
Further, let $\vX=(X_1,\ldots,X_K),\vS=(S_1,\ldots,S_L)$ be a partitions of $[0,1)$ into pairwise disjoint measurable sets.
We write $\#\vX,\#\vS$ for the number $K,L$ of classes, respectively.
Then $\mu$ is {\em $\eps$-regular} with respect to $(\vX,\vS)$
if there exists $R\subset[\#\vX]\times[\#\vS]$ such that the following conditions hold.
\begin{description}
\item[REG1] $\lambda(X_i)>0$ and $\lambda(S_j)>0$ for all $(i,j)\in R$.
\item[REG2] $\sum_{(i,j)\in R}\lambda(X_i)\lambda(S_j)>1-\eps$.
\item[REG3] for all $(i,j)\in R$ and almost all $s,s'\in S_j$ we have
	$\tv{\int_{X_i}\mu_{s,x}-\mu'_{s',x}\dd x}<\eps\lambda(X_i)$.
\item[REG4] if $(i,j)\in R$, then for every $U\subset X_i$ with $\lambda(U)\geq\eps\lambda(X_i)$
	and every $T\subset S_j$ with $\lambda(T)\geq\eps\lambda(S_j)$ we have
		$$\TV{\mu_{S,X_i}-\mu_{T,U}}<\eps.$$
\end{description}

A {\em refinement} of a partition $(\vX,\vS)$ is a partition $(\vX',\vS')$ such that
for every pair $(i',j')\in[\#\vX']\times[\vS']$ there is a pair $(i,j)\in [\#\vX]\times[\vS]$ such that $(X_{i'}',S_{j'}')\subset(X_i,S_j)$.

\begin{theorem}[\cite{Limits}]\label{Thm_reg}
For any $\eps>0$ there exists $N=N(\eps,\Omega)$ such that for every $\mu\in\states$ the following is true.
Every partition $(\vX_0,\vS_0)$ with $\#\vX_0+\#\vS_0\leq1/\eps$ has a refinement
$(\vX,\vS)$ such that $\#\vX+\#\vS\leq N$ with respect to which $\mu$ is $\eps$-regular.
\end{theorem}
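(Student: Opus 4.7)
The plan is to prove Theorem~\ref{Thm_reg} via an iterative energy-increment argument in the spirit of Szemer\'edi's regularity lemma and its Frieze--Kannan cut-norm variant, adapted to kernel-valued maps. Define the \emph{energy} of a partition $(\vX,\vS)$ with respect to $\mu$ by
\[
\cE(\mu,\vX,\vS)=\sum_{i=1}^{\#\vX}\sum_{j=1}^{\#\vS}\lambda(X_i)\lambda(S_j)\sum_{\omega\in\Omega}\mu_{S_j,X_i}(\omega)^2,
\]
where $\lambda$ denotes the Lebesgue measure. By Jensen's inequality $\cE$ is monotone under refinement of either $\vX$ or $\vS$, and it is uniformly bounded above by $|\Omega|$. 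This boundedness will drive the termination of the whole procedure.

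First I would handle REG3 directly. For a given partition $(\vX,\vS)$ and each $j\in[\#\vS]$, consider the vector-valued map
\[
S_j\ni s\mapsto\left(\lambda(X_i)^{-1}\int_{X_i}\mu_{s,x}\,\dd x\right)_{i=1}^{\#\vX}\in\cP(\Omega)^{\#\vX}.
\]
Its codomain is compact in the product total variation metric and hence admits a finite $\eps$-net of size depending only on $\eps$, $|\Omega|$ and $\#\vX$. Refining $\vS$ via the pre-images of a nearest-net-point map yields a refinement on which REG3 holds, at the cost of multiplying the number of row classes by a bounded factor; this operation weakly increases $\cE$.

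Next I would iterate for REG4. If REG4 fails on a set of pairs $(i,j)$ whose combined measure is at least $\eps$, each such pair admits witnesses $T\subset S_j$, $U\subset X_i$ of relative measure at least $\eps$ with $\TV{\mu_{S_j,X_i}-\mu_{T,U}}\geq\eps$. Refining $\vX$ and $\vS$ by the sub-partitions $\{U,X_i\setminus U\}$ and $\{T,S_j\setminus T\}$ for each offending pair, then invoking a Pythagoras-type identity for the squared $\ell^2$-norm under conditional expectation, one shows that $\cE$ strictly increases by at least some explicit $c(\eps,|\Omega|)>0$ (of order roughly $\eps^5/|\Omega|$). Classes of Lebesgue measure below some threshold can be dumped into the complement of $R$ to guarantee REG1 and REG2 simultaneously. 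Since $\cE\leq|\Omega|$, at most $|\Omega|/c(\eps,|\Omega|)$ iterations are needed before the refinement becomes $\eps$-regular.

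The main obstacle is controlling the partition growth while interleaving the REG3 and REG4 refinements. Each REG4 step may split every cell into up to $2^{\#\vX+\#\vS}$ subcells, so the partition size grows in tower-type fashion; this is acceptable because the number of rounds depends only on $\eps$ and $|\Omega|$. More delicately, each REG4-refinement introduces new column classes, which invalidates the REG3 discretisation of the row-average maps and forces re-discretisation; one must interleave the two kinds of refinement and verify that the REG3 step is cheap enough (both in energy and in class count) not to swamp the REG4 energy gain. Once this interleaving is set up, the final bound $N(\eps,\Omega)$ emerges as a tower of height $O(|\Omega|/c(\eps,|\Omega|))$, independent of $\mu$ and of the initial coarse partition $(\vX_0,\vS_0)$ beyond its size constraint.
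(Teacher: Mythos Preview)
The paper does not prove this statement at all: Theorem~\ref{Thm_reg} is quoted from \cite{Limits} as a black box and used without proof in the appendix to establish Lemma~\ref{Lemma_JoinCont}. So there is no ``paper's own proof'' to compare against.

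That said, your outline is the standard energy-increment route for regularity lemmas of this type and is in the right spirit. The one place I would push back on is your handling of REG3. You propose to discretise the map $s\mapsto(\lambda(X_i)^{-1}\int_{X_i}\mu_{s,x}\,\dd x)_i$ using an $\eps$-net in $\cP(\Omega)^{\#\vX}$. The size of such a net is of order $(C/\eps)^{|\Omega|\cdot\#\vX}$, which depends on the current number of column classes $\#\vX$. After a REG4 step, $\#\vX$ can grow exponentially, so the next REG3 discretisation multiplies $\#\vS$ by a factor that itself grows doubly exponentially in the current step count. This is still fine for the conclusion (the final $N$ is allowed to be tower-type), but you should be explicit that the REG3 refinement does not cost any energy and therefore does not interfere with the termination bound, which is governed solely by the number of REG4 rounds. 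Your closing paragraph gestures at this, but the phrase ``cheap enough \ldots\ not to swamp the REG4 energy gain'' is misleading: the REG3 step contributes zero or positive energy gain, so there is nothing to swamp; the only cost is in the class count, and that you must track separately.
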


Additionally, we need the {\em strong cut metric}, defined by
	\begin{align*}
	\CUTM(\mu,\nu)&=\sup_{S,X,\omega}\abs{\int_S\int_X\mu_{s,x}(\omega)-\nu_{s,x}(\omega)\dd x\dd s}
			\qquad(\mu,\nu\in\states),
	\end{align*}
where $S,X$ range over measurable subsets of the unit interval and $\omega\in\Omega$.
It is well known that $\CUTM(\nix,\nix)$ induces a metric on $\states$.

For $\mu,\nu\in\states$ we define $\mu\oplus\nu:[0,1]^3\to\cP(\Omega^2)$ by	$\mu\oplus\nu_{s,x_1,x_2}=\mu_{s,x_1}\tensor\mu_{s,x_2}$.
Since $[0,1]^2$ with the Lebesgue measure is isomorphic as a measure space to $[0,1]$ with the Lebesgue measure, we can view $\mu\oplus\nu$ as a strong $\cP(\Omega^2)$-valued kernel.
In particular, it makes sense to apply the strong cut metric to these kernels.

\begin{proposition}\label{Prop_oplusCont}
The map $(\mu,\nu)\mapsto\mu\oplus\nu$ is continuous with respect to the strong cut metric.
\end{proposition}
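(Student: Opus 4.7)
The plan is to show continuity by reducing, via telescoping, to the case where one of the two arguments is fixed; concretely, for each $\eps>0$ I aim to produce $\delta=\delta(\eps)>0$ such that whenever $\CUTM(\mu,\mu')<\delta$ we have $\CUTM(\mu\oplus\nu,\mu'\oplus\nu)<\eps$ for every $\nu\in\states$. By the symmetry between the two coordinates of $[0,1]^2$, the analogous statement for varying $\nu$ follows; joint continuity then results from the telescoping identity $\mu\oplus\nu-\mu'\oplus\nu'=(\mu-\mu')\oplus\nu+\mu'\oplus(\nu-\nu')$ and the uniform modulus applied to each summand.

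The first step is to replace $\nu$ by a step kernel $\bar\nu$ taking a constant value on each cell $T_j\times X_i$ of a finite product partition of $[0,1]^2$, obtained by applying the regularity lemma (\Thm~\ref{Thm_reg}) to $\nu$. The number of cells depends only on $\eps$ and $|\Omega|$, and the $\eps$-regularity conditions {\bf REG1}--{\bf REG4} yield $\CUTM(\nu,\bar\nu)=O(\eps)$. A short direct estimate (using only that kernel values are bounded by $1$) shows that perturbing one argument of $\oplus$ by $O(\eps)$ in $\CUTM$ while keeping the other fixed moves the tensor product by $O(\eps)$ in $\CUTM$; this absorbs the replacement $\nu\rightsquigarrow\bar\nu$ into the error term. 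It remains to bound $\CUTM(\mu\oplus\bar\nu,\mu'\oplus\bar\nu)$ in terms of $\CUTM(\mu,\mu')$.

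To this end, fix a test set $S\subset[0,1]$, a measurable set $\tilde Y\subset[0,1]^2$ (identified with a subset of $[0,1]$ via the measure isomorphism $[0,1]^2\cong[0,1]$), and spins $\omega_1,\omega_2\in\Omega$. Integrating out $x_2$ against the block kernel yields, for $s\in T_j$,
\begin{align*}
\int_{\tilde Y}(\mu-\mu')_{s,x_1}(\omega_1)\bar\nu_{s,x_2}(\omega_2)\dd x_1\dd x_2=\sum_{i}\bar\nu_{j,i}(\omega_2)\int_0^1(\mu-\mu')_{s,x_1}(\omega_1)h_i(x_1)\dd x_1,
\end{align*}
where $h_i(x_1)=\lambda(\{x_2\in X_i:(x_1,x_2)\in\tilde Y\})$ is the $x_1$-slice measure. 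Approximating each measurable function $h_i:[0,1]\to[0,1]$ in $L^\infty$ by a simple function obtained by quantising its range into $O(\eps^{-1})$ levels and taking a common refinement $Q=(Q_k)_k$ of the resulting partitions produces a partition of $[0,1]$ whose cardinality depends only on $\eps$ and $|\Omega|$, such that every $h_i$ is constant up to $O(\eps)$ on each cell $Q_k$. Substituting, the full integral $\int_S\int_{\tilde Y}(\ldots)\dd s$ becomes, up to an additive $O(\eps)$, a finite linear combination with uniformly bounded coefficients of quantities $\int_{S\cap T_j}\int_{Q_k}(\mu-\mu')_{s,x_1}(\omega_1)\dd x_1\dd s$, each of which is bounded by $\CUTM(\mu,\mu')$. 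This yields a bound of the form $C(\eps)\CUTM(\mu,\mu')+O(\eps)$ that is uniform in $\mu,\mu',\nu$ and in the choice of test datum $(S,\tilde Y,\omega_1,\omega_2)$, completing the argument.

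The main obstacle is precisely that the test set $\tilde Y$ in the definition of $\CUTM(\mu\oplus\bar\nu,\mu'\oplus\bar\nu)$ is an arbitrary measurable subset of $[0,1]^2$ rather than a product $X\times X'$, so one cannot directly factorise the $x_1$- and $x_2$-integrations against separate single-variable test functions. The two-step discretisation above resolves this: replacing $\nu$ by the block kernel $\bar\nu$ reduces the $x_2$-integration to a weighted sum of slice measures $h_i(x_1)$, and the $L^\infty$-approximation of each $h_i$ effectively decomposes $\tilde Y$ into a bounded number of rectangles up to a small error, which is controllable since the remaining integrand $(\mu-\mu')_{s,x_1}(\omega_1)$ is uniformly bounded.
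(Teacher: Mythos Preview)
Your argument is circular. The step ``a short direct estimate (using only that kernel values are bounded by $1$) shows that perturbing one argument of $\oplus$ by $O(\eps)$ in $\CUTM$ while keeping the other fixed moves the tensor product by $O(\eps)$ in $\CUTM$'' is precisely the statement you are proving: it asserts $\CUTM(\mu\oplus\nu,\mu\oplus\bar\nu)=O(\CUTM(\nu,\bar\nu))$, i.e.\ uniform continuity of $\oplus$ in its second argument, which by the very symmetry you invoke is equivalent to the first-argument continuity you are establishing. There is no short estimate available here. The obstruction is exactly the one you later identify: the test set $\tilde Y\subset[0,1]^2$ is not a rectangle, and $\mu_{s,x_1}(\omega_1)(\nu-\bar\nu)_{s,x_2}(\omega_2)$ cannot be controlled using only $\CUTM(\nu,\bar\nu)$ together with $|\mu|\le1$. (If the regularity lemma gave an $L^1$-approximation $\|\nu-\bar\nu\|_1=O(\eps)$ the step would be trivial, but it does not.) Your subsequent slicing and quantisation argument for step kernels $\bar\nu$ is correct, but you cannot reach that case without first assuming the conclusion.

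The paper avoids any step-kernel replacement. For fixed $x_2$, write $w(s)=\vecone\{s\in S\}\nu_{s,x_2}(\tau)\in[0,1]$ and use the layer-cake identity $w(s)=\int_0^1\vecone\{w(s)>u\}\,\dd u$ (equivalently, a change of variable $\dd t=\nu_{s,x_2}(\tau)\,\dd s/z$). This rewrites
\[
\int_S\int_{X_{x_2}}(\mu-\mu')_{s,x_1}(\sigma)\,\nu_{s,x_2}(\tau)\,\dd x_1\,\dd s
=\int_0^1\Big(\int_{S_u}\int_{X_{x_2}}(\mu-\mu')_{s,x_1}(\sigma)\,\dd x_1\,\dd s\Big)\dd u,
\]
an average of integrals over genuine rectangles $S_u\times X_{x_2}$, each bounded by $\CUTM(\mu,\mu')$. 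Integrating over $x_2$ yields $\CUTM(\mu\oplus\nu,\mu'\oplus\nu)\le\CUTM(\mu,\mu')$ directly, with no regularity lemma needed.
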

\begin{proof}
Given $\eps>0$ pick a small enough $\delta>0$ and assume that $\CUTM(\mu,\mu')<\delta$.
Due to the triangle inequality it suffices to prove that  $\CUTM(\mu\oplus\nu,\mu'\oplus\nu)<\eps$ for every $\nu$.
Thus, we need to show that for any $X\subset[0,1]^2$, $S\subset[0,1]$ and $\sigma,\tau\in\Omega$,
	\begin{align}\label{eqProp_oplusCont1}
	\abs{\int_X\int_S\bc{\mu_{s,x_1}(\sigma)-\mu'_{s,x_1}(\sigma)}\nu_{s,x_2}(\tau)\dd s\dd x_1\dd x_2}<\eps.
	\end{align}
To this end, we may assume that $\lambda(S)>\eps^2$ and that $\int_S\nu_{s,x_2}(\tau)\dd s>\eps^2$ for all $(x_1,x_2)\in X$.
Further, with $z=\int_0^1\nu_{s,x_2}(\tau)\dd s$ consider the variable transformation
	\begin{align}\label{eqProp_oplusCont2}
	\dd t=\frac{\nu_{s,x_2}(\tau)\dd s}{z}.
	\end{align}
Let $T$ be the inverse image of $S$ under the transformation \eqref{eqProp_oplusCont2}.
Then we obtain for any $X_1\subset[0,1]$,
	\begin{align}\label{eqProp_oplusCont3}
	\int_{X_1}\int_S\bc{\mu_{s,x_1}(\sigma)-\mu'_{s,x_1}(\sigma)}\nu_{s,x_2}(\tau)\dd s\dd x_1
		&=z\int_{X_1}\int_T\mu_{t,x_1}(\sigma)-\mu'_{t,x_1}(\sigma)\dd t\dd x_1.
	\end{align}
But the assumption $\CUTM(\mu,\mu')<\delta$ implies that the double integral on the r.h.s.\ of (\ref{eqProp_oplusCont3}) is bounded by $\eps^4$ in absolute value (providing $\delta$ is small enough).
Thus, (\ref{eqProp_oplusCont1}) follows.
\end{proof}

\begin{proof}[Proof of \Lem~\ref{Lemma_JoinCont}]
We may assume without loss that $f(\tau)=\vecone\{\tau=\sigma\}$ for some $\sigma\in\Omega^k$.
Let $\eps>0$, pick $\alpha=\alpha(\eps)$, $\xi=\xi(\alpha)>0$ small enough and assume that $\mu,\nu\in\states$ are such that $\CUTM(\mu,\nu)<\delta$ for a small enough $\delta=\delta(\xi)>0$.
Applying \Thm~\ref{Thm_reg} twice, we obtain $(\vX,\vS)$ with respect to which both $\mu,\nu$ are $\xi$-regular, and $L=\#\vX+\#\vS$ is bounded in terms of $\xi$ only.
Let $R'$ be the set of all pairs for which {\bf REG1--REG4} are satisfied for both $\mu,\nu$ and that satisfy $\lambda(\vX_i,\vS_j)>\xi^8/L$.
Assuming that $\delta$ is sufficiently small, we obtain 
	\begin{equation}\label{eqLemma_JoinCont_1a}
|\mu_{S_i,X_j}-\nu_{S_i,X_j}|<\xi^8\qquad\mbox{ for all $(i,j)\in R'$.}
\end{equation}
Furthermore, consider the random variables
	\begin{align*}
	z_i&=\prod_{h=1}^k\mu_{S_i,\vx_h}(\sigma_h),&z&=\sum_{i\leq\#\vS}z_i,\\
	z_i'&=\prod_{h=1}^k\nu_{S_i,\vx_h}(\sigma_h),&z'&=\sum_{i\leq\#\vS}z_i'
	\end{align*}
and define $\mu',\nu'\in\states$ as follows.
To construct $\mu'$, partition the interval $[0,1]$ into pairwise disjoint sets $T_i$, $i\in[\#\vS]$, of measure $z_i/z$ and fill the strip $T_i\times[0,1]$
with a suitably scaled copy of $(\mu_{s,x})_{s\in S_i,x\in[0,1]}$.
Construct $\nu'$ analogously from the $z_i'$.
Then $\Cutm(\mu',f\Join\mu)=\Cutm(\nu',f\Join\nu)=0$.
Furthermore, \Prop~\ref{Prop_oplusCont} shows that with probability at least $1-\alpha$ we have 
	$$\sum_{i=1}^{\#\vS}\lambda(S_i)|z_i-z_i'|<\alpha^2,$$
provided that $\xi,\delta$ are chosen small enough.
Since also $z\geq\alpha$ because the function $f$ is strictly positive, we conclude that with probability at least $1-\alpha$ we have $\Cutm(\mu',\nu')<\alpha$.
We thus obtain a coupling of the random variables $f\Join \mu,f\Join\nu$ under which the expected cut distance is bounded by $\eps$, as desired.
\end{proof}

\begin{proof}[Proof of \Lem~\ref{Lemma_inftyCont}]
We proceed precisely as in the proof of \Lem~\ref{Lemma_JoinCont}, up until the point where the positivity of $f$ is used.
In the setup of \Lem~\ref{Lemma_inftyCont}, the function $f$ may take the value $0$ on kernels that take the value $1$ with positive probability; however, since we are assuming that the values of the kernels are bounded by $\lambda/(1+\lambda)$.
Therefore, the function $f$ always attains values that are bounded away from $0$.
\end{proof}

\section{Proof of \Lem~\ref{Lemma_cntFct}}

\noindent
The proof of \Lem~\ref{Lemma_cntFct} requires the following operation.
For functions $f:\Omega^{M\times N}\to\RR$, $g:\Omega^{L\times N}\to\RR$ we define
	\begin{align*}
	f\otimes g&:\Omega^{(M+L)\times N}\to\RR,&
	\sigma&\mapsto f\bc{(\sigma_{i,j})_{i\in[M],j\in[N]}}\cdot g\bc{(\sigma_{i+M,j+N})_{i\in[L],j\in[N]}}.
	\end{align*}	
Thus, the first $M$ rows of $\sigma$ go into $f$, the last $L$ rows go into $g$ and we multiply the results.

We define a corresponding operation on kernels.
Namely, for $\mu,\nu\in\states$ we define $\mu\otimes\nu:[0,1]^3\to\cP(\Omega^2)$ by	$\mu\oplus\nu_{s,t,x}=\mu_{s,x}\tensor\nu_{t,x}$.
Since $([0,1]^2,\lambda\tensor\lambda)$ is isomorphic $([0,1],\lambda)$, we can view $\mu\otimes\nu$ as a  $\cP(\Omega^2)$-valued kernel, and the cut metric extends to these kernels.
Since the cut metric is invariant under swapping the axes, \Prop~\ref{Prop_oplusCont} readily yields the following.

\begin{proposition}\label{Prop_otimesCont}
The map $(\mu,\nu)\mapsto\mu\otimes\nu$ is continuous with respect to the cut metric.
\end{proposition}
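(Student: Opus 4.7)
The plan is to reduce Proposition~\ref{Prop_otimesCont} to Proposition~\ref{Prop_oplusCont} via the axis-swap symmetry built into the definition of the cut metric. Define the swap map $\tau:\states\to\states$ by $(\tau\mu)_{s,x}=\mu_{x,s}$. Inspecting~\eqref{eqDefCutMetric}, the two measure-preserving maps $\varphi,\varphi'$ play entirely symmetric roles (one reshuffles the first coordinate, the other the second), and the supremum over measurable $S,X\subset[0,1]$ is likewise symmetric in these two coordinates. Consequently $\Cutm(\tau\mu,\tau\nu)=\Cutm(\mu,\nu)$ on $\states$, and the same identity lifts to the Wasserstein extension of $\Cutm(\nix,\nix)$ on $\Meas$. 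In particular, $\tau$ descends to an isometry of $\meas$.

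Next I would express $\otimes$ in terms of $\oplus$ by unpacking the definitions: for any $\mu,\nu\in\states$ and $(s,t,x)\in[0,1]^3$,
\begin{align*}
(\mu\otimes\nu)_{s,t,x}&=\mu_{s,x}\tensor\nu_{t,x}=(\tau\mu)_{x,s}\tensor(\tau\nu)_{x,t}=\bc{\tau\mu\oplus\tau\nu}_{x,\,s,t}.
\end{align*}
After identifying $[0,1]^2$ with $[0,1]$ via an arbitrary measure isomorphism (the same one used implicitly in the statement of Proposition~\ref{Prop_oplusCont}), this reads $\mu\otimes\nu=\tau(\tau\mu\oplus\tau\nu)$ as elements of $\meas$.

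The conclusion then follows by composing three continuous maps: the pair-swap $(\mu,\nu)\mapsto(\tau\mu,\tau\nu)$ is continuous by the isometry observation, $(\mu',\nu')\mapsto\mu'\oplus\nu'$ is continuous by Proposition~\ref{Prop_oplusCont}, and the final application of $\tau$ is again continuous.

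I do not foresee a substantive obstacle; the only point requiring a moment of care is the measure-space identification used to flatten $[0,1]^2$ to $[0,1]$ when viewing $\oplus$- and $\otimes$-products as elements of $\meas$. Since the cut metric quotients by arbitrary measure-preserving rearrangements of the two axes, the particular choice of isomorphism is immaterial and this point is purely cosmetic.
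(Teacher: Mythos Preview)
Your proposal is correct and follows exactly the paper's own approach: the paper's entire proof is the single sentence ``Since the cut metric is invariant under swapping the axes, \Prop~\ref{Prop_oplusCont} readily yields the following,'' and you have spelled out precisely this reduction via the swap map $\tau$ and the identity $\mu\otimes\nu=\tau(\tau\mu\oplus\tau\nu)$. The only minor remark is that \Prop~\ref{Prop_oplusCont} is stated for the \emph{strong} cut metric $\CUTM$, so your composition first yields continuity of $\otimes$ in $\CUTM$; since the operation commutes with measure-preserving rearrangements of each axis, this descends to $\Cutm$ on $\meas$ --- a routine point the paper also leaves implicit.
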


As a final preparation toward the proof of \Lem~\ref{Lemma_cntFct} we need the following fact.

\begin{lemma}\label{Lemma_cntFct1d}
For any $f:\Omega\to\RR$ the map $\mu\in\meas\mapsto\Erw\scal f\mu$ is continuous.
\end{lemma}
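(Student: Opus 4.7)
The plan is to observe that $\Erw\scal f\mu$ is, up to a sum over $\Omega$, precisely the kind of integral that the cut metric is designed to control, so that continuity reduces to a one-line Lipschitz estimate.

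Specialising the definition of $\scal f\mu$ to $m=n=1$ and taking expectation over the uniform variable $\vx_1$ gives
\begin{align*}
\Erw\scal f\mu &= \sum_{\sigma \in \Omega} f(\sigma) \int_0^1 \int_0^1 \mu_{s,x}(\sigma) \, \dd x \, \dd s.
\end{align*}
For any measurable $\varphi,\varphi':[0,1]\to[0,1]$ that preserve the Lebesgue measure, the change of variables $(s,x)\mapsto(\varphi(s),\varphi'(x))$ shows that $\int_0^1\int_0^1 \nu_{\varphi(s),\varphi'(x)}(\sigma)\,\dd x\,\dd s = \int_0^1\int_0^1 \nu_{s,x}(\sigma)\,\dd x\,\dd s$. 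Hence, for each $\sigma \in \Omega$,
\begin{align*}
\abs{\int_0^1\int_0^1 \bc{\mu_{s,x}(\sigma) - \nu_{s,x}(\sigma)} \dd x \, \dd s} &= \abs{\int_0^1\int_0^1 \bc{\mu_{s,x}(\sigma) - \nu_{\varphi(s),\varphi'(x)}(\sigma)} \dd x \, \dd s}.
\end{align*}
Taking the infimum over $\varphi,\varphi'$ and specialising $S=X=[0,1]$ in the definition \eqref{eqDefCutMetric} of $\Cutm$, we see the left-hand side is bounded above by $\Cutm(\mu,\nu)$ for every $\sigma$.

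Summing over $\sigma \in \Omega$ with weights $|f(\sigma)|$ yields
\begin{align*}
\abs{\Erw\scal f\mu - \Erw\scal f\nu} &\leq \bc{\sum_{\sigma\in\Omega} |f(\sigma)|} \cdot \Cutm(\mu,\nu),
\end{align*}
and the prefactor is finite because $\Omega$ is finite. In particular $\mu \mapsto \Erw\scal f\mu$ is Lipschitz (and well-defined on equivalence classes, since both sides vanish when $\Cutm(\mu,\nu)=0$), and therefore continuous on $\meas$. There is no substantive obstacle: the statement is essentially the observation that the cut distance dominates the global average.
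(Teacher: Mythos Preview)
Your proof is correct and follows essentially the same approach as the paper's: both reduce to showing that the global average $\int_0^1\int_0^1\mu_{s,x}(\sigma)\,\dd x\,\dd s$ is controlled by the cut distance, which is immediate from specialising $S=X=[0,1]$ in \eqref{eqDefCutMetric}. The only cosmetic difference is that the paper first reduces by linearity to indicator functions $f=\vecone\{\cdot=\sigma\}$, whereas you handle general $f$ directly via the weighted sum; your explicit Lipschitz constant $\sum_\sigma|f(\sigma)|$ is a mild bonus.
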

\begin{proof}
We may assume without loss that $f(\tau)=\vecone\{\sigma=\tau\}$ for some $\sigma\in\Omega$.
Then 
	$$\Erw\scal f\mu=\int_0^1\int_0^1\mu_{s,x}(\sigma)\dd x\dd s,$$
and it is immediate from the definition of the cut metric that the integral on the right hand side is a continuous function of $\mu$.
\end{proof}

\begin{proof}[Proof of \Lem~\ref{Lemma_cntFct}]
Let $f:\Omega^{m\times n}\to\RR$ and let $\mu\in\meas$.
Define $\nu=(\mu^{\oplus n})^{\otimes m}$.
Then $\nu$ is a kernel with values in $\Omega^{mn}$ and the definition of $\scal\nix\nix$ ensures that $\Erw\scal{f}\mu=\Erw\scal{f}\nu$.
This already shows that the map $\mu\mapsto\Erw\scal{f}\mu$ is continuous, because the map
$\mu\mapsto\nu$ is continuous by \Prop~\ref{Prop_oplusCont} and~\ref{Prop_otimesCont} and the map
$\nu\mapsto\Erw\scal f\nu$ is continuous by \Lem~\ref{Lemma_cntFct1d}.
Now fix an integer $\ell\geq2$ and let $\eta=\nu^{\otimes\ell}$.
Then
		$$\Erw\brk{\scal{f}\mu^\ell}=\Erw\brk{\scal{f}\eta}$$
and thus the continuity of the map $\mu\mapsto\Erw\brk{\scal{f}\mu^\ell}$ follows from \Prop~\ref{Prop_otimesCont} and \Lem~\ref{Lemma_cntFct1d}.
\end{proof}

\end{appendix}

\end{document}